\documentclass{amsart}

\textwidth=6.2in
\setlength{\oddsidemargin}{.2in}
\setlength{\evensidemargin}{.2in}
\setlength{\footskip}{.5in}

\title[The full Kostant-Toda hierarchy on the positive flag variety]
{The full Kostant-Toda hierarchy on the positive flag variety}

\author{Yuji Kodama and Lauren Williams} 
\date{\today}
\thanks{The first author was partially
supported by NSF grant DMS-1108813.  The second author was 
partially supported by an NSF CAREER award.}
\address{Department of Mathematics, Ohio State University,
Columbus, OH 43210}
\email{kodama@math.ohio-state.edu}
\address{Department of Mathematics, University of California,
Berkeley, CA 94720-3840}
\email{williams@math.berkeley.edu}
\subjclass[2000]{}

\usepackage{url}
\usepackage{color}
\usepackage{rotating}

\countdef\x=23
\countdef\y=24
\countdef\z=25
\countdef\t=26

\def\tbox(#1,#2)#3{
\x=#1 \y=#2 
\multiply\x by 12 
\multiply\y by 12 
\z=\x \t=\y
\advance\z by 12 
\advance\t by 12 
\psline(\x,\y)(\x,\t)(\z,\t)(\z,\y)(\x,\y)
\advance\x by 6
\advance\y by 6 
\rput(\x,\y){{\bf #3}}}

\usepackage{amssymb}
\usepackage{graphicx}
\usepackage{epstopdf}
\DeclareGraphicsRule{.tif}{png}{.png}{`convert #1 `basename #1 .tif`.png}
\usepackage{amscd}
\usepackage{appendix}
\usepackage{graphics}
\def\proof{\par{\it Proof}. \ignorespaces}
\def\endproof{{\ \vbox{\hrule\hbox{%
     \vrule height1.3ex\hskip0.8ex\vrule}\hrule }}\par}

\theoremstyle{definition}

\theoremstyle{remark}

\numberwithin{equation}{section}

\let\trueint=\int
\let\truesum=\sum
\def\int{\mathop{\textstyle\trueint}\limits}
\def\sum{\mathop{\textstyle\truesum}\limits}
\let\<=\langle
\let\>=\rangle

\def\RR{{\mathcal R}}

\def\SL{\mathrm{SL}}
\def\Sym{{\mathfrak S}}

\def\v{\mathbf{v}}
\def\w{\mathbf{w}}
\def\t{\mathbf{t}}
\def\0{\mathbf{0}}

\usepackage{amsmath, amsthm, amssymb, amsbsy,amsfonts,amscd}
\usepackage[mathscr]{eucal}
\usepackage{epsfig}
\usepackage{young}
\newtheorem{theorem}{Theorem}[section]

\newtheorem{definition}[theorem]{Definition}
\newtheorem{proposition}[theorem]{Proposition}
\newtheorem{lemma}[theorem]{Lemma}

\newtheorem{example}[theorem]{Example}
\newtheorem{corollary}[theorem]{Corollary}

\newtheorem{remark}[theorem]{Remark}

\usepackage{amsfonts}
\usepackage{xy}
\usepackage{amssymb}
\usepackage{amsmath}

\newcommand{\To}{\longrightarrow}

\newcommand{\R}{\mathbb R}
\newcommand{\QQ}{\mathsf Q}

\newcommand{\C}{\mathbb C}

\DeclareMathOperator{\constant}{constant}

\DeclareMathOperator{\convex}{Conv}
\DeclareMathOperator{\conv}{Conv}
\DeclareMathOperator{\CH}{Conv}
\DeclareMathOperator{\Perm}{Perm}

\DeclareMathOperator{\M}{\mathcal M}

\DeclareMathOperator{\diag}{diag}

\newcommand{\thmrefer}[1]{\renewcommand\thetheorem
  {\protect\ref{#1}}\addtocounter{theorem}{-1}}

\xyoption{all}
\CompileMatrices

\begin{document}

\begin{abstract}
We study some geometric and  combinatorial aspects of the solution to the 
full Kostant-Toda (f-KT) hierarchy,
when the initial data is given by 
an arbitrary point on the totally non-negative (tnn) 
flag variety of $\text{SL}_n(\R)$. 
The f-KT flows on the tnn flag variety are complete, 
and their asymptotics are completely determined
by the 
cell decomposition of the tnn flag variety given by Rietsch \cite{Rietsch}.
We define the f-KT flow on the weight space via
the moment map, and show that 
the closure of each f-KT flow forms an interesting convex polytope
generalizing the permutohedron which we call 
a \emph{Bruhat interval polytope}.  
We also prove analogous results for the full symmetric Toda hierarchy,
by mapping our f-KT solutions to those of the full symmetric
Toda hierarchy.
In the Appendix we show that Bruhat interval polytopes 
 are
\emph{generalized permutohedra}, in the sense of Postnikov \cite{Postnikov2},
and that their edges correspond to 
cover relations in the Bruhat order. 
\end{abstract}

\maketitle

\setcounter{tocdepth}{1}
\tableofcontents

\section{Introduction}

The  Toda lattice, introduced by Toda in 1967 (see 
\cite{Toda} for a comprehensive treatment), 
is an integrable Hamiltonian system representing the dynamics of $n$ particles
of unit mass, moving on a line under the influence of exponential 
repulsive forces.  The dynamics can be encoded by 
 a matrix equation called the \emph{Lax equation}
\begin{equation} \label{Lax:1}
\frac{dL}{dt}=[\pi_{\mathfrak{so}}(L),L],
\end{equation}
where $L$ is a tridiagonal symmetric matrix and $\pi_{\mathfrak{so}}(L)$
is the \emph{skew-symmetric projection} of $L$.   More specifically, 
\begin{equation*}\label{tridiagonalL}
L=L(t) = \begin{pmatrix}
b_1 &a_1 & 0 &\cdots& 0\\
a_1& b_2& a_2&\cdots&0\\
0& a_2&b_3&\cdots &0\\
\vdots&\ddots&\ddots&\ddots&\vdots\\
0&\cdots&\cdots&a_{n-1}&b_n
\end{pmatrix}\qquad\text{and}\quad  \pi_{\mathfrak{so}}(L):=(L)_{>0}-(L)_{<0},
\end{equation*}
where $(L)_{>0}$ (respectively $(L)_{<0}$) is the strictly upper 
(resp. lower) triangular part of $L$, so that $\pi_{\mathfrak{so}}(L)$
represents a skew-symmetrization of the matrix $L$.  The entries $a_i$ and $b_j$ 
of $L$ are functions of $t$.

The Toda lattice gives an \emph{iso-spectral deformation}
of the eigenvalue problem of $L$, that is, 
the eigenvalues of $L(t)$ are independent of $t$.
It is an immediate consequence of the Lax equation 
that for any positive integer $k$, 
the trace $\text{tr}(L^k)$ of $L^k$ is a constant of motion
(it is invariant under the Toda flow). These 
invariants  are
the  power sum symmetric functions
of the eigenvalues, and are sometimes referred to as
\emph{Chevalley invariants}.  Note that assuming $\text{tr}(L)=0$
(i.e. $L\in\mathfrak{sl}_n(\R)$),
we have $n-1$ independent Chevalley invariants, $H_k:=\text{tr}(L^{k+1})$
for $k=1,\ldots,n-1$.

One remarkable property of the Toda lattice is that 
for generic initial data, i.e. 
$L$ has
distinct eigenvalues 
\[
\lambda_1~<~\lambda_2~<~\cdots~<~\lambda_n
\]
and $a_k(0)\ne0$ for all $k$, 
the asymptotic form of the Lax matrix is given by
\begin{equation}\label{asymptoticT}
L(t)\quad\longrightarrow\quad\left\{\begin{array}{lll}
\text{diag}(\lambda_1,\lambda_2,\ldots,\lambda_n)\quad&\text{as}\quad t\to-\infty\\[0.5ex]
\text{diag}(\lambda_n,\lambda_{n-1},\ldots,\lambda_1)\quad&\text{as}\quad t\to\infty.
\end{array}\right.
\end{equation}
In other words, all off-diagonal elements $a_i(t)$ approach $0$, 
and the time evolution of the Toda lattice sorts the eigenvalues
of $L$ (see \cite{DNT, Symes}).  This property is referred to as the 
\emph{sorting property}, which has important applications
to matrix eigenvalue algorithms.  
It is  also known
that if we let $L$ range over all tridiagonal matrices with 
fixed eigenvalues $\lambda_1 < \dots < \lambda_n$, 
the set of fixed points of the Toda lattice
 -- i.e. those points $L$ such that 
$dL/dt=0$ -- are precisely the diagonal matrices.  Therefore
 there are $|\Sym_n|=n!$
fixed points of the Toda lattice, where $\Sym_n$ is the symmetric group 
on $n$ letters.

The \emph{full symmetric Toda lattice} is a generalization
of the Toda lattice: it is defined using  
the Lax equation \eqref{Lax:1},  but now 
$L$ can be any symmetric matrix.
For generic $L$, the full symmetric Toda lattice is again
an  integrable Hamiltonian system \cite{DLNT} and 
it has the same asymptotic behavior from \eqref{asymptoticT}  \cite{KM}.
Recently the non-generic flows
were studied in \cite{CSS}, 
including their 
asymptotics as $t \to \pm \infty$.

In this paper we consider a different generalization of 
the Toda lattice called the \emph{full Kostant-Toda lattice}, or 
\emph{f-KT lattice}, first
studied in \cite{EFS}.
Like the Toda lattice and the full symmetric Toda lattice,
the f-KT lattice is an integrable Hamiltonian system,
whose constants of motion are given by the so-called 
{\it chop integrals} \cite{EFS}.\footnote{Note, however, that 
we do not use the chop integrals in our study of the 
f-KT flows.}
It is defined by the Lax equation  
\begin{equation}\label{Lax}
\frac{dL}{dt}=[(L)_{\ge 0},L],
\end{equation}
where $L$ is a \emph{Hessenberg matrix}, i.e. any matrix of the form 
\begin{equation}\label{L}
L=\begin{pmatrix}
a_{1,1} & 1 & 0 & \cdots & 0 \\
a_{2,1}&a_{2,2}&1&\cdots &0\\
\vdots &\vdots &\ddots &\ddots & \vdots\\
a_{n-1,1}&a_{n-1,2}&\cdots&\cdots&1\\
a_{n,1}& a_{n,2}&\cdots&\cdots & a_{n,n}
\end{pmatrix},
\end{equation}
and $(L)_{\geq 0}$ denotes the weakly upper triangular part of $L$.
In terms of the entries $a_{i,j}=a_{i,j}(t)$,  
the f-KT lattice is defined by the system of equations
\begin{equation}\label{eq:f-KTcomp}
\frac{da_{\ell+k,k}}{dt}=a_{\ell+k+1,k}-a_{\ell+k,k-1}+(a_{\ell+k,\ell+k}-a_{k,k})a_{\ell+k,k}
\end{equation}
for  $k=1,\ldots,n-\ell$ and $\ell=0,1,\ldots,n-1$. Here we use the convention
that $a_{i,j}=0$  if 
$j= 0$ or $i= n+1$.  Note  that  the index $\ell$ represents the $\ell$th subdiagonal of the matrix $L$, that is, $\ell=0$ corresponds to 
the diagonal elements
$a_{k,k}$ and $\ell=1$ corresponds to the elements $a_{k+1,k}$ of the 
first subdiagonal, etc.  The f-KT lattice also gives an iso-spectral deformation
of the matrix $L$.

Both the full symmetric Toda lattice and the 
full Kostant-Toda lattice have the same number of free parameters,
namely $\frac{n(n+1)}{2}$.
One should note, however, that when working over $\R$, one can map each
full symmetric matrix to
a Hessenberg matrix, but not vice-versa in general.
For example, the symmetric matrix $\begin{pmatrix}b_1&a\\a&b_2\end{pmatrix}$
can be mapped to the Hessenberg matrix $\begin{pmatrix}
b_1&1\\a^2&b_1\end{pmatrix}$. However,  the 
Hessenberg matrix $\begin{pmatrix}
\beta_1&1\\\alpha&\beta_2\end{pmatrix}$ \emph{cannot} be mapped to a real symmetric matrix if $\alpha<0$.
In this sense, the f-KT lattice may be considered to be more general than the full symmetric Toda lattice.

We also consider the f-KT hierarchy which consists of the symmetries of the f-KT lattice generated by the Chevalley invariants $H_k=\text{tr}(L^{k+1})$ for $k=1,\ldots,n-1$, 
see e.g. \cite{KS08}.
Each symmetry is given by the equation
\begin{equation}\label{KT-hierarchy}
\frac{\partial L}{\partial t_k}=[(L^k)_{\ge0}, L],\qquad\text{for}\quad k=1,2,\ldots, n-1.
\end{equation}
We let $\mathbf{t}:=(t_1,\ldots,t_{n-1})$ denote the multi-time variables
representing the flow parameters.
Note that the flows commute with each other, 
and the equation for $k=1$ corresponds to the f-KT lattice,
with $t=t_1$.

It is well-known that the solution space of the Toda lattice (and its generalizations) 
can be described by the flag variety $G/B^+$ -- see e.g.  \cite{KS08} 
for some basic information on the f-KT lattice and
the full symmetric Toda lattice.  In this paper 
we use as a main tool 
the \emph{totally non-negative part} 
$(G/B^+)_{\geq 0}$ of the flag variety
(which we often abbreviate as the \emph{tnn flag variety}).
As shown by Rietsch \cite{Rietsch} and 
Marsh-Rietsch \cite{MR}, 
$(G/B^+)_{\geq 0}$ has
a decomposition into cells 
$\mathcal R_{v,w}^{>0}$ which are 
indexed by
pairs $(v,w)$ of permutations in $\Sym_n$, where $v \leq w$ in Bruhat order.

Our first main result concerns the asymptotics of f-KT flows associated
to points in the tnn flag variety.
More specifically, 
in Proposition \ref{prop:gsolution}, we associate to each point
$gB^+$ of $(G/B^+)_{\geq 0}$
an initial Hessenberg 
matrix $L^0$ for the f-KT lattice.  The corresponding
f-KT flow is complete for such initial matrices.  In Theorem 
\ref{asympt} 
we prove a generalization of the sorting property: we show that
if $gB^+ \in \mathcal{R}_{v,w}^{>0}$, then as $t \to \pm \infty$,
the diagonal of $L(t)$ contains the eigenvalues, sorted according to
$v$ and $w$, respectively.
Note that we recover the classical sorting property when
$gB^+ \in \mathcal{R}_{e,w_0}^{>0}$, where 
$e=(1,2,\dots,n)$ is the identity permutation, and 
$w_0=(n,n-1,\dots,1)$ is the longest permutation.

Our second main result concerns the moment map images of the
flows of the f-KT hierarchy. 
In Theorem \ref{thm:polytope} we show that
if $gB^+ \in \mathcal{R}_{v,w}^{>0}$, and we apply the moment
map to the corresponding f-KT flow, then its closure is a convex polytope 
$\mathsf{P}_{v,w}$
which generalizes the permutohedron.
More specifically, $\mathsf{P}_{v,w}$  is the convex hull of the permutation
vectors $z$ such that $v \leq z \leq w$.  Note that 
$\mathsf{P}_{v,w}$ is precisely the permutohedron when 
$v=e$ and $w=w_0$.
In the appendix we study combinatorial
properties of these \emph{Bruhat interval polytopes}: 
they are Minkowski 
sums of matroid polytopes and hence
\emph{generalized permutohedra} in the sense of Postnikov \cite{Postnikov2}, and 
their edges correspond to cover relations
in Bruhat order.

Finally, we show that one can map the f-KT flows $L(t)$ studied above
(i.e. those coming from points $gB^+$ of the tnn flag variety) to 
full symmetric Toda flows $\mathcal{L}(t)$.  
This allows us to deduce analogues of our previous results 
for the full symmetric Toda lattice.  More 
specifically,  Theorem \ref{thm:symm} proves that 
if $gB^+ \in \mathcal{R}_{v,w}^{>0}$, 
then as $t \to \pm \infty$, $\mathcal{L}(t)$ tends to a diagonal matrix
with eigenvalues sorted according to $v$ and $w$, respectively.
And Theorem \ref{thm:symmpolytope}
proves that the closure of the image of the moment map 
applied to such a symmetric Toda flow is the Bruhat
interval polytope $\mathsf{P}_{v,w}$.


The structure of this paper is as follows. 
In Sections \ref{sec:flag} and \ref{sec:Grass} we provide 
some background on the flag variety, the Grassmannian,
and their non-negative parts.  In Section \ref{sec:fKT} we 
introduce the full Kostant-Toda lattice (and hierarchy), 
and define the so-called \emph{$\tau$-functions} for the hierarchy.
We then explain how to express the solution $L(t)$ 
using the LU-factorization.  In Section \ref{sec:solKT}
we associate to each point $gB^+$ of the tnn flag variety an initial 
matrix $L^0$, and we analyze the asymptotics 
of the corresponding flow as $t \to \pm \infty$.
In Section \ref{sec:moment} we study the 
moment map images of the f-KT flows associated to points in 
$(G/B^+)_{\geq 0}$.  In 
Section \ref{sec:symmetricToda} we 
map our f-KT solutions (which come from the tnn 
flag variety) to the symmetric Toda lattice, which allows
us to transfer our results on the asymptotics and moment polytopes
to the symmetric Toda lattice.  We end this paper with 
a self-contained appendix which explores combinatorial properties
of Bruhat interval polytopes.

\medskip
\emph{Acknowledgements:}
One of the authors (Y. K.) is grateful to Michael Gekhtman for
useful discussion, in particular, 
his helpful explanation of  \cite[Theorem 3.1]{BG}.


\section{The flag variety and its tnn part}\label{sec:flag}

In this section
we provide some background on the flag variety and  its totally non-negative 
part, the \emph{tnn flag variety}.  In subsequent sections 
we use the geometry of the flag variety (and the Grassmannian)
to describe  combinatorial aspects of the solutions to the full
Kostant-Toda lattice.

\subsection{The flag variety}
Throughout this paper, we consider the group $G=\text{SL}_n(\R)$ and the 
Lie algebra $\mathfrak{g}=\mathfrak{sl}_n(\R)$.
Let $B^+$ and $B^-$ be the Borel subgroups of 
upper and lower-triangular matrices.
Let $U^+$ and $U^-$ be the 
unipotent radicals of 
$B^+$ and $B^-$; these are the
subgroups of upper and lower-triangular matrices with
1's on the diagonals. We let $H$ denote the Cartan subgroup of $G$,
which is the subgroup of diagonal matrices.
We let $\mathfrak{b}^{\pm}, \mathfrak{u}^{\pm}$ and $\mathfrak{h}$
denote their Lie algebras.

For each $1 \leq i \leq n-1$ we have a homomorphism
$\phi_i:{\rm SL}_2\to{\rm SL}_n$ such that
\[
\phi_i\begin{pmatrix} a& b\\c&d\end{pmatrix}=
\begin{pmatrix}
1 &             &       &       &           &     \\
   &\ddots  &        &      &            &        \\
   &             &   a   &   b  &          &       \\
   &             &   c    &   d  &         &       \\
   &            &          &       &  \ddots  &     \\
   &            &          &       &              & 1
   \end{pmatrix} ~\in~ {\rm SL}_n,
\]
that is, $\phi_i$ replaces a $2\times 2$ block of the identity matrix with $\begin{pmatrix} a&b\\c&d\end{pmatrix}$, where $a$ is at the $(i,i)$-entry.
We have $1$-parameter subgroups of $G$
defined by
\begin{equation*}
x_i(m) = \phi_i \left(
                   \begin{array}{cc}
                     1 & m \\ 0 & 1\\
                   \end{array} \right)  \text{ and }\ 
y_i(m) = \phi_i \left(
                   \begin{array}{cc}
                     1 & 0 \\ m & 1\\
                   \end{array} \right) ,\
\text{ where }m \in \R.
\end{equation*}

Let $W$ denote the Weyl group $ N_G(T) / T$,
where $N_G(T)$ is the normalizer of the maximal torus $T$.
The simple reflections $s_i \in W$ are given by
$s_i:= \dot{s_i} T$ where $\dot{s_i} :=
                 \phi_i \left(
                   \begin{array}{cc}
                     0 & -1 \\ 1 & 0\\
                   \end{array} \right)$,
and any $w \in W$ can be expressed as a product $w = s_{i_1} s_{i_2}
\dots s_{i_\ell}$ with $\ell=\ell(w)$ factors.  We set $\dot{w} =
\dot{s}_{i_1} \dot{s}_{i_2} \dots \dot{s}_{i_\ell}$.
In our setting $W$ is isomorphic to
$\Sym_n$, the symmetric group on $n$ letters,
and $s_i$ corresponds to the transposition exchanging $i$ and $i+1$.
We let $\leq$ denote the (strong) Bruhat order on $W$.

\begin{definition}
The \emph{real flag variety} is the variety of all \emph{flags}
\[
\{V_{\bullet} = V_1~\subset~ V_2~\subset~\cdots~\subset~V_n=\R^n \ \vert \ \dim V_i = i\}
\]
of vector subspaces of $\R^n$.  As we explain below,
it can be identified with the homogeneous space
$G/B^+$.
\end{definition}

Let $\{e_1,\dots, e_n\}$ be the standard basis of $\R^n$.
The \emph{standard flag} is 
$E_{\bullet} = E_1~\subset~ E_2~\subset~\cdots~\subset~E_n$ where
$E_i$ is the span of $\{e_1, \dots, e_i\}$.  Note that the group $G$ acts on 
flags; the Borel subgroup $B^+$ is the stabilizer of the standard flag
$E_{\bullet}$.   Any flag $V_{\bullet}$ may be written as 
$g E_{\bullet}$ for some $g \in G$.  Note that 
$g E_{\bullet} = g' E_{\bullet}$ if and only if $gB^+ = g'B^+.$
In this way we may identify the flag variety with 
$G/B^+$.


We have two opposite Bruhat
decompositions of $G/B^+$:
\begin{equation*}
G/B^+=\bigsqcup_{w\in W} B^+ \dot w B^+/B^+=\bigsqcup_{v\in W}
B^- \dot v B^+/B^+.
\end{equation*}
We define the intersection of opposite Bruhat cells
\begin{equation*}
\mathcal R_{v,w}:=(B^+\dot w B^+/B^+)\cap (B^-\dot v B^+/B^+),
\end{equation*}
which is nonempty
precisely when $v\le w$.
The strata $\mathcal R_{v,w}$ are often called \emph{Richardson varieties}.

\subsection{The tnn part of the flag variety}
\begin{definition} \cite{Lusztig3}
The \emph{tnn
part $U_{\geq 0}^-$ of $U^-$} is defined to be the semigroup in
$U^-$ generated by the $y_i(p)$ for $p \in \R_{\geq 0}$.
The
\emph{tnn part 
$(G/B^+)_{\geq 0}$
of $G/B^+$} 
is defined by \begin{equation*}
(G/B^+)_{\geq 0} := \overline{ \{\,u  B^+ ~|~ u \in U_{\geq 0}^-\, \} },
\end{equation*}
where the closure is taken inside $G/B^+$ in its real topology.
We sometimes refer to $(G/B^+)_{\geq 0}$ as the 
\emph{tnn flag variety}.
\end{definition}



Lusztig \cite{Lusztig3, Lusztig2} introduced a 
natural decomposition of $(G/B^+)_{\geq 0}$.

\begin{definition}  \cite{Lusztig3}
For $v, w \in W$ with $v \leq w$, let
\begin{equation*}
\mathcal R_{v,w}^{>0} := \mathcal R_{v,w} \cap (G/B^+)_{\geq 0}.
\end{equation*}
Then the tnn part of the flag variety $G/B^+$ has the decomposition,
\begin{equation}\label{decomposition}
(G/B^+)_{\ge 0}=\bigsqcup_{w\in \Sym_n}\left(\bigsqcup_{v\le w}\mathcal{R}^{>0}_{v,w}\right).
\end{equation}
\end{definition}

Lusztig conjectured and Rietsch proved \cite{Rietsch} 
that $\mathcal R_{v,w}^{>0}$ 
is a semi-algebraic cell of dimension
$\ell(w)-\ell(v)$.  
Subsequently Marsh-Rietsch \cite{MR}  provided an explicit parameterization of each 
cell.  To state their result, we first review the notion of 
positive distinguished subexpression, as in 
\cite{Deodhar} and \cite{MR}.

Let $\w:= s_{i_1}\dots s_{i_m}$ be a reduced expression for $w\in W$.
A  {\it subexpression} $\v$ of $\w$
is a word obtained from the reduced expression $\w$ by replacing some of
the factors with $1$. For example, consider a reduced expression in $\Sym_4$, say $s_3
s_2 s_1 s_3 s_2 s_3$.  Then $1\, s_2\, 1\, 1\, s_2\, s_3$ is a
subexpression of $s_3 s_2 s_1 s_3 s_2 s_3$.
Given a subexpression $\v$,
we set $v_{(k)}$ to be the product of the leftmost $k$
factors of $\v$, if $k \geq 1$, and $v_{(0)}=1$.

\begin{definition}\label{d:Js}\cite{Deodhar, MR}
Given a subexpression $\v$ of  $\w=
s_{i_1} s_{i_2} \dots s_{i_m}$, we define
\begin{align*}
J^{\circ}_\v &:=\{k\in\{1,\dotsc,m\}\ |\  v_{(k-1)}<v_{(k)}\},\\
J^{+}_\v\, &:=\{k\in\{1,\dotsc,m\}\ |\  v_{(k-1)}=v_{(k)}\},\\
J^{\bullet}_\v &:=\{k\in\{1,\dotsc,m\}\ |\  v_{(k-1)}>v_{(k)}\}.
\end{align*}

The subexpression  $\v$
is called {\it
non-decreasing} if $v_{(j-1)}\le v_{(j)}$ for all $j=1,\dotsc, m$,
e.g.\ if $J^{\bullet}_\v=\emptyset$.
It is called {\it distinguished}
if we have
$v_{(j)}\le v_{(j-1)}\hspace{2pt} s_{i_j}$ for all
$j\in\{1,\dotsc,m\}.$
In other words, if right multiplication by $s_{i_j}$ decreases the
length of $v_{(j-1)}$, then in a distinguished subexpression we
must have
$v_{(j)}=v_{(j-1)}s_{i_j}$.
Finally, 
 $\v$ is called a {\it positive distinguished subexpression}
(or a PDS for short) if
$v_{(j-1)}< v_{(j-1)}s_{i_j}$ for all
$j\in\{1,\dotsc,m\}$.
In other words, it is distinguished and non-decreasing.
\end{definition}

\begin{lemma}\label{l:positive}\cite{MR}
Given $v\le w$
and a reduced expression $\w$ for $w$,
there is a unique PDS $\v_+$ for $v$ contained in $\w$.
\end{lemma}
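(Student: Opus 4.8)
The plan is to prove existence and uniqueness of the PDS $\v_+$ for $v$ inside the fixed reduced word $\w = s_{i_1} \cdots s_{i_m}$ by a greedy recursion, constructed from the right. I would set up an auxiliary sequence of group elements: define $w_{(k)} := s_{i_1} \cdots s_{i_k}$ (so $w_{(m)} = w$), and look for a subexpression $\v$ with associated partial products $v_{(0)} = 1 \le v_{(1)} \le \cdots \le v_{(m)} = v$, each step being either $v_{(k)} = v_{(k-1)}$ or $v_{(k)} = v_{(k-1)} s_{i_k}$, subject to the PDS constraints from Definition \ref{d:Js}. The key observation driving the recursion is a one-step lemma: given the fixed word $\w$ and a target $v \le w$, there is a \emph{unique} choice of last factor. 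Concretely, if $v \le w s_{i_m}$ and $v \not\le w$ fails — I need to be careful here — the right statement is that among the two options ``keep $s_{i_m}$'' and ``delete $s_{i_m}$'', exactly one is consistent with the PDS conditions and with the reachability constraint $v \le w_{(k)}$ at every prefix. The deletion option ($v_{(m)} = v_{(m-1)} = v$) requires $v \le w s_{i_m} = w_{(m-1)}$; the keep option ($v_{(m-1)} = v s_{i_m}$, with $v_{(m)} = v$) requires $\ell(v s_{i_m}) < \ell(v)$ (so that we are genuinely allowed to append $s_{i_m}$ in a non-decreasing, distinguished way, i.e.\ $v_{(m-1)} < v_{(m-1)} s_{i_m}$) and also $v s_{i_m} \le w_{(m-1)}$.

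First I would prove this one-step dichotomy carefully: if $\ell(v s_{i_m}) > \ell(v)$, then the keep option violates ``non-decreasing,'' so we are forced to delete, and I must check $v \le w s_{i_m}$ — this follows from the standard lifting property of Bruhat order (the ``Z-lemma'' / subword characterization): since $v \le w$ and $\ell(w s_{i_m}) < \ell(w)$ while $\ell(v s_{i_m}) > \ell(v)$, one has $v \le w s_{i_m}$. If instead $\ell(v s_{i_m}) < \ell(v)$, then I claim we are forced to keep: the ``distinguished'' condition says $v_{(m)} \le v_{(m-1)} s_{i_m}$; writing this out, if we had deleted we would need $v_{(m-1)} = v$ with the prior prefix already equal to $v$, but then at the previous step the same analysis recurs — the cleaner formulation is that ``distinguished'' forces $v_{(m)} = v_{(m-1)} s_{i_m}$ whenever $\ell(v_{(m-1)} s_{i_m}) < \ell(v_{(m-1)})$, hence in a PDS (which is distinguished and non-decreasing) the status of position $m$ is entirely determined by $v_{(m-1)}$ and $s_{i_m}$. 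So the real content is: the multiset of ``kept'' positions is forced once we know all the partial products, and the partial products are forced by downward recursion.

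Then I would run the recursion: set $v^{(m)} := v$; having defined $v^{(k)} \le w_{(k)}$ with $v^{(k)} \le w$, define $v^{(k-1)}$ by: if $\ell(v^{(k)} s_{i_k}) < \ell(v^{(k)})$, put $v^{(k-1)} := v^{(k)} s_{i_k}$ and mark $k \in J^\circ$; otherwise put $v^{(k-1)} := v^{(k)}$ and mark $k \in J^+$. I would then verify by induction (using the lifting property each time) that $v^{(k-1)} \le w_{(k-1)}$ remains true, so the recursion never gets stuck, and that $v^{(0)} = 1$ — this last point needs an argument: since $w_{(0)} = 1$ and $v^{(0)} \le w_{(0)}$, we get $v^{(0)} = 1$ for free, which is exactly the consistency check that the produced $\v_+$ really is a subexpression evaluating to $v$. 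The resulting subexpression $\v_+$ has $J^\bullet_{\v_+} = \emptyset$ (non-decreasing) and satisfies $v_{(j-1)} < v_{(j-1)} s_{i_j}$ at every $j \in J^+$ by construction, hence is a PDS. Uniqueness is immediate from the recursion: at each step the choice was forced.

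The main obstacle is the careful bookkeeping with the Bruhat-order lifting property to guarantee the recursion stays inside $\{v' : v' \le w_{(k)}\}$ and terminates correctly at $1$; everything else is essentially unwinding the definitions. Since this is a lemma quoted from \cite{MR}, I would likely present this as a short self-contained argument and refer the reader to \cite[Lemma 3.5]{MR} (or the analogous statement in \cite{Deodhar}) for full details.
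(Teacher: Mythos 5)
Your proposal is correct, and it is essentially the standard argument of Marsh--Rietsch (this lemma is cited from \cite{MR} without proof in the present paper, so there is no in-paper argument to compare against). Your greedy downward recursion $v^{(m)}=v$, $v^{(k-1)}=v^{(k)}s_{i_k}$ if $\ell(v^{(k)}s_{i_k})<\ell(v^{(k)})$ and $v^{(k-1)}=v^{(k)}$ otherwise, combined with the right-multiplication lifting property to maintain $v^{(k)}\le w_{(k)}$ and hence $v^{(0)}=e$, is exactly the mechanism behind \cite[Lemma~3.5]{MR}.

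One small presentational cleanup is worth making. The dichotomy at each step is forced by the PDS conditions \emph{alone}: since $\ell(v^{(k)}s_{i_k})\ne\ell(v^{(k)})$ always, exactly one of ``keep'' (which demands $v^{(k)}s_{i_k}<v^{(k)}$ so that $v_{(k-1)}=v^{(k)}s_{i_k}\le v_{(k)}$) and ``delete'' (which demands $v^{(k)}<v^{(k)}s_{i_k}$ so that $v_{(k-1)}<v_{(k-1)}s_{i_k}$) is permitted, and the reachability constraint $v^{(k)}\le w_{(k)}$ plays no role in deciding which. Its only job --- and this is the content that needs the Z-shaped lifting property --- is to guarantee that the recursion terminates at $v^{(0)}=e$, so the constructed word is a genuine subexpression evaluating to $v$. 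Phrasing it this way makes uniqueness immediate (each step is forced by the definition of PDS) and isolates the lifting argument as the sole nontrivial ingredient for existence. You should also check the PDS inequality $v_{(j-1)}<v_{(j-1)}s_{i_j}$ at positions $j\in J^\circ$ as well as $j\in J^+$; it is automatic there because $v_{(j-1)}s_{i_j}=v_{(j)}>v_{(j-1)}$, but the claim as written only mentions $J^+$.
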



\begin{theorem}\label{t:parameterization}\cite[Proposition 5.2, Theorem 11.3]{MR}
Choose a reduced expression $\w=s_{i_1} \dots s_{i_m}$ for $w$ with $\ell(w)=m$.
To $v \leq w$ we associate the unique 
PDS 
$\v_+$ for $v$ in $\w$.  Then $J^{\bullet}_{\v^+} = \emptyset$.
We define 
\begin{equation}\label{eq:G}
G_{\v_+,\w}^{>0}:=\left\{g= g_1 g_2\cdots g_m \left
|\begin{array}{ll}
 g_\ell= y_{i_\ell}(p_\ell)& \text{ if $\ell\in J^{+}_\v$,}\\
 g_\ell=\dot s_{i_\ell}& \text{ if $\ell\in J^{\circ}_\v$,}
 \end{array}\right. \right\},
\end{equation}
where each $p_\ell$ ranges over $\R_{>0}$.
The set $G_{\v_+,\w}^{>0}$ lies in $U^-\dot{v}\cap B^+\dot{w}B^+$,
$G_{\v_+,\w}^{>0} \cong \R_{>0}^{\ell(w)-\ell(v)}$,
and the map $g\mapsto g B^+$ defines an isomorphism 
\begin{align*}\label{e:parameterization+}
G_{\v_+,\w}^{>0}&~\overset\sim\To ~\mathcal R_{v,w}^{>0}.
\end{align*}
\end{theorem}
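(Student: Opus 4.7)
The plan is to establish the four assertions separately, in roughly the order they appear, following the Marsh--Rietsch framework. The first assertion $J^{\bullet}_{\v_+}=\emptyset$ is immediate from the definition: a PDS is non-decreasing, so no step strictly decreases in Bruhat order. Consequently $|J^{+}_{\v_+}|=\ell(w)-\ell(v)$, and the parameter count matches. The homeomorphism $G_{\v_+,\w}^{>0}\cong \R_{>0}^{\ell(w)-\ell(v)}$ will then follow once I show that different tuples $(p_\ell)_{\ell\in J^{+}_{\v_+}}$ give rise to different points of $G/B^+$, which is a byproduct of the bijection in the last step.

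Second, I would verify the containment $G_{\v_+,\w}^{>0}\subset U^-\dot v\cap B^+\dot w B^+$. For the $B^+\dot w B^+$ part, the key input is the standard Bruhat multiplication rule: for any $u\in W$, $p\in\R$, and simple generator $s_i$,
\[
B^+\dot u B^+\cdot y_i(p)\subseteq B^+\dot u B^+\cup B^+\dot u\dot s_i B^+,\qquad B^+\dot u B^+\cdot \dot s_i\subseteq B^+\dot u B^+\cup B^+\dot u\dot s_i B^+.
\]
An induction on $\ell$ shows that the partial product $g_1\cdots g_\ell$ lies in $B^+\dot{u}_{(\ell)}B^+$ where $u_{(\ell)}$ is the product of the leftmost $\ell$ simple reflections of $\w$. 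Since $\w$ is reduced, the final product lies in $B^+\dot wB^+$. For the $U^-\dot v$ part, I would commute each $\dot s_{i_\ell}$ with $\ell\in J^{\circ}_{\v_+}$ past the $y$-factors to the right using relations of the form $\dot s_i y_j(p)\dot s_i^{-1}\in U^-$; the PDS condition ensures that the roots produced remain negative, so all $y$-type terms stay inside $U^-$. The accumulated product of simple reflections at the right equals $\dot v$ by construction of $J^{\circ}_{\v_+}$.

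Third, I would establish that $g\mapsto gB^+$ gives an isomorphism onto $\mathcal R_{v,w}^{>0}$; this is the main obstacle and the heart of the theorem. Containment in $\mathcal R_{v,w}$ follows from the previous step, and positivity of the parameters together with the fact that each $y_{i_\ell}(p_\ell)$ lies in $U^-_{\geq 0}$ places the image inside $(G/B^+)_{\geq 0}$, hence in $\mathcal R_{v,w}^{>0}$. For surjectivity and injectivity I would invoke Deodhar's stratification of $B^+\dot w B^+/B^+$ by subexpressions of $\w$: every point of $\mathcal R_{v,w}\cap B^+\dot wB^+/B^+$ sits in a unique Deodhar cell indexed by a distinguished subexpression of $\w$ representing $v$. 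The total positivity condition forces this subexpression to be \emph{positive} (otherwise one would obtain a factor that cannot lie in $U^-_{\geq 0}$), and Lemma~\ref{l:positive} asserts uniqueness of the PDS. Thus the Deodhar parameterization restricts to the claimed isomorphism, with the open positive orthant $\R_{>0}^{\ell(w)-\ell(v)}$ cut out by the tnn condition. The technical heart—verifying that no cancellation among $y$-factors and $\dot s$-factors can land a genuinely nonnegative parameter back outside $\R_{>0}$—is where the positivity of the PDS is used decisively.
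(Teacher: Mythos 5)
The paper does not prove this statement: it is imported verbatim from Marsh--Rietsch \cite{MR} (Proposition 5.2 and Theorem 11.3), so there is no in-paper argument to compare against. Your outline does follow the Marsh--Rietsch/Deodhar framework, and the easier claims (vanishing of $J^\bullet_{\v_+}$, dimension count, and membership in $B^+\dot w B^+$ by Bruhat multiplication along a reduced word) are handled correctly. Your sketch of the $U^-\dot v$ containment is on the right track, although the stated relation $\dot s_i y_j(p)\dot s_i^{-1}\in U^-$ is false in general --- for instance it fails for $i=j$, where one gets $x_i(-p)$. The correct inductive statement is that $g_1\cdots g_\ell\in U^-\dot v_{(\ell)}$, and the step where $\ell\in J^+_{\v}$ uses exactly the PDS condition $v_{(\ell-1)}<v_{(\ell-1)}s_{i_\ell}$, which guarantees $v_{(\ell-1)}(\alpha_{i_\ell})>0$ so that conjugating $y_{i_\ell}(p_\ell)$ by $\dot v_{(\ell-1)}$ remains in $U^-$.

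The genuine gap is in your treatment of the bijection with $\mathcal R_{v,w}^{>0}$, which is the crux of the cited theorem. You assert that total positivity forces the PDS Deodhar component because ``otherwise one would obtain a factor that cannot lie in $U^-_{\geq 0}$,'' but this is not a valid obstruction: total nonnegativity is a condition on the coset $gB^+$, not on the factors of a particular factorization of a representative $g$. Indeed $G_{\v_+,\w}^{>0}$ itself is not contained in $U^-_{\geq 0}$, since it mixes $y$-factors with $\dot s$-factors, yet its image \emph{is} in $(G/B^+)_{\geq 0}$; so the presence of a ``bad'' factor in a non-PDS parameterization tells you nothing directly. Both directions --- that positive parameters produce points of $(G/B^+)_{\geq 0}$, and that every tnn point of $\mathcal R_{v,w}$ lies in the PDS component with all $p_\ell>0$ --- require substantive arguments; Marsh--Rietsch establish them in the proof of their Theorem 11.3 by an induction on $\ell(w)$ using projections to minimal parabolics and Lusztig's results on the tnn flag variety. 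As written, your proposal replaces that argument with a heuristic that does not hold.
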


\begin{example}\label{ex:param}
Consider the reduced decomposition
$\w =  s_2 s_3 s_1 s_4 s_5 s_3 s_2 $ for $w \in W=S_6$.
Let $v = s_3 s_4 s_2 \leq w$.  Then 
the PDS $\v_+$ for $v$ in $\w$ is 
$\v_+ = 1 s_3 1 s_4 1 1 s_2$.  The set
$G_{\v_+,\w}^{>0}$ consists of all elements of the form 
\begin{equation*}\label{flagmatrix}
y_2(p_1) \dot s_3 y_1(p_3) \dot s_4 y_5(p_5) y_3(p_6) \dot s_2=
\begin{pmatrix} 
1& 0& 0 & 0 & 0 & 0\\
p_3 & 0 & -1 & 0 & 0 & 0\\
p_1 p_3 & 0 & -p_1 & 0 & 1 & 0\\
0 & 1 & 0 & 0 & 0 & 0\\
0 & p_6 & 0 & 1 & 0 & 0\\
0 & 0 & 0 & 0 & p_5 & 1
\end{pmatrix} \quad\text{ where each }p_i \in \R_{>0}.
\end{equation*}
\end{example}

\section{The Grassmannian and its tnn part}\label{sec:Grass}

\subsection{The Grassmannian}

The \emph{real Grassmannian} $Gr_{k,n}$ is the space of all
$k$-dimensional subspaces of $\R^n$.  An element of
$Gr_{k,n}$ can be viewed as a full-rank $k\times n$ matrix $A$ modulo left
multiplication by nonsingular $k\times k$ matrices.  In other words, two
$k\times n$ matrices are equivalent, i.e. they 
represent the same point in $Gr_{k,n}$, if and only if they
can be obtained from each other by row operations.  

Let $\binom{[n]}{k}$ be the set of all $k$-element subsets of $[n]:=\{1,\dots,n\}$.
For $I\in \binom{[n]}{k}$, let $\Delta_I(A)$
be the {\it Pl\"ucker coordinate}, that is, the maximal minor of the $k\times n$ matrix $A$ located in the column set $I$.
The map $A\mapsto (\Delta_I(A))$, where $I$ ranges over $\binom{[n]}{k}$,
induces the {\it Pl\"ucker embedding\/} $Gr_{k,n}\hookrightarrow \mathbb{RP}^{\binom{n}{k}-1}$.

Just as for the flag variety, one may identify the
Grassmannian with a homogeneous space. 
Let $P_k$ be the parabolic subgroup which fixes the 
$k$-dimensional subspace spanned by $e_1,\dots, e_k$.
(This is a block upper-triangular matrix containing $B^+$.)
Then we may identify $Gr_{k,n}$ with the 
space of cosets $G/P_k$.



There is a natural projection 
$\pi_k:G/B^+ \to Gr_{k,n}$ such that 
$\pi_k(V_1 \subset \dots \subset V_n)=V_k$.
One may equivalently express this projection as the map
$\pi_k:G/B^+ \to G/P_k$, where 
$\pi_k(gB^+) = gP_k$.
Abusing notation, 
we simply write $\pi_k(g)=A_k$   with $A_k\in Gr_{k,n}\cong G/P_k$
instead of  $\pi_k(gB^+)=gP_k$.

Concretely, for $g\in G$, $\pi_k(g)$ is represented 
by the $k\times n$ matrix
$A_k$ consisting of the leftmost $k$ columns
of $g$, i.e. 
\begin{equation}\label{eq:pi}
g=\begin{pmatrix}
g_{1,1}&\cdots&g_{1,k}&\cdots &g_{1,n}\\
\vdots&\ddots&\vdots&\vdots&\vdots\\
g_{k,1}&\cdots&g_{k,k}&\cdots& g_{k,n}\\
\vdots&\vdots&\vdots&\vdots&\vdots\\
g_{n,1}&\cdots&g_{n,k}&\cdots& g_{n,n}
\end{pmatrix}\quad\longmapsto
\quad A_k=\begin{pmatrix}
g_{1,1}&\cdots&g_{k,1}&\cdots&g_{n,1}\\
\vdots&\ddots & \vdots&\vdots&\vdots\\
g_{1,k}&\cdots&g_{k,k}&\cdots&g_{n,k}
\end{pmatrix}.
\end{equation}
This is equivalent to the following formula using the Pl\"ucker embedding
into the projectivization of the wedge product space $\mathbb{P}(\wedge^k\R^n)
\cong\mathbb{RP}^{\binom{n}{k}-1}$ with the standard basis
$\{e_i:i=1,\ldots,n\}$,
\begin{equation}\label{standard}
g\cdot e_1\wedge\cdots\wedge e_k=\sum_{1\le i_1<\cdots<i_k\le n}\Delta_{i_1,\ldots,i_k}(A_k)\,e_{i_1}\wedge\cdots \wedge e_{i_k}.
\end{equation}
The Pl\"ucker coordinates $\Delta_{i_1,\ldots,i_k}(A_k)$ are then given by
\[
\Delta_{i_1,\ldots,i_k}(A_k)=\langle e_{i_1}\wedge\cdots\wedge e_{i_k},~g\cdot e_1\wedge\cdots\wedge e_k\rangle,
\]
where $\langle\cdot,\cdot\rangle$
is the usual inner product on $\wedge^k\R^n$. 
\begin{remark}\label{rem:basis}
There is a variant of the projection $\pi_k$ that will be useful to us later.
Let $\{u_i:i=1,\ldots,n\}$ be an ordered basis of $\R^n$.  Then
one can define the map $\tilde\pi_k(g)=\tilde A_k$ by
\begin{equation}\label{basis}
g\cdot e_1\wedge\cdots\wedge e_k=\sum_{1\le i_1<\cdots <i_k\le n}\Delta_{i_1,\ldots,i_k}(\tilde A_k)\,u_{i_1}\wedge\cdots \wedge u_{i_k}.
\end{equation}
Using the relation $u_{i_1}\wedge\cdots\wedge u_{i_k}=U\cdot e_{i_1}\wedge\cdots\wedge e_{i_k}$ with  the $n\times n$ matrix $U=(u_1,\ldots,u_n)$, 
 the 
Pl\"ucker coordinates $\Delta_{i_1,\ldots,i_k}(\tilde A_k)$  are then given by
\begin{align*}
\Delta_{i_1,\ldots,i_k}(\tilde A_k)
&=\langle e_{i_1}\wedge\cdots\wedge e_{i_k},~
U^{-1} g\cdot e_1\wedge\cdots\wedge e_k\rangle.
\end{align*}
This implies that  
$\tilde \pi_k(g)= \tilde A_k = \pi_k(U^{-1} g)$ and hence in general,
$A_k$ and $\tilde A_k$ represent different element 
of $Gr_{k,n}$.  
The map $\tilde\pi_k$ will be useful in Section \ref{sec:moment} where we consider the  moment map (see Lemma \ref{lem:momentprojection}).
\end{remark}

\subsection{The tnn part of the Grassmannian}

One may define the tnn part of the Grassmannian
$(Gr_{k,n})_{\geq 0}$  either as the projection
of $(G/B^+)_{\geq 0}$, following Lusztig \cite{Lusztig2}, 
or equivalently in terms of Pl\"ucker coordinates,
following Postnikov \cite{Postnikov}. 

\begin{definition}\label{def:TNNGrass}
The \emph{tnn
part of the Grassmannian} $(Gr_{k,n})_{\geq 0}$
is the image $\pi_k ((G/B^+)_{\geq 0})$.
Equivalently, 
$(Gr_{k,n})_{\geq 0}$ is the subset of $Gr_{k,n}$ such that all
Pl\"ucker coordinates are non-negative.
\end{definition}

Let $W_k = \langle s_1,\dots,\hat{s}_{k},\dots,s_{n-1} \rangle$
be a parabolic subgroup of $W = \Sym_n$.
Let $W^k$ denote the set of minimal-length
coset representatives of $W/W_k$.
Recall that a \emph{descent} of a permutation $z$
is a position $j$ such that $z(j)>z(j+1)$.
Then $W^k$ is the subset of permutations of $\Sym_n$
which have at most one descent, and if it exists,
that descent must be in position $k$.

\begin{definition}
For each $w\in W^k$ and $v \leq w$,  define
$\mathcal P_{v,w}^{>0} = \pi_k(\mathcal R_{v,w}^{>0})$.
\end{definition}

\begin{theorem}\cite{Rietsch}\label{Grasscell}
We have a decomposition
\begin{equation*}\label{eq:GrassCell} 
(Gr_{k,n})_{\geq 0}  
 = \bigsqcup_{w\in W^k} \bigsqcup_{v \leq w} \mathcal P_{v,w}^{>0}
\end{equation*}
of $(Gr_{k,n})_{\geq 0}$ into cells.
\end{theorem}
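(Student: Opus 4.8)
The plan is to obtain the decomposition by pushing forward the Rietsch decomposition \eqref{decomposition} of $(G/B^+)_{\geq 0}$ under the projection $\pi_k : G/B^+ \to Gr_{k,n}$, and then checking that the resulting subsets $\mathcal P_{v,w}^{>0} = \pi_k(\mathcal R_{v,w}^{>0})$, as $w$ ranges over $W^k$ and $v \leq w$, are pairwise disjoint cells that exhaust $(Gr_{k,n})_{\geq 0}$. Exhaustion is essentially immediate: by Definition \ref{def:TNNGrass} we have $(Gr_{k,n})_{\geq 0} = \pi_k((G/B^+)_{\geq 0})$, and applying $\pi_k$ to \eqref{decomposition} expresses it as a union of the sets $\pi_k(\mathcal R_{v',w'}^{>0})$ over all $v' \leq w'$ in $\Sym_n$ (not yet restricted to $W^k$). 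So the real content is (i) to show that every such $\pi_k(\mathcal R_{v',w'}^{>0})$ coincides with some $\mathcal P_{v,w}^{>0}$ with $w \in W^k$, and (ii) to show that the sets with $w \in W^k$, $v \leq w$ are disjoint and each is a cell.

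For step (i) I would use the compatibility of the Bruhat decomposition of $G/B^+$ with the projection to $G/P_k$. Writing $w' = w \cdot u$ with $w \in W^k$ and $u \in W_k$ the minimal-coset-representative factorization, the cell $B^+ \dot{w'} B^+/B^+$ maps onto $B^+ \dot{w} P_k / P_k$ under $\pi_k$; and likewise, writing the minimal representative of $v' W_k$ as $v$, one checks $v \leq w$ in $\Sym_n$ (this uses the standard fact that the projection $W \to W^k$ is order-preserving on each side of a Bruhat relation), so that $\pi_k(\mathcal R_{v',w'}^{>0}) \subseteq \mathcal P_{v,w}^{>0}$. To get equality one notes that different representatives $v'$ in the coset $vW_k$ with $v' \leq w'$ all project to the same subset, using surjectivity of $\pi_k$ restricted to $(G/B^+)_{\geq 0}$ onto $(Gr_{k,n})_{\geq 0}$. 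Concretely, the Marsh--Rietsch parameterization (Theorem \ref{t:parameterization}) gives $\mathcal R_{v,w}^{>0} \cong \R_{>0}^{\ell(w)-\ell(v)}$ via an explicit $g = g_1 \cdots g_m$; applying $\pi_k$ just takes the first $k$ columns of $g$, and I would verify that for $w \in W^k$ this map is injective on the parameters, so that $\mathcal P_{v,w}^{>0}$ is a homeomorphic image of $\R_{>0}^{\ell(w)-\ell(v)}$, hence a cell of the expected dimension.

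For step (ii), disjointness of $\{\mathcal P_{v,w}^{>0}\}_{w \in W^k,\, v \leq w}$, the cleanest route is via Plücker coordinates: a point of $Gr_{k,n}$ lies in $B^+ \dot w P_k/P_k$ (resp.\ $B^- \dot v P_k / P_k$) iff a certain lexicographically extremal Plücker coordinate — determined by $w$ (resp.\ $v$) via the bijection between $W^k$ and $\binom{[n]}{k}$ — is the first/last nonzero one; this pins down $w$ and $v$ from the point itself, giving disjointness and simultaneously identifying which cell a given tnn point lies in. The main obstacle I anticipate is precisely the bookkeeping in step (i): matching the index $(v,w)$ produced by the projection with the minimal-coset-representative combinatorics, and checking that the PDS data of $\v_+$ in $\w$ behaves well under deleting the tail $u \in W_k$ of a reduced word for $w'$ — i.e.\ that restricting the Marsh--Rietsch parameterization along $\pi_k$ really does collapse exactly the $W_k$-directions and nothing more. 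Once that is in hand, Theorem \ref{Grasscell} follows by assembling (i) and (ii) with the exhaustion observation above.
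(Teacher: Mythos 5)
The paper does not prove Theorem~\ref{Grasscell}; it is stated with a citation to Rietsch, and the only supporting discussion is the informal Remark that follows it (that $\pi_k$ restricts to an isomorphism on $\mathcal R_{v,w}$ when $w\in W^k$, and takes cells to cells in general). There is therefore no proof in the paper to compare against, so I will comment on your sketch on its own terms. Your overall strategy --- push forward the decomposition \eqref{decomposition} via $\pi_k$, verify that the images indexed by $w\in W^k$, $v\leq w$ are cells, and check disjointness --- is the right one. However, steps (i) and (ii) share a genuine gap: you underestimate the role of $v$, which in the index set $\{(v,w):w\in W^k,\ v\leq w\}$ is \emph{not} required to lie in $W^k$, so passing to the minimal coset representative of $vW_k$ loses information.

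Concretely, in step (i) you claim $\pi_k(\mathcal R_{v',w'}^{>0})\subseteq\mathcal P_{v,w}^{>0}$ with $v,w$ the minimal representatives of $v'W_k$, $w'W_k$. This is already false when $w'\in W^k$ but $v'\notin W^k$. Take $n=3$, $k=1$, $W_1=\langle s_2\rangle$, $w'=s_2s_1\in W^1$, $v'=s_2\leq w'$. Then $\pi_1(\mathcal R_{s_2,\,s_2s_1}^{>0})=\mathcal P_{s_2,\,s_2s_1}^{>0}$ by definition (a $1$-cell on which $\Delta_2\equiv0$), whereas the minimal representative of $s_2W_1$ is $e$, and $\mathcal P_{e,\,s_2s_1}^{>0}$ is the top $2$-cell (on which $\Delta_2>0$); these cells are disjoint, so the claimed containment fails. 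The same example kills step (ii): you propose to separate cells by their lexicographically extremal nonvanishing Pl\"ucker coordinates, i.e.\ by their Richardson position in $Gr_{k,n}$, but the cell decomposition of $(Gr_{k,n})_{\geq0}$ strictly refines the tnn Richardson decomposition. Here $\mathcal P_{e,\,s_2s_1}^{>0}$ and $\mathcal P_{s_2,\,s_2s_1}^{>0}$ both have lex-min basis $\{1\}$ and lex-max basis $\{3\}$ and are distinguished only by whether $\Delta_2$ vanishes; so the extremal Pl\"ucker coordinates pin down $w\in W^k$ and the coset $vW_k$, but not $v$ itself. A correct disjointness argument instead uses the isomorphism $\pi_k\colon B^+\dot w B^+/B^+\to B^+\dot w P_k/P_k$ valid for $w\in W^k$: it carries the mutually disjoint $\{\mathcal R_{v,w}^{>0}\}_{v\leq w}$ to mutually disjoint cells, and for distinct $w,w'\in W^k$ the Schubert cells $B^+\dot w P_k/P_k$ and $B^+\dot{w'} P_k/P_k$ are already disjoint.
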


\begin{remark}
If $w \in W^k$, then 
the projection $\pi_k:G/B^+ \to Gr_{k,n}$ is an isomorphism when 
restricted to $\mathcal R_{v,w}$, and hence
is an isomorphism 
from $\mathcal R_{v,w}^{>0}$ to the corresponding cell
$\mathcal P_{v,w}^{>0}$ of $(Gr_{k,n})_{\geq 0}$.
More generally, for $w$ not necessarily in $W^k$, 
$\pi_k$ is a surjective map
taking cells of $(G/B^+)_{\geq 0}$ to cells of $(Gr_{k,n})_{\geq 0}$.
\end{remark}

There is another description of the cell decomposition from 
Theorem \ref{Grasscell} which is due to Postnikov \cite{Postnikov}, 
who discovered it independently.  To give Postnikov's description,
we first review the matroid stratification of the Grassmannian.

\begin{definition}
Given $A \in Gr_{k,n}$, let $\mathcal M(A)$ be the collection 
of $k$-element subsets $I$ of $[n]$ such that $\Delta_I(A) \neq 0$.
Such a collection is the set of \emph{bases} of a \emph{(realizable) matroid}.
Now given a collection $\mathcal M$ of $k$-element subsets of $[n]$,
we define the \emph{matroid stratum}
$S_{\mathcal{M}}$ to be
$$S_{\mathcal{M}} = \{A \in Gr_{k,n} \ \vert \ \mathcal M(A) = \mathcal{M} \}.$$
\end{definition}

Note that $S_{\mathcal{M}}$ may be empty.
Letting $\mathcal{M}$ vary over all subsets of ${[n] \choose k}$, 
we have the \emph{matroid stratification} of the Grassmannian
$Gr_{k,n} = \sqcup_{\mathcal{M}} S_{\mathcal{M}}.$

\begin{theorem}\cite{Postnikov}
The intersection of each matroid stratum $S_{\mathcal{M}}$
with $(Gr_{k,n})_{\geq 0}$ is either empty or is a cell.
This gives a cell decomposition of 
$(Gr_{k,n})_{\geq 0}$.
\end{theorem}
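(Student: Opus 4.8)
The plan is to match this matroid-stratum decomposition with the cell decomposition of $(Gr_{k,n})_{\geq 0}$ provided by Theorem \ref{Grasscell}, whose cells we denote $\mathcal{P}_{v,w}^{>0}$ with $w\in W^k$ and $v\leq w$. For $A\in (Gr_{k,n})_{\geq 0}$ write $\mathcal{M}(A)$ for its matroid, as above. It suffices to establish two facts: \emph{(1)} $\mathcal{M}(A)$ is constant on each cell $\mathcal{P}_{v,w}^{>0}$, say equal to a collection $\mathcal{M}_{v,w}\subseteq\binom{[n]}{k}$; and \emph{(2)} the assignment $(v,w)\mapsto \mathcal{M}_{v,w}$ is injective. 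Indeed, granting \emph{(1)} and using the disjoint decomposition in Theorem \ref{Grasscell}, for every collection $\mathcal{M}$ we get $S_{\mathcal{M}}\cap (Gr_{k,n})_{\geq 0}=\bigsqcup_{(v,w)\,:\,\mathcal{M}_{v,w}=\mathcal{M}}\mathcal{P}_{v,w}^{>0}$, which by \emph{(2)} is either empty or a single cell $\mathcal{P}_{v,w}^{>0}$; as $\mathcal{M}$ varies these exhaust $(Gr_{k,n})_{\geq 0}$, and each nonempty one is a cell by Rietsch's theorem \cite{Rietsch}. That is exactly the assertion.

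For \emph{(1)}, fix a reduced word $\w=s_{i_1}\cdots s_{i_m}$ for $w$ and let $\v_+$ be the PDS for $v$ in $\w$ (Lemma \ref{l:positive}). By Theorem \ref{t:parameterization}, every point of $\mathcal{R}_{v,w}^{>0}$ is $gB^+$ for a unique $g=g_1\cdots g_m\in G_{\v_+,\w}^{>0}$, where each $g_\ell$ is either $y_{i_\ell}(p_\ell)$ with $p_\ell\in\R_{>0}$ or $\dot s_{i_\ell}$; projecting by $\pi_k$ and reading off Pl\"ucker coordinates via \eqref{standard} expresses each $\Delta_I(\pi_k(g))$ as a fixed function $F_I$ of the positive parameters of this parametrization. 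The crucial point is that whenever $F_I$ is not identically zero it is a subtraction-free polynomial in those parameters, hence strictly positive on the whole cell. This is the positivity phenomenon underlying the total nonnegativity of the flag variety: it can be extracted from the Marsh--Rietsch parametrization together with Lusztig's results \cite{Lusztig2, Lusztig3, MR}, or seen directly from Postnikov's parametrization of the cell in \cite{Postnikov}, in which a representative matrix has an echelon form with nonnegative entries for which every maximal minor is manifestly a polynomial with nonnegative coefficients in the parameters. Consequently $\{I:\Delta_I\neq 0\}$ equals the fixed set $\mathcal{M}_{v,w}:=\{I:F_I\not\equiv 0\}$ at every point of the cell, which is \emph{(1)}. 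This ``no unexpected vanishing'' is the main obstacle: a priori $F_I$ is only a signed polynomial, since the factors $\dot s_{i_\ell}$ contribute $-1$'s, so one genuinely needs positivity input and not merely the bound $\Delta_I\geq 0$ that holds on all of $(Gr_{k,n})_{\geq 0}$ by Definition \ref{def:TNNGrass}.

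For \emph{(2)}, one recovers $(v,w)$ from $\mathcal{M}_{v,w}$ by a purely combinatorial operation: for each $i\in[n]$ let $I_i$ be the lexicographically minimal element of $\mathcal{M}_{v,w}$ in the cyclically shifted order $i<i+1<\cdots<n<1<\cdots<i-1$; the sequence $(I_1,\dots,I_n)$ is the Grassmann necklace of $\mathcal{M}_{v,w}$, and it (equivalently, the associated decorated permutation) determines the cell and hence the pair $(v,w)$. So distinct cells carry distinct matroids. The correspondence between these combinatorial data and the pairs $(v,w)$ is itself a manifestation of the equivalence of the two cell decompositions, so what is genuinely new here beyond Theorem \ref{Grasscell} is step \emph{(1)} together with this bijection, which may be cited from \cite{Postnikov, Rietsch}. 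Combining \emph{(1)} and \emph{(2)} as above proves the theorem, and shows along the way that Postnikov's matroid-stratum decomposition of $(Gr_{k,n})_{\geq 0}$ coincides with the Lusztig--Rietsch decomposition.
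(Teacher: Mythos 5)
The paper does not actually prove this theorem: it is stated as a citation to \cite{Postnikov}, whose own argument is independent of Rietsch's work, directly parametrizing each nonempty $S_{\mathcal{M}}\cap(Gr_{k,n})_{\geq 0}$ by $\Gamma$-diagrams (Le-diagrams) and plabic graphs and exhibiting each such piece as an open ball of the correct dimension. Your proposal, which is correct, instead derives the theorem from Rietsch's decomposition (Theorem \ref{Grasscell}) by establishing that the matroid $\mathcal{M}(A)$ is constant on each Rietsch cell and that distinct cells carry distinct matroids --- in effect a proof that the Lusztig--Rietsch and Postnikov decompositions of $(Gr_{k,n})_{\geq 0}$ coincide, as you observe. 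You correctly isolate the crux of step (1): one needs each Pl\"ucker coordinate to be identically zero or strictly positive on a Rietsch cell, which is strictly stronger than the blanket non-negativity of Definition \ref{def:TNNGrass}, and you are right that the Marsh--Rietsch parametrization is not literally subtraction-free because of the $\dot s_{i_\ell}$ factors, so real positivity input (Lusztig/Marsh--Rietsch, or Postnikov's genuinely subtraction-free charts) is required. Step (2) is the thin part: you invoke the Grassmann-necklace/decorated-permutation bijection to recover the cell from the matroid, but that bijection is itself part of Postnikov's positroid machinery and is essentially equivalent to the coincidence of decompositions you are establishing; this is not a logical error --- the facts are legitimately citable --- but it renders your argument more a consistency check between the Rietsch and Postnikov pictures than a from-scratch proof. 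What your route buys is brevity given the paper's existing setup (Theorem \ref{t:parameterization}, Theorem \ref{Grasscell}, Lemma \ref{minors}); what Postnikov's route buys is self-containedness and explicit coordinate charts on every cell without reference to the flag variety.
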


\subsection{Flag minors of the tnn flag variety} \label{sec:project}

\begin{definition}
Let $M$ be an $n \times n$ matrix with real entries.  
Any $k \times k$ submatrix (for $1 \leq k \leq n$) is called
a \emph{flag minor} if its set of columns is precisely $\{1,2,\dots,k\}$, 
the leftmost $k$ columns of $M$.  
And we say that $M$ is \emph{flag  non-negative} 
if all of its flag minors
are non-negative.  
\end{definition}

\begin{remark}
Note that the flag minors of $g \in G$  are precisely the Pl\"ucker 
coordinates of the projections of $gB^+$
to the various Grassmannians $\pi_k(gB^+)$
for $1 \leq k\leq n$.
\end{remark}

\begin{lemma}\label{lem:flagTNN}
Let $v \leq w$ be elements of $W$.  Choose a reduced expression
$\w$ for $w$ and let $\v_+$ be the PDS for $v$ within $\w$.  
Then $G_{\v_+,\w}^{>0}$ consists of flag non-negative matrices, i.e.
for any $g\in G_{\v_+,\w}^{>0}$, all flag minors of $g$
are non-negative.
\end{lemma}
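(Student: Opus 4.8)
The plan is to prove the statement by induction on $m = \ell(w)$, building up the product $g = g_1 g_2 \cdots g_m$ one factor at a time and tracking the effect of each factor on the flag minors. Write $h_j := g_1 g_2 \cdots g_j$, so $h_0 = e$ and $h_m = g$; I want to show that every flag minor of every $h_j$ is non-negative, which for $j = m$ gives the conclusion. The base case $h_0 = e$ is clear since the flag minors of the identity are all $0$ or $1$. For the inductive step, I need to understand how passing from $h_{j-1}$ to $h_j = h_{j-1} g_j$ changes the flag minors, where $g_j$ is either $y_{i_j}(p_j)$ with $p_j > 0$ (if $j \in J^+_\v$) or $\dot s_{i_j}$ (if $j \in J^\circ_\v$).

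The key computational observation is that right multiplication by either $y_i(p)$ or $\dot s_i$ is a ``local'' column operation affecting only columns $i$ and $i+1$. Concretely, $M \mapsto M y_i(p)$ replaces column $i$ of $M$ by (column $i$) $+ p \cdot$ (column $i+1$) and leaves all other columns fixed; and $M \mapsto M \dot s_i$ replaces column $i$ by column $i+1$, replaces column $i+1$ by $-$(column $i$), and fixes the rest. Now a flag minor of $M h_j$ uses exactly the column set $\{1,2,\dots,k\}$ for some $k$. If $i+1 \le k$ or $i \ge k+1$, then the operation either permutes two columns inside the window $\{1,\dots,k\}$ (possibly introducing a sign that is compensated, as I explain below) or takes place entirely outside the window, so the corresponding flag minor of $h_j$ is, up to an overall sign that I must check, a flag minor of $h_{j-1}$. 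The only interesting case is the ``boundary'' case $i = k$: then column $k$ lies in the window $\{1,\dots,k\}$ and column $k+1$ does not. For $g_j = y_k(p_j)$, the new $k$-th flag minor of $h_j$ equals $\Delta_{\{1,\dots,k\}}(h_{j-1}) + p_j \Delta_{\{1,\dots,k-1,k+1\}}(h_{j-1})$ — a non-negative combination of two flag minors of $h_{j-1}$, hence non-negative by induction. For $g_j = \dot s_k$, the new $k$-th flag minor of $h_j$ equals $\Delta_{\{1,\dots,k-1,k+1\}}(h_{j-1})$, again a flag minor of $h_{j-1}$.

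The remaining bookkeeping is the sign issue in the ``$i+1 \le k$'' case for $g_j = \dot s_i$: swapping columns $i$ and $i+1$ and negating one of them contributes a factor of $(-1)$ to the minor with column set $\{1,\dots,k\}$, so naively the flag minors could alternate in sign as we multiply by the various $\dot s$'s. The way to handle this cleanly is to observe — using Theorem \ref{t:parameterization}, which tells us $g = g_1 \cdots g_m \in G^{>0}_{\v_+,\w}$ represents a point of $\mathcal{R}^{>0}_{v,w} \subseteq (G/B^+)_{\ge 0}$, together with Definition \ref{def:TNNGrass} — that the flag minors of $g$ are exactly the Plücker coordinates of the projections $\pi_k(gB^+) \in (Gr_{k,n})_{\ge 0}$, which are non-negative up to a single common scaling per value of $k$. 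So rather than proving each $h_j$ is literally flag non-negative on the nose, the cleaner statement to induct on is that for each $k$, the vector of size-$k$ flag minors of $h_j$ (indexed by which column of the window got swapped out, i.e. the Plücker coordinates of $\pi_k(h_j)$) is a non-negative scalar multiple of a fixed non-negative vector — equivalently, all ratios of same-size flag minors of $h_j$ that are defined are non-negative. This reformulation makes the sign ambiguity irrelevant, since multiplying a whole column of $M$ (inside the window) by $-1$ scales all size-$k$ flag minors by the same $-1$. The main obstacle, then, is getting this sign/normalization argument airtight: precisely formulating the inductive hypothesis so that the ``move a column across the boundary of the window'' step is manifestly a non-negative operation, and checking that the two genuinely new relations displayed above ($\Delta_{1\dots k} + p\,\Delta_{1\dots k-1,k+1}$ and $\Delta_{1\dots k-1,k+1}$) are the only nontrivial transitions. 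Once that is set up, the induction closes immediately and the case $j = m$ delivers the lemma.
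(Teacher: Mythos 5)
Your inductive framework has a genuine gap that prevents it from closing, and when you notice the difficulty you fall back to exactly the argument the paper uses, which makes the induction redundant.

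The gap: in the boundary case $i=k$ you write the new $k$-th flag minor of $h_j = h_{j-1}y_k(p_j)$ as $\Delta_{\{1,\dots,k\}}(h_{j-1}) + p_j\,\Delta_{\{1,\dots,k-1,k+1\}}(h_{j-1})$ and call the second term ``a flag minor of $h_{j-1}$.'' It is not: it is a $k\times k$ minor of $h_{j-1}$ using \emph{columns} $\{1,\dots,k-1,k+1\}$, which is not a set of the form $\{1,\dots,k\}$, so it is outside the class of minors your inductive hypothesis controls. The same problem occurs for $g_j = \dot s_k$. To make the induction close you would have to enlarge the family of minors you track (and re-prove positivity for that larger family), which is a nontrivial strengthening you have not supplied. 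Also, a smaller slip: when $i+1\le k$ and $g_j=\dot s_i$, the column swap contributes $-1$ and the negation of a column contributes another $-1$, so the net factor on the flag minor is $+1$, not the $-1$ you claim — that case is actually harmless, and your subsequent worry about signs propagating through the $\dot s_i$'s is unfounded for this sub-case.

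More importantly, the rescue you propose — invoke Theorem~\ref{t:parameterization} to get $gB^+\in\mathcal{R}^{>0}_{v,w}\subseteq (G/B^+)_{\ge 0}$, then use Definition~\ref{def:TNNGrass} to conclude that $\pi_k(gB^+)\in(Gr_{k,n})_{\ge 0}$ and hence the $k\times k$ flag minors of $g$ are non-negative — is, verbatim, the paper's entire proof. Once you appeal to that, the induction on $j$ does no work at all (and indeed each intermediate $h_j$ also lies in some $G^{>0}_{\v'_+,\w'}$ for a prefix $\w'$, so the projection argument already handles every $h_j$ directly). Finally, your reformulated inductive statement — that for each $k$ the vector of size-$k$ flag minors is a non-negative multiple of a fixed non-negative vector — only establishes that the size-$k$ flag minors share a common sign, which is strictly weaker than the lemma's claim of non-negativity; you never pin down that common sign.
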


\begin{proof}
Choose any $k$ such that $1 \leq k \leq n$.
Recall from Definition \ref{def:TNNGrass} that 
the projection $\pi_k$ maps the tnn flag
variety $(G/B^+)_{\geq 0}$ to the tnn Grassmannian
$(Gr_{k,n})_{\geq 0}$, i.e. the subset of the real Grassmannian such
that all $k \times k$ minors are non-negative.  When we identify
$G_{\v_+,\w}^{>0}$ with the cell 
$\mathcal R_{v,w}^{>0}$, this projection maps
a matrix $g \in G_{\v_+,\w}^{>0}$ to the span of 
the leftmost $k$ columns of $g$.
It follows that the $k \times k$ flag minors of $g$ are non-negative.
\end{proof}

We now give a sufficient condition for 
a flag minor of 
$G_{\v_+,\w}^{>0}$ to be positive.

Given an element $g\in G_{\v_+,\w}^{>0}$ and a $k$-element set 
$I_k=\{i_1<i_2<\cdots<i_k\}\in\binom{[n]}{k}$, we 
let $\Delta_{I_k}^k(g)$ denote the minor of the matrix $g$ given by
\begin{equation*}\label{g-minor}
\Delta_{I_k}^k(g):=\langle e_{i_1}\wedge\cdots\wedge e_{i_k},\,g\cdot e_1\wedge\cdots\wedge e_k\rangle.
\end{equation*}
In other words, $\Delta_{I_k}^k(g)=\Delta_{I_k}(A_k)$ is 
the \emph{flag minor} of $g$ which uses 
the leftmost $k$ columns of $g$ and the rows indexed by  $I_k$.

Let $[k]:=\{1,\ldots,k\}$.  For any $z\in W$ we define the ordered set
$z \cdot [k] = \{z(1), \dots, z(k)\}$.  Then we have the following.  
 
\begin{lemma}\label{minors}
Let $v \leq w$ be elements in $W=\Sym_n$, and choose
$z\in \Sym_n$ arbitrarily.  Choose a reduced subexpression
$\w$ for $w$; this determines the PDS
$\v_+$ for $v$ in $\w$.
Choose  any $g\in G_{\v_+,\w}^{>0}$.  Then we have
\[
\Delta_{z\cdot [k]}^k(g) > 0\qquad  \text{ for}\quad  1\leq k \leq n  
\]
if and only if 
$$v \leq z \leq w.$$
\end{lemma}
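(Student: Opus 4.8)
The plan is to connect the positivity of the flag minors $\Delta_{z\cdot[k]}^k(g)$ with the position of $gB^+$ in the two opposite Bruhat decompositions of $G/B^+$, and then to project to Grassmannians. First I would recall the standard characterization of Bruhat cells in terms of minors: for $g\in G$, one has $gB^+\in B^-\dot vB^+/B^+$ if and only if the pattern of vanishing/non-vanishing flag minors of $g$ is that of the permutation matrix $\dot v$, and similarly $gB^+\in B^+\dot wB^+/B^+$ is detected by a dual condition on the flag minors. More precisely, the relevant fact (due to the theory of the Bruhat decomposition, and available here because Theorem~\ref{t:parameterization} puts $g\in G_{\v_+,\w}^{>0}$ inside $U^-\dot v\cap B^+\dot wB^+$) is: if $gB^+\in\mathcal{R}_{v,w}$, then for each $k$ the projection $\pi_k(g)=A_k\in Gr_{k,n}$ lies in the Richardson variety $\mathcal{P}_{v^k,w^k}$, where $v^k$ (resp. $w^k$) is the minimal-length coset representative of $vW_k$ (resp. the relevant representative associated to $w$) — and the non-vanishing Plücker coordinates $\Delta_{I}(A_k)$ for $I\in\binom{[n]}{k}$ are constrained by $I\geq v^k\cdot[k]$ in the Gale/Bruhat order on $k$-subsets, with $\Delta_{v^k\cdot[k]}(A_k)\neq 0$ exactly.

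The key combinatorial translation I would use is: for $z\in\Sym_n$, the inequality $v\leq z\leq w$ holds if and only if for every $k\in\{1,\dots,n\}$ the $k$-subset $z\cdot[k]=\{z(1),\dots,z(k)\}$ (with its natural sorted order) satisfies $v\cdot[k]\leq z\cdot[k]\leq w\cdot[k]$ in the coordinatewise (Gale) order on sorted $k$-subsets. This is the standard "subset criterion" for Bruhat order on $\Sym_n$ via the projections $\Sym_n\to\Sym_n/W_k$. So the statement reduces to showing, for each fixed $k$ and with $A_k=\pi_k(g)$:
\begin{equation*}
\Delta_{z\cdot[k]}(A_k)>0 \iff v\cdot[k]\leq z\cdot[k]\leq w\cdot[k].
\end{equation*}
For the forward direction ($\Rightarrow$), if $\Delta_{z\cdot[k]}(A_k)>0$ then in particular it is nonzero, so $z\cdot[k]$ is a basis of the matroid $\mathcal{M}(A_k)$; since $A_k$ lies in the open Richardson cell $\mathcal{P}_{v^k,w^k}^{>0}$, every basis $I$ satisfies $v^k\cdot[k]\leq I\leq w^k\cdot[k]$, giving the two inequalities. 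For the reverse direction ($\Leftarrow$), I would use that $A_k$ lies in the \emph{totally nonnegative} Grassmannian (Lemma~\ref{lem:flagTNN}): there all Plücker coordinates are $\geq 0$, and a Plücker coordinate $\Delta_I(A_k)$ is strictly positive if and only if $I$ is a basis of $\mathcal{M}(A_k)$ — there is no cancellation, so non-vanishing is equivalent to positivity. Thus it suffices to show that whenever $v\cdot[k]\leq z\cdot[k]\leq w\cdot[k]$, the subset $z\cdot[k]$ is a basis of $\mathcal{M}(\pi_k(g))$. This is exactly the statement that the matroid of a point in the open positroid cell $\mathcal{P}^{>0}_{v^k,w^k}$ consists of \emph{all} $k$-subsets $I$ with $v^k\cdot[k]\leq I\leq w^k\cdot[k]$ — equivalently, that this Richardson cell in the Grassmannian has as its matroid a Schubert-matroid intersection (a so-called lattice path / positroid), whose bases are precisely the $k$-subsets lying in that interval. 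I would either cite this from the Rietsch/Marsh-Rietsch or Knutson-Lam-Speyer positroid theory, or prove it directly using the explicit parameterization $g=g_1\cdots g_m$ of Theorem~\ref{t:parameterization} and an induction on $m$: multiplying by $y_{i}(p)$ or by $\dot s_i$ affects the flag minors in a controlled, sign-definite way, so one can track exactly which flag minors become (and stay) positive.

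The main obstacle I anticipate is the reverse direction, i.e. establishing that \emph{every} $k$-subset in the Bruhat interval $[v\cdot[k],\,w\cdot[k]]$ actually occurs as a positive Plücker coordinate — equivalently, pinning down the matroid of the cell $\mathcal{P}^{>0}_{v^k,w^k}$ exactly rather than just bounding it. Bounding the support of the matroid above and below by $w$ and $v$ is the easy half (it follows from membership in the two opposite Bruhat cells); showing the interval is \emph{filled} requires either invoking the positroid/Richardson matroid identification or running the inductive argument on the Marsh–Rietsch factorization with care about how the PDS structure ($J^\circ$ versus $J^+$ indices) contributes reflections versus $y_i(p)$-factors. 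I would handle the bookkeeping by reducing, via the projection $\pi_k$, to the Grassmannian case $w\in W^k$ where $\pi_k$ restricted to $\mathcal{R}_{v,w}$ is an isomorphism onto $\mathcal{P}_{v^k,w^k}$, and then apply the known combinatorial description of the matroid of the corresponding positroid cell; the passage from general $w$ to $w^k\in W^k$ is routine because the flag minor $\Delta_{z\cdot[k]}^k(g)$ depends only on $\pi_k(g)$, and on the Bruhat side $v\leq z\leq w$ iff $v\cdot[k]\le z\cdot[k]\le w\cdot[k]$ for all $k$.
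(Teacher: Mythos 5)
Your argument for the implication ``$\Delta^k_{z\cdot[k]}(g)>0$ for all $k$ $\Rightarrow$ $v\le z\le w$'' is fine and essentially matches the paper's: membership of $g$ in $U^-\dot v\cap B^+\dot wB^+$ forces every basis $I$ of $\M(\pi_k(g))$ to satisfy $v\cdot[k]\le I\le w\cdot[k]$ in the componentwise (Gale) order on sorted $k$-sets, and the tableau criterion then gives $v\le z\le w$.

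The other direction, however, has a genuine gap. You reduce to the claim that, for each fixed $k$, every $k$-subset $I$ with $v\cdot[k]\le I\le w\cdot[k]$ is a basis of $\M(\pi_k(g))$ --- i.e.\ that the positroid of a point of the cell containing $\pi_k(g)$ is the full Gale interval. This is false. Take $n=4$, $w=(3,4,1,2)$ with reduced word $\w=s_2s_3s_1s_2$, and $v=s_3=(1,2,4,3)$ (so $v\le w$); the PDS is $\v_+=1\,s_3\,1\,1$. Then any $g\in G^{>0}_{\v_+,\w}$ has the form $y_2(p_1)\dot s_3\,y_1(p_3)\,y_2(p_4)$, whose leftmost two columns are $(1,p_3,p_1p_3,0)^T$ and $(0,1,p_1,p_4)^T$, and one computes $\Delta^2_{\{2,3\}}(g)=p_3p_1-p_1p_3=0$ identically, while $\Delta^2_{\{1,2\}},\Delta^2_{\{1,3\}},\Delta^2_{\{1,4\}},\Delta^2_{\{2,4\}},\Delta^2_{\{3,4\}}$ are all positive. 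Yet $v\cdot[2]=\{1,2\}\le\{2,3\}\le\{3,4\}=w\cdot[2]$ in Gale order, so $\{2,3\}$ lies in the interval but is not a basis. The lemma itself is not endangered here because no $z$ with $v\le z\le w$ has $z\cdot[2]=\{2,3\}$ (one checks this with the tableau criterion against both $v$ and $w$ at $k=3$); but your reduction treats each $k$ in isolation and sees only the $k$-subset $z\cdot[k]$, so it cannot detect this.

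This is precisely the subtlety the paper sidesteps by arguing globally rather than $k$-by-$k$. The paper never identifies the matroid $\M(\pi_k(g))$. Instead it invokes the closure relations of Rietsch's cell decomposition: $v\le z\le w$ gives $\RR^{>0}_{z,w}\subset\overline{\RR^{>0}_{v,w}}$; the coordinate $\Delta_{z\cdot[k]}$ is nonvanishing (hence positive) on $\pi_k(\RR^{>0}_{z,w})$ because that cell sits inside $\pi_k(B^-\dot zB^+/B^+)$; and since cells of $(Gr_{k,n})_{\ge0}$ are cut out by prescribing which Pl\"ucker coordinates vanish, a coordinate positive on a boundary cell remains positive on any cell whose closure contains it. The permutation $z$ enters through the flag cell $\RR^{>0}_{z,w}$, which remembers $z$ in full --- exactly the information your per-$k$ positroid reduction throws away. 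To make your route work you would have to replace the (false) interval-positroid claim by an argument on the Marsh--Rietsch factorization that tracks the flag condition at all $k$ simultaneously.
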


The classical \emph{tableau criterion} for Bruhat order on $\Sym_n$
will be useful for the proof.
\begin{lemma} \label{tableaucriterion}\cite{Ehresmann}
We have that $x \leq y$ in Bruhat order on $\Sym_n$ if and only if
$x_{i,k} \leq y_{i,k}$ for all $1 \leq i \leq k \leq n-1$,
where $x_{i,k}$ is the $i$th entry in the increasing rearrangement
of $x_1, x_2, \dots, x_k$, and similarly for $y_{i,k}$.
\end{lemma}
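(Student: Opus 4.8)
\medskip
\noindent\textit{Proof strategy.}
The ``only if'' direction is the easier half. Since the Bruhat order on $\Sym_n$ is graded by length, any $x\le y$ is joined to $y$ by a saturated chain $x=z_0\lessdot z_1\lessdot\cdots\lessdot z_r=y$, and the relation ``$x_{i,k}\le y_{i,k}$ for all $1\le i\le k\le n-1$'' is visibly reflexive and transitive, so it suffices to verify the inequalities when $x\lessdot y$. Recall that then $y=x\cdot(a\,b)$ for some $a<b$ with $x(a)<x(b)$. Fix $k$. If $k<a$ or $k\ge b$, then $\{x(1),\dots,x(k)\}=\{y(1),\dots,y(k)\}$, so $x_{i,k}=y_{i,k}$ for all $i$. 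If $a\le k<b$, then $\{y(1),\dots,y(k)\}$ is obtained from $\{x(1),\dots,x(k)\}$ by deleting the element $x(a)$ and inserting the strictly larger element $x(b)$ (not previously present, as $b>k$); replacing one element of a set by a larger one weakly increases every entry of its increasing rearrangement, so $x_{i,k}\le y_{i,k}$ for all $i$.

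\medskip
For the converse I would argue by induction on the nonnegative integer $d(x,y):=\sum_{k=1}^{n-1}\sum_{i=1}^{k}\bigl(y_{i,k}-x_{i,k}\bigr)$. If $d(x,y)=0$, then $\{x(1),\dots,x(k)\}=\{y(1),\dots,y(k)\}$ for every $k$; taking successive differences of these sets recovers the one-line notations, so $x=y$. If $d(x,y)>0$, I claim there is a lifting: a permutation $x'$ with $x<x'$ in Bruhat order, with $x'_{i,k}\le y_{i,k}$ for all $i\le k$, and with $d(x',y)<d(x,y)$; granting this, the inductive hypothesis gives $x'\le y$, hence $x\le y$. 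To build $x'$, let $i$ be the smallest index with $x(i)\ne y(i)$. Comparing the increasing rearrangements of $\{x(1),\dots,x(i)\}$ and $\{y(1),\dots,y(i)\}$, which share the common subset $\{x(1),\dots,x(i-1)\}=\{y(1),\dots,y(i-1)\}$, and using $x_{i',i}\le y_{i',i}$ for all $i'$, one deduces $x(i)<y(i)$. Since $y(i)=x(b)$ for a unique $b$, necessarily with $b>i$, the set $\{a>i:x(a)>x(i)\}$ is nonempty; set $j^{*}:=\min\{x(a):a>i,\ x(a)>x(i)\}$ (so $x(i)<j^{*}\le y(i)$) and let $b^{*}$ be the position with $x(b^{*})=j^{*}$. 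Put $x':=x\cdot(i\,b^{*})$. Then $x<x'$ because $i<b^{*}$ and $x(i)<x(b^{*})$, and $d(x',y)<d(x,y)$ because the first-$k$ value sets of $x$ and $x'$ coincide for $k<i$ and for $k\ge b^{*}$, while for each $k$ with $i\le k<b^{*}$ the first-$k$ set of $x'$ arises from that of $x$ by replacing $x(i)$ with the larger value $j^{*}$, strictly raising at least one entry of the increasing rearrangement.

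\medskip
The remaining point, and the main obstacle, is to check that $x'$ still satisfies the tableau inequalities relative to $y$; here the choice of $j^{*}$ as the \emph{smallest} eligible larger value is crucial. For $k<i$ and $k\ge b^{*}$ nothing changes. For $k=i$, the first-$i$ set of $x'$ is obtained from that of $y$ by replacing $y(i)$ with the smaller value $j^{*}$, so its increasing rearrangement is entrywise $\le$ that of $y$. The case $i<k<b^{*}$ is the delicate one: by minimality of $j^{*}$, no value of $x$ strictly between $x(i)$ and $j^{*}$ occurs among positions $i+1,\dots,b^{*}-1$, hence such values can only sit among the first $i-1$ positions, where $x$ and $y$ agree; consequently, passing from the increasing rearrangement of $\{x(1),\dots,x(k)\}$ to that of $\{x'(1),\dots,x'(k)\}$ merely shifts one contiguous block of entries up by a single slot and then inserts $j^{*}$, and combining this with the already-known inequalities $x_{i',k}\le y_{i',k}$ (and the agreement of $x,y$ on the first $i-1$ positions) yields $x'_{i',k}\le y_{i',k}$ for all $i'$. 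This amounts to a finite case analysis on where each sorted entry falls relative to the shifted block; I expect it to be routine but fiddly, and it is the technical heart of the argument. As an alternative one may invoke the rank form of the Bruhat order, $x\le y\iff |\{a\le i:x(a)\le j\}|\ge|\{a\le i:y(a)\le j\}|$ for all $i,j$, and observe that the tableau condition is a formal reformulation of it via $x_{i,k}=\min\{j:|\{a\le k:x(a)\le j\}|\ge i\}$; but the nontrivial implication of the rank criterion rests on essentially the same lifting step.
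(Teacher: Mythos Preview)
The paper does not prove this lemma; it cites it as a classical result of Ehresmann and uses it as a black box. So there is nothing on the paper's side to compare your argument to.

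Your ``only if'' direction is fine. The ``if'' direction, however, contains a genuine error, not merely an unfinished case analysis: the lifting you construct need not preserve the tableau inequalities. Take $n=3$, $x=(1,3,2)=s_2$ and $y=(3,1,2)=s_2s_1$; the tableau condition holds, so $x\le y$. Your recipe gives $i=1$, $j^*=\min\{3,2\}=2$, $b^*=3$, and $x'=x\cdot(1\,3)=(2,3,1)=s_1s_2$. At $k=2$ the sorted prefixes are $(2,3)$ for $x'$ and $(1,3)$ for $y$, so $x'_{1,2}=2>1=y_{1,2}$: the tableau condition fails for $(x',y)$, and indeed $x'\not\le y$ in Bruhat order since $\ell(x')=\ell(y)=2$ with $x'\neq y$. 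The heuristic you give for the ``delicate'' range $i<k<b^*$ (``combining this with the already-known inequalities \dots\ yields $x'_{i',k}\le y_{i',k}$'') is therefore false; here $x_{1,2}=y_{1,2}=1$ leaves no room to raise the entry to $j^*=2$. The defect is structural: choosing $j^*$ as the smallest larger value of $x$ to the right of $i$ ignores $y$ entirely. A correct lifting must bring $y$ into the choice---for instance, swap at the smallest \emph{position} $b>i$ with $x(i)<x(b)\le y(i)$---and the verification is cleanest after passing to the equivalent rank form $|\{a\le k:x(a)\le j\}|\ge|\{a\le k:y(a)\le j\}|$. A complete argument along these lines can be found in Bj\"orner--Brenti, \emph{Combinatorics of Coxeter Groups}; contrary to your final remark, the lifting used there is not the same as yours.
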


We now prove Lemma \ref{minors}.
\begin{proof}
First suppose that $z$ satisfies $v \leq z \leq w$.
Then by \cite{RietschClosure}, 
the cell $\RR_{z,w}^{>0}$ is contained in the closure
$\overline{\RR_{v,w}^{>0}}$, and hence
$\pi_k(\RR_{z,w}^{>0}) \subset \pi_k (\overline{\RR_{v,w}^{>0}})$.
Recall from Section \ref{sec:Grass} that the projection $\pi_k$ of any cell
of $(G/B^+)_{\geq 0}$ is a cell of $(Gr_{k,n})_{\geq 0}$, 
and that cells of $(Gr_{k,n})_{\geq 0}$ are defined by specifying 
that certain Pl\"ucker coordinates are $0$ and all others are 
strictly positive.  Therefore if 
$\mathcal C_1$ and $\mathcal C_2$ are two cells of $(Gr_{k,n})_{\geq 0}$
such that $\mathcal C_1 \subset \overline{\mathcal C_2}$,
and the Pl\"ucker coordinate $\Delta_J$ is positive 
on all points of $C_1$, then it must also be positive on 
all points of $\mathcal C_2$.

Recall that $\RR_{z,w}^{>0} \subset \RR_{z,w} = (B^+ \dot w  B^+/B^+)
\cap (B^- \dot z  B^+/B^+)$.
Then $\pi_k(\RR_{z,w}^{>0}) \subset \pi_k(B^- \dot z B^+/B^+)$.
It is well known that the lexicographically minimal nonvanishing
Pl\"ucker coordinate on $\pi_k(B^-\dot{z} B^+/B^+)$
 is $\Delta_{z\cdot[k]}$.  Therefore 
$\Delta_{z\cdot[k]}$ must be strictly positive on any point of 
$\pi_k(\RR_{z,w}^{>0})$.

And now since
$\pi_k(\RR_{z,w}^{>0}) \subset 
 \pi_k (\overline{\RR_{v,w}^{>0}})$, 
it follows that the Pl\"ucker coordinate
$\Delta_{z\cdot[k]}$ must be positive on any point 
in $\pi_k(\RR_{v,w}^{>0})$.
Finally recall that the projection $\pi_k$ from $G/B^+$ to 
$Gr_{k,n}$ maps an element 
$g \in G_{\v_+,\w}^{>0}$ to the span of its leftmost $k$ columns,
i.e. $\pi_k(g)=A_k$.
Therefore we must have that 
$\Delta_{z\cdot[k]}(A_k)=\Delta^k_{z \cdot [k]}(g)>0$.

Now suppose that 
$\Delta_{z\cdot [k]}^k(g) > 0$ for all $k$ such that $ 1\leq k \leq n$.
As before, $\pi_k(g)$ represents an element of 
$\pi_k(\mathcal R_{v,w})$, and the lexicographically minimal 
nonvanishing Pl\"ucker coordinate on $\pi_k(\mathcal R_{v,w})$
is $\Delta_{v \cdot [k]}(A_k)$.  Therefore we must have that 
the subset $z \cdot [k]$ is lexicographically greater than 
$v \cdot [k]$.  But since this is true for all $k$,  
Lemma \ref{tableaucriterion} implies that
$v \leq z \leq w$.
\end{proof}

\section{The full Kostant-Toda lattice}\label{sec:fKT}

The $\mathfrak{sl}_n(\mathbb{R})$-\emph{full Kostant-Toda (f-KT) lattice} is defined as follows.
Let $L\in\mathfrak{sl}_n(\R)$ be given by
\[
L\,\in\, \epsilon+\mathfrak{b}^- ,
\]
where $\epsilon$ is the matrix with $1$'s on the superdiagonal and zeros elsewhere.
We use the coordinates \eqref{L} for $L$ and refer to it as a \emph{Lax matrix}.
Then the f-KT lattice is defined by the \emph{Lax equation}
\eqref{Lax}.

One of the goals  of this paper is to describe the behavior of the solution
$L(t)$ of the f-KT lattice when the initial matrix $L(0)=L^0$ is associated to an arbitrary point on the tnn part 
$(G/B^+)_{\geq 0}$ of the flag variety $G/B^+$.
In this section we will give background on the f-KT lattice, including
a characterization of the fixed points, and the method of finding 
solutions via the LU-factorization.

\subsection{The fixed points of the f-KT lattice}
Let  $\mathcal{F}_{\Lambda}$ be the \emph{isospectral variety} consisting
of 
the Hessenburg matrices of \eqref{L} with fixed  eigenvalues
$\Lambda=\{\lambda_1,\lambda_2,\ldots,\lambda_n\}$,\footnote{Note that
we will also occasionally use
$\Lambda$ to denote the matrix
$\text{diag}(\lambda_1,\lambda_2,\ldots,\lambda_n)$.} i.e.
\[
\mathcal{F}_{\Lambda}:=\{L\in\epsilon+\mathfrak{b}^-~|~ L \text{ has eigenvalues }\Lambda\}.
\]
Throughout this paper, we assume that all the eigenvalues are real and distinct, and have the ordering
\[
\lambda_1~<~\lambda_2~<~\cdots~<~\lambda_n.
\]
Note that since $L \in \mathfrak{sl}_n(\mathbb{R})$,  we have
$\sum_{i=1}^n \lambda_i = 0$.

We let
$E$ denote the Vandermonde matrix in the $\lambda_i$'s:
\begin{equation}\label{E}
E:=\begin{pmatrix} 
1& 1& \cdots & 1\\
\lambda_1&\lambda_2&\cdots&\lambda_n\\
\vdots&\vdots&\ddots&\vdots\\
\lambda_1^{n-1}&\lambda_2^{n-1}&\cdots&\lambda_n^{n-1}
\end{pmatrix}.
\end{equation}

\begin{definition}
A Lax matrix $L$ is a \emph{fixed point} 
or \emph{stationary point} of the f-KT lattice
if \[
\frac{dL}{dt}=0,\qquad\text{or equivalently,}\qquad [(L)_{\ge 0},L]=0.
\]
\end{definition}

\begin{lemma}\label{lem:commutation}
Let $L$ be an $n \times n$ matrix with distinct eigenvalues
$\{\lambda_1,\dots,\lambda_n\}$
and let $F$ be an $n\times n$ matrix.  Then
\[
[F,L]=0\qquad\text{implies}\qquad F=\sum_{k=0}^{n-1}c_kL^k
\]
for some constants $c_k$, i.e. $F$ is a polynomial of $L$.
\end{lemma}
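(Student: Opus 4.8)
The plan is to use the fact that a matrix with $n$ distinct eigenvalues is nonderogatory, so its centralizer is exactly the algebra $\R[L]$ of polynomials in $L$, which has dimension $n$. I would argue as follows. Since $L$ has $n$ distinct eigenvalues $\lambda_1,\dots,\lambda_n$, it is diagonalizable: write $L = S\Lambda S^{-1}$ with $\Lambda=\diag(\lambda_1,\dots,\lambda_n)$ and $S$ the matrix of eigenvectors. First I would reduce the identity $[F,L]=0$ to the diagonal case by conjugating: setting $\tilde F = S^{-1} F S$, the relation $[F,L]=0$ becomes $[\tilde F,\Lambda]=0$. Writing $\tilde F = (\tilde f_{ij})$, the $(i,j)$ entry of $[\tilde F,\Lambda]$ is $\tilde f_{ij}(\lambda_j-\lambda_i)$, so the commutation forces $\tilde f_{ij}=0$ whenever $i\neq j$ (here is where distinctness of the eigenvalues is essential). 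Hence $\tilde F = \diag(d_1,\dots,d_n)$ is diagonal.

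The second step is to realize any diagonal matrix $\diag(d_1,\dots,d_n)$ as a polynomial of degree at most $n-1$ in $\Lambda$. This is a Lagrange interpolation argument: because the $\lambda_i$ are distinct, the Vandermonde matrix $E$ of \eqref{E} is invertible, so the linear system $\sum_{k=0}^{n-1} c_k \lambda_i^k = d_i$ for $i=1,\dots,n$ has a (unique) solution $(c_0,\dots,c_{n-1})$. For that choice of constants, $\sum_{k=0}^{n-1} c_k \Lambda^k = \diag(d_1,\dots,d_n) = \tilde F$. Conjugating back by $S$ and using $S \Lambda^k S^{-1} = L^k$, we get
\[
F = S \tilde F S^{-1} = \sum_{k=0}^{n-1} c_k\, S\Lambda^k S^{-1} = \sum_{k=0}^{n-1} c_k L^k,
\]
which is exactly the claimed conclusion, with the same constants $c_k$ that solve the interpolation system.

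I do not expect a serious obstacle here; the only thing to be careful about is that we are working over $\R$, but since all eigenvalues are assumed real and distinct, $S$ can be taken real and the whole argument stays within real matrices. One could alternatively phrase the proof entirely over $\C$ and then note that the $c_k$ are real because both $F$ and $L$ are real and the $c_k$ are uniquely determined — but the direct real diagonalization above avoids even that remark. The mild subtlety, if any, is purely bookkeeping: making sure the polynomial is genuinely of degree $\le n-1$ (guaranteed by the Vandermonde/Cayley–Hamilton dimension count) rather than just "some polynomial."
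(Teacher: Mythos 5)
Your proof is correct and takes essentially the same route as the paper's: diagonalize $L$, observe that commutation forces $F$ to be simultaneously diagonal, invoke Vandermonde invertibility (Lagrange interpolation) to write the diagonal of $F$ as a degree-$\le n-1$ polynomial in the diagonal of $\Lambda$, and conjugate back. The only difference is that you spell out the entrywise argument $\tilde f_{ij}(\lambda_j-\lambda_i)=0$ showing $\tilde F$ is diagonal, which the paper simply asserts; this is a welcome bit of extra detail but not a different method.
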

\begin{proof}
Since $L$ is diagonalizable,  we can write 
$L = M\Lambda M^{-1}$ for some invertible matrix $M$.  Then
$[F,L]=0$ implies that $F = MDM^{-1}$ for some diagonal matrix $D$.
Let $D=\text{diag}(\mu_1,\ldots,\mu_n)$;
the $\mu_j$'s are the eigenvalues of $F$.  
Then each $\mu_j$ can be expressed as
\[
\mu_j=\sum_{i=0}^{n-1}a_i\lambda_j^i \qquad\text{for}\quad j=1,\ldots,n.
\]
Note that this equation is $\mu=a\,E$ with $\mu:=(\mu_1,\ldots,\mu_n)$
and $a:=(a_0,\ldots,a_{n-1})$.  Since the Vandermonde matrix $E$ is invertible, 
$a$ is uniquely determined for given $\mu$.  We can equivalently 
write this equation as
$D=\sum_{i=1}^{n-1}a_i\Lambda^i$ with $\Lambda=\text{diag}(\lambda_1,\ldots,\lambda_n)$,  and hence we have
\[
FM=MD=M\left(\sum_{i=0}^{n-1}a_i\Lambda^i\right)=\left(\sum_{i=0}^{n-1}a_iL^i\right)M,
\]
where we have used $L=M\Lambda M^{-1}$.  Since $M$ is invertible, this 
implies that  $F=\sum_{i=0}^{n-1}a_iL^i$.
\end{proof}

Lemma \ref{lemma:fixed} allows us to characterize the
fixed points of the f-KT lattice.
\begin{lemma}\label{lemma:fixed}
If $L$ has distinct eigenvalues and 
$[(L)_{\ge0},~L]=0$, then $L=(L)_{\ge0},$
that is, $(L)_{<0}=0$.
\end{lemma}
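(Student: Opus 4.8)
The plan is to combine Lemma \ref{lem:commutation} with an examination of the lowest subdiagonal of $L$. By Lemma \ref{lem:commutation}, the hypothesis $[(L)_{\ge 0}, L] = 0$ forces $(L)_{\ge 0} = \sum_{k=0}^{n-1} c_k L^k$ for some constants $c_k$. Now I would compare the two sides of this identity, paying attention to how far below the diagonal each side can reach. Since $L \in \epsilon + \mathfrak{b}^-$ is a Hessenberg matrix, its only nonzero entry above the diagonal lies on the superdiagonal (all equal to $1$), so $L^k$ can have nonzero entries at most $k$ diagonals above the main diagonal; in particular $(L)_{\ge 0}$, being supported on the diagonal and superdiagonal, is consistent with a polynomial of degree $\ge 1$ in $L$ only if the coefficients $c_k$ for $k \ge 2$ vanish in the relevant entries — but more usefully, I would look below the diagonal.

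The key step: compare the $(\ell+1)$-st subdiagonal entries on both sides, working from the bottom up. The left-hand side $(L)_{\ge 0}$ has \emph{zero} entries on every subdiagonal $\ell \ge 1$. On the right-hand side, the entries of $L^k$ on the $\ell$-th subdiagonal are determined by the entries of $L$ on subdiagonals $\le \ell$ together with the superdiagonal $\epsilon$; crucially, because $L = \epsilon + (\text{lower triangular})$, the lowest nonzero subdiagonal of $L$ propagates. Concretely, if $(L)_{<0} \ne 0$, let $\ell^* \ge 1$ be the largest index such that the $\ell^*$-th subdiagonal of $L$ is nonzero. Then I would show by a direct (but short) computation that the $\ell^*$-th subdiagonal of $L^k$ for $k \ge 1$ is controlled: matching the vanishing $\ell^*$-th subdiagonal of $(L)_{\ge 0}$ against $\sum c_k L^k$ yields a relation forcing $c_1 \cdot (\text{$\ell^*$-th subdiagonal of }L) = 0$ after peeling off the higher powers (whose $\ell^*$-th subdiagonal entries, for $k\ge 2$, are \emph{already} expressible via lower subdiagonals because multiplying by $L=\epsilon+\mathfrak b^-$ can only push mass down by combining with the $\epsilon$ part, or stay — and one tracks this carefully). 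Hence $c_1 = 0$ on that subdiagonal. Iterating upward through the subdiagonals, one gets $c_k = 0$ for all $k \ge 1$, so $(L)_{\ge 0} = c_0 I$; but then comparing superdiagonal entries gives $1 = 0$, a contradiction unless in fact $(L)_{<0}=0$ to begin with.

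Alternatively — and this may be cleaner — one observes directly that $(L)_{\ge 0} = \sum_{k\ge 0} c_k L^k$ with the right side having a nonzero superdiagonal (coefficient of $\epsilon$) forces some $c_k \ne 0$ for $k \ge 1$; take the least such, or rather note $(L)_{\ge 0}$ is strictly upper-Hessenberg-bounded so only $c_0, c_1$ can survive, giving $(L)_{\ge 0} = c_0 I + c_1 L$, whence $c_1 (L)_{<0} = (L)_{<0} = 0$ after reading off the strictly-lower part, and $c_1 = 1$ from the superdiagonal. I expect the main obstacle to be the bookkeeping in the first approach: one must argue carefully that powers $L^k$ with $k \ge 2$ do not contribute \emph{new} lowest-subdiagonal behavior beyond what $c_1 L$ already does, i.e. that $(L)_{\ge 0}$ being a polynomial in $L$ actually forces that polynomial to have degree at most $1$. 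The second approach sidesteps most of this by using the Hessenberg shape of $L$ to bound which powers can appear, and I would adopt that: the degree bound is the crux, and it follows since $(L)_{\ge 0}$ vanishes on the $\ell$-th diagonal for every $\ell \le -1$ while $L^k$ for $k \ge 2$ generically does not, together with the fact that on the superdiagonal $L^2$ already exceeds the support of $(L)_{\ge 0}$'s image under the polynomial unless the higher coefficients die.
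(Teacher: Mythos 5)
Your proposal applies Lemma~\ref{lem:commutation} to $(L)_{\ge 0}$, whereas the paper applies it to $(L)_{<0}$, and the difference matters. The paper's observation is that $[(L)_{\ge 0},L]=0$ immediately gives $[(L)_{<0},L]=-[(L)_{\ge 0},L]=0$ (since $L=(L)_{\ge 0}+(L)_{<0}$), so by Lemma~\ref{lem:commutation} one writes $(L)_{<0}=\sum_{k=0}^{n-1}c_kL^k$. Now $(L)_{<0}$ is strictly lower triangular, hence nilpotent, so all of its eigenvalues vanish; but the eigenvalues of $\sum c_kL^k$ are $\sum_k c_k\lambda_j^k$ for $j=1,\dots,n$, i.e.\ the entries of the row vector $cE$ with $E$ the Vandermonde matrix. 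Since $E$ is invertible, $c=0$, and $(L)_{<0}=0$ follows with no shape bookkeeping whatsoever. Your route never discovers this move.

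Your second approach --- write $(L)_{\ge 0}=\sum c_kL^k$, prove the degree is at most $1$, then read off $c_1=1$ from the superdiagonal and $(L)_{<0}=c_1(L)_{<0}=0$ from the strictly lower part --- \emph{can} be made to work, but the justification you give for the degree bound is not correct as stated. The fact that ``$(L)_{\ge 0}$ vanishes on the $\ell$-th diagonal for $\ell\le -1$'' is true of the left-hand side but says nothing about which powers can appear on the right (every $L^k$, including $L^1$, has nonzero entries below the diagonal). And the observation that ``$L^2$ already exceeds the support on the superdiagonal'' does not by itself kill $c_2$, because $L^3,\dots,L^{n-1}$ also contribute to the second superdiagonal. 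What one actually needs is a top-down induction: the $(1,n)$ entry of $\sum c_kL^k$ equals $c_{n-1}$ (only $L^{n-1}=\epsilon^{n-1}+\cdots$ reaches that far, and its $(1,n)$ entry is $1$), forcing $c_{n-1}=0$; then the $(n-2)$-th superdiagonal of $\sum c_kL^k$ reduces to $c_{n-2}\cdot\mathbf 1$, forcing $c_{n-2}=0$; and so on down to $c_2=0$. Only after this does the conclusion $(L)_{\ge 0}=c_0I+c_1L$ follow, and then the rest of your argument goes through. As written, the degree-bound step is a genuine gap, and even once it is repaired the proof remains heavier than the paper's one-line eigenvalue argument.
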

\begin{proof}
 The decomposition $L=(L)_{\ge 0}+(L)_{<0}$ gives
 \[
 [(L)_{\ge 0},\,L]=-[(L)_{<0},\,L]=0.
 \]
  Then Lemma \ref{lem:commutation} implies that we have
  \[
  (L)_{<0}=\sum_{k=0}^{n-1}c_kL^k.
  \]
Diagonalizing both sides, 
we have the row vector equation $0=c\, E$,
where $c=(c_0,\ldots,c_{n-1})$ and
the Vandermonde matrix $E$ is given by \eqref{E}.  Since $E$ is invertible, we have $c=0$. 
\end{proof}

Then we have the following result.
\begin{proposition}\label{prop:fixedpoint}
Let $L$ be in $\mathcal{F}_{\Lambda}$.  Then
$L$ is a fixed point of the f-KT lattice if and only if
\[
L=(L)_{\ge 0}.
\]
Moreover, the diagonal part of a fixed point $L$ is given by
\[
\text{diag}(L)=\text{diag}(\lambda_{\pi(1)},\lambda_{\pi(2)},\ldots,\lambda_{\pi(n)})
\qquad\text{for some}\quad \pi\in\Sym_n.
\]
\end{proposition}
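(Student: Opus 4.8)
The plan is to prove the two assertions of Proposition \ref{prop:fixedpoint} in sequence, using the lemmas already established. For the first assertion, note that $L$ is a fixed point if and only if $[(L)_{\ge 0},L]=0$ by definition. Since we assume $L\in\mathcal{F}_\Lambda$ has distinct eigenvalues, Lemma \ref{lemma:fixed} applies directly: $[(L)_{\ge 0},L]=0$ forces $(L)_{<0}=0$, i.e. $L=(L)_{\ge 0}$. Conversely, if $L=(L)_{\ge 0}$ then trivially $[(L)_{\ge 0},L]=[L,L]=0$, so $L$ is a fixed point. This direction is immediate and requires no further work.

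For the second assertion, suppose $L$ is a fixed point, so by the first part $L=(L)_{\ge 0}$ is an upper triangular matrix lying in $\epsilon+\mathfrak b^-$; in particular its superdiagonal entries are all equal to $1$, and it is upper triangular with those $1$'s on the superdiagonal. The key observation is that an upper triangular matrix has its eigenvalues on the diagonal, counted with multiplicity. Since $L$ has eigenvalue set $\Lambda=\{\lambda_1,\dots,\lambda_n\}$ with the $\lambda_i$ distinct, the diagonal entries of $L$ must be exactly $\lambda_1,\dots,\lambda_n$ in some order, i.e. $\diag(L)=\diag(\lambda_{\pi(1)},\dots,\lambda_{\pi(n)})$ for some permutation $\pi\in\Sym_n$. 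This is the desired conclusion.

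I expect there to be essentially no obstacle here: the proposition is a short corollary of Lemma \ref{lemma:fixed} together with the elementary linear-algebra fact that the diagonal of an upper-triangular matrix carries its spectrum. The only point worth stating carefully is that because $L=(L)_{\ge 0}$ is genuinely upper triangular (not merely Hessenberg), the characteristic polynomial factors as $\prod_{i}(x-L_{ii})$, so the multiset of diagonal entries coincides with $\Lambda$; distinctness of the $\lambda_i$ then promotes ``multiset'' to ``permutation.'' One could also phrase this via Lemma \ref{lem:commutation} by writing the fixed point condition as $[(L)_{<0},L]=0$ and diagonalizing, but invoking Lemma \ref{lemma:fixed} as a black box is cleaner and already does this work.
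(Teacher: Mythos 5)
Your proof is correct and follows essentially the same route as the paper: invoke Lemma~\ref{lemma:fixed} for the nontrivial direction of the equivalence, note the converse is trivial, and then observe that a fixed point is upper triangular (being $\epsilon$ plus a diagonal matrix) so its diagonal must carry the eigenvalues $\Lambda$ in some order. Your elaboration on the characteristic polynomial and the distinctness of the $\lambda_i$ is a slightly more explicit version of the paper's one-line remark, but the argument is identical in substance.
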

\begin{proof}
The fact that a matrix of the form 
$L = (L)_{\geq 0}$ is a fixed point of the f-KT lattice follows directly
from the definitions.  The other direction 
is just a corollary of Lemma \ref{lemma:fixed}.

Now suppose $L$ is a fixed point
and hence $L = (L)_{\geq 0}$.  
Since $L$ has the eigenvalues $\Lambda$, 
the diagonal part of $L$
consists of
the eigenvalues.  Any arrangement of the eigenvalues can be
expressed as $(\lambda_{\pi(1)},\ldots,\lambda_{\pi(n)})$ 
with some $\pi\in\Sym_n$.
\end{proof}

\subsection{Matrix factorization and the $\tau$-functions of the f-KT lattice}
To find the solution $L(t)$ of the f-KT lattice in terms of the initial matrix $L(0)=L^0$,
it is standard to consider the \emph{LU-factorization} of the matrix $\exp(tL^0)$:
\begin{equation}\label{exp}
\exp(tL^0)=u(t)b(t)\qquad \text{with}\quad u(t)\in U^-,~ b(t)\in B^+. 
\end{equation}
Note here that $u(0)=b(0)=I$, the identity matrix.
It is known and easy to show that this factorization exists if and only if the principal minors are all nonzero.  We then let $\tau_k(t)$ denote the $k$-th principal minor, i.e.
\begin{equation}\label{eq:tau1}
\tau_k(t):=[\exp(tL^0)]_k\qquad\text{for}\quad k=1,2,\ldots,n.
\end{equation}
These functions, which are called  \emph{$\tau$-functions}, 
play a key role in the method for solving the f-KT lattice, as
we will explain below.  
We first have the following result (see e.g. \cite{Symes} which deals with the
original Toda lattice).
\begin{proposition}\label{prop:LUsolution}
The solution $L(t)$ is given by
\[
L(t)=u^{-1}(t)L^0u(t)=b(t)L^0b(t)^{-1}.
\]
\end{proposition}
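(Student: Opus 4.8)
The plan is to verify directly that the matrix $L(t) := u^{-1}(t) L^0 u(t)$ satisfies the Lax equation \eqref{Lax} with initial condition $L(0) = L^0$, and then to check the equivalence of the two stated expressions for $L(t)$. The initial condition is immediate since $u(0) = I$. For the Lax equation, I would differentiate $u(t) L(t) = L^0 u(t)$ (which is just the definition rearranged), obtaining $\dot u L + u \dot L = L^0 \dot u$, so that $\dot L = u^{-1}(L^0 \dot u - \dot u L)\,u^{-1}\cdot u = u^{-1}(L^0 - \dot u u^{-1} L^0)\,\cdots$; more cleanly, $\dot L = [L, u^{-1}\dot u] = -[u^{-1}\dot u, L]$. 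So the whole computation reduces to identifying $u^{-1}\dot u$ with $-(L)_{\geq 0}$, i.e. showing $u^{-1}\dot u = -(L)_{\geq 0}$.

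To get a handle on $u^{-1}\dot u$, I would differentiate the factorization $\exp(tL^0) = u(t) b(t)$. This gives $L^0 \exp(tL^0) = \dot u b + u \dot b$, hence $L^0 = \dot u u^{-1} + u \dot b b^{-1} u^{-1}$ after multiplying on the right by $\exp(-tL^0) = b^{-1}u^{-1}$. Conjugating by $u^{-1}$ on the left (i.e. writing $u^{-1} L^0 u = u^{-1}\dot u + \dot b b^{-1}$) and recalling from Proposition \ref{prop:LUsolution}'s own statement that $L(t) = u^{-1} L^0 u$, we obtain the splitting
\begin{equation*}
L(t) = u^{-1}\dot u + \dot b b^{-1}.
\end{equation*}
Now $u \in U^-$ forces $u^{-1}\dot u \in \mathfrak{u}^-$ (strictly lower triangular), while $b \in B^+$ forces $\dot b b^{-1} \in \mathfrak{b}^+$ (weakly upper triangular). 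Since $\mathfrak{gl}_n = \mathfrak{u}^- \oplus \mathfrak{b}^+$ is a direct sum decomposition, this splitting is unique, so $\dot b b^{-1} = (L)_{\geq 0}$ and $u^{-1}\dot u = (L)_{<0} = L - (L)_{\geq 0}$. Wait — that has the wrong sign from what I claimed above; let me reconcile: we need $\dot L = -[u^{-1}\dot u, L] = -[(L)_{<0}, L] = -[L - (L)_{\geq 0}, L] = [(L)_{\geq 0}, L]$, using $[L,L]=0$. Good — this is exactly \eqref{Lax}.

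For the second equality $L(t) = b(t) L^0 b(t)^{-1}$, I would simply note that $u^{-1} = b \exp(-tL^0)$ from \eqref{exp}, so $u^{-1} L^0 u = b\exp(-tL^0) L^0 \exp(tL^0) b^{-1} = b L^0 b^{-1}$, since $L^0$ commutes with $\exp(\pm tL^0)$. The main (and essentially only) obstacle is the triangularity bookkeeping in the paragraph above: one must be careful that $u^{-1}\dot u$ lands in the \emph{strictly} lower-triangular Lie algebra $\mathfrak{u}^-$ and $\dot b b^{-1}$ in the \emph{weakly} upper-triangular $\mathfrak{b}^+$, and that these intersect trivially, so that the decomposition $L = u^{-1}\dot u + \dot b b^{-1}$ forces $\dot b b^{-1} = (L)_{\geq 0}$ on the nose; everything else is routine differentiation of matrix products. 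One should also remark that the factorization \eqref{exp}, hence the solution, is only valid on the time interval where all principal minors $\tau_k(t)$ are nonzero, which is consistent with the setup preceding the proposition.
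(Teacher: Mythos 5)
Your proof is correct and follows essentially the same route as the paper's: differentiate the LU factorization $\exp(tL^0)=u(t)b(t)$, conjugate to get $u^{-1}L^0u = u^{-1}\dot u + \dot b b^{-1}$, invoke the direct-sum decomposition $\mathfrak{g}=\mathfrak{u}^-\oplus\mathfrak{b}^+$ to read off $u^{-1}\dot u = (L)_{<0}$ and $\dot b b^{-1}=(L)_{\geq 0}$, and then identify the Lax equation. The only cosmetic difference is that the paper names the formula $\tilde L$, shows $\tilde L$ satisfies the f-KT equation with $\tilde L(0)=L^0$, and then invokes uniqueness of solutions to the ODE to conclude $\tilde L=L$, whereas you verify the Lax equation directly for the defined $L(t)$; both are fine, though strictly speaking to justify "the solution" you should, like the paper, note that uniqueness for the ODE forces the identification.
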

\begin{proof}
Taking the derivative of \eqref{exp}, we have
\[
\frac{d}{dt}\exp(tL^0)=L^0ub=ubL^0=\dot{u}b+u\dot{b},
\]
where $\dot{x}$ means the derivative of $x(t)$.  This 
equation can  also be written as
\[
u^{-1}L^0u=bL^0b^{-1}=u^{-1}\dot{u}+\dot{b}b^{-1}.
\]
We denote this as $\tilde{L}$, and show $\tilde{L}=L$.  
Using $\mathfrak{g}=\mathfrak{u}^-\oplus\mathfrak{b}^+$, we decompose $\tilde{L}$ 
as
\[
u^{-1}\dot{u}=(\tilde{L})_{<0}
\qquad\text{and}\qquad
\dot{b}b^{-1}=(\tilde{L})_{\ge 0}.
\]
To show $\tilde L=L$, we first show that $\tilde{L}$ also satisfies the f-KT lattice.  Differentiating $\tilde{L}=u^{-1}L^0u$, we have
\[
\frac{d\tilde{L}}{dt}=-u^{-1}\dot{u}\tilde{L}+\tilde{L}u^{-1}\dot{u}=[-u^{-1}\dot{u},\tilde{L}].
\]
Here we have used $\frac{d}{dt}u^{-1}=-u^{-1}\dot{u}u^{-1}$.
Writing $u^{-1}\dot{u}=(\tilde L)_{<0}=\tilde L-(\tilde L)_{\ge0}$,
we obtain
\[
\frac{d\tilde L}{dt}=[(\tilde L)_{\ge0},\tilde L].
\]
Since $\tilde L(0)=L^0=L(0)$, i.e. the initial data are the same,  the uniqueness theorem of the differential equation implies that $\tilde L(t)=L(t)$.  This completes the proof.
\end{proof}

We also have an explicit formula for the diagonal elements of $L(t)$.
\begin{proposition}\label{prop:diagonal}
The diagonal elements of the matrix $L=L(t)$ can be expressed by
\begin{equation}\label{aii}
a_{k,k}(t)=\frac{d}{dt}\ln\frac{\tau_k(t)}{\tau_{k-1}(t)},
\end{equation}
where $\tau_k(t)=[\exp(tL^0)]_k$ is the $k$-th $\tau$-function as defined in \eqref{eq:tau1}.
\end{proposition}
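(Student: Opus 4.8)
The plan is to compute the principal minors $\tau_k(t) = [\exp(tL^0)]_k$ in two different ways and compare. First I would use the LU-factorization $\exp(tL^0) = u(t)b(t)$ with $u(t)\in U^-$ and $b(t)\in B^+$ from \eqref{exp}. Since $u(t)$ is lower-triangular with $1$'s on the diagonal, its leading $k\times k$ submatrix has determinant $1$; hence the $k$-th principal minor of the product is $\tau_k(t) = [b(t)]_k = b_{1,1}(t)b_{2,2}(t)\cdots b_{k,k}(t)$, where $b_{i,i}(t)$ are the diagonal entries of $b(t)$. In particular $\dfrac{\tau_k(t)}{\tau_{k-1}(t)} = b_{k,k}(t)$, so that $\dfrac{d}{dt}\ln\dfrac{\tau_k(t)}{\tau_{k-1}(t)} = \dfrac{\dot b_{k,k}(t)}{b_{k,k}(t)}$, which is the $k$-th diagonal entry of $\dot b(t) b(t)^{-1}$ (since $\dot b b^{-1}$ is upper-triangular, its $(k,k)$ entry is exactly $\dot b_{k,k}/b_{k,k}$).

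Next I would invoke Proposition \ref{prop:LUsolution} and its proof: there it is shown that $L(t) = b(t)L^0 b(t)^{-1}$ and that the decomposition of this matrix relative to $\mathfrak{g} = \mathfrak{u}^-\oplus\mathfrak{b}^+$ gives $\dot b(t)b(t)^{-1} = (L(t))_{\geq 0}$. Therefore the $(k,k)$ diagonal entry of $\dot b b^{-1}$ equals the $(k,k)$ diagonal entry of $(L(t))_{\geq 0}$, which is simply $a_{k,k}(t)$, the $k$-th diagonal entry of $L(t)$. Combining this with the computation of the previous paragraph yields $a_{k,k}(t) = \dfrac{d}{dt}\ln\dfrac{\tau_k(t)}{\tau_{k-1}(t)}$, which is \eqref{aii}. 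For the edge case $k=1$ one uses the convention $\tau_0(t) = 1$, consistent with $\tau_1(t) = b_{1,1}(t)$ and $a_{1,1}(t) = \dot b_{1,1}/b_{1,1}$.

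I do not anticipate a serious obstacle here; the argument is essentially bookkeeping built on top of Proposition \ref{prop:LUsolution}. The one point requiring a little care is the claim that the $(k,k)$ entry of $\dot b b^{-1}$ is $\dot b_{k,k}/b_{k,k}$: this follows because both $b$ and $b^{-1}$ are upper-triangular, so the diagonal of $b^{-1}$ is $(b_{1,1}^{-1},\dots,b_{n,n}^{-1})$ and the diagonal of a product of upper-triangular matrices is the entrywise product of the diagonals; differentiating $b\,b^{-1} = I$ and reading off the diagonal also gives this directly. A second minor point is ensuring the factorization \eqref{exp} exists on the relevant time interval, i.e. that all $\tau_k(t)\neq 0$ — but this is precisely the standing hypothesis under which the formula is being asserted, and away from such $t$ the statement is vacuous. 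Thus the proof is short and the main content is the identification $\tau_k/\tau_{k-1} = b_{k,k}$ together with the already-established $\dot b b^{-1} = (L)_{\geq 0}$.
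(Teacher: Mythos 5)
Your proof is correct and follows essentially the same route as the paper: both identify $b_{k,k} = \tau_k/\tau_{k-1}$ from the LU-factorization and then use the relation $\dot b\, b^{-1} = (L)_{\geq 0}$ established in Proposition \ref{prop:LUsolution} to read off the diagonal entries.
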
 
\begin{proof}
First recall 
from the proof of 
Proposition \ref{prop:LUsolution}
that $\dot{b}b^{-1}=(L)_{\ge 0}$. 
Then the diagonal elements $b_{k,k}$
of the matrix $b$ 
satisfy the equation,
\[
\frac{db_{k,k}}{dt}=a_{k,k}b_{k,k}\qquad\text{for}\quad k=1,\ldots,n,
\]
which gives $a_{k,k}=\frac{d}{dt}\ln b_{k,k}$.
  From the decomposition $\exp(tL^0)=ub$, we have
\[
\tau_k=[ub]_k=\prod_{i=1}^kb_{i,i}.
\]
This implies that $\displaystyle{b_{k,k}=\frac{\tau_k}{\tau_{k-1}}}$.
This completes the proof.
\end{proof}

If $M$ is a matrix, we define the notation with multi-time variables $\mathbf{t}=(t_1,\ldots,t_{n-1})$
\begin{equation} \label{eq:Theta}
\Theta_{M}(\mathbf{t}):=\sum_{j=1}^{n-1} M^j t_j.
\end{equation}

The proof of Proposition \ref{prop:LUsolution} 
can be easily extended to give the following 
solution to the f-KT hierarchy.
\begin{proposition}\label{prop:LUsolution2}
Consider the LU-factorization
\begin{equation}
\exp(\Theta_{L^0}(\mathbf{t}))=
u(\mathbf{t})b(\mathbf{t})\qquad \text{with}
\quad u(\mathbf{t})\in U^-,~ b(\mathbf{t})\in B^+. 
\end{equation}
The solution $L(\mathbf{t})$ of the f-KT hierarchy is then given by
\[
L(\mathbf{t})=u(\mathbf{t})^{-1} L^0u(\mathbf{t})=
b(\mathbf{t})L^0b(\mathbf{t})^{-1}.
\]
\end{proposition}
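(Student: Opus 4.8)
The plan is to repeat the argument of Proposition \ref{prop:LUsolution} almost verbatim, one time variable at a time. Write $\Theta := \Theta_{L^0}(\mathbf{t}) = \sum_{j=1}^{n-1}(L^0)^j t_j$, so that $\partial\Theta/\partial t_k = (L^0)^k$ and $\Theta$ commutes with every power $(L^0)^k$; hence $\partial_{t_k}\exp(\Theta) = (L^0)^k\exp(\Theta) = \exp(\Theta)(L^0)^k$. Since $L^0$ also commutes with $\exp(\Theta) = u(\mathbf{t})b(\mathbf{t})$, we get $u^{-1}L^0 u = b L^0 b^{-1}$, so the candidate $\tilde{L} := u^{-1}L^0 u = b L^0 b^{-1}$ is well defined, and moreover $\tilde{L}^k = u^{-1}(L^0)^k u = b(L^0)^k b^{-1}$. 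The goal is to show $\tilde{L} = L$.

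Next I would differentiate $\exp(\Theta) = ub$ with respect to $t_k$. From $(L^0)^k u b = ub(L^0)^k = (\partial_{t_k}u)b + u(\partial_{t_k}b)$ one obtains
\[
\tilde{L}^k = u^{-1}(L^0)^k u = u^{-1}(\partial_{t_k}u) + (\partial_{t_k}b)b^{-1}.
\]
Since $u^{-1}(\partial_{t_k}u)\in\mathfrak{u}^-$ and $(\partial_{t_k}b)b^{-1}\in\mathfrak{b}^+$, the decomposition $\mathfrak{g} = \mathfrak{u}^-\oplus\mathfrak{b}^+$ forces $u^{-1}(\partial_{t_k}u) = (\tilde{L}^k)_{<0}$ and $(\partial_{t_k}b)b^{-1} = (\tilde{L}^k)_{\geq 0}$. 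Then, exactly as in Proposition \ref{prop:LUsolution}, differentiating $\tilde{L} = u^{-1}L^0 u$ and using $\partial_{t_k}u^{-1} = -u^{-1}(\partial_{t_k}u)u^{-1}$ gives $\partial_{t_k}\tilde{L} = [-u^{-1}(\partial_{t_k}u),\tilde{L}] = [-(\tilde{L}^k)_{<0},\tilde{L}] = [(\tilde{L}^k)_{\geq 0},\tilde{L}]$, where the last step uses $[\tilde{L}^k,\tilde{L}] = 0$. Thus $\tilde{L}(\mathbf{t})$ satisfies every equation \eqref{KT-hierarchy} of the f-KT hierarchy, and since $u(\mathbf{0}) = b(\mathbf{0}) = I$ we have $\tilde{L}(\mathbf{0}) = L^0$.

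Finally I would invoke uniqueness of the solution with prescribed value at $\mathbf{t} = \mathbf{0}$. Because the flows \eqref{KT-hierarchy} commute, the value $L(\mathbf{t})$ is obtained by integrating them one coordinate at a time — the $t_1$-ODE from $L^0$, then the $t_2$-ODE from the result, and so on — and each step has a polynomial (hence Lipschitz) right-hand side, so has a unique solution; restricted to each such coordinate line, $\tilde{L}$ solves the same ODE with the same initial datum, so $\tilde{L} = L$. I expect the only point requiring care to be this bookkeeping: one must observe that the compatibility of the flows (equivalently, consistency of the expressions for $\partial_{t_k}u$ and $\partial_{t_k}b$ across different $k$) is automatic because everything is read off from the single well-defined matrix $\exp(\Theta_{L^0}(\mathbf{t}))$, so no separate integrability check is needed. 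A minor preliminary is the existence of the LU-factorization near $\mathbf{t} = \mathbf{0}$: the principal minors $\tau_k(\mathbf{t}) = [\exp(\Theta_{L^0}(\mathbf{t}))]_k$ equal $1$ at $\mathbf{t} = \mathbf{0}$, hence are nonzero in a neighborhood, where the factorization (and the above argument) applies; this also yields the generalization $a_{k,k}(\mathbf{t}) = \partial_{t_1}\ln(\tau_k(\mathbf{t})/\tau_{k-1}(\mathbf{t}))$ of Proposition \ref{prop:diagonal}.
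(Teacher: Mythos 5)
Your proposal is correct and follows exactly the route the paper intends: the paper offers no independent proof of this proposition, stating only that the argument for Proposition \ref{prop:LUsolution} ``can be easily extended,'' and your write-up carries out precisely that extension one $t_k$ at a time, including the two points the paper elides (the automatic compatibility of the flows because $u,b$ are read off from the single matrix $\exp(\Theta_{L^0}(\mathbf{t}))$, and the local existence of the LU-factorization near $\mathbf{t}=\mathbf{0}$).
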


This motivates the following definition of the $\tau$-functions
for the 
f-KT hierarchy.
\begin{equation}\label{eq:tau2}
\tau_k(\mathbf{t})=[\exp(\Theta_{L^\0}(\mathbf{t}))]_k\qquad\text{with}\quad
\mathbf{t}=(t_1,\ldots,t_{n-1}),
\end{equation}
where $L^{\mathbf{0}}=L(\mathbf{0})$, the initial matrix of $L(\mathbf{t})$.
As before, the LU-factorization
of Proposition \ref{prop:LUsolution2}
exists if and only if each $\tau$-function
$\tau_k(\mathbf{t})$ is nonzero.

\begin{remark}\label{rem:solution}
An explicit formula for each entry $a_{i,j}(\t)$ 
of $L(\t)$ has been obtained in \cite{AvM, KY} in
terms of the $\tau$-functions and their derivatives with respect to $t_j$'s.  However, in this paper we need only the formula
for the diagonal elements 
given in \eqref{aii} with $t=t_1$.
We included a direct proof of this formula
in order to keep 
the paper self-contained.
\end{remark}

Note that 
if one identifies $t_1=x,~t_2=y$ and $t_3=t$, 
\eqref{eq:tau2} gives the $\tau$-function for the KP equation,
see e.g. \cite{BK}. Then $\tau_k$ is 
associated with a point of the Grassmannian $Gr_{k,n}$, 
and the set of $\tau$-functions
$(\tau_1,\ldots,\tau_{n-1})$ is associated with a point of the flag variety.
The solution space of the f-KT hierarchy is naturally given by the complete flag
variety.

In Section \ref{sec:solKT}, we will associate an 
initial matrix $L^{\mathbf{0}}$ to each point 
on the tnn flag variety.  We obtain in this way a large family of 
regular solutions of the f-KT hierarchy.


\section{The solution of the f-KT hierarchy from the tnn flag
variety}\label{sec:solKT}

In this section we discuss the behavior of solutions of 
the f-KT hierarchy when the initial point is associated to a point on the tnn flag variety. 
By using the 
decomposition \eqref{decomposition} of
the tnn flag variety $(G/B^+)_{\ge0}$, we determine the asymptotic form of 
the matrix $L(\t)$ when the first time variable $t=t_1$ goes to $\pm \infty$.  
Each asymptotic form is a particular fixed point of the f-KT lattice.
We then extend the asymptotic analysis to the case with the multi-times: 
one can reach any fixed point by sending the multi-time variables
to infinity in a particular direction.

We first illustrate how to embed the f-KT flow into the flag variety.

\subsection{The companion embedding of the f-KT flow into the flag variety}\label{sec:embedding}

Let us first recall the companion embedding of the iso-spectral variety $\mathcal{F}_{\Lambda}$ into the flag variety $G/B^+$ defined in \cite{FH}
\begin{align}\label{eq:companion}
c_{\Lambda}:&~~\mathcal{F}_{\Lambda}~\longrightarrow~G/B^+\\
&\hskip0.2cm L\hskip0.2cm\longmapsto ~~ u\,B^+ \nonumber
\end{align}
where $u\in U^-$ is the unique element given by  the decomposition $L=u^{-1}C_{\Lambda}u$ with
the companion matrix
\begin{equation}\label{C}
C_{\Lambda}=\begin{pmatrix}
0&1                &    0               &   \cdots  &     0      \\
0&0                &     1             &     \ddots  &  \vdots       \\
\vdots      & \vdots          &  \ddots   &     \ddots  & \vdots   \\
0&  0               &     \cdots              &  0  &     1         \\
\pm\sigma_n  &  \mp\sigma_{n-1} &  \cdots  &  \sigma_2&0 \\
\end{pmatrix} ~\in ~\epsilon+\mathfrak{b}^-.
\end{equation}
Here the $\sigma_i$'s are obtained from the characteristic polynomial $\text{det}(\lambda I-L)=\sum_{i=0}^n(-1)^i\sigma_i\lambda^{n-i}$ with $\sigma_0=1$,
that is, the $\sigma_i$'s are the elementary symmetric polynomials of 
the eigenvalues $\{\lambda_1,\ldots,\lambda_n\}$:
\[
\sigma_1=\sum_{j=1}^n\lambda_j=0,\quad \sigma_2=\sum_{i<j}\lambda_i\lambda_j,\quad \sigma_3=\sum_{i<j<k}\lambda_i\lambda_j\lambda_k,\qquad \cdots \qquad \sigma_n=\prod_{i=1}^n\lambda_i.
\]



By Proposition \ref{prop:LUsolution2}, each f-KT flow is represented
by the map 
$$Ad_{u(\t)^{-1}}~: ~L^0\, \longrightarrow\, L(\t).$$
We then have the following (see also \cite{FH, CK, KS08}).

\begin{proposition}\label{prop:companion}
Each f-KT flow maps to the flag variety as
\begin{equation}\label{companionE}
\begin{CD}
L^\0  @>c_{\Lambda}>> u_0\,B^+\\
@V Ad_{u(\t)^{-1}}VV @VVV \\
L(\t) @>c_{\Lambda}>>\quad\left\{
\begin{array}{lll}
     ~~ u_0\,u(\t)\,B^+ \\[0.5ex]
      = u_0 \exp(\Theta_{L^\0}(\t))\,B^+\\[0.5ex]
     =\exp(\Theta_{C_{\Lambda}}(\t))\,u_0\,B^+
\end{array}\right.
\end{CD}
\end{equation}
where $L^\0=u_0^{-1}C_{\Lambda}u_0$.  
That is, the initial matrix $L^\0$ determines the element $u_0\in U^-$, and each f-KT flow corresponds to an $\exp(\Theta_{C_{\Lambda}}(\t))$-orbit on 
the flag variety with the initial point $u_0B^+$.
\end{proposition}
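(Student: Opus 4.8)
The plan is to unwind the definitions and verify the commutativity of the diagram directly, using Proposition \ref{prop:LUsolution2} and the companion embedding $c_\Lambda$. First I would recall that $c_\Lambda(L^\0) = u_0 B^+$ means precisely that $L^\0 = u_0^{-1} C_\Lambda u_0$ with $u_0 \in U^-$; conjugating, we get $C_\Lambda = u_0 L^\0 u_0^{-1}$, so $\Theta_{C_\Lambda}(\t) = u_0\, \Theta_{L^\0}(\t)\, u_0^{-1}$ and hence $\exp(\Theta_{C_\Lambda}(\t)) = u_0 \exp(\Theta_{L^\0}(\t)) u_0^{-1}$. This already identifies the second and third expressions on the right-hand side: $\exp(\Theta_{C_\Lambda}(\t))\, u_0 B^+ = u_0 \exp(\Theta_{L^\0}(\t))\, u_0^{-1} u_0 B^+ = u_0 \exp(\Theta_{L^\0}(\t))\, B^+$.

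Next I would identify the first expression $u_0\, u(\t)\, B^+$ with the second. By Proposition \ref{prop:LUsolution2}, $\exp(\Theta_{L^\0}(\t)) = u(\t) b(\t)$ with $u(\t) \in U^-$ and $b(\t) \in B^+$, so $u_0 \exp(\Theta_{L^\0}(\t))\, B^+ = u_0\, u(\t)\, b(\t)\, B^+ = u_0\, u(\t)\, B^+$, since $b(\t) \in B^+$ fixes the coset. This shows all three right-hand expressions agree.

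It remains to check that the left vertical arrow $Ad_{u(\t)^{-1}}: L^\0 \mapsto L(\t)$ is compatible with $c_\Lambda$, i.e.\ that $c_\Lambda(L(\t)) = u_0\, u(\t)\, B^+$. Again by Proposition \ref{prop:LUsolution2}, $L(\t) = u(\t)^{-1} L^\0 u(\t)$. Substituting $L^\0 = u_0^{-1} C_\Lambda u_0$ gives $L(\t) = u(\t)^{-1} u_0^{-1} C_\Lambda u_0 u(\t) = (u_0 u(\t))^{-1} C_\Lambda (u_0 u(\t))$. Since $u_0, u(\t) \in U^-$ and $U^-$ is a group, $u_0 u(\t) \in U^-$, and by the uniqueness of the element of $U^-$ in the companion decomposition (as asserted in \eqref{eq:companion}), this element is exactly the one defining $c_\Lambda(L(\t))$. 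Therefore $c_\Lambda(L(\t)) = u_0 u(\t)\, B^+$, which completes the verification of the diagram.

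There is really no hard step here: the proposition is essentially a bookkeeping exercise packaging Proposition \ref{prop:LUsolution2} together with the companion embedding. The only point requiring a small amount of care is invoking the \emph{uniqueness} of $u \in U^-$ in the decomposition $L = u^{-1} C_\Lambda u$ — one should note that $u_0 u(\t)$ lies in $U^-$ and conjugates $C_\Lambda$ to $L(\t)$, and then uniqueness forces it to be the distinguished element, rather than trying to recompute it from scratch. One might also remark that the LU-factorization (equivalently, the non-vanishing of the $\tau$-functions $\tau_k(\t)$) is what guarantees $u(\t)$ exists along the flow, so the diagram is valid wherever the f-KT flow is defined.
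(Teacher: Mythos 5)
Your proof is correct and complete. The paper itself does not give a proof of Proposition~\ref{prop:companion}; it simply asserts the diagram and cites \cite{FH, CK, KS08}. Your argument is the natural unwinding of the definitions: you use $L^\0 = u_0^{-1} C_\Lambda u_0$ to get $\exp(\Theta_{C_\Lambda}(\t)) = u_0 \exp(\Theta_{L^\0}(\t)) u_0^{-1}$ (identifying the second and third expressions), use the LU-factorization $\exp(\Theta_{L^\0}(\t)) = u(\t)b(\t)$ from Proposition~\ref{prop:LUsolution2} (identifying the first and second), and verify the left vertical arrow by substituting and invoking the uniqueness of the $U^-$ factor in the companion decomposition. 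Your closing remarks are also apt: the uniqueness step is the one place requiring care, and the existence of $u(\t)$ (i.e.\ the non-vanishing of the $\tau$-functions) is exactly what ensures the diagram makes sense along the flow — which is precisely the point taken up in Proposition~\ref{prop:regular} for initial data from the tnn flag variety.
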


\begin{remark} 
By Proposition \ref{prop:fixedpoint}, a fixed point of the f-KT lattice 
has the form
$L=(L)_{\ge0}$. This implies that if $L$ is a fixed point, then $\exp(\Theta_{L}(\mathbf{t}))\in B^+$, hence $u(\mathbf{t})$ is the identity matrix.
That is, a fixed point of the f-KT flow in a space $\mathcal{F}_{\Lambda}$ is the fixed point of the $\exp(\Theta_{C_{\Lambda}}(\mathbf{t}))$-action in the flag variety
$G/B^+$.

\end{remark}


\subsection{The full Kostant-Toda flow on $\mathcal{R}_{v,w}^{>0}$}\label{sec:guL}
In this section we associate to each matrix $g\in G^{>0}_{\v_+,\w}$
(representing a point of $\mathcal{R}_{v,w}^{>0}$) an initial matrix 
$L^\0$ for the f-KT hierarchy.  
We then express the $\tau$-functions of the f-KT hierarchy 
with the 
initial matrix $L^\0$ in terms of $g$.
Since the solution of the f-KT hierarchy can be given 
in terms of the $\tau$-functions in  \eqref{eq:tau2} (recall Remark \ref{rem:solution}),
this allows one to 
express the solution in terms of $g$.

The main result of this section is the following.

\begin{proposition}\label{prop:gsolution}
To each matrix $g\in G^{>0}_{\v_+,\w}$
we can associate an initial matrix 
$L^\0 \in \mathcal{F}_{\Lambda}$, 
defined by $L^\0 = u_0^{-1} C_{\Lambda} u_0$, where 
$C_{\Lambda}$ is given by \eqref{C}, and 
$u_0 \in U^-$ and $b_0 \in B^+$ are uniquely determined by 
the equation $Eg = u_0 b_0$. 
For this choice of $g$ and $L^\0$, the $\tau$-functions for 
the f-KT hierarchy
with initial matrix $L^\0$ are given by 
\begin{equation}\label{tauEg}
\tau_k(\mathbf{t})=[\exp(\Theta_{C_{\Lambda}}(\mathbf{t}))u_0]_k=d_k\left[E\exp\left(\Theta_{\Lambda}(\mathbf{t})\right)g\right]_k,
\end{equation}
where $d_k=[b_0^{-1}]_k$.
\end{proposition}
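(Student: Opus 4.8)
The plan is to connect the $\tau$-functions defined intrinsically via the LU-factorization of $\exp(\Theta_{L^0}(\mathbf t))$ to the data $g, E$, and $C_\Lambda$ by chasing the diagram in Proposition \ref{prop:companion}. First I would establish that $L^0 = u_0^{-1}C_\Lambda u_0$ is a legitimate element of $\mathcal F_\Lambda$: since $C_\Lambda$ has characteristic polynomial with the prescribed eigenvalues $\{\lambda_1,\dots,\lambda_n\}$, any conjugate of it lies in $\mathcal F_\Lambda$, and conjugating $C_\Lambda \in \epsilon + \mathfrak b^-$ by $u_0 \in U^-$ keeps the result in $\epsilon + \mathfrak b^-$ (the superdiagonal $\epsilon$ is preserved because $u_0$ is unipotent lower-triangular). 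So $L^0$ is a genuine Lax matrix, and by construction $c_\Lambda(L^0) = u_0 B^+$. One should also check $Eg = u_0 b_0$ actually admits such a factorization — this needs the principal minors $[Eg]_k$ to be nonzero; presumably this is where flag-positivity of $g$ (Lemma \ref{lem:flagTNN}) and the structure of the Vandermonde matrix $E$ come in, guaranteeing $[Eg]_k \neq 0$, at least generically, but the Proposition's statement takes $u_0, b_0$ as given by that factorization so I would simply invoke it.

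Next I would compute the $\tau$-functions directly from \eqref{eq:tau2}: $\tau_k(\mathbf t) = [\exp(\Theta_{L^0}(\mathbf t))]_k$. Using $L^0 = u_0^{-1} C_\Lambda u_0$ we get $\exp(\Theta_{L^0}(\mathbf t)) = u_0^{-1} \exp(\Theta_{C_\Lambda}(\mathbf t)) u_0$. The key observation (as in Proposition \ref{prop:companion}) is that $\exp(\Theta_{C_\Lambda}(\mathbf t)) u_0 = u_0 u(\mathbf t) b(\mathbf t)$ where $u(\mathbf t) b(\mathbf t)$ is the LU-factorization of $\exp(\Theta_{L^0}(\mathbf t))$. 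Therefore the $k$th principal minor satisfies $\tau_k(\mathbf t) = [u_0^{-1} \exp(\Theta_{C_\Lambda}(\mathbf t)) u_0]_k$. Now I use the standard fact that the $k$th principal minor is unchanged under left multiplication by a lower-triangular unipotent and under right multiplication by an upper-triangular unipotent — but here we have conjugation by $u_0 \in U^-$, so $[u_0^{-1} X u_0]_k = [X u_0]_k$ since left multiplication by $u_0^{-1} \in U^-$ fixes principal minors. Wait — more carefully, I want $[u_0^{-1} \exp(\Theta_{C_\Lambda}) u_0]_k = [\exp(\Theta_{C_\Lambda}) u_0]_k$, which holds because $u_0^{-1}$ is lower-triangular unipotent; this gives the first equality in \eqref{tauEg}.

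For the second equality I would diagonalize $C_\Lambda$. The Vandermonde matrix $E$ diagonalizes the companion matrix: $E C_\Lambda = \Lambda E$, i.e. $C_\Lambda = E^{-1} \Lambda E$ (where $\Lambda = \diag(\lambda_1,\dots,\lambda_n)$). Hence $\exp(\Theta_{C_\Lambda}(\mathbf t)) = E^{-1} \exp(\Theta_\Lambda(\mathbf t)) E$, so
\[
\exp(\Theta_{C_\Lambda}(\mathbf t)) u_0 = E^{-1} \exp(\Theta_\Lambda(\mathbf t)) E u_0.
\]
Using $E u_0 = Eg b_0^{-1}$ from $Eg = u_0 b_0$, we get $\exp(\Theta_{C_\Lambda}(\mathbf t)) u_0 = E^{-1} \exp(\Theta_\Lambda(\mathbf t)) Eg b_0^{-1}$. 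Taking $k$th principal minors: right multiplication by $b_0^{-1} \in B^+$ multiplies the $k$th principal minor by $[b_0^{-1}]_k =: d_k$, while left multiplication by $E^{-1}$ is the subtle point. The claim must be that $[E^{-1} M]_k = [M]_k$ for the relevant $M$, OR — more likely, since $E^{-1}$ is not triangular — there is a normalization absorbed into $d_k$; I would recheck the precise bookkeeping here, possibly the statement intends $d_k = [b_0^{-1}]_k$ to absorb exactly the discrepancy, or there is an identity $[E \exp(\Theta_\Lambda) g]_k = [E^{-1}\exp(\Theta_\Lambda) E g]_k \cdot(\text{something})$.

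The main obstacle I anticipate is precisely this last minor-tracking step: reconciling the two expressions $[\exp(\Theta_{C_\Lambda}(\mathbf t)) u_0]_k$ and $d_k[E\exp(\Theta_\Lambda(\mathbf t)) g]_k$ requires care about how principal minors transform under the non-triangular matrices $E$ and $E^{-1}$, and getting the constant $d_k$ exactly right. My expectation is that the identity $C_\Lambda = E^{-1}\Lambda E$ together with $Eg = u_0 b_0$ makes everything collapse cleanly once one writes $[\exp(\Theta_{C_\Lambda}(\mathbf t))u_0]_k = [E^{-1}\exp(\Theta_\Lambda(\mathbf t)) u_0 b_0 \cdot b_0^{-1}\cdot \text{wait}]$... the cleanest route: from $Eg = u_0 b_0$ we directly have $u_0 = Eg b_0^{-1}$, substitute into $[\exp(\Theta_{C_\Lambda})u_0]_k$, use $\exp(\Theta_{C_\Lambda}) = E^{-1}\exp(\Theta_\Lambda)E$ to get $[E^{-1}\exp(\Theta_\Lambda) E g b_0^{-1}]_k$, and then I need $[E^{-1} N]_k = [N]_k$ where $N = \exp(\Theta_\Lambda) E g b_0^{-1}$ — this is false in general, so the correct manipulation instead writes everything without the stray $E^{-1}$, perhaps by noting $[\exp(\Theta_{C_\Lambda})u_0]_k = [\exp(\Theta_{C_\Lambda})u_0 b_0]_k \cdot [b_0^{-1}]_k = d_k[\exp(\Theta_{C_\Lambda}) E g]_k$ (using $u_0 b_0 = Eg$), and then — hmm, $\exp(\Theta_{C_\Lambda}) Eg = E^{-1}\exp(\Theta_\Lambda) E \cdot E \cdots$, no. I would resolve this by carefully checking whether the intended identity uses $E g$ or involves a transpose/inverse of $E$, and trust that with the correct reading of how $E$ enters (likely $E$ here should be read so that $\exp(\Theta_{C_\Lambda})$ acting on the left corresponds to $\exp(\Theta_\Lambda)$ acting after $E$), the computation reduces to: $\tau_k(\mathbf t) = [\exp(\Theta_{C_\Lambda}(\mathbf t)) u_0]_k = [\exp(\Theta_{C_\Lambda}(\mathbf t)) u_0 b_0]_k [b_0^{-1}]_k = d_k [\exp(\Theta_{C_\Lambda}(\mathbf t)) E g]_k$, and finally $\exp(\Theta_{C_\Lambda}(\mathbf t)) E = E \exp(\Theta_\Lambda(\mathbf t))$ from $C_\Lambda E^T =$ ... from $E C_\Lambda = \Lambda E$ we get $\exp(\Theta_\Lambda) E = E \exp(\Theta_{C_\Lambda})$, hence $\exp(\Theta_{C_\Lambda}) E$ is not directly $E\exp(\Theta_\Lambda)$ — so in fact one should write $[\exp(\Theta_{C_\Lambda})Eg]_k$; multiply on the left by $E$ won't change things if... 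Actually the resolution is: $[\exp(\Theta_{C_\Lambda})Eg]_k$, and since $E\exp(\Theta_{C_\Lambda}) = \exp(\Theta_\Lambda)E$, we can't pull $E$ through from the right. The honest fix is that $\tau_k = [u_0^{-1}\exp(\Theta_{C_\Lambda})u_0]_k$ and one multiplies by $u_0^{-1}$ on the left which is lower unipotent (fixes principal minors) to get $[\exp(\Theta_{C_\Lambda})u_0]_k$ — then one should instead have started by left-multiplying the whole expression $\exp(\Theta_{L^0}) = u_0^{-1}\exp(\Theta_{C_\Lambda})u_0$ and used $E u_0^{-1}\exp(\Theta_{C_\Lambda})u_0 = E u_0^{-1}\exp(\Theta_{C_\Lambda})u_0$; I would write $E \exp(\Theta_{L^0}(\mathbf t)) g^{-1}\cdots$ — at this point I would simply carry out the bookkeeping carefully in the proof, using that $[XY]_k = [X]_k[Y]_k$ when $X \in U^-, B^+$ appropriately, and that $E C_\Lambda E^{-1} = \Lambda$, confident that the constant $d_k$ is exactly what makes the two sides of \eqref{tauEg} agree.
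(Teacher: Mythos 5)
Your overall plan matches the paper's proof, but you never actually close the computation, and the reason is a concrete sign error at the diagonalization step. You write $EC_\Lambda = \Lambda E$, i.e.\ $C_\Lambda = E^{-1}\Lambda E$. That is backwards. The columns $E^0_i = (1,\lambda_i,\dots,\lambda_i^{n-1})^T$ of the Vandermonde matrix $E$ are \emph{eigenvectors} of the companion matrix: $C_\Lambda E^0_i = \lambda_i E^0_i$ (the shift in the first $n-1$ rows raises each power of $\lambda_i$ by one, and the last row reproduces $\lambda_i^n$ via the characteristic polynomial). Hence $C_\Lambda E = E\Lambda$, i.e.\ $C_\Lambda = E\Lambda E^{-1}$, and consequently $\exp(\Theta_{C_\Lambda}(\mathbf t)) = E\exp(\Theta_\Lambda(\mathbf t))E^{-1}$. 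With this orientation everything collapses exactly as you hoped:
\begin{align*}
\exp(\Theta_{L^0}(\mathbf t)) = u_0^{-1}\exp(\Theta_{C_\Lambda}(\mathbf t))u_0
 = u_0^{-1}E\exp(\Theta_\Lambda(\mathbf t))\,E^{-1}u_0
 = u_0^{-1}E\exp(\Theta_\Lambda(\mathbf t))\,g\,b_0^{-1},
\end{align*}
the last step using $E^{-1}u_0 = g\,b_0^{-1}$, which is just $Eg=u_0b_0$ rearranged. Taking the $k$th principal minor, $u_0^{-1}\in U^-$ on the left is dropped (as you correctly observed), and right multiplication by $b_0^{-1}\in B^+$ pulls out the factor $d_k=[b_0^{-1}]_k$, giving $\tau_k(\mathbf t)=d_k\,[E\exp(\Theta_\Lambda(\mathbf t))\,g]_k$. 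No "stray $E^{-1}$" ever appears, so none of the contortions in the second half of your argument are needed. Your first equality in \eqref{tauEg} was correctly derived; the gap is that you committed to the wrong similarity relation $C_\Lambda = E^{-1}\Lambda E$ and then, recognizing the resulting mismatch, speculated about various fixes without landing on the correct one. This is a genuine gap in the argument as written, though the repair is a one-line correction.

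A minor additional point: you are right that $L^0 = u_0^{-1}C_\Lambda u_0 \in \epsilon+\mathfrak b^-$ because conjugation by $U^-$ preserves the Hessenberg shape, and that the existence of the LU-factorization $Eg=u_0b_0$ is guaranteed by flag-positivity of $g$ together with positivity of the Vandermonde minors via Cauchy--Binet; the paper isolates the latter as Lemma~\ref{Eg}, so invoking it rather than re-deriving it is fine.
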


\begin{remark}
Note that if we use Proposition \ref{prop:gsolution} 
to associate $L^0$ to $g$, and we subsequently apply the companion embedding
to $L^0$, then we will obtain the point $u_0 B^+ = Egb_0^{-1} B^+ = Eg  B^+$
of the flag variety.  This is actually a point on the 
\emph{totally positive part} 
$\mathcal R_{e,w_0}^{>0}$ of the
flag variety.
\end{remark}


The following lemma implies that the construction of  $L^\0$ in 
Proposition \ref{prop:gsolution} is well-defined.

\begin{lemma}\label{Eg}
For each $g\in G^{>0}_{\v_+,\w}$,
the product $Eg$ has the LU-factorization, that is, there exist unique $u_0 \in U^-$ and $b_0 \in B^+$ such that
$E g =u_0\, b_0.$
\end{lemma}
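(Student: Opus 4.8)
The plan is to show that the product $Eg$ admits an LU-factorization by verifying that all of its leading principal minors $[Eg]_k$ are nonzero for $k=1,\dots,n$; the standard fact quoted after \eqref{exp} then gives unique $u_0\in U^-$ and $b_0\in B^+$ with $Eg=u_0b_0$. So the whole lemma reduces to the claim that $[Eg]_k\neq 0$ for each $k$.

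To compute $[Eg]_k$, I would use the Cauchy--Binet formula. The $k$th leading principal minor of $Eg$ is the $k\times k$ minor using rows $\{1,\dots,k\}$ and columns $\{1,\dots,k\}$, so
\[
[Eg]_k=\sum_{I\in\binom{[n]}{k}}\Delta^{[k]}_{[k],I}(E)\,\Delta_{I,[k]}(g),
\]
where $\Delta^{[k]}_{[k],I}(E)$ is the minor of $E$ on rows $[k]$ and columns $I$, and $\Delta_{I,[k]}(g)=\Delta^k_I(g)$ is exactly the flag minor of $g$ on rows $I$ and leftmost $k$ columns, in the notation of Section \ref{sec:project}. The first factor is a minor of the Vandermonde matrix $E$ built from $\lambda_1<\lambda_2<\dots<\lambda_n$ using the first $k$ rows (i.e. the monomials $1,\lambda,\dots,\lambda^{k-1}$) and the columns indexed by $I=\{i_1<\dots<i_k\}$; this is a generalized Vandermonde determinant, and since $1,\lambda,\dots,\lambda^{k-1}$ is a Chebyshev (unisolvent) system on $\R$ and the $\lambda_{i_j}$ are distinct and increasing, this minor is \emph{strictly positive} for every $I$. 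Hence every term $\Delta^{[k]}_{[k],I}(E)\,\Delta^k_I(g)$ has the sign of $\Delta^k_I(g)$.

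Now I invoke the positivity results already established. By Lemma \ref{lem:flagTNN}, every flag minor $\Delta^k_I(g)$ of $g\in G^{>0}_{\v_+,\w}$ is $\geq 0$, so all terms in the Cauchy--Binet sum are nonnegative and $[Eg]_k\geq 0$. To get strict positivity it suffices to exhibit a single $I$ with $\Delta^k_I(g)>0$: by Lemma \ref{minors}, taking $z=w$ (which certainly satisfies $v\leq w\leq w$) gives $\Delta^k_{w\cdot[k]}(g)>0$ for all $k$. Therefore the sum has at least one strictly positive term and no negative terms, so $[Eg]_k>0$ for every $k=1,\dots,n$. This establishes that the leading principal minors of $Eg$ are all nonzero, hence the LU-factorization $Eg=u_0b_0$ exists and is unique.

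The only mildly delicate point — the step I'd flag as the one to get right rather than a genuine obstacle — is the positivity of the Vandermonde-type minors $\Delta^{[k]}_{[k],I}(E)$: one must make sure to use the \emph{top} $k$ rows of $E$ (powers $0$ through $k-1$), for which the minor factors as $\bigl(\prod_{a<b,\,a,b\in I}(\lambda_b-\lambda_a)\bigr)>0$ by the ordering of the eigenvalues; if instead an arbitrary set of $k$ rows were used the sign analysis would be more involved, but that is not what $[Eg]_k$ requires. Everything else is a direct application of Cauchy--Binet together with Lemmas \ref{lem:flagTNN} and \ref{minors}.
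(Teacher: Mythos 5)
Your proof is correct and takes essentially the same route as the paper: expand $[Eg]_k$ via Cauchy--Binet, observe that the Vandermonde minors on the top $k$ rows are strictly positive, and combine with the non-negativity of the flag minors from Lemma~\ref{lem:flagTNN}. The only cosmetic difference is how you guarantee at least one strictly positive term: you invoke Lemma~\ref{minors} with $z=w$, whereas the paper simply notes that $g$ is invertible, so its leftmost $k$ columns have full rank and some $k\times k$ minor is nonzero (hence positive by non-negativity). Both are valid, and Lemma~\ref{minors} is already available at this point in the paper, so there is no circularity.
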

\begin{proof}
We calculate the principal minors $[E\,g]_k$ of $Eg$
for $k=1,\dots,n$.
We have that \[
[E\,g]_k = \left|\begin{pmatrix}
1&1&\cdots &1\\
\vdots&\vdots&\ddots&\vdots\\
\lambda_1^{k-1}&\lambda_2^{k-1}&\cdots&\lambda_n^{k-1}
\end{pmatrix}
\,\begin{pmatrix}
g_{1,1}&\cdots & g_{1,k}\\
g_{2,1}&\cdots&g_{2,k}\\
\vdots&\ddots&\vdots\\ 
g_{n,1}&\cdots&g_{n,k}
\end{pmatrix}
\right|.
\]
Since $g\in G_{\v_+,\w}^{>0}$, Lemma \ref{lem:flagTNN} implies that 
all the $k\times k$ minors of the leftmost $k$ columns are non-negative
(and since $g \in G/B^+$, at least one of them is positive).
Also since $\lambda_1 < \cdots < \lambda_n$, all the $k\times k$ minors of the top $k$ rows of the Vandermonde
matrix are positive.  Therefore the Binet-Cauchy
lemma implies that $[E\,g]_k>0$ for all $k=1 \dots n$, and hence
$Eg$ has the LU-factorization.
\end{proof}
 
We now complete the proof of Proposition \ref{prop:gsolution}.
\begin{proof}
%
Our initial matrix is given by 
\[
L^\0=u_0^{-1}C_{\Lambda}u_0=u_0^{-1}E\Lambda E^{-1}u_0, 
\]
where we have used the diagonalization, $C_{\Lambda}=E\,\Lambda\,E^{-1}$.  
We then have
\begin{align*}
\exp\left(\Theta_{L^\0}(\mathbf{t})\right)&=u_0^{-1}\exp(\Theta_{C_{\Lambda}}(\t))\,u_0\\
&=u_0^{-1}E\,\exp\left(\Theta_{\Lambda}(\mathbf{t})\right)\,E^{-1}\,u_0\\
&=u_0^{-1}E\,\exp\left(\Theta_{\Lambda}(\mathbf{t})\right)\,g\,b_0^{-1}.
\end{align*}

Therefore  by \eqref{eq:tau2}, the $\tau$-functions of the 
f-KT hierarchy are given by 
\begin{align*}
\tau_k(\mathbf{t})&=
\left[\exp\left(\Theta_{L^\0}(\mathbf{t})\right)\right]_k
= \left[ u_0^{-1} E \exp\left(\Theta_{\Lambda}(\mathbf{t})\right)gb_0^{-1}\right]_k\\
&= \left[ E \exp\left(\Theta_{\Lambda}(\mathbf{t})\right)gb_0^{-1}\right]_k =d_k\left[E\exp\left(\Theta_{\Lambda}(\mathbf{t})\right)g\right]_k,
\end{align*}
where $d_k=[b_0^{-1}]_k$.  
\end{proof}

\begin{remark}\label{rem:torus}
From the proof above we see that 
$\exp(\Theta_{C_{\Lambda}}(\mathbf{t}))u_0=E\exp(\Theta_{\Lambda}(\mathbf{t}))gb_0^{-1}$.
This implies that the f-KT flow gives a (non-compact) torus action on
the flag variety.  More precisely, the torus $(\R_{>0})^n$ acts by 
$\exp(\Theta_{\Lambda}(\mathbf{t}))$ on the basis vectors consisting of
the columns of the Vandermonde matrix $E$, that is, we have
$\exp(\Theta_{L^0}(\mathbf{t}))u_0B^+=E\exp(\Theta_{\Lambda}(\mathbf{t}))gB^+$.
Note here that the torus $\exp(\Theta_{\Lambda}(\mathbf{t}))$ acts on the point $gB^+$.
\end{remark}

\begin{definition}\label{def:EI}
For $I=\{i_1,i_2,\ldots,i_k\}$, we set
$E_{i_j}(\mathbf{t}):=\exp\theta_{{i_j}}(\mathbf{t})$ with $\theta_{i_j}(\mathbf{t})=\sum_{m=1}^{n-1}\lambda_{i_j}^mt_m$, and define
\[
E_I(\mathbf{t}):=
\prod_{\ell<m}(\lambda_{i_m}-\lambda_{i_\ell})\,e^{\theta_I(\mathbf{t})}
\quad\text{with}\quad \theta_I(\mathbf{t})=\sum_{j=1}^k\theta_{i_j}(\mathbf{t}).
\]
\end{definition}

Note that $E_I(\mathbf{t})$ is always positive.

\begin{corollary}\label{cor:tauA}
Use the notation of Proposition \ref{prop:gsolution}.
Recall from \eqref{eq:pi} that $A_k = \pi_k(g)$
consists of the leftmost $k$ columns of $g$.
Then we can write the $\tau$-function as 
\begin{equation}\label{tau}
\tau_k(\mathbf{t})=d_k \sum_{I \in {[n] \choose k}} \Delta_I(A_k) E_I(\mathbf{t}).
\end{equation}
\end{corollary}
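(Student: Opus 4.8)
The plan is to expand the $k$-th principal minor in the formula $\tau_k(\mathbf{t})=d_k\bigl[E\exp(\Theta_{\Lambda}(\mathbf{t}))g\bigr]_k$ of Proposition~\ref{prop:gsolution} using the Cauchy--Binet formula together with the Vandermonde determinant.

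First I would write $D(\mathbf{t}):=\exp(\Theta_{\Lambda}(\mathbf{t}))=\diag\bigl(e^{\theta_1(\mathbf{t})},\dots,e^{\theta_n(\mathbf{t})}\bigr)$, so that $ED(\mathbf{t})$ is the $n\times n$ matrix with $(r,j)$-entry $\lambda_j^{r-1}e^{\theta_j(\mathbf{t})}$. The principal minor $[ED(\mathbf{t})g]_k$ is the determinant of the $k\times k$ matrix obtained by multiplying the top $k$ rows of $ED(\mathbf{t})$ (a $k\times n$ matrix) by the leftmost $k$ columns of $g$ (an $n\times k$ matrix). Applying Cauchy--Binet to this product gives
\[
[ED(\mathbf{t})g]_k=\sum_{I\in\binom{[n]}{k}}\det\bigl((ED(\mathbf{t}))_{[k],I}\bigr)\,\det\bigl(g_{I,[k]}\bigr),
\]
where $I=\{i_1<\dots<i_k\}$ ranges over $k$-subsets of $[n]$, the matrix $(ED(\mathbf{t}))_{[k],I}$ has rows $1,\dots,k$ and columns $I$, and $g_{I,[k]}$ has rows $I$ and columns $1,\dots,k$.

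Next I would identify the two factors. By definition $\det(g_{I,[k]})=\Delta^k_I(g)=\Delta_I(A_k)$, using the identification $A_k=\pi_k(g)$ from \eqref{eq:pi}. For the other factor, the $(r,\ell)$-entry of $(ED(\mathbf{t}))_{[k],I}$ is $\lambda_{i_\ell}^{r-1}e^{\theta_{i_\ell}(\mathbf{t})}$, so pulling $e^{\theta_{i_\ell}(\mathbf{t})}$ out of the $\ell$-th column for each $\ell$ leaves a $k\times k$ Vandermonde matrix in $\lambda_{i_1},\dots,\lambda_{i_k}$, and hence
\[
\det\bigl((ED(\mathbf{t}))_{[k],I}\bigr)=e^{\theta_I(\mathbf{t})}\prod_{\ell<m}(\lambda_{i_m}-\lambda_{i_\ell})=E_I(\mathbf{t})
\]
by Definition~\ref{def:EI}, where $\theta_I(\mathbf{t})=\sum_j\theta_{i_j}(\mathbf{t})$. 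Substituting these into the Cauchy--Binet expansion gives $[ED(\mathbf{t})g]_k=\sum_{I\in\binom{[n]}{k}}\Delta_I(A_k)E_I(\mathbf{t})$, and multiplying by $d_k$ yields \eqref{tau}.

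I expect no serious obstacle here: the argument is a direct application of Cauchy--Binet and the Vandermonde formula. The only points requiring care are the transpose built into the definition of $A_k$ in \eqref{eq:pi} (so that the minor of $g$ on rows $I$ and columns $\{1,\dots,k\}$ is exactly $\Delta_I(A_k)$), and grouping the product as $\bigl(ED(\mathbf{t})\bigr)\cdot g$ so that the minors of $g$ appear directly in the expansion.
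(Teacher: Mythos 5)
Your proof is correct and follows essentially the same route as the paper: both start from $\tau_k(\mathbf{t})=d_k[E\exp(\Theta_\Lambda(\mathbf{t}))g]_k$, view the principal minor as a product of a $k\times n$ block (top rows of $E\exp(\Theta_\Lambda(\mathbf{t}))$) with an $n\times k$ block (left columns of $g$), apply Cauchy--Binet, and recognize the Vandermonde factor and the flag minors of $g$. You are slightly more explicit than the paper in pulling the exponentials out of the columns and in noting the transpose built into the definition of $A_k$, but there is no substantive difference.
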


\begin{proof}
By Proposition \ref{prop:gsolution}, we have that 
\begin{align*}
\tau_k(\mathbf{t})&=d_k\left[E\exp\left(\Theta_{\Lambda}(\mathbf{t})\right)g\right]_k \\
&= d_k \left[
\begin{pmatrix}
1 &1& \cdots & 1\\
\lambda_1 & \lambda_2&\cdots & \lambda_n \\
\vdots &\vdots&\ddots  & \vdots \\
\lambda_1^{n-1} &\lambda_2^{n-1}& \cdots & \lambda_{n}^{n-1}
\end{pmatrix}
 \begin{pmatrix}
e^{\theta_{1}(\mathbf{t})} &&&\\
&e^{\theta_{2}(\mathbf{t})}&&\\
&&\ddots&\\
&&&e^{\theta_{n}(\mathbf{t})}
\end{pmatrix}
g~\right]_k\\
&= d_k \left|
\begin{pmatrix}
e^{\theta_{1}(\mathbf{t})} & e^{\theta_{2}(\t)}&\cdots & e^{\theta_{n}(\mathbf{t})}\\
\vdots &\vdots&\ddots  & \vdots \\
\lambda_1^{k-1} e^{\theta_{1}(\mathbf{t})} & \lambda_2^{k-1}e^{\theta_{2}(\t)}& \cdots & \lambda_1^{k-1} e^{\theta_{n}(\mathbf{t})}
\end{pmatrix}
 \begin{pmatrix}
g_{11} &\cdots & g_{1k}\\
g_{21} & \cdots & g_{2k}\\
\vdots  &\ddots  & \vdots \\
g_{n1}& \cdots & g_{nk}
\end{pmatrix}
\right|\\
&=d_k \sum_{I \in {[n] \choose k}} E_I(\mathbf{t}) \Delta_I(A_k),
\end{align*}
where in the last step, we have used the Binet-Cauchy lemma.
\end{proof}

When the initial 
matrix $L^0$ comes from a point in $G_{\v_+,\w}^{>0}$,
the f-KT flow is complete.

\begin{proposition}\label{prop:regular}
Let $g\in G_{\v_+,\w}^{>0}$, and $Eg=u_0b_0$
as in Proposition  \ref{prop:gsolution}.
Let $L(0)=L^0=u_0^{-1}C_{\Lambda}u_0$.  Then
the solution $L(\t)$ of the f-KT hierarchy is regular for all 
$\t=(t_1,\dots,t_{n-1})\in\R^{n-1}$.
\end{proposition}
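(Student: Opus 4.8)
The plan is to show that the LU-factorization in Proposition \ref{prop:LUsolution2} exists for every $\mathbf{t}\in\R^{n-1}$, which, as noted after \eqref{eq:tau2}, is equivalent to showing that each $\tau$-function $\tau_k(\mathbf{t})$ is nonzero for all $\mathbf{t}$. The key tool is the explicit formula for the $\tau$-functions from Corollary \ref{cor:tauA}:
\[
\tau_k(\mathbf{t})=d_k\sum_{I\in\binom{[n]}{k}}\Delta_I(A_k)\,E_I(\mathbf{t}),
\]
where $A_k=\pi_k(g)$ and $d_k=[b_0^{-1}]_k$.

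First I would observe that $d_k\neq 0$: indeed $b_0\in B^+$ is invertible, so all its principal minors are nonzero, and $[b_0^{-1}]_k = 1/[b_0]_k$ up to sign. (Alternatively, from the proof of Lemma \ref{Eg}, $d_k[Eg]_k=[u_0 b_0 b_0^{-1}]_k\cdot(\dots)$; more directly $\tau_k(\mathbf{0})=[\,\exp(0)\,]_k=1$, forcing $d_k\sum_I\Delta_I(A_k)>0$.) Next, by Lemma \ref{lem:flagTNN} every flag minor $\Delta_I(A_k)$ is non-negative since $g\in G^{>0}_{\v_+,\w}$, and at least one is strictly positive because $A_k$ represents a point of $Gr_{k,n}$ (it has full rank $k$). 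Meanwhile $E_I(\mathbf{t})=\prod_{\ell<m}(\lambda_{i_m}-\lambda_{i_\ell})\,e^{\theta_I(\mathbf{t})}$ is strictly positive for all $\mathbf{t}$, by Definition \ref{def:EI} and the ordering $\lambda_1<\cdots<\lambda_n$ (the Vandermonde-type product is positive because the indices of $I$ are increasing). Hence $\tau_k(\mathbf{t})$ is a sum of non-negative terms, at least one of which is strictly positive, so $\tau_k(\mathbf{t})\neq 0$ (with a fixed sign, that of $d_k$) for every $\mathbf{t}\in\R^{n-1}$.

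Having established that all $\tau_k(\mathbf{t})\neq 0$, the LU-factorization $\exp(\Theta_{L^0}(\mathbf{t}))=u(\mathbf{t})b(\mathbf{t})$ exists for all $\mathbf{t}$, so by Proposition \ref{prop:LUsolution2} the solution $L(\mathbf{t})=u(\mathbf{t})^{-1}L^0 u(\mathbf{t})$ is defined for all $\mathbf{t}\in\R^{n-1}$; since $u(\mathbf{t})$ depends smoothly (indeed analytically) on $\mathbf{t}$ wherever the principal minors are nonzero, $L(\mathbf{t})$ is regular throughout $\R^{n-1}$. I do not anticipate a serious obstacle here: the content is really just assembling Corollary \ref{cor:tauA}, the non-negativity of flag minors (Lemma \ref{lem:flagTNN}), the positivity of $E_I(\mathbf{t})$, and the non-vanishing of the constants $d_k$. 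The only point requiring a little care is arguing that not all $\Delta_I(A_k)$ vanish simultaneously — this follows because $\pi_k(g)$ is a genuine point of the Grassmannian, equivalently because $g\in G$ is invertible so its leftmost $k$ columns are linearly independent.
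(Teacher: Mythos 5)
Your proof is correct and follows essentially the same route as the paper: both argue that the LU-factorization exists for all $\mathbf{t}$ because the $\tau$-functions from Corollary \ref{cor:tauA} are a sum of non-negative terms $\Delta_I(A_k)E_I(\mathbf{t})$ (by Lemma \ref{lem:flagTNN} and positivity of $E_I$), at least one strictly positive, scaled by the nonzero constant $d_k$. You spell out a few details the paper leaves implicit — the non-vanishing of $d_k$ and that not all $\Delta_I(A_k)$ vanish — but these are exactly the points the paper's citation of Lemma \ref{lem:flagTNN} and Corollary \ref{cor:tauA} is meant to cover, so there is no substantive difference.
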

\begin{proof}
Recall that by Proposition \ref{prop:LUsolution2},
the solution of the f-KT lattice
is given by $L(\t)=u^{-1}(\t)L^0u(\t)$,  
where $u(\t)\in U^-$ is obtained from the LU-factorization of the matrix
$\exp(\Theta_{L^0}(\mathbf{t}))=u(\t)b(\t)$ 
for $b(t)\in B^+$.
The $\tau$-functions are the principal minors of 
$\exp(\Theta_{L^0}(\mathbf{t}))$,
and by Corollary \ref{cor:tauA} and Lemma \ref{lem:flagTNN}
they are all positive.  Therefore
the LU-factorization exists and is 
unique for each $\t\in\R^{n-1}$.
This implies that 
$L(\t)$ is regular for all $\t\in\R^{n-1}$, that is, 
the f-KT flow is complete.
\end{proof}

\begin{remark}
If a $\tau$-function vanishes at a fixed multi-time $\t=\t_*$, then 
the LU-factorization of $\exp(\Theta_{L^0}(\mathbf{t}_*))$ fails, i.e.
$\exp(\Theta_{L^0}(\mathbf{t}_*))\in U^-\dot{z}B^+$ for some $z \in \Sym_n$ 
such that $z \neq e$.
This means that the f-KT flow becomes singular and
hits the  boundary of the top Schubert cell,
see \cite{CK, FH}. The set of times $\t_*$ where the $\tau$-function
vanishes is called the \emph{Painlev\'e divisor}.  
It is then quite interesting to identify  $z$
in terms of
the initial matrix $L^0$ given in the general form of \eqref{L}.
This will be discussed in a future work.
\end{remark}

\begin{remark}
The $\tau$-function in \eqref{tau} has the Wronskian structure, that is, if we define
the functions $\{f_1,\ldots,f_k\}$ by
\[
(f_1(\mathbf{t}),\ldots,f_k(\mathbf{t})):=(E_1(\mathbf{t}),\ldots,E_n(\mathbf{t}))\,A_k^T,
\]
then we have
\[
\tau_k(\mathbf{t})=d_k\,\text{Wr}(f_1(\mathbf{t}),\ldots,f_k(\mathbf{t})),
\]
where the Wronskian is for the $t_1$-variable.  
Furthermore, if we identify the first three variables
as $t_1=x,~t_2=y$ and $t_3=t$ in \eqref{tau},
then we obtain the $\tau$-function for the KP equation
which gives rise to soliton solutions of the KP equation from the Grassmannian
$Gr_{k,n}$
\cite{KW3}.
\end{remark}

\subsection{Asymptotic behavior of the f-KT lattice}

In this section we consider the asymptotics of the solution 
$L(t)$ to the f-KT lattice as $t=t_1$ goes to $\pm \infty$,
and where $L(0) = L^0$ is the initial matrix from Proposition \ref{prop:gsolution}.
To analyze the asymptotics, we 
use the $\tau$-functions  $\tau_k(\mathbf{t})$ from
\eqref{tau} with all other times $t_j$ being constants for $j\ge 2$.
In Section \ref{subsec:fKThierarchy}, we extend the asymptotic results of this section to the case of the f-KT hierarchy
in the $\mathbf{t}$-space.

Recall that we have a fixed order $\lambda_1 < \dots < \lambda_n$
on the eigenvalues, and that 
$z \cdot [k]$ denotes the ordered set $\{z(1), z(2),\dots, z(k)\}$.  

\begin{lemma}\label{asympt-tau}
Let $g\in G^{>0}_{\v_+,\w}$.  Then 
each $\tau_k({t})$-function from \eqref{tau} 
has the following asymptotic behavior:
\[
\tau_k({t})~\longrightarrow~\left\{\begin{array}{lll}
E_{w \cdot [k]}({t})\quad&\text{as}\quad t\to\infty\\[1.0ex]
E_{v \cdot [k]}({t})\quad&\text{as}\quad t\to-\infty
\end{array}\right.
\]
\end{lemma}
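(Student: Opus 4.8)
The plan is to read off the asymptotics directly from the explicit formula
\[
\tau_k(t) = d_k \sum_{I \in \binom{[n]}{k}} \Delta_I(A_k)\, E_I(t)
\]
of Corollary \ref{cor:tauA}, where now all times $t_j$ with $j \geq 2$ are held constant and only $t = t_1 \to \pm\infty$. Each term $E_I(t)$ is positive, and its exponential rate in $t_1$ is $\sum_{j \in I} \lambda_j$ (the linear-in-$t_1$ part of $\theta_I$), times a positive constant depending on the fixed $t_2,\dots,t_{n-1}$ and on the Vandermonde factors. Since the $\lambda_j$ are strictly increasing, the sum $\sum_{j \in I}\lambda_j$ over $k$-element subsets $I$ is uniquely maximized by $I = \{n-k+1,\dots,n\}$ and uniquely minimized by $I = \{1,\dots,k\}$. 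So among all $I$ with $\Delta_I(A_k) \neq 0$, the dominant exponential as $t_1 \to +\infty$ is the one for which $\sum_{j\in I}\lambda_j$ is largest, and as $t_1\to -\infty$ the one for which it is smallest.

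The key point, then, is to identify \emph{which} $I$ with $\Delta_I(A_k) \neq 0$ has the largest (resp. smallest) subset-sum of eigenvalues. First I would recall that for $g \in G^{>0}_{\v_+,\w}$, the projection $\pi_k(g) = A_k$ represents a point of the cell $\mathcal{P}^{>0}_{v_{(k)},w_{(k)}}$ of $(Gr_{k,n})_{\geq 0}$ (where $v_{(k)}, w_{(k)}$ denote the appropriate minimal coset representatives, or simply use that $\pi_k$ sends the cell $\mathcal{R}^{>0}_{v,w}$ to a cell of the tnn Grassmannian as in Section \ref{sec:Grass}). On a cell $\mathcal{P}^{>0}_{v',w'}$ the nonvanishing Pl\"ucker coordinates are a known set: the lexicographically minimal one is $\Delta_{v'\cdot[k]}$ and the lexicographically maximal one is $\Delta_{w'\cdot[k]}$, and all of $v'\cdot[k], w'\cdot[k]$ lie among the nonvanishing support. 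In fact the cleanest route is to invoke Lemma \ref{minors}: for every $z$ with $v \leq z \leq w$ we have $\Delta^k_{z\cdot[k]}(g) > 0$; taking $z = w$ gives $\Delta_{w\cdot[k]}(A_k) > 0$ and taking $z = v$ gives $\Delta_{v\cdot[k]}(A_k) > 0$. It remains to check that $w\cdot[k]$ is exactly the subset maximizing $\sum_{j\in I}\lambda_j$ among nonvanishing $I$, and $v\cdot[k]$ the minimizing one.

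For the maximizer: I would argue that if $\Delta_I(A_k) \neq 0$ then $I \leq w\cdot[k]$ in the Gale (componentwise, after sorting) order, which by $\lambda_1 < \cdots < \lambda_n$ implies $\sum_{j\in I}\lambda_j \leq \sum_{j\in w\cdot[k]}\lambda_j$ with equality iff $I = w\cdot[k]$. This comes from the fact that $\pi_k(g) \in \pi_k(B^+\dot w B^+/B^+)$, whose nonvanishing Pl\"ucker coordinates $\Delta_I$ satisfy $I \leq w\cdot[k]$ in Gale order (the Schubert cell condition); dually, $\pi_k(g)\in \pi_k(B^-\dot v B^+/B^+)$ forces $I \geq v\cdot[k]$ in Gale order for all nonvanishing $\Delta_I$, which is exactly the content used in the proof of Lemma \ref{minors} about lexicographically minimal coordinates, upgraded to the full Gale order statement. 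Hence as $t_1\to+\infty$ the term $\Delta_{w\cdot[k]}(A_k)E_{w\cdot[k]}(t)$ strictly dominates all others, so $\tau_k(t) \sim d_k \Delta_{w\cdot[k]}(A_k) E_{w\cdot[k]}(t)$; after absorbing the positive constant $d_k\Delta_{w\cdot[k]}(A_k)$ (or rather, after noting the statement is asserting the asymptotic form up to such a constant, as encoded in the definition of $E_I$) we get $\tau_k(t) \to E_{w\cdot[k]}(t)$, and symmetrically $\tau_k(t)\to E_{v\cdot[k]}(t)$ as $t_1 \to -\infty$.

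The main obstacle is the Gale-order / dominance bookkeeping: making precise that for $g\in G^{>0}_{\v_+,\w}$ the support of nonvanishing flag minors at level $k$ is sandwiched, in Gale order, between $v\cdot[k]$ and $w\cdot[k]$, and that these two extreme subsets are themselves attained. Once that is in hand, the exponential-dominance argument is routine: finitely many positive exponentials in $t_1$, unique top and bottom rate. I would also need to be mildly careful that the ``other'' time variables $t_2,\dots,t_{n-1}$ only contribute bounded (constant-in-$t_1$) positive factors and cannot change which exponential wins — which is immediate since they enter $E_I(t)$ only through the $t_1$-independent factor $\exp(\sum_{j\in I}\sum_{m\geq 2}\lambda_j^m t_m)$ times the positive Vandermonde constant.
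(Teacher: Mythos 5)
Your proposal follows essentially the same route as the paper: read off the asymptotics from the explicit Pl\"ucker-sum formula for $\tau_k$, then identify the dominant exponential via the support $\M(A_k)$ of nonvanishing flag minors. However, you are more careful than the paper on one genuinely important point. The paper's proof only records that $w\cdot[k]$ and $v\cdot[k]$ are the \emph{lexicographically} maximal and minimal elements of $\M(A_k)$, and then appeals to $E_1\ll\cdots\ll E_n$. As stated, that is not quite enough: for an arbitrary collection of $k$-subsets, the lex-maximal one need not maximize $\sum_{i\in I}\lambda_i$ (e.g.\ $\{2,3\}$ is lex-greater than $\{1,4\}$, but can have a smaller eigenvalue sum). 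What is actually needed — and what you correctly supply — is the \emph{Gale (componentwise) order} sandwiching: since $g\in B^+\dot w B^+\cap U^-\dot v$, every $I\in\M(A_k)$ satisfies $v\cdot[k]\leq I\leq w\cdot[k]$ in Gale order, and then $\lambda_1<\cdots<\lambda_n$ gives $\sum_{i\in I}\lambda_i\leq\sum_{i\in w\cdot[k]}\lambda_i$ with equality only for $I=w\cdot[k]$ (and symmetrically for $v\cdot[k]$). This is the honest reason $E_{w\cdot[k]}$ dominates. Your handling of the fixed higher times $t_2,\dots,t_{n-1}$ as bounded positive factors, and the remark that constants are being absorbed (as the paper itself notes after the lemma), are both correct. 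In short: same approach, but your version makes explicit a step the paper leaves implicit, and is the version one would want if the proof were being written out fully.
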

Note that in Lemma \ref{asympt-tau} we are ignoring the coefficients
of the exponentials $E_{w \cdot [k]}({t})$ and 
$E_{v \cdot [k]}({t})$. 

\begin{proof}
Recall that 
\[
\mathcal{M}(A_k)=\left\{I\in\binom{[n]}{k}~\Big|~\Delta_I(A_k)\ne 0\right\}.
\]
Since $A_k = \pi_k(g)$ and $g\in U^-\dot{v}\cap B^+\dot{w}B^+$ (by Theorem 
\ref{t:parameterization}),  the lexicographically maximal and minimal elements in $\mathcal{M}(A_k)$
are respectively given by $w \cdot [k]$ and $v \cdot [k]$.

Recall from Definition \ref{def:EI} 
that $E_j({t}) = \exp(\theta_{j}({t}))$. 
Since $\lambda_1 < \dots < \lambda_n$,
we have 
\begin{align*}
&E_1\ll E_2\ll \cdots \ll E_n,\quad\text{as}\quad t\to \infty, \\[0.5ex]
&E_1\gg E_2\gg \cdots\gg E_n,\quad\text{as}\quad t \to -\infty,
\end{align*}
which implies the lemma.
\end{proof}

Our main theorem on the asymptotic behavior of the 
matrix $L(\t)$ is the 
following.
\begin{theorem}\label{asympt}
Let $g\in G^{>0}_{\v_+,\w}$ and let $L^0\in \mathcal{F}_{\Lambda}$ 
be the initial matrix as given by Proposition \ref{prop:gsolution}.  Then
the diagonal elements $a_{k,k}=a_{k,k}(t)$ of $L(t)$ satisfy
\[
a_{k,k}~\longrightarrow~\left\{\begin{array}{lll}
\lambda_{w(k)}\quad&\text{as}\quad t\to\infty\\[1.0ex]
\lambda_{v(k)}\quad&\text{as}\quad t\to-\infty
\end{array}\right.
\]
Furthermore,  
$L(t)$ approaches a fixed point of the f-KT flow as $t\to\pm\infty$:
we have
$(L)_{<0}\to 0$ and 
\[
L(t)~\longrightarrow~\left\{\begin{array}{lll}
\epsilon+\text{diag}(\lambda_{w(1)},\lambda_{w(2)},\dots,\lambda_{w(n)})\quad&\text{as}\quad t\to\infty\\[1.5ex]
\epsilon+\text{diag}(\lambda_{v(1)},\lambda_{v(2)},\dots,\lambda_{v(n)})\quad &\text{as}\quad t\to-\infty
\end{array}\right.
\]
\end{theorem}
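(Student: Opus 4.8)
The plan is to deduce the statement about $L(t)$ from the asymptotics of the $\tau$-functions in Lemma~\ref{asympt-tau}. First I would establish the claim about the diagonal elements $a_{k,k}(t)$. By Proposition~\ref{prop:diagonal} we have $a_{k,k}(t) = \frac{d}{dt}\ln\frac{\tau_k(t)}{\tau_{k-1}(t)}$. By Lemma~\ref{asympt-tau}, as $t\to\infty$ we have $\tau_k(t) \sim c_k E_{w\cdot[k]}(t)$ for some positive constant $c_k$ (absorbing $d_k$, the Vandermonde factor, and the leading Pl\"ucker coordinate), and similarly $\tau_{k-1}(t)\sim c_{k-1} E_{w\cdot[k-1]}(t)$. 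Since $w\cdot[k] = (w\cdot[k-1]) \cup \{w(k)\}$ as sets, the ratio $E_{w\cdot[k]}(t)/E_{w\cdot[k-1]}(t)$ is (up to a positive constant) $e^{\theta_{w(k)}(t)}$, whose logarithmic $t_1$-derivative is exactly $\lambda_{w(k)}$. Taking the limit — and here one must be slightly careful that the error terms are genuinely subexponential so that their contribution to the logarithmic derivative vanishes, which follows because the next-largest exponential term differs from the leading one by a strictly positive multiple of $t_1$ — gives $a_{k,k}(t)\to\lambda_{w(k)}$. The $t\to-\infty$ case is identical, using the lexicographically minimal element $v\cdot[k]$.

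Next I would handle the off-diagonal part, i.e. show $(L)_{<0}\to 0$. The cleanest route is via the companion embedding: by Proposition~\ref{prop:companion} and Remark~\ref{rem:torus}, the f-KT flow corresponds on the flag variety to the orbit $\exp(\Theta_{\Lambda}(\t))\, gB^+$ (in the basis given by the columns of $E$), and the matrix $L(t)$ is recovered from the point $u(t) B^+$ via $L(t) = u(t)^{-1} C_{\Lambda} u(t)$ where $u(t)\in U^-$. As $t\to\infty$, the torus $\exp(\Theta_{\Lambda}(t))$ drives $gB^+$ to the $T$-fixed point $\dot w B^+$ of the flag variety corresponding to the lexicographically maximal nonvanishing flag minors $\{w\cdot[k]\}_{k=1}^n$ (this is the standard fact that a generic one-parameter torus limit of a point in $B^+\dot w B^+/B^+$ is $\dot w B^+$, combined with the fact that $g\in B^+\dot w B^+$ from Theorem~\ref{t:parameterization}); as $t\to-\infty$ it goes to $\dot v B^+$. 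Correspondingly $u(t)$ converges, and $L(t)$ converges to $u_\infty^{-1} C_{\Lambda} u_\infty$, a matrix in $\epsilon+\mathfrak{b}^-$ whose flow-limit must be a fixed point of the f-KT lattice — hence by Proposition~\ref{prop:fixedpoint} it has zero strictly-lower-triangular part, i.e. $(L)_{<0}\to 0$. Combined with the diagonal computation, this forces $L(t)\to \epsilon + \diag(\lambda_{w(1)},\dots,\lambda_{w(n)})$ as $t\to\infty$ and $L(t)\to\epsilon+\diag(\lambda_{v(1)},\dots,\lambda_{v(n)})$ as $t\to-\infty$.

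An alternative, more hands-on approach to $(L)_{<0}\to 0$ that avoids invoking the torus-limit statement as a black box: use the explicit formulas (referenced in Remark~\ref{rem:solution}) expressing each entry $a_{i,j}(\t)$ as a ratio of (derivatives of) $\tau$-functions, plug in the asymptotics of Lemma~\ref{asympt-tau}, and check that for $i>j$ the numerator is of strictly smaller exponential order than the denominator. This is essentially a bookkeeping argument with the dominant exponentials $E_{w\cdot[k]}$, using that the relevant $\tau$-derivatives pick out the same leading exponential whose logarithmic derivatives cancel in the off-diagonal combinations. Either way, one then observes a posteriori that any limit point of $L(t)$ as $t\to\pm\infty$ is a stationary point of the flow, so by Proposition~\ref{prop:fixedpoint} it lies in $\epsilon+\h$, and the diagonal computation pins down exactly which one.

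The main obstacle is the off-diagonal vanishing: the diagonal statement is a short logarithmic-derivative computation, but showing $(L)_{<0}\to 0$ requires either the geometric input that a generic torus orbit converges to a $T$-fixed flag (and identifying that fixed flag as $\dot w B^+$ resp.\ $\dot v B^+$), or a careful exponential-order estimate on the off-diagonal $\tau$-quotient formulas. I expect the paper takes the geometric route, since the companion embedding of Proposition~\ref{prop:companion} has been set up precisely for this, and the fixed-point characterization of Proposition~\ref{prop:fixedpoint} is the natural tool to conclude that the limit lies on the diagonal. A secondary subtlety worth flagging is justifying the interchange of limit and derivative in $\lim_{t\to\pm\infty}\frac{d}{dt}\ln\frac{\tau_k}{\tau_{k-1}}$; this is handled by noting that $\ln\tau_k(t)$ is, up to an affine-linear-in-$t$ function, a convergent sum of the form $\ln(1 + \sum(\text{terms decaying exponentially in }t))$, whose derivative converges.
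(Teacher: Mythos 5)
Your diagonal computation matches the paper's exactly (Proposition~\ref{prop:diagonal} combined with Lemma~\ref{asympt-tau}), including your observation that the error terms decay exponentially. For $(L)_{<0}\to 0$, however, the paper takes neither of your two routes. Having noted that $a_{k,k}-\lambda_{w(k)}\to 0$ exponentially and hence $da_{k,k}/dt\to 0$ exponentially, the paper works directly with the component form \eqref{eq:f-KTcomp} of the f-KT system and runs a double induction down the subdiagonals: the $\ell=0$ equations $da_{k,k}/dt = a_{k+1,k}-a_{k,k-1}$ give $a_{k+1,k}\to 0$ exponentially by induction on $k$, then the $\ell=1$ equations (solved for $a_{k+2,k}$) give exponential decay of the second subdiagonal, and so on. This stays entirely inside the ODE framework, avoids both the torus-limit geometry and the explicit $\tau$-quotient formulas of Remark~\ref{rem:solution}, and yields exponential decay of every off-diagonal entry as a byproduct. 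Your geometric route is a valid alternative, but it glosses over a real step: to pass from convergence of the flag point to $\dot{w}B^+$ (in the $\mathcal{E}$-basis) to convergence of the $U^-$-representative $u(t)$, you must check that the limit flag sits in the big cell of the standard basis, i.e.\ that all principal minors $[E\dot{w}]_k$ are nonzero. This holds here because they are Vandermonde minors, but it is not automatic; absent that check, $u(t)$ could diverge even while the flag converges, and then $L(t)=u(t)^{-1}C_\Lambda u(t)$ would not obviously have a limit. The paper's induction sidesteps this issue entirely, at the cost of being somewhat less conceptual.
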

\begin{proof}  We consider only the case for $t\to\infty$;
the proof is the same in the other case.

First recall from \eqref{aii} that
\[
a_{k,k}=\frac{d}{dt}\ln\frac{\tau_k}{\tau_{k-1}}.
\]
Then from Lemma \ref{asympt-tau}, we have
\[
a_{k,k}~\longrightarrow~\frac{d}{dt}\ln\frac{E_{w\cdot[k]}}{E_{w\cdot[k-1]}}=\frac{d}{dt}\ln E_{w(k)}=\lambda_{w(k)}\quad\text{as}~t\to\infty.
\]
Note  that $u_{k,k}:=a_{k,k}-\lambda_{w(k)}$ decays \emph{exponentially}
to zero as $t\to\infty$, i.e. we have $\displaystyle{\frac{da_{k,k}}{dt}\to 0}$ exponentially.  This decay property of the diagonal elements 
is key for proving the convergence $(L)_{<0}\to0$.

Now we show $(L)_{<0}\to 0$. From \eqref{eq:f-KTcomp} with $\ell=0$, 
we have
\[
\frac{da_{k,k}}{dt}=a_{k+1,k}-a_{k,k-1},\qquad\text{for}\quad k=1,\ldots,n,
\]
where by convention we have $a_{i,j}=0$ if $i=n+1$ or $j=0$.
Since the left hand side decays exponentially, the case 
$k=1$ gives  
$a_{2,1}\to0$.  Now by induction on $k$ 
we find that 
all the elements of the first sub-diagonal decay exponentially, that is,
we have
\[
a_{k+1,k}~\longrightarrow~0\quad \text{for  }~k=1,\ldots,n-1.
\]

From \eqref{eq:f-KTcomp} with $\ell=1$, we have
\[
\frac{da_{k+1,k}}{dt}=a_{k+2,k}-a_{k+1,k-1}+(a_{k+1,k+1}-a_{k,k})a_{k+1,k},\qquad\text{for}\quad k=1,\ldots,n-1,
\]
and hence  
the elements of the second sub-diagonal of $(L)_{<0}$ are given by
\begin{align*}
a_{k+2,k}&=a_{k+1,k-1}+\frac{da_{k+1,k}}{dt}-(a_{k+1,k+1}-a_{k,k})a_{k+1,k}.
\end{align*}
Now using induction on $k$, 
all the functions on the right-hand side decay exponentially, and hence
the elements $a_{k+2,k}$ for $k=1,\ldots,n-2$ also decay exponentially.
(The base case with $k=1$
shows the decay of $a_{3,1}$.)

Continuing this argument for $\ell>1$ yields
$(L)_{<0}\to0$.  This completes the proof.
\end{proof}
\begin{example}
Consider the $\mathfrak{sl}_5$ f-KT lattice.
Let $g\in G^{>0}_{\v_+,\w}$ where $\v_+$ and $\w$ are given by
\[
\w=s_2s_3s_1s_4s_3s_2\qquad \text{and}\qquad \v_+=s_2\,1\,1\,s_4s_3\,1.
\]
Since $w\cdot(1,2,3,4,5)=(3,5,1,4,2)$ and $v\cdot(1,2,3,4,5)=(1,3,5,2,4)$, we have the following asymptotic matrices,
\[
L(t)~\longrightarrow~\left\{\begin{array}{lll}
\epsilon+\text{diag}(\lambda_1,\lambda_3,\lambda_5,\lambda_2,\lambda_4)\quad&\text{as}\quad t\to-\infty\\[1.0ex]
\epsilon+\text{diag}(\lambda_3,\lambda_5,\lambda_1,\lambda_4,\lambda_2)\quad &\text{as}\quad t\to\infty
\end{array}\right.
\]
\end{example}

\begin{definition}
We say that a flow $L(t)$ \emph{has the asymptotic form $(v,w)$}
for $(v,w) \in \Sym_n\times \Sym_n$ 
if when $t\to-\infty$ (resp. $t\to\infty$) the 
Lax matrix $L(t)$ 
tends to $\epsilon+ \diag(\lambda_{v(1)},\ldots,\lambda_{v(n)})$ (resp.
$\epsilon+\diag(\lambda_{w(1)},\ldots,\lambda_{w(n)})$).
\end{definition}

Then Theorem \ref{asympt} implies the following.
\begin{corollary}\label{cor:flow}
We consider f-KT flows coming from points of $(G/B^+)_{\geq 0}$.
If $v \leq w$ are comparable elements in 
$\Sym_n$, then there exists a regular f-KT flow whose asymptotic form is 
$(v,w)$.  Moreover, the dimension of the space of such flows 
is $\ell(w)-\ell(v)$.
\end{corollary}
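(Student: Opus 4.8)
The plan is to derive Corollary \ref{cor:flow} directly from Theorem \ref{asympt} together with Theorem \ref{t:parameterization}. Given comparable elements $v \leq w$ in $\Sym_n$, I would first fix a reduced expression $\w$ for $w$, and then invoke Lemma \ref{l:positive} to obtain the unique PDS $\v_+$ for $v$ inside $\w$. Theorem \ref{t:parameterization} then supplies the cell $G^{>0}_{\v_+,\w} \cong \R_{>0}^{\ell(w)-\ell(v)}$, which maps isomorphically onto $\mathcal{R}_{v,w}^{>0} \subset (G/B^+)_{\geq 0}$. For any matrix $g \in G^{>0}_{\v_+,\w}$, Proposition \ref{prop:gsolution} associates an initial matrix $L^0 = u_0^{-1} C_{\Lambda} u_0 \in \mathcal{F}_{\Lambda}$, and Proposition \ref{prop:regular} guarantees the resulting f-KT flow $L(t)$ is regular (complete) for all $t \in \R$. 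Theorem \ref{asympt} then tells us exactly that $L(t) \to \epsilon + \diag(\lambda_{v(1)},\ldots,\lambda_{v(n)})$ as $t \to -\infty$ and $L(t) \to \epsilon + \diag(\lambda_{w(1)},\ldots,\lambda_{w(n)})$ as $t \to +\infty$; that is, $L(t)$ has asymptotic form $(v,w)$. This establishes existence.

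For the dimension count, I would argue that distinct points $g, g' \in G^{>0}_{\v_+,\w}$ give rise to genuinely distinct f-KT flows. The cleanest way is to track the flow back through the companion embedding: by Proposition \ref{prop:companion}, the f-KT flow through $L^0$ corresponds to the $\exp(\Theta_{C_{\Lambda}}(\mathbf{t}))$-orbit of $u_0 B^+ = Eg B^+$ in the flag variety, and two flows coincide only if their orbits coincide, i.e. only if $Eg B^+$ and $Eg' B^+$ lie on the same orbit. Since multiplication by $E$ is a fixed invertible map and the torus orbit through a point of $\mathcal{R}_{e,w_0}^{>0}$ meets that cell in a set of dimension $n-1$ (the full torus dimension), while $\mathcal{R}_{v,w}^{>0}$ has dimension $\ell(w)-\ell(v)$, the family of such flows is parametrized by $G^{>0}_{\v_+,\w}$ up to the torus action — but in fact one sees more simply that the map $g \mapsto L^0 \mapsto (\text{flow of } L^0)$ is injective on $G^{>0}_{\v_+,\w}$ because $L^0$ itself is recovered from $g$ via $L^0 = u_0^{-1}C_\Lambda u_0$ with $u_0$ determined by $Eg = u_0 b_0$, and distinct $g$ (which already represent distinct points $gB^+$, hence distinct $EgB^+$) yield distinct $u_0 B^+$, hence distinct $u_0$, hence distinct $L^0$, hence distinct initial conditions and therefore distinct flows. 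So the space of f-KT flows with asymptotic form $(v,w)$ is in bijection with $G^{>0}_{\v_+,\w} \cong \R_{>0}^{\ell(w)-\ell(v)}$, giving dimension $\ell(w)-\ell(v)$.

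The main subtlety — and the step I would be most careful about — is the precise meaning of ``the dimension of the space of such flows.'' One must be sure that the construction $g \mapsto L^0$ is well-defined (this is exactly Lemma \ref{Eg}), injective, and that it does not miss any flows arising from other cells $\mathcal{R}_{v',w'}^{>0}$ of the tnn flag variety having the same asymptotics. For the last point, Theorem \ref{asympt} shows the asymptotic form of the flow from a point of $\mathcal{R}_{v',w'}^{>0}$ is exactly $(v',w')$, so different cells give different asymptotic forms and there is no overcounting. Putting these observations together with the existence argument above completes the proof of Corollary \ref{cor:flow}.

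\begin{proof}
Let $v \leq w$ in $\Sym_n$. Fix a reduced expression $\w = s_{i_1}\cdots s_{i_m}$ for $w$, and let $\v_+$ be the unique PDS for $v$ in $\w$ provided by Lemma \ref{l:positive}. By Theorem \ref{t:parameterization} the set $G^{>0}_{\v_+,\w}$ is isomorphic to $\R_{>0}^{\ell(w)-\ell(v)}$ and maps isomorphically onto $\mathcal{R}_{v,w}^{>0}$. Pick any $g \in G^{>0}_{\v_+,\w}$. By Lemma \ref{Eg}, $Eg$ admits an LU-factorization $Eg = u_0 b_0$, and Proposition \ref{prop:gsolution} associates to $g$ the initial matrix $L^0 = u_0^{-1} C_{\Lambda} u_0 \in \mathcal{F}_{\Lambda}$. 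By Proposition \ref{prop:regular} the corresponding f-KT flow $L(t)$ is regular for all $t \in \R$, and by Theorem \ref{asympt} we have $L(t) \to \epsilon + \diag(\lambda_{v(1)},\ldots,\lambda_{v(n)})$ as $t \to -\infty$ and $L(t) \to \epsilon + \diag(\lambda_{w(1)},\ldots,\lambda_{w(n)})$ as $t \to +\infty$; that is, $L(t)$ has asymptotic form $(v,w)$. This proves existence.

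For the dimension statement, observe that the assignment $g \mapsto L^0$ is injective on $G^{>0}_{\v_+,\w}$: distinct $g, g' \in G^{>0}_{\v_+,\w}$ represent distinct points $gB^+ \neq g'B^+$, hence $EgB^+ \neq Eg'B^+$, hence (since $u_0 B^+ = Eg b_0^{-1} B^+ = Eg B^+$) the corresponding $u_0, u_0' \in U^-$ are distinct, and therefore $L^0 = u_0^{-1} C_{\Lambda} u_0 \neq (u_0')^{-1} C_{\Lambda} u_0' = (L^0)'$. Distinct initial matrices give distinct flows. Conversely, by Theorem \ref{asympt} any f-KT flow coming from a point of $(G/B^+)_{\geq 0}$ lying in a cell $\mathcal{R}_{v',w'}^{>0}$ has asymptotic form $(v',w')$, so a flow has asymptotic form $(v,w)$ precisely when it comes from a point of $\mathcal{R}_{v,w}^{>0} \cong G^{>0}_{\v_+,\w}$. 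Hence the space of f-KT flows with asymptotic form $(v,w)$ is parametrized by $G^{>0}_{\v_+,\w} \cong \R_{>0}^{\ell(w)-\ell(v)}$, and its dimension is $\ell(w)-\ell(v)$.
\end{proof}
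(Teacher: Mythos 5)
Your proof is correct and takes essentially the same approach as the paper, which cites Proposition \ref{prop:regular} for regularity and Theorem \ref{t:parameterization} (implicitly together with Theorem \ref{asympt}) for the dimension count; you simply make explicit the details the paper leaves implicit, namely the injectivity of $g \mapsto L^0$ and the fact that flows with asymptotic form $(v,w)$ come only from the cell $\mathcal{R}_{v,w}^{>0}$.
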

\begin{proof}
By Proposition \ref{prop:regular},
a flow coming from $g \in G_{\v_+, \w}^{>0}$
is regular.  Theorem \ref{t:parameterization}
implies that if we consider all flows $L(t)$ with initial matrix
$L^0$
coming from some $g$ representing a point of $(G/B^+)_{\geq 0}$, 
the dimension of the space of flows with asymptotic form 
$(v,w)$ is precisely $\ell(w)-\ell(v)$.
\end{proof}



\begin{remark}
The fact that the space of flows
with the asymptotic form $(v,w)$ has dimension $\ell(w)-\ell(v)$ is 
similar to \cite[Corollary 3.3]{CSS} for the case
of the full symmetric Toda lattice.  We will discuss the relation between
the f-KT lattice and the full symmetric Toda lattice in Section \ref{sec:symmetricToda}.
\end{remark}
\subsection{Asymptotic behavior of the f-KT flow in the $\mathbf{t}$-space}\label{subsec:fKThierarchy}
We now consider the solution $L(\mathbf{t})$ to the f-KT hierarchy with the multi-time variables $\mathbf{t}=(t_1,\ldots,t_{n-1})$.
The main result in this section is the following.
\begin{proposition}\label{prop:fixedpointz}
 Let $g\in G_{\v_+,\w}^{>0}$, and let $z\in \Sym_n$.  
Assume that $v \leq z \leq w$, or equivalently 
(by Lemma \ref{minors}) $\Delta_{z\cdot[k]}^k(g)>0$ for all $k=1,\ldots,n-1$.
Then there exists a direction $\mathbf{t}(s)=s \mathbf{c}$ 
with a constant vector $\mathbf{c}=(c_1,\ldots,c_{n-1})$ such that 
\begin{equation}\label{eq:limit}
L(\mathbf{t}(s))~\longrightarrow~\epsilon+\text{diag}(\lambda_{z(1)},\ldots,\lambda_{z(n)})\quad
\text{as}\quad s\to\infty.
\end{equation}
Moreover, for any $\mathbf{c}$ such that 
$E_{z(1)}(\mathbf{c}) > E_{z(2)}(\mathbf{c}) > \dots >
E_{z(n)}(\mathbf{c}),$ we have \eqref{eq:limit}.
In other words, when we take such a limit, $L(\mathbf{t})$ approaches 
the fixed point
$\epsilon+\text{diag}(\lambda_{z(1)},\ldots,\lambda_{z(n)}).$

\end{proposition}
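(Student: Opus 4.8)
The plan is to mimic the single-time asymptotic analysis of Theorem \ref{asympt}, but now allowing the limit to be taken along a ray $\mathbf{t}(s) = s\mathbf{c}$ in the multi-time space. First I would recall from Corollary \ref{cor:tauA} that
\[
\tau_k(\mathbf{t}) = d_k \sum_{I \in {[n] \choose k}} \Delta_I(A_k)\, E_I(\mathbf{t}),
\]
and observe that along the ray $\mathbf{t}(s) = s\mathbf{c}$ we have $E_I(s\mathbf{c}) = \left(\prod_{\ell<m}(\lambda_{i_m}-\lambda_{i_\ell})\right) \exp\!\big(s \sum_{j} \theta_{i_j}(\mathbf{c})\big)$, so that each term in $\tau_k$ is (a positive constant times) a single exponential in $s$ whose rate is $\sum_{j=1}^{k} \theta_{z(j)}(\mathbf{c})$ if $I = \{z(1),\dots,z(k)\}$ reordered. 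The condition $E_{z(1)}(\mathbf{c}) > \dots > E_{z(n)}(\mathbf{c})$ is exactly the statement $\theta_{z(1)}(\mathbf{c}) > \theta_{z(2)}(\mathbf{c}) > \dots > \theta_{z(n)}(\mathbf{c})$, which guarantees that among all $k$-subsets $I$ the one maximizing the exponential rate as $s \to \infty$ is precisely $z \cdot [k] = \{z(1),\dots,z(k)\}$.

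The key step is then the analogue of Lemma \ref{asympt-tau}: since $v \leq z \leq w$, Lemma \ref{minors} gives $\Delta_{z\cdot[k]}^k(g) = \Delta_{z\cdot[k]}(A_k) > 0$ for all $k$, so the dominant term in $\tau_k(s\mathbf{c})$ is genuinely present (its coefficient is nonzero and positive by Lemma \ref{lem:flagTNN}), and hence $\tau_k(s\mathbf{c}) \sim (\text{positive constant}) \cdot E_{z\cdot[k]}(s\mathbf{c})$ as $s \to \infty$. From here I would run exactly the argument in the proof of Theorem \ref{asympt}: using \eqref{aii} in the form $a_{k,k}(\mathbf{t}) = \partial_{t_1} \ln(\tau_k/\tau_{k-1})$, differentiation of the asymptotics in the $t_1$-direction yields $a_{k,k}(s\mathbf{c}) \to \lambda_{z(k)}$, with the difference decaying exponentially in $s$; then the same inductive cascade through the f-KT equations \eqref{eq:f-KTcomp} (first the $\ell = 1$ subdiagonal, then $\ell = 2$, etc.), using that each $da_{k,k}/dt$ decays exponentially, forces $(L)_{<0} \to 0$. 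This establishes \eqref{eq:limit}.

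One point that needs a little care: unlike the one-variable case, the $\tau$-functions are functions of all of $\mathbf{t}$, so I must make sure that the decay estimates used in the cascade argument hold along the ray and in the $t_1$-derivative. Concretely, writing $\tau_k(s\mathbf{c}) = E_{z\cdot[k]}(s\mathbf{c})\big(c_k + O(e^{-\delta s})\big)$ for some $c_k > 0$ and $\delta > 0$ (where $\delta$ is the gap between the top and second exponential rates among the $k$-subsets with nonzero Plücker coordinate), and then differentiating $\ln \tau_k$ with respect to $t_1$, one sees that $a_{k,k} - \lambda_{z(k)} = O(e^{-\delta s})$; the implied constants and $\delta$ depend on $\mathbf{c}$ but not on $s$, which is all that is needed. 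The existence claim (first sentence of the Proposition) follows immediately once we note that there always exists $\mathbf{c} \in \R^{n-1}$ with $\theta_{z(1)}(\mathbf{c}) > \dots > \theta_{z(n)}(\mathbf{c})$: this is a linear system whose coefficient matrix is (a submatrix of) the Vandermonde matrix $E$, which is invertible, so one can solve for $\mathbf{c}$ realizing any prescribed strictly decreasing sequence of values $\theta_{z(j)}(\mathbf{c})$.

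The main obstacle I anticipate is purely bookkeeping rather than conceptual: verifying that the exponential-decay cascade through \eqref{eq:f-KTcomp} goes through verbatim when the independent variable is the ray parameter $s$ (with the other directions held proportional), i.e.\ confirming that the derivatives $\partial_{t_1} a_{\ell+k,k}$ appearing in the recursion are indeed $\partial_s$-exponentially small after accounting for the chain rule $\partial_s = \sum_j c_j \partial_{t_j}$. Since the $\tau$-functions along the ray are finite sums of pure exponentials in $s$ with fixed coefficients, every entry $a_{i,j}(s\mathbf{c})$ is a ratio of such finite exponential sums and its $s$-derivatives are controlled in exactly the same way as in Theorem \ref{asympt}; so this obstacle is minor and the proof is essentially a reduction to the already-established one-time asymptotics.
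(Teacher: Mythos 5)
Your proposal is correct and follows essentially the same route as the paper: existence of $\mathbf{c}$ via invertibility of the Vandermonde matrix (the paper's Lemma \ref{lem:direction}), dominance of $E_{z\cdot[k]}$ in $\tau_k$ along the ray because $\Delta_{z\cdot[k]}^k(g)>0$ by Lemma \ref{minors}, then formula \eqref{aii} with $t=t_1$ to get $a_{k,k}(s\mathbf{c})\to\lambda_{z(k)}$, and finally the same inductive cascade through \eqref{eq:f-KTcomp} as in Theorem \ref{asympt} to conclude $(L)_{<0}\to 0$. The one small distraction is your worry about the chain rule $\partial_s=\sum_j c_j\partial_{t_j}$: the cascade stays entirely in terms of $\partial_{t_1}$ evaluated along the ray (as the paper notes, one only needs $\partial_{t_1}a_{k,k}|_{\mathbf{t}=s\mathbf{c}}\to 0$), so no chain-rule bookkeeping is actually required.
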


To prove the proposition, we use the following lemma.
\begin{lemma}\label{lem:direction}
For any permutation $z \in \Sym_n$, 
one can find a multi-time $\mathbf{c}=(c_1,\dots, 
c_{n-1})\in \R^{n-1}$
such that 
$E_{z(1)}(\mathbf{c}) > E_{z(2)}(\mathbf{c}) > \dots >
E_{z(n)}(\mathbf{c}).$
\end{lemma}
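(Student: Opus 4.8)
The plan is to reduce the statement to a concrete linear-algebra fact about the exponential functions $E_{j}(\mathbf{t}) = \exp(\theta_j(\mathbf{t}))$ where $\theta_j(\mathbf{t}) = \sum_{m=1}^{n-1}\lambda_j^m t_m$. Since $E_{z(i)}(\mathbf{c}) > E_{z(i+1)}(\mathbf{c})$ is equivalent to $\theta_{z(i)}(\mathbf{c}) > \theta_{z(i+1)}(\mathbf{c})$, and $\theta_j(\mathbf{c}) = \sum_{m=1}^{n-1} \lambda_j^m c_m$ is a linear functional of $\mathbf{c}$, the task is exactly to find $\mathbf{c} \in \R^{n-1}$ with
\[
\theta_{z(1)}(\mathbf{c}) > \theta_{z(2)}(\mathbf{c}) > \cdots > \theta_{z(n)}(\mathbf{c}).
\]
So the whole lemma is the assertion that the $n$ linear functionals $\mathbf{c} \mapsto \theta_j(\mathbf{c})$, $j = 1,\dots,n$, can be separated and put in any prescribed order by a suitable choice of $\mathbf{c}$.

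First I would observe that the map $\mathbf{c} \mapsto (\theta_1(\mathbf{c}),\dots,\theta_n(\mathbf{c}))$ is given by the matrix $\tilde E$ whose $(m,j)$ entry is $\lambda_j^m$ for $m = 1,\dots,n-1$ and $j = 1,\dots,n$; this is (the lower $(n-1)\times n$ block of) the Vandermonde matrix $E$ in \eqref{E}. The key point is that the differences $\theta_j - \theta_{j'}$ for $j \neq j'$ are nonzero linear functionals (because the $\lambda_j$ are distinct, the columns of this Vandermonde-type matrix are distinct, in fact the rank-$(n-1)$ matrix has the property that no two columns are equal — indeed if columns $j$ and $j'$ agreed then $\lambda_j^m = \lambda_{j'}^m$ for $m=1,\dots,n-1$, forcing $\lambda_j = \lambda_{j'}$). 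Hence each hyperplane $\{\mathbf{c} : \theta_j(\mathbf{c}) = \theta_{j'}(\mathbf{c})\}$ is a genuine hyperplane in $\R^{n-1}$, and the finite union of these $\binom{n}{2}$ hyperplanes is not all of $\R^{n-1}$. So there is an open dense set of $\mathbf{c}$ on which all the values $\theta_1(\mathbf{c}),\dots,\theta_n(\mathbf{c})$ are pairwise distinct; such a $\mathbf{c}$ induces \emph{some} total order on $\{1,\dots,n\}$.

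To get the \emph{prescribed} order dictated by $z$, I would argue that the map is surjective onto enough of $\R^n$: since the $(n-1) \times n$ matrix $(\lambda_j^m)$ has rank $n-1$ (its first $n-1$ columns form a nonsingular Vandermonde matrix, or even: the $n-1$ rows are linearly independent because $1,\lambda,\dots,\lambda^{n-1}$ evaluated cannot all vanish), its image is a hyperplane $\mathcal{H}$ in $\R^n$. One checks that $\mathcal{H}$ is complementary to the all-ones direction $(1,\dots,1)$ — indeed $(1,\dots,1)$ is not in the row span of the $\lambda_j^m$ for $m \geq 1$ unless some relation forces it, and a short computation (e.g.\ $(1,\dots,1) \in \mathcal{H}$ would require $c_1,\dots,c_{n-1}$ with $\sum_m c_m \lambda_j^m = 1$ for all $j$, i.e.\ the polynomial $p(x) = \sum_m c_m x^m - 1$ has $n$ roots $\lambda_1,\dots,\lambda_n$ but degree $\leq n-1$, contradiction) shows $(1,\dots,1) \notin \mathcal{H}$. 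Therefore $\mathcal{H} \oplus \R(1,\dots,1) = \R^n$, so for \emph{any} target vector $(d_1,\dots,d_n) \in \R^n$ — in particular one with $d_{z(1)} > d_{z(2)} > \cdots > d_{z(n)}$, such as $d_j = -$(position of $j$ in the word $z(1),\dots,z(n)$) — we can write $(d_1,\dots,d_n) = \theta(\mathbf{c}) + r(1,\dots,1)$ for some $\mathbf{c}$ and scalar $r$. Since adding the constant $r$ to every coordinate preserves the ordering, $\theta_{z(1)}(\mathbf{c}) > \cdots > \theta_{z(n)}(\mathbf{c})$, which is exactly what we wanted.

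The only mildly delicate step is the verification that $(1,\dots,1) \notin \mathcal{H}$ — equivalently that the full Vandermonde matrix $E$ (including the top row of ones) is invertible, which is classical since the $\lambda_j$ are distinct. Everything else is elementary linear algebra over $\R$. I do not expect any real obstacle; the main thing to be careful about is keeping the index conventions straight (the time variables run $m = 1,\dots,n-1$, matching powers $\lambda_j^m$ with $m \geq 1$, so the relevant Vandermonde block omits the row of ones, but adjoining the all-ones direction recovers the full invertible Vandermonde).
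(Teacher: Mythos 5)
Your proof is correct and follows essentially the same route as the paper: both reduce to ordering the linear functionals $\theta_j(\mathbf{c})$ and exploit the invertibility of the Vandermonde matrix $E$, together with the observation that adding a common constant to all the $\theta_j$ does not change their relative order. The paper packages the constant as an extra dummy variable $t_0$ and solves $(t_0,\mathbf{t})\cdot E = (r_1,\dots,r_n)$ in one step; you instead show directly that the image of $\mathbf{c}\mapsto(\theta_1(\mathbf{c}),\dots,\theta_n(\mathbf{c}))$ is a hyperplane complementary to $\R(1,\dots,1)$, which is the same fact in slightly different clothing (one small caveat: your parenthetical ``its first $n-1$ columns form a nonsingular Vandermonde matrix'' is not quite right if some $\lambda_j=0$, but the alternative row-independence argument you give, or the polynomial-roots argument you use for $(1,\dots,1)\notin\mathcal{H}$, is sound).
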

\begin{proof}
Recall that $E_i(\mathbf{t}) = \exp\theta_{i}(\mathbf{t})$ with
$\theta_{i}(\mathbf{t})=\sum_{m=1}^{n-1}\lambda_i^mt_m$.
Define the function $\ell_i: \R\times \R^{n-1} \to \R$
by 
\[
\ell_i(t_0,\mathbf{t})= t_0+
\lambda_i t_1 + \lambda_i^2 t_2 + \dots + \lambda_i^{n-1} t_{n-1} = t_0+\theta_{i}(\mathbf{t}) =(t_0,t_1,\ldots,t_{n-1})\cdot E_i^0,
\]
where $E_i^0$ is the $i$-th column vector of the Vandermonde matrix $E$.
To prove the lemma, it suffices to find a point 
$(t_0,\mathbf{t})$ such 
that $\ell_{z(1)}(t_0,\mathbf{t}) > \ell_{z(2)}(t_0,\mathbf{t}) > 
\dots > \ell_{z(n)}(t_0,\mathbf{t})$, which also implies
that $\theta_{{z(1)}}(\mathbf{t})>\theta_{{z(2)}}(\mathbf{t})>\cdots>\theta_{{z(n)}}(\mathbf{t})$, and hence
$E_{z(1)}(\mathbf{t}) > E_{z(2)}(\mathbf{t}) > \dots > E_{z(n)}(\mathbf{t}).$

Let $(r_1,\dots,r_n) \in \R^n$ be any point such that 
$r_{z(1)} > \dots > r_{z(n)}$.  Then since $E$ is an invertible
matrix, we can solve the equation
$(t_0, t_1,\dots, t_{n-1}) \cdot E = (r_1,\dots,r_n)$.  This proves 
the lemma.
\end{proof}

\begin{remark}
In the proof of Lemma \ref{lem:direction}, note that $E_i^0$ is 
the normal vector to the plane given by $\ell_i(t_0,\mathbf{t})= \constant$.  
Since the $\lambda_i$'s are distinct, the $E_i^0$'s are linearly 
independent.
Therefore 
the hyperplane arrangement 
\begin{equation}\label{eq:arrangement}
\ell_i(t_0,\mathbf{t}) - \ell_j(t_0,\mathbf{t})
=0 \quad \text{for}\quad 1\le i<j\le n
\end{equation}
is a \emph{braid arrangement} which has $n!$ regions in its complement, 
indexed by the permutations
$z \in \Sym_n$ 
(the permutation specifies the total order of the values that the 
functions $\ell_i$ take on that region).
Since the arrangement \eqref{eq:arrangement}
does not depend on the $t_0$ variable, 
the $(n-1)$-dimensional $\mathbf{t}$-space is divided into
$n!$ convex polyhedral cones, each  labeled  by some $z\in \Sym_n$
specifying the total order
of the
$\theta_{{i}}(\mathbf{t})$'s.  See Figure \ref{fig:braid} and Example \ref{ex:braid} below.
\end{remark}

We now prove Proposition \ref{prop:fixedpointz}.
\begin{proof}
Choose a point $\mathbf{c}$ in the $\mathbf{t}$-space such that 
$E_{z(1)}(\mathbf{c})>\cdots>E_{z(n)}(\mathbf{c})$; by 
Lemma \ref{lem:direction}, one exists.  Recall that  $E_I(\mathbf{t})=\prod_{\ell<m}(\lambda_{i_m}-\lambda_{i_{\ell}})\prod_{j=1}^kE_{i_j}(\mathbf{t})$
for $I=\{i_1,\ldots,i_k\}$.
This implies that $E_{z\cdot[k]}(\mathbf{c})$ dominates the other exponentials
in the $\tau_k$-function \eqref{tau} at the point $\mathbf{c}$. 
Then we consider the direction $\mathbf{t}(s)=s\mathbf{c}$, and take the limit
$s\to\infty$.  This gives
\[
\tau_k(\mathbf{t}(s))~\longrightarrow~d_k\Delta_{z\cdot[k]}^k(g)E_{z\cdot[k]}(\mathbf{t}(s))\quad\text{as}\quad s\to\infty.
\]
Now using the formula for $a_{k,k}(\mathbf{t})$ in \eqref{aii} with $t=t_1$, one can 
follow the proof of Theorem \ref{asympt} to show that 
$a_{k,k}(\t(s)) \to \lambda_{z(k)}$ as $s\to\infty$.
We also have that 
$\displaystyle{\frac{\partial a_{k,k}}{\partial t_1}\Big|_{\t=\t(s)}\to 0}$ as $s\to\infty$,
which allows us to use the proof of Theorem \ref{asympt} 
to show that 
$(L)_{<0}\to0$, i.e. $L(\t(s))$ approaches the fixed point as $s\to\infty$.
\end{proof}

\begin{example}\label{ex:braid}
Consider the case of the $\mathfrak{sl}_3(\R)$ f-KT lattice,
shown in 
Figure \ref{fig:braid}.  
The $\mathbf{t}$-space 
is divided into 
$|\mathfrak{S}_3|=6$ convex polyhedral cones by the braid arrangement $\ell_i-\ell_j=0$ for
$1\le i<j\le 3$.  
The cone defined by 
$\ell_i>\ell_j>\ell_k$ is labeled by
the permutation $z\in\mathfrak{S}_3$ such that 
$z(1)=i,$ $z(2)=j$, and $z(3)=k$.
In this example, we have the following limits of $t_2$ with $t_1$=constant:
\[
L(t_1,t_2)~\longrightarrow~\left\{\begin{array}{lll}
\epsilon+\text{diag}(\lambda_3,\lambda_1,\lambda_2)\quad&\text{as}\quad t_2\to\infty\\[1.0ex]
\epsilon+\text{diag}(\lambda_2,\lambda_1,\lambda_3)\quad &\text{as}\quad t_2\to-\infty
\end{array}\right.
\]
Thus each cone corresponds to a fixed point of the f-KT hierarchy, and 
the polygon connecting those fixed points is a hexagon (see the right 
of Figure \ref{fig:braid}).
This polygon is the so-called \emph{permutohedron} of $\Sym_3$. 
We will discuss the permutohedron and a generalization of it 
in Section \ref{sec:moment}.
\begin{figure}[h]
\centering
\includegraphics[height=4.8cm]{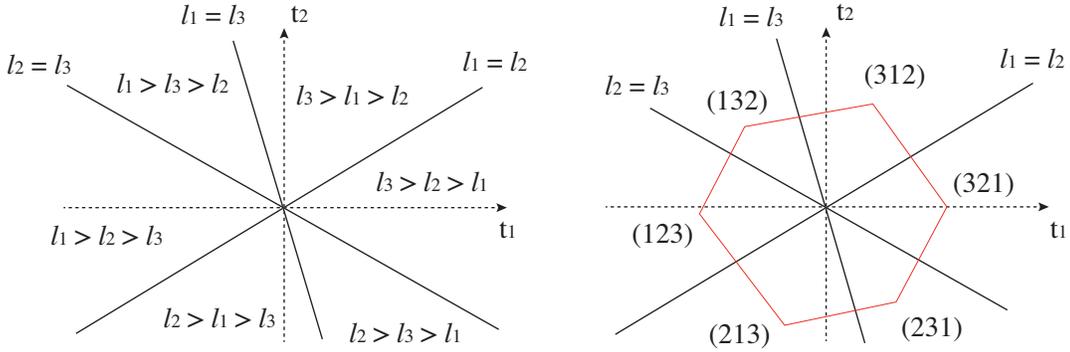}
\caption{The braid arrangement for the $\mathfrak{sl}_3(\R)$ f-KT lattice.
Here $\ell_i-\ell_j=(\lambda_i-\lambda_j)t_1+(\lambda_i^2-\lambda_j^2)t_2$
with $\lambda_1<\lambda_2<\lambda_3$.
There are six cones, and each cone has the ordering $\ell_{z(1)}>\ell_{z(2)}>\ell_{z(3)}$ with some $z\in\mathfrak{S}_3$.  Each vertex $(i,j,k)$ 
of the polytope corresponds to the cone with
$\ell_i>\ell_j>\ell_k$.
\label{fig:braid}}
\end{figure}

\end{example}

\section{The moment polytope of the f-KT lattice}\label{sec:moment}

In this section we define the moment map for the flag variety,
and study the image of the moment map on the f-KT flows coming from
the tnn flag variety.  In Theorem \ref{thm:polytope}
we show that we obtain in this way certain
convex polytopes which generalize the permutohedron. 

\subsection{The moment map for Grassmannians and flag varieties}

Let $\mathsf{L}_i$ denote a weight of 
the standard representation of $\mathfrak{sl}_n$, i.e. 
for $h=\text{diag}(h_1,\ldots,h_n)\in\mathfrak{h}$, 
$\mathsf{L}_i(h)=h_i$.
Then recall that the weight space $\mathfrak{h}_{\R}^*$
is the dual of the Cartan subalgebra $\mathfrak{h}_{\R}$, i.e. 
\[
\mathfrak{h}_{\R}^*:=\text{Span}_{\R}\left\{\mathsf{L}_1,\ldots, \mathsf{L}_n~\Big|~\sum_{j=1}^n\mathsf{L}_j=0\right\}\cong\R^{n-1}.
\]

For $I = \{i_1,\dots,i_k\}$, we set 
$\mathsf{L}(I)
=\mathsf{L}_{i_1}+\mathsf{L}_{i_2}+\cdots+\mathsf{L}_{i_k} \in \mathfrak{h}_{\R}^*.$
The \emph{moment map} for the Grassmannian
$\mu_k:Gr_{k,n}\to\mathfrak{h}_{\R}^*$ is defined by 
\begin{equation}\label{momentGrassmannian}
\mu_k(A_k):=\frac{\sum_{I\in\mathcal{M}(A_k)}\left|\Delta_{I}(A_k)\right|^2\mathsf{L}(I)}{\sum_{I\in\mathcal{M}(A_k)}\left|\Delta_I(A_k)\right|^2},
\end{equation}
see e.g. \cite{GS, GGMS, Shipman}.  

We recall the following fundamental result of
Gelfand-Goresky-MacPherson-Serganova \cite{GGMS} on the moment map
for the Grassmannian (which in turn uses the convexity theorem of 
Atiyah \cite{A:82} and Guillemin-Sternberg \cite{GuS}).

\begin{theorem} \cite[Section 2]{GGMS} \label{thm:convex}
If $A_k \in Gr_{k,n}$ and we consider the action of the 
torus $(\C^*)^n$ on $Gr_{k,n}$ (which rescales columns of the 
matrix representing $A_k$), then the closure of the image of the moment map
applied to the torus orbit of $A_k$ is a
convex polytope 
\begin{equation}\label{Gamma}
\Gamma_{\M(A_k)} = \conv\{\mathsf{L}(I) \ \mid \ 
\Delta_I(A_k) \neq 0 \text{ i.e. }I\in \M(A_k)\}
\end{equation}
called a 
\emph{matroid polytope}, whose vertices
correspond to the
fixed points of the action of the torus.  
\end{theorem}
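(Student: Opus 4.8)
The plan is to read this as the Atiyah--Guillemin--Sternberg convexity theorem \cite{A:82, GuS} applied to a single torus-orbit closure, with the combinatorics made explicit through Pl\"ucker coordinates. Write $T=(\C^*)^n$. The first step is to record how $T$ acts on coordinates: if $t=(t_1,\dots,t_n)$ rescales the $i$-th column of a matrix representing $A_k$ by $t_i$, then $\Delta_I(t\cdot A_k)=(\prod_{i\in I}t_i)\,\Delta_I(A_k)$, so with $s_i:=|t_i|^2$ and $c_I:=|\Delta_I(A_k)|^2$ the defining formula \eqref{momentGrassmannian} becomes
\[
\mu_k(t\cdot A_k)=\frac{\sum_{I\in\M(A_k)}s^I c_I\,\mathsf{L}(I)}{\sum_{I\in\M(A_k)}s^I c_I},\qquad s^I:=\prod_{i\in I}s_i .
\]
As $t$ ranges over $T$, the vector $(s_i)$ ranges over all of $(\R_{>0})^n$, and each resulting point is a convex combination of $\{\mathsf{L}(I):I\in\M(A_k)\}$ with strictly positive coefficients. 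Hence $\mu_k(T\cdot A_k)\subseteq\operatorname{relint}\Gamma_{\M(A_k)}$, with $\Gamma_{\M(A_k)}$ as in \eqref{Gamma}, and in particular the closure of the moment image is contained in $\Gamma_{\M(A_k)}$.

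For the reverse inclusion I would first pin down the relevant fixed points. The $T$-fixed points of $Gr_{k,n}$ are the coordinate subspaces $e_I:=\langle e_i:i\in I\rangle$ for $I\in\binom{[n]}{k}$, and $\mu_k(e_I)=\mathsf{L}(I)$. Since the points $\mathsf{L}(I)$ are all vertices of the hypersimplex (identified with its image in $\mathfrak{h}_{\R}^*$), each $\mathsf{L}(I)$ with $I\in\M(A_k)$ is a vertex of the subpolytope $\Gamma_{\M(A_k)}$, so there is $a\in\R^n$ with $\langle a,\mathbf{1}_I\rangle>\langle a,\mathbf{1}_J\rangle$ for all $J\in\M(A_k)\setminus\{I\}$. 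Running the one-parameter subgroup $s_i=e^{\tau a_i}$ and letting $\tau\to\infty$ makes $s^I c_I$ dominate the sums above, so $\mu_k(t(\tau)\cdot A_k)\to\mathsf{L}(I)$; equivalently $e_I\in\overline{T\cdot A_k}$. Conversely, if $J\notin\M(A_k)$ then $\Delta_J$ vanishes identically on $T\cdot A_k$, hence on its closure, so $e_J\notin\overline{T\cdot A_k}$. Thus the $T$-fixed points lying in $\overline{T\cdot A_k}$ are precisely the $e_I$ with $I\in\M(A_k)$, and their moment images are exactly the vertices of $\Gamma_{\M(A_k)}$, which is the ``vertices correspond to fixed points'' assertion.

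It remains to show that $\overline{\mu_k(T\cdot A_k)}$ is convex; together with the previous paragraph this forces it to equal $\Gamma_{\M(A_k)}$, and I expect this to be the only genuinely substantial point. I would carry it out by one of two routes. Geometrically, $\overline{T\cdot A_k}$ is a $T$-invariant irreducible projective subvariety of $Gr_{k,n}$, in fact a projective toric variety whose associated polytope is $\Gamma_{\M(A_k)}$; the Atiyah--Guillemin--Sternberg convexity theorem (applied, if need be, after a toric resolution of singularities) then identifies the moment image with the convex hull of the images of the $T$-fixed points it contains, which by the previous paragraph is $\Gamma_{\M(A_k)}$. Alternatively, to keep the argument self-contained: setting $x_i:=\log s_i$ and identifying $\R^n/\R(1,\dots,1)$ with $\mathfrak{h}_{\R}^*$ (the direction $(1,\dots,1)$ being killed because $\sum_j\mathsf{L}_j=0$), the map $x\mapsto\mu_k(t\cdot A_k)$ is the gradient of the convex function $\varphi(x)=\log\sum_{I\in\M(A_k)}c_I\,e^{\langle\mathbf{1}_I,x\rangle}$, and the standard Legendre-duality fact that the gradient image of such a log-sum-of-exponentials equals the relative interior of the convex hull of its exponent vectors yields $\mu_k(T\cdot A_k)=\operatorname{relint}\Gamma_{\M(A_k)}$; passing to closures finishes the proof. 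In either approach the crux is this convexity input --- checking that the orbit closure is toric with the expected polytope, or performing the Legendre-duality computation --- while everything else reduces to bookkeeping with Pl\"ucker coordinates and elementary convex geometry.
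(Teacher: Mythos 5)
The paper does not prove this theorem at all: it is cited directly from \cite[Section 2]{GGMS}, with the remark that GGMS in turn rely on the Atiyah and Guillemin--Sternberg convexity theorems. Your argument is a correct reconstruction of the proof one would find there. The Pl\"ucker-coordinate bookkeeping (the transformation rule for $\Delta_I$ under the torus, the resulting form of $\mu_k(t\cdot A_k)$ as a convex combination with weights proportional to $s^I c_I$), the identification of the relevant $T$-fixed points $e_I$ with $I\in\M(A_k)$ and their exclusion for $J\notin\M(A_k)$, and the one-parameter-subgroup limit reaching each vertex are all standard and correct. The one genuinely substantive point, as you flag, is convexity of the moment image; your second, self-contained route via the log-sum-exp potential $\varphi(x)=\log\sum_{I\in\M(A_k)} c_I e^{\langle\mathbf 1_I,x\rangle}$ and Legendre duality (the gradient image of such a function is the relative interior of the Newton polytope of the exponents) is a clean way to finish that avoids invoking the full Atiyah/Guillemin--Sternberg machinery for a possibly singular toric orbit closure. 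In short: the paper gives no proof of its own, and your proposal correctly supplies one along essentially the lines the paper attributes to GGMS.
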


\begin{remark}
In representation theory, this polytope is a weight polytope of the fundamental representation of $\mathfrak{sl}_n$ on $\wedge^kV$, where $V$ is
the standard representation.  
\end{remark}

\begin{corollary}\label{cor:convexity}
If $A_k \in Gr_{k,n}$ and we consider the action of the 
\emph{positive} torus $(\R_{>0})^n$ on $Gr_{k,n}$, the conclusion of 
Theorem \ref{thm:convex} still holds. 
\end{corollary}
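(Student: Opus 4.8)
The plan is to deduce Corollary \ref{cor:convexity} directly from Theorem \ref{thm:convex}, exploiting the fact that the moment map $\mu_k$ in \eqref{momentGrassmannian} sees only the \emph{absolute values} of the Pl\"ucker coordinates and is therefore invariant under the ``phases'' of the complex torus action. First I would fix $A_k\in Gr_{k,n}$ and an element $t=(t_1,\dots,t_n)\in(\C^*)^n$, write $t_j=r_j e^{\sqrt{-1}\,\theta_j}$ with $r_j=|t_j|>0$, and set $r:=(r_1,\dots,r_n)\in(\R_{>0})^n$. Since rescaling the $j$th column of (a matrix representing) $A_k$ by $t_j$ multiplies $\Delta_I$ by $\prod_{j\in I}t_j$, one gets, for every $I\in\binom{[n]}{k}$,
\[
|\Delta_I(t\cdot A_k)|^2=\Big(\prod_{j\in I}r_j^2\Big)\,|\Delta_I(A_k)|^2=|\Delta_I(r\cdot A_k)|^2 .
\]
Because \eqref{momentGrassmannian} is a ratio of expressions involving only the quantities $|\Delta_I(\cdot)|^2$, this immediately yields $\mu_k(t\cdot A_k)=\mu_k(r\cdot A_k)$.

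Next I would conclude that the image of the moment map on the full complex torus orbit of $A_k$ and its image on the positive torus orbit coincide as subsets of $\mathfrak{h}_\R^*$: the containment $(\R_{>0})^n\cdot A_k\subseteq(\C^*)^n\cdot A_k$ gives $\mu_k\big((\R_{>0})^n\cdot A_k\big)\subseteq\mu_k\big((\C^*)^n\cdot A_k\big)$, and the phase computation above gives the reverse inclusion, since every point $\mu_k(t\cdot A_k)$ equals $\mu_k(r\cdot A_k)$ with $r\in(\R_{>0})^n$. Taking closures and invoking Theorem \ref{thm:convex}, the common set closes up to the matroid polytope $\Gamma_{\M(A_k)}=\conv\{\mathsf{L}(I)\mid I\in\M(A_k)\}$, which is exactly the assertion of the corollary.

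I do not expect a genuine obstacle here — the statement is essentially a formal consequence of the $\mathrm{U}(1)^n$-invariance of $\mu_k$. The only points deserving care are bookkeeping in the displayed identity (tracking which torus coordinate scales which Pl\"ucker coordinate) and the worth-stating remark that, although the real positive torus orbit of $A_k$ is in general a proper subset of the complex torus orbit inside $Gr_{k,n}$, the two orbits nonetheless have the same moment-map image. If a more self-contained route is preferred, I could alternatively observe that $\mu_k$ restricted to the positive torus orbit is the gradient of the convex log-partition function $x\mapsto\log\big(\sum_{I\in\M(A_k)}|\Delta_I(A_k)|^2 e^{\langle x,\,\mathsf{L}(I)\rangle}\big)$, whose gradient image is the relative interior of $\Gamma_{\M(A_k)}$; but the phase-invariance argument above is shorter and suffices.
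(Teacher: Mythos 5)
Your argument is exactly the paper's: the moment map $\mu_k$ depends only on $|\Delta_I|^2$, so replacing $t\in(\C^*)^n$ by its coordinatewise modulus $r\in(\R_{>0})^n$ leaves $\mu_k$ unchanged, hence the images (and their closures) of the two torus orbits under $\mu_k$ coincide and Theorem~\ref{thm:convex} applies. The proposal is correct and matches the paper's proof in both idea and level of detail.
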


\begin{proof}
If $T_u:=\text{diag} (u_1,\dots,u_n)$ with $u_i \in \C^*$ then
for $I = \{i_1,\dots,i_k\}$, we have that  
$\Delta_I(A_kT_u) = 
u_{i_1} u_{i_2} \dots u_{i_k} \Delta_I(A_k)$.
It's clear from the definition that 
$\mu_k(A_kT_u)$ depends only on the magnitudes of each of the 
$u_i$'s.  Therefore the image of the moment map applied to the torus orbit
of $A_k$ is the same if we restrict to the positive torus.
\end{proof}

One may extend the moment map from the Grassmannian to the flag variety
$G/B^+$ \cite{GS}.  First 
recall that 
the flag variety $G/B^+$ has a projective embedding in 
$\mathbb{P}(V)\times \mathbb{P}(\wedge^2V)\times \cdots \times\mathbb{P}(\wedge^{n-1}V)$ which is the image of $G$ acting on the highest weight
vector 
$[e_1\otimes(e_1\wedge e_2)\otimes\cdots\otimes(e_1\wedge\cdots\wedge e_{n-1})]$ (which has weight 
$n\mathsf{L}_1+(n-1)\mathsf{L}_2+\cdots+\mathsf{L}_{n}=\sum_{k=1}^{n-1}\mathsf{L}(I_k)$ with $I_k:=\{1,\ldots,k\}$).

The \emph{moment map} for the flag variety 
$\mu: G/B^+ \to \mathfrak{h}_{\R}^*$ is defined by
\[
\mu(g):= \sum_{k=1}^{n-1}\mu_k(A_k),\qquad \text{ where }\quad A_k = \pi_k(g).
\]

If we let $gB^+$ vary over all points in $G/B^+$, 
we obtain the \emph{moment polytope}, which is also called the 
\emph{permutohedron} $\Perm_n$.
For $(i_1,i_2,\dots, i_n) \in \Sym_n$, define the weight 
\[
\mathsf{L}_{i_1,i_2,\ldots,i_n}:=n\mathsf{L}_{i_1}+(n-1)\mathsf{L}_{i_2}+\cdots+\mathsf{L}_{i_n}=\sum_{k=1}^{n}\mathsf{L}(I_k),
\]
where $I_k:=\{i_1,\ldots,i_k\}$.  (Note $\mathsf{L}(I_n)=\mathsf{L}_1+\cdots+\mathsf{L}_n=0$.)
We identify  $\mathsf{L}_{1,2,\dots,n}$ with the highest weight vector.
By definition, the permutohedron 
 $\Perm_n$ is 
the convex hull of the weights
$\mathsf{L}_{\pi}:=\mathsf{L}_{i_1,\ldots,i_n}$ as $\pi(1,\ldots,n)=(i_1,\dots,i_n)$ varies over $\Sym_n$, i.e.
\[
\Perm_n=\CH\{\mathsf{L}_{\pi}\in\mathfrak{h}_{\R}^*~|~\pi\in \Sym_n\}.
\]
\begin{example}
Consider the case of $G=\text{SL}_3(\R)$. The permutohedron $\Perm_3$
is given by Figure \ref{fig:permutoS3}.
\begin{figure}[h]
\centering
\includegraphics[height=5cm]{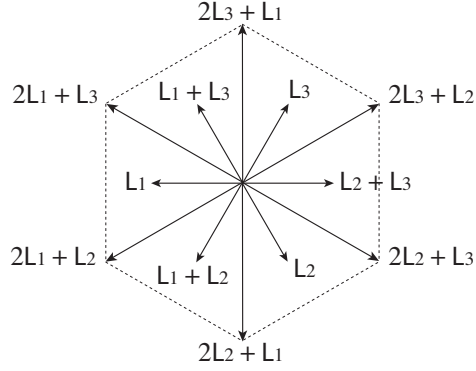}
\caption{The permutohedron $\Perm_3$, which is a hexagon.  Each vertex is given by the
weight $\mathsf{L}_z:=\mathsf{L}_{z(1),z(2),z(3)}=2\mathsf{L}_{z(1)}+\mathsf{L}_{z(2)}=3\mathsf{L}_{z(1)}+2\mathsf{L}_{z(2)}+\mathsf{L}_{z(3)}$ for some $z\in\Sym_3$.
\label{fig:permutoS3}}
\end{figure}

\end{example}

\subsection{The moment polytope of the f-KT flows}
Recall from Section \ref{sec:guL} that to each point 
of the tnn flag variety (represented by some point 
$g \in G_{\v_+,\w}^{>0}$), one can associate a unique element $u_0 \in U^-$,
which in turn determines an initial point $L^0$ of a flow.  Throughout
this section we fix $g$, $u_0$, and $L^0$ accordingly.  Furthermore 
we let $A_k=\pi_k(g)$ be the span of the leftmost $k$ columns of $g$, as in 
\eqref{eq:pi}.
Our main goal here is to compute
the image of the
moment map 
$\mu:G/B^+\to\mathfrak{h}^*_{\R}$ when applied to the f-KT flow
$\exp(\Theta_{C_\Lambda}(\mathbf{t}))$
on the point $u_0B^+$ of the flag variety 
described in \eqref{companionE}.

The following lemma will help us compute the image of the moment map.

\begin{lemma}\label{lem:momentprojection}
Let $\mathcal{E}:=\{E_i^0:i=1,\ldots,n\}$ be the ordered set of column vectors of $E$, i.e.
$E_i^0=(1,\lambda_i,\ldots,\lambda_i^{n-1})^T$.  Then the projection
$\tilde\pi_k$ of \eqref{basis} on $\mathbb{P}(\wedge^k\R^n)$ with the basis $\mathcal{E}$ is given by
\[
\tilde\pi_k(\exp(\Theta_{C_\Lambda}(\mathbf{t}))u_0)  =
A_ke^{\Theta_{\Lambda}(\mathbf{t})}.
\]
\end{lemma}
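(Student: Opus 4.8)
The plan is to reduce the statement to a short matrix computation via the explicit description of $\tilde\pi_k$ recorded in Remark~\ref{rem:basis}. Since $\mathcal{E}=\{E_1^0,\dots,E_n^0\}$ is precisely the ordered set of columns of the Vandermonde matrix $E$, that remark (applied with $U=E$) gives $\tilde\pi_k(h)=\pi_k(E^{-1}h)$ for any matrix $h$, so it suffices to identify $\pi_k\bigl(E^{-1}\exp(\Theta_{C_\Lambda}(\mathbf{t}))u_0\bigr)$ with $A_k\,e^{\Theta_\Lambda(\mathbf{t})}$ as a point of $Gr_{k,n}$. The first ingredient is the diagonalization $C_\Lambda=E\Lambda E^{-1}$: from it $\Theta_{C_\Lambda}(\mathbf{t})=\sum_j C_\Lambda^j t_j=E\bigl(\sum_j\Lambda^j t_j\bigr)E^{-1}=E\,\Theta_\Lambda(\mathbf{t})\,E^{-1}$, hence $\exp(\Theta_{C_\Lambda}(\mathbf{t}))=E\exp(\Theta_\Lambda(\mathbf{t}))E^{-1}$, where $\exp(\Theta_\Lambda(\mathbf{t}))=\diag(e^{\theta_1(\mathbf{t})},\dots,e^{\theta_n(\mathbf{t})})$. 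Substituting this and using the defining relation $Eg=u_0b_0$ of Proposition~\ref{prop:gsolution} in the form $E^{-1}u_0=g\,b_0^{-1}$ with $b_0^{-1}\in B^+$, one gets
\[
E^{-1}\exp(\Theta_{C_\Lambda}(\mathbf{t}))u_0=\exp(\Theta_\Lambda(\mathbf{t}))\,E^{-1}u_0=\exp(\Theta_\Lambda(\mathbf{t}))\,g\,b_0^{-1}.
\]

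Next I would apply $\pi_k$ to the right-hand side. Since $b_0^{-1}\in B^+\subseteq P_k$, right multiplication by $b_0^{-1}$ does not change the image under $\pi_k$ (it only recombines the leftmost $k$ columns among themselves without altering their span), so $\pi_k(\exp(\Theta_\Lambda(\mathbf{t}))\,g\,b_0^{-1})=\pi_k(\exp(\Theta_\Lambda(\mathbf{t}))\,g)$. Finally, left multiplication of $g$ by the diagonal matrix $\exp(\Theta_\Lambda(\mathbf{t}))$ scales row $i$ of $g$ by $e^{\theta_i(\mathbf{t})}$; under the recipe \eqref{eq:pi}, which produces $A_k$ by transposing the leftmost $k$ columns of $g$ into the rows of $A_k$, this row-scaling of $g$ becomes column-scaling of $A_k$, i.e.\ right multiplication of $A_k$ by $\diag(e^{\theta_1(\mathbf{t})},\dots,e^{\theta_n(\mathbf{t})})=e^{\Theta_\Lambda(\mathbf{t})}$. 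This gives $\pi_k(\exp(\Theta_\Lambda(\mathbf{t}))\,g)=A_k\,e^{\Theta_\Lambda(\mathbf{t})}$, completing the argument.

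I do not expect a real obstacle here; the only points requiring care are the two bookkeeping steps of the last paragraph — that multiplication on the right by an element of $B^+$ is invisible to $\pi_k$ (immediate from $B^+\subseteq P_k$), and keeping straight the transpose convention in \eqref{eq:pi} so that left row-scaling of $g$ corresponds to right column-scaling of $A_k$. As a sanity check (and an alternative route) one can instead run the whole computation on Pl\"ucker coordinates via \eqref{basis}: expanding $\exp(\Theta_{C_\Lambda}(\mathbf{t}))u_0\cdot e_1\wedge\cdots\wedge e_k=d_k\,E\exp(\Theta_\Lambda(\mathbf{t}))g\cdot e_1\wedge\cdots\wedge e_k$ in the basis $\{E_{i_1}^0\wedge\cdots\wedge E_{i_k}^0\}$ and reading off the coefficients yields $\Delta_{i_1,\dots,i_k}(\tilde A_k)=d_k\,e^{\theta_{i_1}(\mathbf{t})+\cdots+\theta_{i_k}(\mathbf{t})}\,\Delta_{i_1,\dots,i_k}(A_k)$ with $d_k=[b_0^{-1}]_k$, which exhibits the same column-rescaling up to the global nonzero scalar $d_k$ (irrelevant in $Gr_{k,n}$), and hence the same conclusion.
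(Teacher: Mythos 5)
Your argument is correct and follows essentially the same route as the paper's proof: both rest on the identity $\exp(\Theta_{C_\Lambda}(\mathbf{t}))u_0 = E\,e^{\Theta_\Lambda(\mathbf{t})}\,g\,b_0^{-1}$ and the identification of the resulting point of $\mathbb{P}(\wedge^k\R^n)$ in the $\mathcal{E}$-basis with $A_k e^{\Theta_\Lambda(\mathbf{t})}$. The paper carries out the wedge-product expansion explicitly, absorbing $b_0^{-1}$ into the scalar $d_k=[b_0^{-1}]_k$ (exactly your ``sanity check'' paragraph), while your main argument packages the same computation at the matrix level via $\tilde\pi_k=\pi_k\circ E^{-1}$ together with the observation $B^+\subset P_k$.
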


\begin{proof}
We compute  $\exp(\Theta_{C_\Lambda}(\mathbf{t}))u_0\cdot e_1\wedge\cdots\wedge e_k$ on $\wedge^k\R^n$ with the basis $\mathcal{E}$.  The first
equality below comes from Remark \ref{rem:torus}, and recall that
$d_k=[b_0^{-1}]_k$.
\begin{align*}
&\exp(\Theta_{C_\Lambda}(\mathbf{t}))u_0\cdot e_1\wedge\cdots\wedge e_k  =
E\,e^{\Theta_{\Lambda}(\mathbf{t})}\,gb_0^{-1} \cdot e_1\wedge\cdots\wedge e_k\\
=&\sum_{1\le i_1<\cdots<i_k\le n}Ee^{\Theta_{\Lambda}(\mathbf{t})}e_{i_1}\wedge
\cdots\wedge e_{i_k}\langle e_{i_1}\wedge\cdots\wedge e_{i_k}, g b_0^{-1}\cdot
e_1\wedge\cdots\wedge e_k\rangle\\
=&d_k \sum_{1\le i_1<\cdots<i_k\le n}Ee^{\Theta_{\Lambda}(\mathbf{t})}e_{i_1}\wedge
\cdots\wedge e_{i_k}\langle e_{i_1}\wedge\cdots\wedge e_{i_k}, g \cdot
e_1\wedge\cdots\wedge e_k\rangle\\
=&d_k\sum_{1\le i_1<\cdots<i_k\le n}\Delta_{i_1,\ldots,i_k}(A_k) Ee^{\Theta_{\Lambda}(\mathbf{t})}e_{i_1}\wedge\cdots\wedge e_{i_k}\\
=&d_k\sum_{1\le i_1<\cdots<i_k\le n}\Delta_{i_1,\ldots,i_k}(A_k) e^{\theta_{i_1,\ldots,i_k}(\mathbf{t})} E\cdot e_{i_1}\wedge\cdots\wedge e_{i_k}\\
=&d_k\sum_{1\le i_1<\cdots<i_k\le n}\Delta_{i_1,\ldots,i_k}(A_ke^{\Theta_{\Lambda}(\mathbf{t})})  E_{i_1}^0\wedge\cdots\wedge E_{i_k}^0,
\end{align*} 
where $\theta_{i_1,\ldots,i_k}(\mathbf{t})=\sum_{j=1}^k\theta_{i_j}(\mathbf{t})$ with
$\theta_j(\mathbf{t})=\sum_{m=1}^{n-1}\lambda_j^mt_m$.
This completes the proof.
\end{proof}

Lemma \ref{lem:momentprojection} implies that 
the Pl\"ucker coordinates for the f-KT flow are very simple
when computed in the 
basis $\mathcal{E}=\{E_1^0,\dots,E_n^0\}$.

We then define the moment map $\mu$ in this basis as follows.
First write
$\varphi(\mathbf{t};g):=
\mu(\exp(\Theta_{C_\Lambda}(\mathbf{t}))u_0)$ and  
$\varphi_k(\mathbf{t};g):= 
\mu_k(\tilde\pi_k(\exp(\Theta_{C_\Lambda}(\mathbf{t}))u_0))=\mu_k(A_ke^{\Theta_{\Lambda}(\mathbf{t})})$ with $\pi_k(g)=A_k$. 
Then we have
\begin{align}\label{momentT}
\varphi(\mathbf{t};g)&=\sum_{k=1}^{n-1}\varphi_k(\mathbf{t};g)\qquad\text{with}\quad
\varphi_k(\mathbf{t};g)=\sum_{I\in\mathcal{M}(A_k)}\alpha_{I}^k(\mathbf{t};g)\,\mathsf{L}(I),\\ \nonumber
&\text{and} \qquad \alpha_I^k(\mathbf{t}; g)
=\frac{\left(\Delta_{I}(A_ke^{\Theta_{\Lambda}(\mathbf{t})})\right)^2}
{\sum_{J\in\mathcal{M}(A_k)}\left(\Delta_J(A_k e^{\Theta_{\Lambda}(\mathbf{t})})\right)^2}.
\end{align}
Note here that  $0<\alpha_{I}^k(\mathbf{t};g)<1$ and $\sum_{I\in\mathcal{M}(A_k)}\alpha_I^k(\mathbf{t};g)=1$ for each $k$.

\begin{definition}
We define the \emph{moment map image of the f-KT flow} for  $g\in G_{\v,\w}^{>0}$ to be the 
set $$\QQ_g=\overline{\left\{\varphi(\mathbf{t};g)~|~\mathbf{t}\in\R^{n-1}\right\}}:=\overline{\bigcup_{\mathbf{t}\in\R^{n-1}}\varphi(\mathbf{t};g)}.$$
Here the closure is taken 
using the usual topology of the Euclidian norm on $\mathfrak{h}_\R^*\cong\R^{n-1}$. 
\end{definition}

\begin{proposition}\label{Qkg}
For each $k$,
define $\QQ^k_g:=\overline{\{\varphi_k(\mathbf{t};g)~|~\mathbf{t}\in\R^{n-1}\}}.$
Then $\QQ^k_g$ equals the corresponding matroid polytope 
from \eqref{Gamma}, i.e. 
\[
\QQ^k_g=\Gamma_{\M(A_k)} \quad \text{ where }~A_k = \pi_k(g).
\]
\end{proposition}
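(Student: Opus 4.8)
The plan is to recognize $\QQ^k_g$ as exactly the moment image of a positive-torus orbit in $Gr_{k,n}$, and then invoke the convexity theorem. By Lemma~\ref{lem:momentprojection}, the Pl\"ucker coordinates of the flag point $\exp(\Theta_{C_\Lambda}(\mathbf{t}))u_0$, computed in the basis $\mathcal{E}$, are (up to the common scalar $d_k$) the quantities $\Delta_I(A_k e^{\Theta_\Lambda(\mathbf{t})})$. Since $e^{\Theta_\Lambda(\mathbf{t})} = \diag(e^{\theta_1(\mathbf{t})},\dots,e^{\theta_n(\mathbf{t})})$ is a diagonal matrix with positive entries, $A_k e^{\Theta_\Lambda(\mathbf{t})}$ is precisely the image of $A_k$ under the action of the positive torus element $T_{\mathbf{t}} := \diag(e^{\theta_1(\mathbf{t})},\dots,e^{\theta_n(\mathbf{t})})$. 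Hence $\varphi_k(\mathbf{t};g) = \mu_k(A_k T_{\mathbf{t}})$, and $\QQ^k_g$ is the closure of the image of the moment map $\mu_k$ applied to $\{A_k T_{\mathbf{t}} : \mathbf{t}\in\R^{n-1}\}$.

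First I would check that as $\mathbf{t}$ ranges over $\R^{n-1}$, the elements $T_{\mathbf{t}}$ trace out the full positive torus $(\R_{>0})^n$ modulo scalars — or at least enough of it to capture the entire orbit for moment-map purposes. Indeed, $\theta_i(\mathbf{t}) = \sum_{m=1}^{n-1}\lambda_i^m t_m$, and the map $\mathbf{t}\mapsto(\theta_1(\mathbf{t}),\dots,\theta_n(\mathbf{t}))$ is linear with matrix equal to the Vandermonde-type matrix obtained from $E$ by deleting its top row; since the $\lambda_i$ are distinct this matrix has rank $n-1$, so its image is the hyperplane $\{\sum_i \theta_i = 0\}$ in $\R^n$ (after the obvious substitution). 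Exponentiating, the $T_{\mathbf{t}}$ fill out the codimension-one subtorus of $(\R_{>0})^n$ consisting of elements of determinant $1$. But the moment map $\mu_k$ is insensitive to the overall scaling of the torus element — scaling all $u_i$ by a common positive constant multiplies every $\Delta_I(A_k T_u)$ by $u^k$, leaving the normalized expression $\alpha_I^k$ unchanged. Therefore the image of $\mu_k$ over $\{A_k T_{\mathbf{t}}\}$ coincides with its image over the entire positive-torus orbit of $A_k$.

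By Corollary~\ref{cor:convexity}, the closure of the image of $\mu_k$ applied to the positive-torus orbit of $A_k$ is the matroid polytope $\Gamma_{\M(A_k)} = \conv\{\mathsf{L}(I) : I\in\M(A_k)\}$. Combining the two previous steps gives $\QQ^k_g = \Gamma_{\M(A_k)}$, as claimed.

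I expect the main obstacle to be the bookkeeping in the second step: one must argue carefully that restricting the torus parameter to the image of the linear map $\mathbf{t}\mapsto(\theta_i(\mathbf{t}))$ — i.e. to a codimension-one subtorus — does not shrink the moment-map image, so that Corollary~\ref{cor:convexity} (stated for the full positive torus) applies verbatim. The scaling-invariance of $\mu_k$ handles exactly this discrepancy, but it should be spelled out. Everything else is a direct translation through Lemma~\ref{lem:momentprojection} and the definitions in \eqref{momentT}.
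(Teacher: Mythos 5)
Your approach is the same as the paper's: identify $\QQ^k_g$ as the moment-map closure of the positive-torus orbit of $A_k$, then invoke Corollary~\ref{cor:convexity}. In fact you are more careful than the paper, which simply asserts that the $e^{\Theta_{\Lambda}(\mathbf{t})}$ ``cover all points in the positive torus'' even though $\mathbf{t}$ has only $n-1$ coordinates; you correctly flag this as a dimension discrepancy that must be reconciled by the scaling-invariance of $\mu_k$.

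However, your specific description of the image subtorus is wrong. You claim the image of $\mathbf{t}\mapsto(\theta_1(\mathbf{t}),\dots,\theta_n(\mathbf{t}))$ is $\{\sum_i\theta_i=0\}$, hence that the $T_{\mathbf{t}}$ fill out the determinant-one subtorus. But $\sum_i\theta_i(\mathbf{t}) = \sum_{m=1}^{n-1} p_m t_m$ where $p_m = \sum_i\lambda_i^m$ is the $m$-th power sum; while $p_1=0$ (trace zero), one has $p_2=\sum_i\lambda_i^2>0$, so this sum is not identically zero and the image is some \emph{other} hyperplane. The argument is salvaged, though, because what you actually need is only that the image hyperplane together with the scaling direction $\mathrm{span}\{(1,\dots,1)\}$ spans $\R^n$, i.e.\ that $(1,\dots,1)$ does not lie in the image. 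This is immediate: the image is spanned by the last $n-1$ columns of $E^T$ (the columns $(\lambda_1^m,\dots,\lambda_n^m)^T$ for $m=1,\dots,n-1$), and $(1,\dots,1)$ is precisely the \emph{first} column of $E^T$; since $E$ is an invertible Vandermonde matrix, its columns are independent, so the first column is not in the span of the others. With that correction your proof is complete and matches the paper's intent.
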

\begin{proof}
 Recall that
$\varphi_k(\mathbf{t}; g) = 
\mu_k(A_ke^{\Theta_{\Lambda}(\mathbf{t})})$.
Since the $\lambda_i$'s are distinct, 
as $\mathbf{t}$ varies over $(\R)^{n-1}$, the diagonal matrices
$T_{\Lambda}:=e^{\Theta_{\Lambda}(\mathbf{t})}$ cover all points 
in the positive torus.   The result now follows from 
Corollary \ref{cor:convexity}.
\end{proof}

\begin{corollary}\label{cor:Minkowski}
Let $g\in G_{\v_+,\w}^{>0}$.
Then the  moment map image $\QQ_g$ of 
the f-KT flow for $g$  is a
Minkowski sum of matroid polytopes.  
More specifically, for $A_k=\pi_k(g)$, $k=1,\ldots,n-1$, we have
$$\QQ_g = \sum_{k=1}^{n-1} 
\Gamma_{\M(A_k)}.
$$
\end{corollary}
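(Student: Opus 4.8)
The plan is to deduce Corollary \ref{cor:Minkowski} from Proposition \ref{Qkg} together with the explicit description of the moment map as a sum, $\varphi(\mathbf{t};g) = \sum_{k=1}^{n-1} \varphi_k(\mathbf{t};g)$, in \eqref{momentT}. The statement of Proposition \ref{Qkg} already identifies $\QQ_g^k = \overline{\{\varphi_k(\mathbf{t};g) \mid \mathbf{t}\in\R^{n-1}\}}$ with the matroid polytope $\Gamma_{\M(A_k)}$, so the content of the corollary is precisely that the closure of the image of the \emph{sum} of the $\varphi_k$ equals the Minkowski sum of the closures of the images of the individual $\varphi_k$. In other words, one must justify interchanging "closure of image of a sum of maps" with "Minkowski sum of closures of images", where all maps share the same parameter $\mathbf{t}$.

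First I would record the easy inclusion $\QQ_g \subseteq \sum_{k=1}^{n-1}\Gamma_{\M(A_k)}$: for every fixed $\mathbf{t}$, the point $\varphi(\mathbf{t};g) = \sum_k \varphi_k(\mathbf{t};g)$ lies in $\sum_k \QQ_g^k$, and since each $\QQ_g^k$ is a (compact, hence closed) polytope, the Minkowski sum $\sum_k \QQ_g^k$ is closed; therefore the closure $\QQ_g$ is contained in it. This uses only Proposition \ref{Qkg} and the fact that a Minkowski sum of finitely many compact convex sets is compact convex. The reverse inclusion is the substantive direction and is where I expect the main obstacle to lie, because a priori the different coordinate projections $\tilde\pi_k(g) = A_k$ are correlated — they all come from the single flag $g$ — so one cannot independently choose the $\mathbf{t}$ achieving a desired point of $\Gamma_{\M(A_k)}$ in each summand.

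To overcome this, I would exploit the very special structure exhibited in Lemma \ref{lem:momentprojection} and \eqref{momentT}: in the basis $\mathcal{E}$, the relevant Pl\"ucker coordinates are $\Delta_I(A_k e^{\Theta_\Lambda(\mathbf{t})})$, and the torus element $e^{\Theta_\Lambda(\mathbf{t})} = \diag(e^{\theta_1(\mathbf{t})},\ldots,e^{\theta_n(\mathbf{t})})$ acts \emph{diagonally and simultaneously} on all the $A_k$ through the \emph{same} rescaling of the $n$ coordinates. The key point, as in Atiyah's convexity theorem and its proof via one-parameter subgroups, is that a single generic one-parameter direction $\mathbf{t}(s) = s\mathbf{c}$ drives the flow toward a fixed point: as established in the proof of Proposition \ref{prop:fixedpointz} and Lemma \ref{lem:direction}, choosing $\mathbf{c}$ so that $E_{z(1)}(\mathbf{c}) > \cdots > E_{z(n)}(\mathbf{c})$ forces $\varphi_k(\mathbf{t}(s);g) \to \mathsf{L}(z\cdot[k])$ simultaneously for \emph{all} $k$, whenever $z\cdot[k] \in \M(A_k)$, i.e. whenever $v\le z\le w$ (Lemma \ref{minors}). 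Hence the vertices $\sum_k \mathsf{L}(z\cdot[k]) = \mathsf{L}_z$ for $v\le z\le w$ all lie in $\QQ_g$. Since each $\Gamma_{\M(A_k)}$ has vertex set $\{\mathsf{L}(I): I\in\M(A_k)\}$ and, by the remark following Theorem \ref{t:parameterization} (or by Lemma \ref{minors}), $\M(A_k) = \{z\cdot[k] : v\le z\le w\}$, the vertices of $\sum_k \Gamma_{\M(A_k)}$ are among the $\mathsf{L}_z$, $v\le z\le w$ — which we have just shown lie in $\QQ_g$.

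Finally I would close the argument by convexity: $\QQ_g$ is the closure of $\{\varphi(\mathbf{t};g)\}$, and since $\varphi(\mathbf{t};g)$ is (by \eqref{momentT}) a weighted average $\sum_k \sum_{I\in\M(A_k)}\alpha_I^k(\mathbf{t};g)\mathsf{L}(I)$ with $\alpha_I^k \ge 0$ and $\sum_I \alpha_I^k = 1$, every point $\varphi(\mathbf{t};g)$ lies in $\sum_k \Gamma_{\M(A_k)}$ — this re-derives the easy inclusion and also shows $\QQ_g$ is contained in a convex polytope whose vertex set it contains; combined with the fact (to be checked, e.g.\ via the standard connectedness/continuity argument for moment maps, or directly from Corollary \ref{cor:convexity} applied summand-by-summand) that $\QQ_g$ is itself convex, we conclude $\QQ_g = \conv\{\mathsf{L}_z : v\le z\le w\} = \sum_{k=1}^{n-1}\Gamma_{\M(A_k)}$. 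The main obstacle is making the convexity of $\QQ_g$ (equivalently, surjectivity of the moment-image onto the whole Minkowski-sum polytope, not just its vertices) rigorous; I would handle this by noting that $\mathbf{t}\mapsto\varphi(\mathbf{t};g)$ extends continuously to a compactification of $\R^{n-1}$ (the torus orbit closure), on which the image is the moment polytope by the Atiyah--Guillemin--Sternberg theorem, and that Minkowski sums of the matroid polytopes $\Gamma_{\M(A_k)}$ are exactly the moment polytope of the corresponding orbit closure in $\prod_k \mathbb{P}(\wedge^k\R^n)$; alternatively one invokes Theorem \ref{thm:convex} directly for the diagonal torus action on the product embedding.
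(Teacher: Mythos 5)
Your diagnosis of where the difficulty lies is exactly right, and your proposal is a genuinely different route from the paper's. The paper's entire proof of Corollary~\ref{cor:Minkowski} is the one-line ``This follows from \eqref{momentT} and Proposition~\ref{Qkg}''; it leaves the substantive inclusion $\sum_k \Gamma_{\M(A_k)} \subseteq \QQ_g$ completely implicit (it is \emph{not} a formal consequence of the two cited facts, since in general the closure of the image of a sum of maps sharing a parameter is strictly smaller than the Minkowski sum of the closures --- one can cook up counterexamples with correlated circles). You flag this correctly. Your route is to prove the easy inclusion directly, then obtain the reverse via (a) identifying the vertices of the Minkowski sum, (b) showing each such vertex is an asymptotic limit of the flow via Proposition~\ref{prop:fixedpointz}, and (c) invoking convexity of $\QQ_g$. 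This is in effect a merged proof of the Corollary and Theorem~\ref{thm:polytope} at once, whereas the paper treats the Corollary as a black box and then derives the vertex description of $\QQ_g$ in Theorem~\ref{thm:polytope} \emph{from} the Corollary. Your restructuring is legitimate, and has the virtue of making the hidden moment-map convexity input explicit.

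Two points that need tightening. First, the claim that the vertices of $\sum_k\Gamma_{\M(A_k)}$ are among the $\mathsf{L}_z$ does not follow merely from $\M(A_k)=\{z\cdot[k]:v\le z\le w\}$: a vertex of the Minkowski sum is a sum $\sum_k\mathsf{L}(I_k)$ with $I_k\in\M(A_k)$, and one must separately argue that the $I_k$ realizing a common maximizing linear functional form a chain $I_1\subset\cdots\subset I_{n-1}$. This is precisely the ``flag property'' that the paper establishes inside the proof of Theorem~\ref{thm:polytope}, using that all the $A_k$ are projections of the \emph{same} $g$; you need that argument (or the fact that $(\M(A_1),\dots,\M(A_{n-1}))$ is a flag matroid) here, not just Lemma~\ref{minors}.

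Second, your two proposed fixes for the convexity of $\QQ_g$ are not both on equal footing. Theorem~\ref{thm:convex} as stated covers a single Grassmannian, so citing it ``directly for the diagonal torus action on the product embedding'' is an overreach --- you would need a GGMS-type statement for the flag variety or for a product of Grassmannians, which is not in the paper. The robust fix is your first one: note $\QQ_g=\mu\bigl(\overline{T\cdot gB^+}\bigr)$ (continuity plus compactness of the orbit closure give this equality of closed sets), then apply Atiyah--Guillemin--Sternberg to the compact toric variety $\overline{T\cdot gB^+}\subset G/B^+\hookrightarrow\prod_k\mathbb{P}(\wedge^k\R^n)$ to get convexity and the fact that the image is the convex hull of the $T$-fixed points. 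Be careful, though, that your phrasing ``Minkowski sums of the matroid polytopes $\Gamma_{\M(A_k)}$ are exactly the moment polytope of the corresponding orbit closure'' is essentially the statement you are trying to prove, so it cannot be cited as known; you must derive the equality of $\mu\bigl(\overline{T\cdot gB^+}\bigr)$ with the Minkowski sum from the vertex identification in (a)--(b) plus the flag-matroid vertex argument above.
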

\begin{proof}
This follows from \eqref{momentT} and Proposition \ref{Qkg}.
\end{proof}

We also define a certain polytope which sits inside the permutohedron.
\begin{definition}
Let $v$ and $w$ be two permutations in $\Sym_n$ such that $v \leq w$.
We define the \emph{Bruhat interval polytope} associated to $(v,w)$ to be the following convex hull:
\begin{equation*}
\mathsf{P}_{v,w}:=\CH\{\mathsf{L}_z \in\mathfrak{h}_{\R}^* \ \vert~ v \leq z \leq w\}.
\end{equation*}
In other words, this is the convex hull of all permutation vectors
corresponding to permutations $z$ lying in the Bruhat interval $[v,w]$.
In particular, 
if $w=w_0$ and $v=e$, then we have $\mathsf{P}_{e,w_0} = \Perm_n$.

\end{definition}

The main result of this section is the following.
\begin{theorem}\label{thm:polytope}
Let $g\in G_{\v_+,\w}^{>0}$.
Then the  moment map image of 
the f-KT flow for $g$   is the convex polytope $\mathsf{P}_{v,w}$, i.e.
\[
\QQ_g = \mathsf{P}_{v,w}.
\]
\end{theorem}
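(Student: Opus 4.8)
The plan is to establish the two inclusions $\mathsf{P}_{v,w}\subseteq\QQ_g$ and $\QQ_g\subseteq\mathsf{P}_{v,w}$ separately. By Corollary \ref{cor:Minkowski} we already know that $\QQ_g=\sum_{k=1}^{n-1}\Gamma_{\M(A_k)}$; in particular $\QQ_g$ is a convex polytope, which I will use freely.

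First I would prove $\mathsf{P}_{v,w}\subseteq\QQ_g$. Fix $z\in\Sym_n$ with $v\le z\le w$. By Lemma \ref{minors} we have $\Delta^k_{z\cdot[k]}(g)>0$, i.e.\ $z\cdot[k]\in\M(A_k)$, for every $k$. Choose a direction $\mathbf{c}$ as in Lemma \ref{lem:direction}, so that $E_{z(1)}(\mathbf{c})>\cdots>E_{z(n)}(\mathbf{c})$. Then, exactly as in the proof of Proposition \ref{prop:fixedpointz}, the exponential $E_{z\cdot[k]}(s\mathbf{c})$ dominates every $E_I(s\mathbf{c})$ with $I\in\M(A_k)$ as $s\to\infty$, so $\alpha_{z\cdot[k]}^k(s\mathbf{c};g)\to 1$ and hence $\varphi_k(s\mathbf{c};g)\to\mathsf{L}(z\cdot[k])$. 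Summing over $k$ gives $\varphi(s\mathbf{c};g)\to\sum_{k=1}^{n-1}\mathsf{L}(z\cdot[k])=\mathsf{L}_z$. Since $\QQ_g$ is closed we get $\mathsf{L}_z\in\QQ_g$, and since $\QQ_g$ is convex we conclude $\mathsf{P}_{v,w}=\CH\{\mathsf{L}_z\mid v\le z\le w\}\subseteq\QQ_g$.

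For the reverse inclusion it suffices, since both sides are convex polytopes, to show that every vertex of $\QQ_g$ has the form $\mathsf{L}_z$ with $v\le z\le w$. A vertex of the Minkowski sum $\sum_k\Gamma_{\M(A_k)}$ is the point at which some generic linear functional $\ell$ on $\mathfrak{h}^*_\R$ is uniquely maximized, and it equals $\sum_k\mathsf{L}(I_k)$, where $I_k$ is the unique basis of $\M(A_k)$ maximizing $I\mapsto\sum_{i\in I}\ell(\mathsf{L}_i)$. Every such $\ell$ is realized by a generic direction $\mathbf{c}$, in the sense that $\ell(\mathsf{L}_i)=\theta_i(\mathbf{c})$ up to an additive constant independent of $i$: indeed, as $\mathbf{c}$ ranges over $\R^{n-1}$ the vectors $(\theta_1(\mathbf{c}),\dots,\theta_n(\mathbf{c}))$ together with $(1,\dots,1)$ span $\R^n$, since this is just the invertibility of the Vandermonde matrix $E$. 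The essential point is that the subsets $I_1,\dots,I_{n-1}$ obtained this way are nested: because $A_k=\pi_k(g)$ is the span of the leftmost $k$ columns of $g$, the associated row spaces form a flag, so $(\M(A_1),\dots,\M(A_{n-1}))$ is a flag matroid, and for a flag matroid the maximum-weight bases of the constituents with respect to a fixed linear order are automatically compatible. Hence $I_k=z\cdot[k]$ for a single permutation $z\in\Sym_n$ with $z\cdot[k]\in\M(A_k)$ for all $k$, so $v\le z\le w$ by Lemma \ref{minors}, and the vertex equals $\sum_{k=1}^{n-1}\mathsf{L}(z\cdot[k])=\mathsf{L}_z$. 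Combining the two inclusions yields $\QQ_g=\mathsf{P}_{v,w}$.

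I expect the nestedness statement in the third paragraph to be the main obstacle. One can invoke the theory of Coxeter/flag matroids (e.g.\ Borovik--Gelfand--White), or argue directly: independence of a set in $\M(A_k)$ implies its independence in $\M(A_{k+1})$, since the span of the leftmost $k$ columns of $g$ sits inside the span of the leftmost $k+1$; a short induction on the greedy algorithm then shows that, for a common generic weight, the maximum-weight bases of $\M(A_1),\dots,\M(A_{n-1})$ are nested and hence assemble into a permutation. A secondary technical point is to pin down which vertices arise as the limits $\lim_{s\to\infty}\varphi(s\mathbf{c};g)$ for non-generic $\mathbf{c}$, or when $z\cdot[k]\notin\M(A_k)$ for the ordering $z$ associated to $\mathbf{c}$; this is precisely why it is cleaner to argue via generic functionals maximizing over the Minkowski sum rather than trying to compute all limits directly.
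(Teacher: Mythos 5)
Your proof is correct and follows essentially the same strategy as the paper: express $\QQ_g$ as a Minkowski sum of matroid polytopes, characterize vertices of $\QQ_g$ as points $\mathsf{L}_z$ with $z\cdot[k]\in\mathcal{M}(A_k)$ for all $k$, and appeal to Lemma~\ref{minors} in both directions. Two remarks. For $\mathsf{P}_{v,w}\subseteq\QQ_g$ the paper's route is more economical than your limit argument: once $z\cdot[k]\in\mathcal{M}(A_k)$, the point $\mathsf{L}(z\cdot[k])$ is a vertex of $\Gamma_{\mathcal{M}(A_k)}$, and a sum of one point from each Minkowski summand automatically lies in the Minkowski sum, so $\mathsf{L}_z=\sum_k\mathsf{L}(z\cdot[k])\in\QQ_g$ with no asymptotics needed. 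For $\QQ_g\subseteq\mathsf{P}_{v,w}$, you correctly identify the nestedness $I_1\subset\cdots\subset I_{n-1}$ as the crux (the paper treats this tersely under the name ``flag property''). Citing Borovik--Gelfand--White is fine, but your suggested ``short induction on the greedy algorithm'' is more delicate than you let on: if $S_j$ and $S_{j+1}$ denote the partial greedy bases for $\mathcal{M}(A_j)$ and $\mathcal{M}(A_{j+1})$ after processing element $m$, knowing $S_j\cup\{m\}$ is independent in $\mathcal{M}(A_j)$, hence in $\mathcal{M}(A_{j+1})$, does not by itself give that $S_{j+1}\cup\{m\}$ is independent in $\mathcal{M}(A_{j+1})$, because $S_{j+1}$ may be strictly larger than $S_j$. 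A cleaner elementary proof of nestedness, which also matches the spirit of the paper's argument, is geometric: for every $s$ one has $A_je^{\Theta_\Lambda(s\mathbf{c})}\subset A_ke^{\Theta_\Lambda(s\mathbf{c})}$ as subspaces of $\R^n$; inclusion of subspaces is a closed condition; and the $s\to\infty$ limits are exactly the coordinate subspaces spanned by $\{e_i:i\in I_j\}$ and $\{e_i:i\in I_k\}$, whence $I_j\subset I_k$.
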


\begin{proof}
Recall from Corollary \ref{cor:Minkowski} that 
$\QQ_g$ is a 
Minkowski sum of the matroid polytopes
$\Gamma_{\M(A_k)}$ for $1 \leq k \leq n-1$.
Therefore $\QQ_g$ is a convex polytope.

Now consider an arbitrary vertex $V$ of $\QQ_g$.
We claim that $V$
has the form $\mathsf{L}_z$ for some $z \in \Sym_n$.
Since $\QQ_g$ is the Minkowski sum of the polytopes
$\Gamma_{\M(A_k)}$, $V$ can be written as a sum 
of vertices of those polytopes, i.e.  for $I_k\in \M(A_k)$, $k=1,\ldots,n-1$, 
the vertex $V$ has the form 
\begin{equation}\label{Minkowskisum}
\mathsf{L}(I_1) + \mathsf{L}(I_2) + \dots + \mathsf{L}(I_{n-1}).
\end{equation}
To prove the claim, it suffices to show 
that $I_1 \subset I_2 \subset \dots \subset
I_{n-1}.$

By \eqref{momentT}, any point of 
${\left\{\varphi(\mathbf{t};g)~|~\mathbf{t}\in\R^{n-1}\right\}}$
has the form 
\begin{equation}\label{eq:form}
\sum_{k=1}^{n-1} \sum_{I \in \M(A_k)} \alpha_I^k \hspace{2pt}
 \mathsf{L}(I)
\end{equation}
where  $0 < \alpha_I^k < 1$ and $\sum_{I\in\M(A_k)}\alpha_I^k=1$ for each $k$.
Because each $A_k$ is the projection $\pi_k(g)$ 
of the same element $g$,
it follows that whenever the coefficient of $\mathsf{L}(I)$ is 
nonzero (i.e. $I\in\M(A_k)$),  for each 
$j<k$ 
there exists some $j$-element subset $J \subset I$ such that the coefficient of 
$\mathsf{L}(J)$ is nonzero (i.e. $J \in \M(A_j)$).
We call the latter property the \emph{flag} property of points
in ${\left\{\varphi(\mathbf{t};g)~|~\mathbf{t}\in\R^{n-1}\right\}}$.

But now the vertex $V$ is a limit of points of the form
\eqref{eq:form} which have the flag property, and also 
$V$ has the form \eqref{Minkowskisum}.  In particular, for each 
$k$, precisely one coefficient $\alpha_I^k$ equals $1$ and the rest are zero.
It follows that $I_1 \subset I_2 \subset \dots \subset I_{n-1}$.
This proves the claim.


%

Now suppose that $z$ is a permutation such that 
$v \leq z \leq w$.  
We will show that $\mathsf{L}_z \in \QQ_g$.
By Lemma \ref{minors}, we have $\Delta_{z\cdot [k]}^{k}(g)\ne0$ for each $k$,
and hence $z\cdot[k] \in \M(A_k)$.
It follows that 
$\sum_{i=1}^k \mathsf{L}_{z(i)}=\mathsf{L}(z\cdot[k])$
is a vertex of $\Gamma_{\M(A_k)}$,
and hence that 
$\sum_{k=1}^n(n-k)\,\mathsf{L}_{z(k)}
= \mathsf{L}_z$
is a point of $\QQ_g$.
This shows that 
if $v \leq z \leq w$ then 
$\mathsf{L}_z \in \QQ_g$.

Conversely, suppose that for some permutation $z$ 
we have $\mathsf{L}_z \in \QQ_g$.  
Then since $\mathsf{L}_z= 
\sum_{k=1}^n (n-k) \mathsf{L}_{z(k)}$,
$$\varphi(\mathbf{t};g)=
\sum_{k=1}^{n-1} 
\sum_{I\in\mathcal{M}(A_k)}\alpha_{I}^k(\mathbf{t};g)\,\mathsf{L}(I),$$
 and $0<\alpha_{I}^k(\mathbf{t};g)<1$,
it follows that 
$z\cdot[k] \in \M(A_k)$ for $1 \leq k \leq n-1$.
Therefore $\Delta^k_{z \cdot [k]}(g) \neq 0$ 
for $1 \leq k \leq n-1$, and hence by Lemma 
\ref{minors}, we have that 
$v \leq z \leq w$.
We have shown that 
$\mathsf{L}_{z} \in \QQ_g$ if and only if $v \leq z \leq w$.
Therefore $\QQ_g$ is precisely the convex hull of the 
points $\mathsf{L}_z$ where $v \leq z \leq w$.
%
%
\end{proof}

\begin{corollary}\label{cor:BIPM}
The Bruhat interval polytope $\mathsf{P}_{v,w}$ is a Minkowski
sum of matroid polytopes 
$$\mathsf{P}_{v,w} = \sum_{k=1}^{n-1} \Gamma_{\M_k}.$$
Here $\M_k$ is the matroid defining the cell 
of $(Gr_{k,n})_{\geq 0}$ that we obtain by projecting the 
cell $\mathcal{R}_{v,w}^{>0}$ of $(G/B^+)_{\geq 0}$ to 
$(Gr_{k,n})_{\geq 0}$. 
\end{corollary}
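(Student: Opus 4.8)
The plan is to read off Corollary \ref{cor:BIPM} directly from Theorem \ref{thm:polytope} together with Corollary \ref{cor:Minkowski}, once one observes that the matroids appearing in the latter depend only on the cell $\mathcal{R}_{v,w}^{>0}$ and not on the chosen representative $g$.

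First I would fix any $g\in G_{\v_+,\w}^{>0}$ and write $A_k=\pi_k(g)$ for $1\le k\le n-1$. Corollary \ref{cor:Minkowski} gives $\QQ_g=\sum_{k=1}^{n-1}\Gamma_{\M(A_k)}$, and Theorem \ref{thm:polytope} gives $\QQ_g=\mathsf{P}_{v,w}$, so immediately $\mathsf{P}_{v,w}=\sum_{k=1}^{n-1}\Gamma_{\M(A_k)}$. It then remains only to match the matroid $\M(A_k)$ with the matroid $\M_k$ of the statement, and in particular to check that $\M(A_k)$ is independent of the choice of $g$ within the cell.

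For this, I would recall from the Remark following Theorem \ref{Grasscell} that $\pi_k$ carries the cell $\mathcal{R}_{v,w}^{>0}$ onto the single cell $\mathcal{P}_{v,w}^{>0}=\pi_k(\mathcal{R}_{v,w}^{>0})$ of $(Gr_{k,n})_{\geq 0}$ (an isomorphism when $w\in W^k$, a surjection in general); in particular $A_k=\pi_k(g)$ lies in this cell. By Postnikov's description of the cell decomposition of $(Gr_{k,n})_{\geq 0}$, each such cell is cut out by requiring a prescribed set of Pl\"ucker coordinates to vanish and the rest to be strictly positive; equivalently, it is the intersection of a matroid stratum $S_{\M_k}$ with $(Gr_{k,n})_{\geq 0}$. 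Hence the collection of nonvanishing Pl\"ucker coordinates of $\pi_k(g)$ --- that is, the matroid $\M(A_k)$ --- is the same for every $g$ representing a point of $\mathcal{R}_{v,w}^{>0}$, and it equals precisely the matroid $\M_k$ defining $\mathcal{P}_{v,w}^{>0}$. Substituting $\M(A_k)=\M_k$ into the equality above yields $\mathsf{P}_{v,w}=\sum_{k=1}^{n-1}\Gamma_{\M_k}$, as claimed.

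Since all the real content is contained in Theorem \ref{thm:polytope} and Corollary \ref{cor:Minkowski}, I do not anticipate a genuine obstacle here; the only point needing a moment's care is the well-definedness of $\M_k$, i.e.\ that the matroid of $\pi_k(g)$ is constant along the cell $\mathcal{R}_{v,w}^{>0}$, which is exactly the statement that the cells of $(Gr_{k,n})_{\geq 0}$ are the non-negative parts of matroid strata.
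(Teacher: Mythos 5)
Your proposal is correct and takes essentially the same route as the paper, which simply cites Corollary \ref{cor:Minkowski} and Theorem \ref{thm:polytope}. The extra paragraph you add checking that $\M(\pi_k(g))$ is independent of the representative $g$ (via Postnikov's matroid-stratum description of the cells) is a sensible elaboration of what the paper leaves implicit in the phrase ``the matroid defining the cell.''
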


\begin{proof}
This follows from Corollary \ref{cor:Minkowski} and Theorem 
\ref{thm:polytope}.
\end{proof}

By Proposition \ref{prop:fixedpointz}, each weight vector $\mathsf{L}_{i_1,\ldots,i_n}$ can be associated to the
ordered set of eigenvalues,
\[
\mathsf{L}_{i_1,\ldots,i_n}\quad\Longleftrightarrow\quad (\lambda_{i_1},\lambda_{i_2},\ldots,\lambda_{i_n}).
\]
For example, the highest weight for the permutohedron is given by
\[
\mathsf{L}_{1,2,\ldots,n}=\sum_{k=1}^n(n-k)\mathsf{L}_k\quad\Longleftrightarrow\quad(\lambda_1,\lambda_2,\ldots,\lambda_n).
\]
which corresponds to the asymptotic form of $\text{diag}(L)$ with $v=e$ for $t\to-\infty$.

\begin{example}
Consider the $\mathfrak{sl}_4(\R)$ f-KT hierarchy. We take 
\[
w=s_2s_3s_2s_1 \text{ and } v=s_3,
\]
which gives
\[
w\cdot(1,2,3,4)=(4,1,3,2) \text{ and } v\cdot(1,2,3,4)=(1,2,4,3).
\]
There are eight permutations $z$ satisfying
$v\le z\le w$, i.e.
\[
v=s_3,\quad s_3s_2,\quad s_2s_3,\quad s_3s_1,\quad s_3s_2s_1,\quad s_2s_3s_1,\quad s_2s_3s_2,\quad w=s_2s_3s_2s_1.
\]
We illustrate the moment polytopes in Figure \ref{fig:Mpoly}.  The vertices are labeled by the index set ``$i_1i_2i_3i_4$'' of the eigenvalues $(\lambda_{i_1},\lambda_{i_2},\lambda_{i_3},\lambda_{i_4})$. The vertex with the white circle indicates the asymptotic form of $\text{diag}(L)$ for
$t\to-\infty$ (i.e. $(1,2,4,3)=v\cdot(1,2,3,4)$ in the right figure), and the black one indicates the asymptotic form for $t\to\infty$ (i.e. $(4,1,3,2)=w\cdot(1,2,3,4)$ in the right figure).
\begin{figure}[h]
\centering
\includegraphics[height=5.3cm]{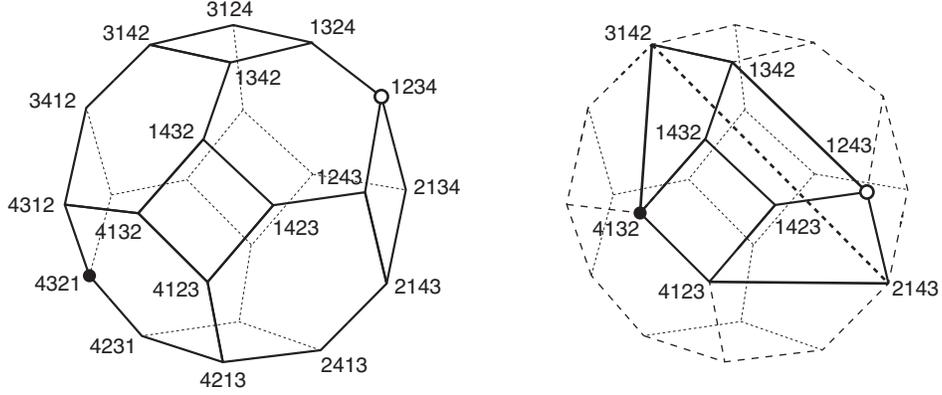}
\caption{Some moment polytopes from the
 $\mathfrak{sl}_4(\R)$ f-KT hierachy.   The left figure is the
permutohedron $\mathsf{P}_{e,w_0}=\Perm_4$, and the right one is the 
Bruhat interval polytope $\mathsf{P}_{v,w}$ with $w=s_2s_3s_2 s_1$ and $v=s_3$.
\label{fig:Mpoly}}
\end{figure}
\end{example}

\section{The full symmetric Toda hierarchy}
\label{sec:symmetricToda}

In this section we discuss the full symmetric Toda hierarchy for a symmetric Lax matrix $\mathcal{L}$ in connection with the  f-KT hierarchy for $L\in\epsilon+\mathfrak{b}^-$.
We first review work of Bloch-Gekhtman \cite{BG} which provides 
a map from the f-KT hierarchy to the symmetric Toda hierarchy.
Although this map is not always defined, in Theorem \ref{thm:symm} 
we will give a sufficient condition, 
in terms of points $g \in G_{\v_+,\w}^{>0}$ of the tnn flag variety, 
under which the map is defined.  This allows us to map 
the corresponding 
flows in the f-KT hierarchy to flows in the symmetric Toda hierarchy,
and to compute the moment map images for such flows in the symmetric
Toda hierarchy.

In the process we will explain the commutative diagram,
\[
\begin{CD}
L @ > Ad_{\beta^{-1}}> > \mathcal{L}\\
@ V Ad_{u} VV @ VV Ad_{q} V \\
C_{\Lambda}@ > Ad_{E^{-1}}> > \Lambda
\end{CD}
\]
where $E$ is the Vandermonde matrix \eqref{E}, and 
$u\in U^-,~ \beta\in B^+$ and $q\in \text{O}_n(\mathbb{R})$ 
are obtained uniquely from $g\in G_{\v_+,\w}^{>0}$.

\subsection{The full symmetric Toda hierarchy and the f-KT hierarchy}
\label{sec:symmfKT}

We now explain the construction of Bloch-Gekhtman \cite{BG} for mapping flows
$L = L(\mathbf{t})$ 
for the f-KT hierarchy to flows $\mathcal{L} = \mathcal{L}(\mathbf{t})$
for the symmetric Toda hierarchy.  Note that 
this construction is not defined at those $\t$ where $L(\mathbf{t})$ is 
singular.

First recall from \eqref{eq:companion} 
that for each $L\in \mathcal{F}_{\Lambda}$,
there is
a unique element $u\in U^-$ such that
\[
L=u^{-1}C_{\Lambda}u,
\]
where $C_{\Lambda}$ is the companion matrix. 
Since $C_{\Lambda} = E \Lambda E^{-1}$, we have
\[
L=\text{Ad}_\gamma\Lambda=\gamma\Lambda \gamma^{-1},\qquad \text{where}\quad \gamma:=u^{-1}E.
\]

\begin{lemma}\label{gamma}
Let $\dot{\gamma}$ denote the derivative of $\gamma=\gamma(\mathbf{t})$ with respect to $t_m$. 
Then we have
\[
\dot{\gamma} \gamma^{-1}=(L^m)_{\ge 0}-L^m=-(L^m)_{<0}.
\]
\end{lemma}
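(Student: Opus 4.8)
The plan is to differentiate the matrix factorization underlying the companion embedding, in the spirit of the proof of Proposition \ref{prop:LUsolution}. Recall from the commutative diagram \eqref{companionE} in Proposition \ref{prop:companion} that the companion element $u=u(\mathbf{t})\in U^-$ of $L=L(\mathbf{t})$ (the unique element of $U^-$ with $L=u^{-1}C_{\Lambda}u$) is the $U^-$-factor in the LU-factorization
\[
\exp(\Theta_{C_{\Lambda}}(\mathbf{t}))\,u_0=u(\mathbf{t})\,b(\mathbf{t}),\qquad u(\mathbf{t})\in U^-,\ b(\mathbf{t})\in B^+,
\]
which exists at every $\mathbf{t}$ for which $L(\mathbf{t})$ is regular (and, by Proposition \ref{prop:regular}, for all $\mathbf{t}\in\R^{n-1}$ when $g\in G_{\v_+,\w}^{>0}$).

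First I would differentiate this factorization with respect to $t_m$. Since the powers of $C_{\Lambda}$ commute, $\tfrac{d}{dt_m}\exp(\Theta_{C_{\Lambda}}(\mathbf{t}))=C_{\Lambda}^m\exp(\Theta_{C_{\Lambda}}(\mathbf{t}))$, so the product rule gives $C_{\Lambda}^m u b=\dot u\,b+u\,\dot b$. Multiplying on the left by $u^{-1}$ and on the right by $b^{-1}$, and using $u^{-1}C_{\Lambda}^m u=(u^{-1}C_{\Lambda}u)^m=L^m$, yields
\[
L^m=u^{-1}\dot u+\dot b\,b^{-1}.
\]
Here $u^{-1}\dot u\in\mathfrak{u}^-$ (the logarithmic derivative of a curve in $U^-$ is strictly lower triangular) while $\dot b\,b^{-1}\in\mathfrak{b}^+$. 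Since $\mathfrak{g}=\mathfrak{u}^-\oplus\mathfrak{b}^+$, the decomposition of $L^m$ into such summands is unique, so $u^{-1}\dot u=(L^m)_{<0}$ and $\dot b\,b^{-1}=(L^m)_{\ge 0}$.

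Finally I would compute $\dot\gamma\,\gamma^{-1}$ directly from $\gamma=u^{-1}E$, using that $E$ does not depend on $\mathbf{t}$: with $\tfrac{d}{dt_m}(u^{-1})=-u^{-1}\dot u\,u^{-1}$ and $\gamma^{-1}=E^{-1}u$ we obtain
\[
\dot\gamma\,\gamma^{-1}=\bigl(-u^{-1}\dot u\,u^{-1}\bigr)E\,E^{-1}u=-u^{-1}\dot u=-(L^m)_{<0}=(L^m)_{\ge 0}-L^m,
\]
which is exactly the claimed identity. There is no genuine obstacle here; the only point needing care is the identification of the companion element $u$ with the $U^-$-factor of $\exp(\Theta_{C_{\Lambda}}(\mathbf{t}))u_0$, which is precisely the content of Proposition \ref{prop:companion}. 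Alternatively, one can avoid the factorization entirely: differentiating $L=\gamma\Lambda\gamma^{-1}$ gives $\partial L/\partial t_m=[\dot\gamma\,\gamma^{-1},L]$, and comparing with the hierarchy equation \eqref{KT-hierarchy} shows that $\dot\gamma\,\gamma^{-1}-(L^m)_{\ge 0}$ commutes with $L$ and hence, by Lemma \ref{lem:commutation}, equals a polynomial $P(L)$; since $\dot\gamma\,\gamma^{-1}=-u^{-1}\dot u$ is strictly lower triangular, $L^m+P(L)$ is strictly lower triangular, hence nilpotent, so the polynomial $z^m+P(z)$ of degree at most $n-1$ vanishes at the $n$ distinct eigenvalues $\lambda_1,\dots,\lambda_n$ and must vanish identically, forcing $P(L)=-L^m$.
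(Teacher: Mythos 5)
Your primary argument is correct and takes a genuinely different route from the paper's. You identify the companion element $u(\mathbf{t})$ of $L(\mathbf{t})$ as the $U^-$-factor in the LU-factorization of $\exp(\Theta_{C_\Lambda}(\mathbf{t}))u_0 = u_0\exp(\Theta_{L^0}(\mathbf{t}))$ (which is indeed what Proposition~\ref{prop:companion} provides), differentiate that factorization, and read off $u^{-1}\dot u = (L^m)_{<0}$ from the direct-sum decomposition $\mathfrak g = \mathfrak u^-\oplus\mathfrak b^+$; then $\dot\gamma\gamma^{-1} = -u^{-1}\dot u$ gives the result. The paper instead argues entirely at the level of the Lax equation: it shows $F := (L^m)_{\geq 0} - \dot\gamma\gamma^{-1}$ commutes with $L$, invokes Lemma~\ref{lem:commutation} to write $F$ as a polynomial in $L$, and then uses $(K)_{\geq 0}=0$ for $K=L^m-F$ together with a Vandermonde argument to force $K=0$. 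Your approach is more direct once the global LU machinery of Propositions~\ref{prop:LUsolution2} and \ref{prop:companion} is in hand, and it avoids Lemma~\ref{lem:commutation} entirely; the paper's approach is more local, requiring only the Lax equation and the companion decomposition at each $\mathbf{t}$. Note that your closing ``alternative'' sketch — comparing $\dot L = [\dot\gamma\gamma^{-1},L]$ with the hierarchy, applying Lemma~\ref{lem:commutation}, and using the nilpotency of a strictly lower triangular polynomial in $L$ to conclude $P(L)=-L^m$ — is in fact precisely the paper's own proof rather than a third route.
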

\begin{proof}
Note that 
 \begin{align*}
 \dot{L}
 &=\dot{\gamma}\gamma^{-1}L-L\dot{\gamma}\gamma^{-1}=[\dot{\gamma}\gamma^{-1},\,L],
 \end{align*}
 where we have used the fact that 
 $\frac{\partial}{\partial t_m}{\gamma}^{-1}=-\gamma^{-1} \dot{\gamma}\gamma^{-1}$.
   From $\gamma=u^{-1}E$, we also have
 \[
 \dot{\gamma}\gamma^{-1}=-u^{-1}\dot{u} \,\in\, \mathfrak{u}^-.
 \] 

Let $F = (L^m)_{\geq 0} - \dot{\gamma} \gamma^{-1}$ and $K= L^m-F$.
To prove the lemma, it suffices to show that $F = L^m$, or equivalently,
$K=0$.
The fact that $\dot{L} = [\dot{\gamma}\gamma^{-1},\,L]$
and $\dot{L}=[(L^m)_{\ge0},L]$ (the Lax equation) implies that 
$[F,L]=0$.  Then Lemma \ref{lem:commutation} implies that 
$F=\sum_{i=0}^{n-1}a_iL^i$ for some 
constants $a_i$.

Now we show $K=L^m-F=0$.
Using the definition of $F$ and $\dot{\gamma} \gamma^{-1} \in \mathfrak{u}^-$, 
we have $(F)_{\geq 0} = (L^m)_{\geq 0}$, and hence $(K)_{\geq 0} = 0$.
Therefore all the eigenvalues of $K$ are $0$.  Also note that using the Cayley-Hamilton lemma,  one can write
\[
K=\sum_{i=0}^{n-1}b_iL^i
\]
for unique $b_i$'s.  Then we have $0=b\,E$ with $b=(b_0,b_1,\ldots,b_{n-1})$.
This implies that $b=0$, which shows $K=0$.  Therefore we have $F=L^m$,
as desired.
\end{proof}

Following Bloch-Gekhtman \cite{BG}, we associate a symmetric matrix 
$\mathcal{L}$ to $L$:
\begin{equation}\label{symmL}
\mathcal{L}=\psi(L):=\text{Ad}_{\beta^{-1}}L=\beta^{-1}L\beta,\qquad \text{with}\quad \beta\in B^+.
\end{equation}
The matrix $\beta$ is given by Cholesky matrix factorization, i.e. 
for a fixed  $\gamma=u^{-1}E$,
\begin{equation}\label{beta}
\gamma\gamma^T=\beta\beta^T.
\end{equation}
This implies that 
\[
\mathcal{L}^T=\beta^T\gamma^{-T}\Lambda \gamma^T\beta^{-T}=\beta^{-1}\gamma\Lambda \gamma^{-1}\beta=\mathcal{L},
\]
where $x^{-T}=(x^{-1})^T=(x^T)^{-1}$.
\begin{remark}
Since $\gamma\gamma^T=u^{-1}EE^Tu^{-T}$ whose principal minors are all positive, the matrix $\gamma\gamma^T$ is
positive definite. This guarantees that the Cholesky factorization is unique.
\end{remark}

The following result shows that 
the map \eqref{symmL} transforms the f-KT hierarchy into the symmetric Toda hierarchy.
\begin{theorem}\cite[Theorem 3.1]{BG} \label{sToda}
If $L = L(\mathbf{t})$ satisfies the f-KT hierarchy, the symmetric matrix 
$\mathcal{L} = \mathcal{L}(\mathbf{t})$
in \eqref{symmL} satisfies the symmetric Toda hierarchy, i.e.
\[
\frac{\partial \mathcal{L}}{\partial t_m}=[\pi_{\mathfrak{so}}(\mathcal{L}^m),\mathcal{L}],\qquad\text{with}\quad
\pi_{\mathfrak{so}}(\mathcal{L}^m):=(\mathcal{L}^m)_{>0}-(\mathcal{L}^m)_{<0}.
\]
\end{theorem}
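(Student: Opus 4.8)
The plan is to realize $\mathcal{L}$ as a conjugate $q\Lambda q^{-1}$ by an \emph{orthogonal} matrix $q$, and to show that the generator of its $t_m$-flow, $\dot q q^{-1}$, is exactly $\pi_{\mathfrak{so}}(\mathcal{L}^m)$; the Lax equation then follows simply by differentiating. First I would set $q:=\beta^{-1}\gamma$, so that $\gamma=\beta q$. By the Cholesky relation \eqref{beta}, $q q^T=\beta^{-1}\gamma\gamma^T\beta^{-T}=\beta^{-1}(\beta\beta^T)\beta^{-T}=I$, so $q\in\mathrm{O}_n(\R)$; since $\gamma\gamma^T=u^{-1}EE^Tu^{-T}$ is positive definite, its Cholesky factor $\beta(\mathbf{t})$, and hence $q(\mathbf{t})$, depends smoothly on $\mathbf{t}$ wherever $L(\mathbf{t})$ is regular. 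Differentiating $q q^T=I$ with respect to $t_m$ (here $\dot{\phantom{x}}=\partial/\partial t_m$) shows that $\dot q q^{-1}=\dot q q^T$ is skew-symmetric. Moreover, combining \eqref{symmL} with $L=\gamma\Lambda\gamma^{-1}$ gives $\mathcal{L}=\beta^{-1}L\beta=\beta^{-1}\gamma\Lambda\gamma^{-1}\beta=q\Lambda q^{-1}$.

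Next I would compute $\dot q q^{-1}$ explicitly. Write $P:=\beta^{-1}\dot\beta\in\mathfrak{b}^+$ and $M:=\beta^{-1}(L^m)_{<0}\beta$. Using Lemma \ref{gamma} in the form $\dot\gamma=-(L^m)_{<0}\gamma$, together with $\gamma=\beta q$ and $\beta^{-1}\gamma=q$,
\[
\dot q=\beta^{-1}\dot\gamma-\beta^{-1}\dot\beta\,\beta^{-1}\gamma=-\big(\beta^{-1}(L^m)_{<0}\beta\big)q-Pq=-(M+P)q,
\]
so $\dot q q^{-1}=-(P+M)$. To identify its strictly lower-triangular part, split $\mathcal{L}^m=\beta^{-1}L^m\beta=\beta^{-1}(L^m)_{\ge0}\beta+M$. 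Conjugation by $\beta\in B^+$ preserves weakly upper-triangular matrices, so $\beta^{-1}(L^m)_{\ge0}\beta$ has no strictly lower part, whence $(M)_{<0}=(\mathcal{L}^m)_{<0}$. Since $P\in\mathfrak{b}^+$ also contributes nothing strictly below the diagonal, $(\dot q q^{-1})_{<0}=-(\mathcal{L}^m)_{<0}$.

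Now comes the point of the argument. Both $\dot q q^{-1}$ (shown above to be skew-symmetric) and $\pi_{\mathfrak{so}}(\mathcal{L}^m)=(\mathcal{L}^m)_{>0}-(\mathcal{L}^m)_{<0}$ (skew-symmetric because $\mathcal{L}^m$ is symmetric, using $\mathcal{L}^T=\mathcal{L}$) are skew-symmetric matrices with the same strictly lower-triangular part, namely $-(\mathcal{L}^m)_{<0}$. A skew-symmetric matrix has vanishing diagonal and its strictly upper part is the negative transpose of its strictly lower part, so it is determined by the latter; hence $\dot q q^{-1}=\pi_{\mathfrak{so}}(\mathcal{L}^m)$. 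Differentiating $\mathcal{L}=q\Lambda q^{-1}$ then yields
\[
\frac{\partial\mathcal{L}}{\partial t_m}=\dot q q^{-1}\,q\Lambda q^{-1}-q\Lambda q^{-1}\,\dot q q^{-1}=[\dot q q^{-1},\mathcal{L}]=[\pi_{\mathfrak{so}}(\mathcal{L}^m),\mathcal{L}],
\]
which is the symmetric Toda hierarchy.

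The main obstacle is conceptual rather than computational: one is tempted to differentiate the Cholesky relation $\gamma\gamma^T=\beta\beta^T$ and solve for $P=\beta^{-1}\dot\beta$, but this is unnecessary. The combination ``$P$ is weakly upper-triangular'' together with ``$\dot q q^{-1}$ is skew-symmetric'' already pins down $\dot q q^{-1}$ completely from the single identity of Lemma \ref{gamma}. The only other points worth spelling out are the smoothness of the Cholesky factor $\beta(\mathbf{t})$ (guaranteed by positive-definiteness of $\gamma\gamma^T$) and the remark that precisely where $L(\mathbf{t})$ becomes singular the element $\gamma$—hence $\beta$, hence the whole construction—ceases to be defined, which is why the map to the symmetric Toda hierarchy is only a local one.
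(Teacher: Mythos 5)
Your proof is correct and is essentially the paper's argument in a lightly repackaged form. The quantity $B=-\beta^{-1}\dot\beta+\beta^{-1}\dot\gamma\gamma^{-1}\beta$ that the paper manipulates is precisely your $\dot q q^{-1}$ for $q=\beta^{-1}\gamma$, and both proofs pin it down by the same two observations: it is skew-symmetric (you get this from $qq^T=I$, the paper from differentiating $\gamma\gamma^T=\beta\beta^T$, which is the same identity) and its strictly lower-triangular part is $-(\mathcal{L}^m)_{<0}$ via Lemma \ref{gamma} and the fact that conjugation by $\beta\in B^+$ preserves $\mathfrak{b}^+$.
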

\begin{proof}
Let $\dot{\mathcal{L}}$ denote the $t_m$-derivative of $\mathcal{L}$.  
Using \eqref{symmL}
it is straightforward to check that 
\begin{align*}
\dot{\mathcal{L}}&=[B,\mathcal{L}],\qquad\text{where}\qquad B=-\beta^{-1}\dot\beta+\beta^{-1}\dot{\gamma}\gamma^{-1}\beta.
\end{align*}
 Using $\beta\beta^T=\gamma\gamma^T$ and
its $t_m$-derivative, one can show that $B$ is a skew-symmetric matrix, i.e. $B^T=-B$,
denoted by $B\in \mathfrak{so}(n)$.
Also using $L^m=(L^m)_{<0}+(L^m)_{\ge 0}$  and  Lemma \ref{gamma}, we have the decomposition
\begin{align*}
B&=-\beta^{-1}\dot{\beta}+\beta^{-1}\dot{\gamma}\gamma^{-1}\beta=-\beta^{-1}\dot{\beta}+\beta^{-1}\left[(L^m)_{\ge0}-L^m\right]\beta\\
&=-\beta^{-1}\dot{\beta}+\beta^{-1}(L^m)_{\ge 0}\beta-\mathcal{L}^m\\
&=\left(-\beta^{-1}\dot{\beta}+\beta^{-1}(L^m)_{\ge 0}\beta-(\mathcal{L}^m)_{\ge 0}\right)-(\mathcal{L}^m)_{<0}.
\end{align*}
Note that the first term $(-\beta^{-1}\dot{\beta}+\dots)$ in the last line belongs to $\mathfrak{b}^+$.  Then we use the fact that $B\in \mathfrak{so}(n)$ to conclude that $B=\pi_{\mathfrak{so}}(\mathcal{L}^m)$.
\end{proof}
\begin{remark}
It is known that the full symmetric Toda flow in Theorem \ref{sToda} is complete
(i.e. regular for all $\mathbf{t}$), see e.g. \cite{DLNT}.   However, 
the full-Kostant Toda flow with general initial data is not complete -- it can 
have a singularity.  Therefore, as noted in \cite{BG}, 
whenever the solution $L(\mathbf{t})$ becomes
singular,
the map from $L(\mathbf{t})$
to $\mathcal{L}(\mathbf{t})$ does not exist. 
In Theorem \ref{thm:symm} 
below we show that if the initial matrix $L^0$ comes from a point of 
the tnn flag variety, as in Section \ref{sec:guL}, 
then the map from $L(\mathbf{t})$ to $\mathcal{L}(\mathbf{t})$ 
exists for all 
$\mathbf{t}\in\R^{n-1}$.
\end{remark}


\subsection{The initial matrix $\mathcal{L}^0$ from $G^{>0}_{\v_+,\w}$}

Recall from Section \ref{sec:guL} that from each point 
$g \in G^{>0}_{\v_+,\w}$ we can construct an initial matrix $L^0$ for the 
full Kostant-Toda lattice.
By setting $\mathcal{L}^0=\beta_0^{-1}L^0\beta_0$, 
where $\beta_0$ represents $\beta$ at $t=0$,
we can also use 
$g\in G^{>0}_{\v_+,\w}$ 
to construct the initial data $\mathcal{L}^0=\mathcal{L}(0)$ for the full symmetric 
Toda hierarchy.

\begin{proposition}\label{lem:initialsymm}
Fix $g \in G^{>0}_{\v_+,\w}$ and define $L^0$ as in Section \ref{sec:guL}, i.e.
$L^0 = u_0^{-1} C_{\Lambda} u_0 = u_0^{-1} E \Lambda E^{-1} u_0$, where $u_0$ is determined by the equation
$Eg = u_0 b_0$.  Let $\mathcal{L}^0 = \psi(L^0)$ be
the symmetric matrix associated to $L^0$ by \eqref{symmL}.  Then 
\begin{equation*}
\mathcal{L}^0 = q_0^T \Lambda q_0,
\end{equation*}
where $q_0 \in \text{SO}_n(\R)$ is determined by the 
QR-factorization 
$g = q_0 r_0$, 
where $r_0 \in B^+$ is determined up to rescaling its columns by $\pm 1$.
\end{proposition}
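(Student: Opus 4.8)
The plan is to exhibit a single orthogonal matrix $q_0$ which simultaneously conjugates $\Lambda$ to $\mathcal{L}^0$ and serves as the orthogonal factor in a QR-factorization of $g$; the identity $\mathcal{L}^0 = q_0^T\Lambda q_0$ then falls out of the two properties together.

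First I would translate the hypotheses into relations among $g$, $u_0$, $b_0$, and $\gamma_0$. Recall $\gamma_0 = u_0^{-1}E$, and $Eg = u_0 b_0$ by Lemma \ref{Eg}; hence $\gamma_0 = u_0^{-1}E = b_0 g^{-1}$, so $\gamma_0^{-1} = g b_0^{-1}$. Using $C_\Lambda = E\Lambda E^{-1}$ we have $L^0 = u_0^{-1}C_\Lambda u_0 = \gamma_0\Lambda\gamma_0^{-1}$, and therefore, by \eqref{symmL},
\[
\mathcal{L}^0 = \beta_0^{-1}L^0\beta_0 = (\beta_0^{-1}\gamma_0)\,\Lambda\,(\gamma_0^{-1}\beta_0).
\]
Now set $q_0 := \gamma_0^{-1}\beta_0$. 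The Cholesky identity \eqref{beta} at $t=0$, namely $\gamma_0\gamma_0^T = \beta_0\beta_0^T$, gives
\[
q_0^T = \beta_0^T(\gamma_0^T)^{-1} = \beta_0^{-1}(\beta_0\beta_0^T)(\gamma_0^T)^{-1} = \beta_0^{-1}(\gamma_0\gamma_0^T)(\gamma_0^T)^{-1} = \beta_0^{-1}\gamma_0 = q_0^{-1},
\]
so $q_0 \in \text{O}_n(\R)$ and $\mathcal{L}^0 = q_0^{-1}\Lambda q_0 = q_0^T\Lambda q_0$. To see $q_0\in\text{SO}_n(\R)$: since $u_0\in U^-$, $\det\gamma_0 = \det E$, the Cholesky identity forces $(\det\beta_0)^2 = (\det E)^2$, and as $\beta_0$ has positive diagonal and $\det E = \prod_{i<j}(\lambda_j-\lambda_i)>0$ we get $\det q_0 = \det\beta_0/\det E = 1$.

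It remains to identify $q_0$ with the orthogonal factor of a QR-factorization of $g$. From $\gamma_0^{-1} = g b_0^{-1}$ we get $q_0 = g b_0^{-1}\beta_0$, that is $g = q_0 r_0$ with $r_0 := \beta_0^{-1}b_0 \in B^+$. Its diagonal is positive: $(\beta_0)_{kk}>0$ by the Cholesky normalization, while $(b_0)_{kk} = [Eg]_k/[Eg]_{k-1} > 0$ because $[Eg]_k = [b_0]_k$ (as $u_0\in U^-$) and all principal minors $[Eg]_k$ are positive by Lemma \ref{Eg}. Thus $g = q_0 r_0$ is a genuine QR-factorization with $q_0\in\text{SO}_n(\R)$ and $r_0\in B^+$ of positive diagonal; any other such factorization differs from it by right-multiplying $q_0$, and correspondingly rescaling $r_0$, by a diagonal $\pm 1$ matrix, which is precisely the ambiguity allowed in the statement. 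Combining this with the previous paragraph yields $\mathcal{L}^0 = q_0^T\Lambda q_0$, as claimed.

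I do not anticipate a real difficulty: the argument is essentially bookkeeping with the three factorizations $Eg = u_0 b_0$, $\gamma_0\gamma_0^T = \beta_0\beta_0^T$, and $g = q_0 r_0$. The only steps that require care are the consistent handling of transposes and inverses in the orthogonality computation $q_0^T = q_0^{-1}$, and verifying the determinant and positivity assertions so that $q_0$ genuinely lands in $\text{SO}_n(\R)$ and $r_0$ in $B^+$ with positive diagonal, matching the normalization of QR implicit in the proposition.
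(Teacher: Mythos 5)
Your proof is correct and follows essentially the same route as the paper's: both identify $q_0 = g b_0^{-1}\beta_0 = \gamma_0^{-1}\beta_0$, use the Cholesky identity $\gamma_0\gamma_0^T = \beta_0\beta_0^T$ to show $q_0$ is orthogonal, and set $r_0 = \beta_0^{-1}b_0 \in B^+$ so that $g = q_0 r_0$ is the QR-factorization. You merely package the orthogonality check as a direct verification of $q_0^T = q_0^{-1}$ and spell out the determinant and diagonal-positivity arguments more explicitly than the paper, which simply appeals to $g\in\SL_n(\R)$.
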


\begin{proof}
We have that 
$\mathcal{L}^0 = \psi(L^0) = \beta_0^{-1} L^0 \beta_0 = \beta_0^{-1} u_0^{-1} E \Lambda E^{-1} u_0 \beta_0$,
where by \eqref{beta} with $\gamma_0=u_0^{-1}E$, we have 
$$\gamma_0\gamma_0^T=u_0^{-1}E(u_0^{-1}E)^T = \beta_0 \beta_0^T.$$
But then since $Eg = u_0 b_0$, we have
$\mathcal{L}^0 = \beta_0^{-1} b_0 g^{-1} \Lambda g b_0^{-1} \beta_0$ and 
$(b_0 g^{-1}) (b_0 g^{-1})^T = \beta_0 \beta_0^T,$ which implies that 
\begin{equation}
\label{orthogonal}
(\beta_0^{-1} b_0 g^{-1})(\beta_0^{-1} b_0 g^{-1})^T = I.
\end{equation}
This shows that $q_0 := (\beta_0^{-1} b_0 g^{-1})^{-1} \in \text{O}_n(\R)$,
and hence 
we have 
$$\mathcal{L}^0 = q_0^T \Lambda q_0.$$
If we set $r_0:=\beta_0^{-1} b_0 \in B^+$ then we have 
$$g = q_0 r_0 \qquad \text{ for }\quad q_0 \in \text{O}_n(\R), \quad r_0 \in B^+,$$ which 
is the  
QR-factorization of $g$.
(Note that the QR-factorization is unique up to 
the signs of the diagonal entries of $r_0$.)
Since we are working in $G = \SL_n(\R)$, 
$q_0$ in fact lies in $\text{SO}_n(\R)$.
\end{proof}

\subsection{The QR-factorization and the $\tau$-functions of the full symmetric Toda lattice}
Recall that we have the following isomorphism for the real flag variety:
\[
\text{SL}_n(\R)/B^+\cong \text{SO}_n(\R)/T_{\mathfrak{so}},\qquad\text{where}\quad
T_{\mathfrak{so}}:=\diag(\pm 1, \dots, \pm 1)
\]
is the maximal torus in $\text{SO}_n(\R)$.
When we associate an initial point $L^0$ for the full Kostant-Toda lattice
to a point $gB^+$ of the flag variety we are using the first point of view
on the flag variety.
Proposition \ref{lem:initialsymm} shows that our association of 
an initial point $\mathcal{L}^0$ for the symmetric Toda lattice is quite
natural from the second 
point of view.

Proposition \ref{prop:QR} below shows that one can use
the QR-factorization to solve the
full symmetric Toda lattice (see also \cite{Symes}).
\begin{proposition}\label{prop:QR}
The solution $\mathcal{L}(t)$ of the full symmetric Toda lattice is given by
\[
\mathcal{L}(t)=q(t)^{-1}\mathcal{L}^0 q(t)=r(t)\mathcal{L}^0 r(t)^{-1},
\]
where $q(t)\in \text{SO}_n(\R)$ and $r(t)\in B^+$ obtained by
the QR-factorization of the matrix $\exp(t\mathcal{L}^0)$, i.e.
\begin{equation}\label{QR-exp}
\exp(t\mathcal{L}^0)=q(t)r(t).
\end{equation}
\end{proposition}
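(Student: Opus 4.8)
The plan is to mimic the argument of Proposition~\ref{prop:LUsolution}, replacing the $\mathfrak{u}^-\oplus\mathfrak{b}^+$ decomposition by the decomposition $\mathfrak{gl}_n=\mathfrak{so}_n\oplus\mathfrak{b}^+$ (or rather the Iwasawa-type decomposition of $\mathfrak{sl}_n$ into skew-symmetric part plus upper-triangular part), and the LU-factorization by the QR-factorization. First I would differentiate the defining relation $\exp(t\mathcal L^0)=q(t)r(t)$ to get
\[
\mathcal L^0 qr = qr\mathcal L^0 = \dot q r + q\dot r,
\]
which rearranges to
\[
q^{-1}\mathcal L^0 q = r\mathcal L^0 r^{-1} = q^{-1}\dot q + \dot r r^{-1}.
\]
Call this common matrix $\tilde{\mathcal L}$. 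Since $q(t)\in\mathrm{SO}_n(\R)$, we have $q^{-1}\dot q\in\mathfrak{so}_n$, and since $r(t)\in B^+$, we have $\dot r r^{-1}\in\mathfrak b^+$; this gives the decomposition $q^{-1}\dot q=\pi_{\mathfrak{so}}(\tilde{\mathcal L})$ and $\dot r r^{-1}=(\tilde{\mathcal L})_{\ge 0}$ in the sense that these are the components of $\tilde{\mathcal L}$ under $\mathfrak{gl}_n = \mathfrak{so}_n \oplus \mathfrak b^+$. (Here one uses that $\tilde{\mathcal L}$ is symmetric, so its $\mathfrak{so}_n$-component is $(\tilde{\mathcal L})_{>0}-(\tilde{\mathcal L})_{<0}=\pi_{\mathfrak{so}}(\tilde{\mathcal L})$ and its $\mathfrak b^+$-component is $(\tilde{\mathcal L})_{\ge 0}$.)

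Next I would show that $\tilde{\mathcal L}(t)$ satisfies the full symmetric Toda lattice. Differentiating $\tilde{\mathcal L}=q^{-1}\mathcal L^0 q$ and using $\frac{d}{dt}q^{-1}=-q^{-1}\dot q q^{-1}$ gives
\[
\dot{\tilde{\mathcal L}} = -q^{-1}\dot q\,\tilde{\mathcal L} + \tilde{\mathcal L}\,q^{-1}\dot q = [-q^{-1}\dot q,\tilde{\mathcal L}] = [-\pi_{\mathfrak{so}}(\tilde{\mathcal L}),\tilde{\mathcal L}] = [\pi_{\mathfrak{so}}(\tilde{\mathcal L}),\tilde{\mathcal L}],
\]
where the last equality uses $\pi_{\mathfrak{so}}(X)^T=-\pi_{\mathfrak{so}}(X)$ together with symmetry of $\tilde{\mathcal L}$ to flip the sign (equivalently, one simply observes that $q^{-1}\dot q = \pi_{\mathfrak{so}}(\tilde{\mathcal L})$ up to the sign convention, which is fixed by checking against the symmetric version at $t=0$). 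Since $\tilde{\mathcal L}(0)=\mathcal L^0=\mathcal L(0)$, the uniqueness theorem for ODEs forces $\tilde{\mathcal L}(t)=\mathcal L(t)$ for all $t$, which is exactly the claimed formula $\mathcal L(t)=q(t)^{-1}\mathcal L^0 q(t)=r(t)\mathcal L^0 r(t)^{-1}$.

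The main subtlety — and the step I would be most careful about — is pinning down the sign/orientation in the skew-symmetric projection: one must verify that the $\mathfrak{so}_n$-component of $q^{-1}\dot q$ really is $\pi_{\mathfrak{so}}(\tilde{\mathcal L})=(\tilde{\mathcal L})_{>0}-(\tilde{\mathcal L})_{<0}$ and not its negative, since $\tilde{\mathcal L}$ being symmetric means $(\tilde{\mathcal L})_{>0}$ and $(\tilde{\mathcal L})_{<0}$ are transposes of each other; the decomposition of a symmetric matrix into skew-symmetric plus upper-triangular is $\tilde{\mathcal L} = \big((\tilde{\mathcal L})_{>0}-(\tilde{\mathcal L})_{<0}\big) + \big((\tilde{\mathcal L})_{\ge0}+(\tilde{\mathcal L})_{<0} - (\tilde{\mathcal L})_{<0}\big)$, so one checks the bookkeeping carefully there. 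A secondary point worth a remark is existence: the QR-factorization of $\exp(t\mathcal L^0)$ always exists (Gram–Schmidt on the columns of an invertible matrix), unique up to the torus $T_{\mathfrak{so}}=\diag(\pm1,\dots,\pm1)$, so the formula is well-defined for all $t\in\R$ — this is the regularity (completeness) of the symmetric Toda flow already noted in the excerpt, and it is automatic here because $\exp(t\mathcal L^0)$ is invertible for every $t$, in contrast to the f-KT case where the analogous LU-factorization can fail. Everything else is the same formal manipulation as in the proof of Proposition~\ref{prop:LUsolution}.
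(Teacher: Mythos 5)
Your overall strategy is the same as the paper's: differentiate the QR-factorization, decompose $\tilde{\mathcal L}:=q^{-1}\dot q+\dot r r^{-1}$ according to $\mathfrak{gl}_n=\mathfrak{so}_n\oplus\mathfrak b^+$, show $\tilde{\mathcal L}$ satisfies the symmetric Toda flow, and invoke ODE uniqueness. But you have a sign error in identifying the $\mathfrak{so}_n$-component, and the patch you apply to fix it is invalid.

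For a symmetric matrix $S$, the decomposition into skew-symmetric plus upper-triangular is
\[
S \;=\; \underbrace{\bigl((S)_{<0}-(S)_{>0}\bigr)}_{\in\,\mathfrak{so}_n}
\;+\; \underbrace{\bigl(\diag(S)+2(S)_{>0}\bigr)}_{\in\,\mathfrak b^+},
\]
which one checks directly since $(S)_{<0}=(S)_{>0}^T$. Thus the $\mathfrak{so}_n$-component of $\tilde{\mathcal L}$ is $(\tilde{\mathcal L})_{<0}-(\tilde{\mathcal L})_{>0}=-\pi_{\mathfrak{so}}(\tilde{\mathcal L})$, i.e.\ $q^{-1}\dot q=-\pi_{\mathfrak{so}}(\tilde{\mathcal L})$, and $\dot r r^{-1}=\diag(\tilde{\mathcal L})+2(\tilde{\mathcal L})_{>0}$, exactly as the paper records in \eqref{eq:r}. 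Your claimed $q^{-1}\dot q=\pi_{\mathfrak{so}}(\tilde{\mathcal L})$ and $\dot r r^{-1}=(\tilde{\mathcal L})_{\ge0}$ are both wrong (indeed, $(\tilde{\mathcal L})_{<0}$ is not skew-symmetric), and your ``careful bookkeeping'' expression $\tilde{\mathcal L}=\bigl((\tilde{\mathcal L})_{>0}-(\tilde{\mathcal L})_{<0}\bigr)+\bigl((\tilde{\mathcal L})_{\ge0}+(\tilde{\mathcal L})_{<0}-(\tilde{\mathcal L})_{<0}\bigr)$ simplifies to $2(\tilde{\mathcal L})_{>0}-(\tilde{\mathcal L})_{<0}+\diag(\tilde{\mathcal L})\neq\tilde{\mathcal L}$.

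The subsequent step ``$[-\pi_{\mathfrak{so}}(\tilde{\mathcal L}),\tilde{\mathcal L}]=[\pi_{\mathfrak{so}}(\tilde{\mathcal L}),\tilde{\mathcal L}]$ using skew-symmetry of $\pi_{\mathfrak{so}}$'' is not a valid manipulation: $[-A,B]=-[A,B]$ always, so the two commutators are negatives of each other, and skew-symmetry of $A$ together with symmetry of $B$ only tells you that $[A,B]$ is symmetric, not that it vanishes. With the correct identification $q^{-1}\dot q=-\pi_{\mathfrak{so}}(\tilde{\mathcal L})$, no sign-flipping is needed: the minus in $\dot{\tilde{\mathcal L}}=[-q^{-1}\dot q,\tilde{\mathcal L}]$ cancels the minus in $q^{-1}\dot q=-\pi_{\mathfrak{so}}(\tilde{\mathcal L})$ to give $\dot{\tilde{\mathcal L}}=[\pi_{\mathfrak{so}}(\tilde{\mathcal L}),\tilde{\mathcal L}]$ directly. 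So the conclusion you reach is right, but the chain of reasoning that gets you there has two compensating errors; fix the decomposition and the argument becomes clean.
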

\begin{proof}
Differentiating \eqref{QR-exp}, we have
\[
\mathcal{L}^0 \exp(t\mathcal{L}^0) = \mathcal{L}^0qr=qr\mathcal{L}^0=\dot{q}r+q\dot{r},
\]
where $\dot{q}$ means the derivative of $q(t)$.
Then we have
\[
q^{-1}\mathcal{L}^0q=r\mathcal{L}^0r^{-1}=q^{-1}\dot{q}+\dot{r}r^{-1}.
\]
Following the arguments in Proposition \ref{prop:LUsolution}, we can also show
\begin{equation*}\label{symmQR}
\mathcal{L}=q^{-1}\mathcal{L}^0q=r\mathcal{L}^0r^{-1}.
\end{equation*}
Here we have used the following projections of $\mathcal{L}$,
\begin{align}\nonumber
q^{-1}\dot{q}&=-\pi_{\mathfrak{so}}(\mathcal{L})=-(\mathcal{L})_{>0}+(\mathcal{L})_{<0}\\
\dot{r}r^{-1}&=\mathcal{L}+\pi_{\mathfrak{so}}(\mathcal{L})=\text{diag}(\mathcal{L})+2(\mathcal{L})_{>0}.\label{eq:r}
\end{align}
Those equations are needed when we compute the derivative of $\mathcal{L}$
as in Proposition \ref{prop:LUsolution}.
\end{proof}

As in the case of the f-KT hierarchy, one can also easily obtain the solution
of the full symmetric Toda hierarchy by extending the QR-factorization
\eqref{QR-exp} with multi-times $\mathbf{t}=(t_1,\ldots,t_{n-1})$.
\begin{proposition}\label{prop:QRsolution2}
Consider the QR-factorization
\begin{equation}\label{QR-exp2}
\exp(\Theta_{\mathcal{L}^0}(\mathbf{t}))=q(\mathbf{t})r(\mathbf{t})\qquad
\text{ with }
\quad q(\mathbf{t})\in \text{SO}_n(\R)\quad\text{and}\quad r(\mathbf{t})\in B^+.
\end{equation}
The solution $\mathcal{L}(\mathbf{t})$ of the full symmetric
Toda hierarchy is then given by
\[
\mathcal{L}(\mathbf{t})=q(\mathbf{t})^{-1}\mathcal{L}^0 q(\mathbf{t})=
r(\mathbf{t})\mathcal{L}^0r(\mathbf{t})^{-1}.
\]
\end{proposition}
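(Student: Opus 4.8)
The plan is to mimic the proof of Proposition~\ref{prop:QR} essentially word for word, differentiating with respect to each flow variable $t_m$ in turn; this is the same passage from the single-time case to the multi-time case that takes Proposition~\ref{prop:LUsolution} to Proposition~\ref{prop:LUsolution2} for the f-KT hierarchy. The one place where the multi-time structure enters is the observation that, because $\Theta_{\mathcal{L}^0}(\mathbf{t})=\sum_j(\mathcal{L}^0)^j t_j$ from \eqref{eq:Theta} is a polynomial in the \emph{single} matrix $\mathcal{L}^0$, the matrix $\mathcal{L}^0$ commutes with $\Theta_{\mathcal{L}^0}(\mathbf{t})$, so that
\[
\frac{\partial}{\partial t_m}\exp\big(\Theta_{\mathcal{L}^0}(\mathbf{t})\big)=(\mathcal{L}^0)^m\exp\big(\Theta_{\mathcal{L}^0}(\mathbf{t})\big).
\]

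Granting this, the steps are as follows. First I would differentiate the QR-factorization \eqref{QR-exp2} with respect to $t_m$, write $\dot q=\partial q/\partial t_m$ and $\dot r=\partial r/\partial t_m$, and rearrange exactly as in Proposition~\ref{prop:QR} to get
\[
q^{-1}(\mathcal{L}^0)^m q \;=\; r(\mathcal{L}^0)^m r^{-1}\;=\;q^{-1}\dot q+\dot r r^{-1},
\]
where the first equality holds because $\exp(\Theta_{\mathcal{L}^0}(\mathbf{t}))$ commutes with $\mathcal{L}^0$, and where $\mathcal{L}(\mathbf{t}):=q^{-1}\mathcal{L}^0 q=r\mathcal{L}^0 r^{-1}$ is symmetric, being the conjugate of the symmetric matrix $\mathcal{L}^0$ by the orthogonal matrix $q$. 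Next I would invoke the decomposition $\mathfrak{sl}_n=\mathfrak{so}(n)\oplus(\mathfrak{b}^+\cap\mathfrak{sl}_n)$: since $q\in\text{SO}_n(\R)$ we have $q^{-1}\dot q\in\mathfrak{so}(n)$ (differentiate $q^Tq=I$), and since $r\in B^+$ we have $\dot r r^{-1}\in\mathfrak{b}^+$. Because $\mathcal{L}^m$ is symmetric, its unique splitting into a skew-symmetric part and an upper-triangular part is $\mathcal{L}^m=-\pi_{\mathfrak{so}}(\mathcal{L}^m)+\big(\diag(\mathcal{L}^m)+2(\mathcal{L}^m)_{>0}\big)$, with $\pi_{\mathfrak{so}}(\mathcal{L}^m)=(\mathcal{L}^m)_{>0}-(\mathcal{L}^m)_{<0}$; matching the two sides of the displayed identity then gives
\[
q^{-1}\dot q=-\pi_{\mathfrak{so}}(\mathcal{L}^m),\qquad \dot r r^{-1}=\diag(\mathcal{L}^m)+2(\mathcal{L}^m)_{>0},
\]
the multi-time analogue of \eqref{eq:r}. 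Finally, differentiating $\mathcal{L}=q^{-1}\mathcal{L}^0 q$ and substituting yields $\partial\mathcal{L}/\partial t_m=[\mathcal{L},q^{-1}\dot q]=[\pi_{\mathfrak{so}}(\mathcal{L}^m),\mathcal{L}]$, which is exactly the $m$-th equation of the full symmetric Toda hierarchy (cf. Theorem~\ref{sToda}). At $\mathbf{t}=\mathbf{0}$ one has $\exp(\Theta_{\mathcal{L}^0}(\mathbf{0}))=I$, forcing $q(\mathbf{0})=r(\mathbf{0})=I$ and $\mathcal{L}(\mathbf{0})=\mathcal{L}^0$, so uniqueness of solutions of the ODE system identifies $\mathcal{L}(\mathbf{t})$ with the solution of the hierarchy.

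The proof is essentially bookkeeping on top of Proposition~\ref{prop:QR}, so there is no serious obstacle; the two points to be careful about are both minor. The first is the clean derivative formula above, valid \emph{precisely} because $\Theta_{\mathcal{L}^0}(\mathbf{t})$ is a polynomial in one matrix — this is also why the flows for different $m$ automatically commute and $\mathcal{L}(\mathbf{t})$ is a well-defined single-valued function on $\R^{n-1}$. The second is the existence of the QR-factorization \eqref{QR-exp2} for every $\mathbf{t}$: since $\mathcal{L}^0=q_0^T\Lambda q_0$ (Proposition~\ref{lem:initialsymm}) is symmetric, $\exp(\Theta_{\mathcal{L}^0}(\mathbf{t}))=q_0^T\exp(\Theta_{\Lambda}(\mathbf{t}))q_0$ is symmetric positive definite and hence admits a QR-factorization, unique once the diagonal entries of $r$ are taken positive; this smooth choice is what makes $q(\mathbf{t})$ and $r(\mathbf{t})$ well-defined. (Alternatively one may appeal to the known completeness of the full symmetric Toda flow.)
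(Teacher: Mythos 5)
Your proof is correct and follows exactly the route the paper intends: the authors state only that the single-time Proposition~\ref{prop:QR} "can be easily extended" to multi-times, and your argument spells out precisely that extension, differentiating the QR-factorization with respect to each $t_m$ and using the splitting $\mathcal{L}^m = -\pi_{\mathfrak{so}}(\mathcal{L}^m) + (\diag(\mathcal{L}^m)+2(\mathcal{L}^m)_{>0})$ into $\mathfrak{so}(n)\oplus\mathfrak{b}^+$ as in \eqref{eq:r}. The supplementary remarks (commutativity of $\mathcal{L}^0$ with $\Theta_{\mathcal{L}^0}(\mathbf{t})$, existence and smoothness of the QR-factorization of the positive-definite matrix $\exp(\Theta_{\mathcal{L}^0}(\mathbf{t}))$) are correct and appropriate.
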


We also have an analogue of Proposition \ref{prop:companion}
for the full symmetric Toda hierarchy.
\begin{proposition}
Consider 
the full symmetric Toda hierarchy, 
with the initial data
\[
\mathcal{L}^0=q_0^T\Lambda q_0,\qquad\text{with}\quad q_0\in \text{SO}_n(\mathbb{R}),
\]
where $q_0$ is obtained by the QR-factorization of $g\in G^{>0}_{\v_+,\w}$,
i.e. $g=q_0r_0$ with $r_0\in B^+$.
Using the QR-factorization \eqref{QR-exp2},
we have
$\mathcal{L}(\mathbf{t}) = q(\mathbf{t})^{-1} \mathcal{L}^0 
q(\mathbf{t}).$ Then the symmetric Toda flow maps to the flag variety
by the \emph{diagonal embedding} $d_{\Lambda}$ as in the following diagram.
\begin{equation}\label{commutative}
\begin{CD}
\mathcal{L}^\0  @> d_{\Lambda}>> q_0\,B^+\\
@V Ad_{q(\t)^{-1}}VV @VVV \\
\mathcal{L}(\t) @> d_{\Lambda}>>\quad\left\{
\begin{array}{lll}
     ~~ q_0\,q(\t)\,B^+ \\[0.5ex]
      = q_0 \exp(\Theta_{\mathcal{L}^\0}(\t))\,B^+\\[0.5ex]
     =\exp(\Theta_{{\Lambda}}(\t))\,q_0\,B^+
\end{array}\right.
\end{CD}
\end{equation}
Here the diagonal embedding from symmetric matrices $\mathcal{L}$ to $G/B^+$ takes
$q^T \Lambda q$ to $qB^+$, and the full symmetric Toda flow
corresponds to the $\exp(\Theta_{\Lambda}(\mathbf{t}))$-torus action on the point $q_0B^+$.
\end{proposition}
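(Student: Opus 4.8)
The plan is to transcribe almost verbatim the proof of Proposition~\ref{prop:companion} (the companion‑embedding diagram for the f‑KT hierarchy), with the LU‑factorization replaced by the QR‑factorization \eqref{QR-exp2} and the companion matrix $C_\Lambda$ replaced by $\Lambda$ itself. I would first recall that $d_\Lambda$ is the diagonal embedding $\mathcal{L}=q^T\Lambda q\mapsto qB^+$ for $q\in\text{SO}_n(\R)$; it is well defined because an orthogonal matrix diagonalizing a symmetric matrix with distinct spectrum $\Lambda$ is unique up to the torus $T_{\mathfrak{so}}=\diag(\pm1,\dots,\pm1)\cap\text{SO}_n(\R)$, so under the identification $\text{SO}_n(\R)/T_{\mathfrak{so}}\cong G/B^+$ recalled above the coset $qB^+$ is unambiguous. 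In particular, Proposition~\ref{lem:initialsymm} — in which $q_0$ is exactly the orthogonal factor of the QR‑factorization $g=q_0r_0$ — gives $d_\Lambda(\mathcal{L}^0)=q_0B^+$, which is simply $gB^+$ since $r_0\in B^+$; this is the top‑right entry of \eqref{commutative}.

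Next I would handle the left‑hand vertical arrow and the bottom‑left entry. By Proposition~\ref{prop:QRsolution2} we have $\mathcal{L}(\mathbf{t})=q(\mathbf{t})^{-1}\mathcal{L}^0q(\mathbf{t})=Ad_{q(\mathbf{t})^{-1}}\mathcal{L}^0$, which is precisely the left arrow. Substituting $\mathcal{L}^0=q_0^T\Lambda q_0$ and using $q(\mathbf{t})^{-1}=q(\mathbf{t})^T$ and $q_0^{-1}=q_0^T$ gives
\[
\mathcal{L}(\mathbf{t})=\bigl(q_0q(\mathbf{t})\bigr)^T\Lambda\bigl(q_0q(\mathbf{t})\bigr),
\]
so $d_\Lambda(\mathcal{L}(\mathbf{t}))=q_0q(\mathbf{t})B^+$. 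This is the bottom‑left entry of \eqref{commutative}, and it identifies the right‑hand vertical arrow as $q_0B^+\mapsto q_0q(\mathbf{t})B^+$.

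It then remains to verify the three expressions displayed in the bottom‑right box of \eqref{commutative}. The equality $q_0q(\mathbf{t})B^+=q_0\exp(\Theta_{\mathcal{L}^0}(\mathbf{t}))B^+$ is immediate from the QR‑factorization \eqref{QR-exp2}, namely $\exp(\Theta_{\mathcal{L}^0}(\mathbf{t}))=q(\mathbf{t})r(\mathbf{t})$ with $r(\mathbf{t})\in B^+$. For the equality $q_0\exp(\Theta_{\mathcal{L}^0}(\mathbf{t}))B^+=\exp(\Theta_\Lambda(\mathbf{t}))q_0B^+$, note that $\mathcal{L}^0=q_0^{-1}\Lambda q_0$ implies $\Theta_{\mathcal{L}^0}(\mathbf{t})=\sum_{j}(\mathcal{L}^0)^jt_j=q_0^{-1}\Theta_\Lambda(\mathbf{t})q_0$, hence $\exp(\Theta_{\mathcal{L}^0}(\mathbf{t}))=q_0^{-1}\exp(\Theta_\Lambda(\mathbf{t}))q_0$ and therefore $q_0\exp(\Theta_{\mathcal{L}^0}(\mathbf{t}))=\exp(\Theta_\Lambda(\mathbf{t}))q_0$; right‑multiplying by $B^+$ finishes this. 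The last identity exhibits the full symmetric Toda flow, pushed forward by $d_\Lambda$, as the $\exp(\Theta_\Lambda(\mathbf{t}))$‑torus orbit of the point $q_0B^+$, which is the concluding assertion. I would also remark that, in contrast with the LU‑factorization used for the f‑KT hierarchy, the QR‑factorization \eqref{QR-exp2} exists for every $\mathbf{t}\in\R^{n-1}$, so the whole diagram \eqref{commutative} is defined on all of $\R^{n-1}$, consistent with the completeness of the full symmetric Toda flow.

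As for the main obstacle: there is essentially none of an analytic nature — once Propositions~\ref{prop:QRsolution2} and \ref{lem:initialsymm} are in hand, the argument is a formal transcription of the f‑KT case. The only points that need care are the well‑definedness of $d_\Lambda$ (via $\text{SO}_n(\R)/T_{\mathfrak{so}}\cong G/B^+$) and the bookkeeping of transposes and inverses needed to keep $\mathcal{L}(\mathbf{t})$ in the normal form $(\text{orthogonal})^T\Lambda(\text{orthogonal})$ along the flow, so that $d_\Lambda$ may be applied; everything else is the elementary manipulation with conjugation and the exponential shown above.
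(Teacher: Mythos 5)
The paper states this proposition without supplying a proof, so your argument is filling in a gap rather than being compared against a written argument; it transcribes the proof of Proposition~\ref{prop:companion}, replacing the LU-factorization by the QR-factorization~\eqref{QR-exp2} and the companion matrix $C_\Lambda$ by $\Lambda$, which is exactly the route the paper implicitly intends. The concrete computations are all correct: $\mathcal{L}(\mathbf{t})=\bigl(q_0q(\mathbf{t})\bigr)^T\Lambda\bigl(q_0q(\mathbf{t})\bigr)$ gives the bottom-left entry, the first equality in the bottom-right box comes from absorbing $r(\mathbf{t})\in B^+$, and the second from $\exp(\Theta_{\mathcal{L}^0}(\mathbf{t}))=q_0^{-1}\exp(\Theta_\Lambda(\mathbf{t}))q_0$. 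The one point I would flag is your justification of the well-definedness of $d_\Lambda$: if $\mathcal{L}=q^T\Lambda q=p^T\Lambda p$ with $q,p\in\text{SO}_n(\R)$, then $pq^T$ commutes with $\Lambda$ and hence lies in $T_{\mathfrak{so}}$, so $p=sq$ for some $s\in T_{\mathfrak{so}}$; the ambiguity in the diagonalizing orthogonal matrix is therefore by \emph{left} multiplication by $T_{\mathfrak{so}}$, whereas the assignment $q\mapsto qB^+$ (equivalently the Iwasawa identification $\text{SO}_n(\R)/T_{\mathfrak{so}}\cong G/B^+$) is invariant only under \emph{right} multiplication, and $sqB^+\ne qB^+$ in general. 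So $d_\Lambda$ is, strictly speaking, defined on $\text{SO}_n(\R)$ (or on symmetric matrices together with a sign convention for eigenvectors), not canonically on the isospectral set itself. The paper leaves this subtlety implicit as well, and it is harmless for your argument because you always work with the explicit representatives $q_0$ and $q_0q(\mathbf{t})$ supplied by the QR factorizations rather than invoking the abstract map. With that caveat noted, the proposal is a correct proof of the proposition along the same lines the paper intends.
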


The main results of this section are Theorem \ref{thm:symm} on the asymptotic 
behavior of the full symmetric Toda lattice, and Theorem
\ref{thm:symmpolytope} on the moment polytope for the full symmetric Toda
hierarchy.

\begin{theorem}\label{thm:symm}
Let $g\in G_{\v^+,\w}^{>0}$.  Define 
$u_0$ by 
$Eg = u_0 b_0$ with $u_0\in U^-$ and $b_0 \in B^+$,
and define $q_0$ by
$g=q_0r_0$ with $q_0\in \text{SO}_n(\R)$ and
$r_0\in B^+$.
Let 
$L^0$ be the Hessenberg matrix 
$L^0=u_0^{-1}C_{\Lambda}u_0$, and 
let $\mathcal{L}^0$ be the symmetric matrix  $\mathcal{L}^0=
q_0^T\Lambda q_0$.
Then $L(t)$ is regular for all $t=t_1$, and the solution $\mathcal{L}(t)$ of 
the full symmetric Toda equation 
with the initial matrix $\mathcal{L}^0$
has the same asymptotic behavior as the solution $L(t)$ of the 
full Kostant-Toda equation  with
the initial matrix 
$L^0.$
More specifically, if
\[
{L}(t)\quad\longrightarrow\quad\epsilon+\text{diag}(\lambda_{z^{\pm}(1)},\ldots,\lambda_{z^{\pm}(n)})\qquad \text{as}~t\to \pm \infty
\]
for some $z^{\pm}\in\Sym_n$, 
then the corresponding solution $\mathcal{L}(t)$ satisfies
\[
\mathcal{L}(t)\quad\longrightarrow\quad\text{diag}(\lambda_{z^{\pm}(1)},\ldots,\lambda_{z^{\pm}(n)})\qquad \text{as}~t\to\pm \infty.
\]
Therefore we have that 
\[
\mathcal{L}(t)~\longrightarrow~\left\{\begin{array}{lll}
\text{diag}(\lambda_{v(1)},\lambda_{v(2)},\dots,\lambda_{v(n)})\quad&\text{as}\quad t\to-\infty\\[1.5ex]
\text{diag}(\lambda_{w(1)},\lambda_{w(2)},\dots,\lambda_{w(n)})\quad &\text{as}\quad t\to\infty
\end{array}\right.
\]

\end{theorem}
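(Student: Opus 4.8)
The plan is to deduce the result from the structure already in place: completeness of the f-KT flow, the Bloch--Gekhtman map $\psi$, a $\tau$-function formula for the diagonal of $\mathcal{L}(t)$ parallel to \eqref{aii}, and isospectrality. Conceptually, Proposition \ref{prop:companion} and the diagram \eqref{commutative} show that (after identifying $u_0B^+ = Eq_0B^+$, which follows from $Eg=u_0b_0$ and $g=q_0r_0$) both $L(t)$ and $\mathcal{L}(t)$ arise from the single torus orbit $\exp(\Theta_\Lambda(t))q_0B^+$ in $G/B^+$, read off through the companion embedding $c_\Lambda$ and the diagonal embedding $d_\Lambda$ respectively; this is the reason the two flows must share their $t\to\pm\infty$ behaviour, and the computation below makes it explicit.

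First I would record regularity: applying Proposition \ref{prop:regular} to $\mathbf{t}=(t,0,\dots,0)$, the $\tau$-functions $\tau_k(t)$ of Corollary \ref{cor:tauA} are strictly positive for all $t\in\R$, so $L(t)$ is regular for all $t$. Hence, by the discussion after Theorem \ref{sToda}, the map $\psi$ of \eqref{symmL} is defined for all $t$, the matrix $\mathcal{L}(t)=\psi(L(t))$ solves the full symmetric Toda lattice, and by Proposition \ref{lem:initialsymm} it has initial value $\mathcal{L}(0)=q_0^T\Lambda q_0=\mathcal{L}^0$; by uniqueness this is the solution with initial data $\mathcal{L}^0$. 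Next I would compute the diagonal of $\mathcal{L}(t)$ via the QR-factorization. By Proposition \ref{prop:QR}, $\mathcal{L}(t)=r(t)\mathcal{L}^0 r(t)^{-1}$ with $\exp(t\mathcal{L}^0)=q(t)r(t)$, and by \eqref{eq:r} the diagonal entries satisfy $\frac{d}{dt}\ln r_{k,k}(t)^2 = 2\mathcal{L}_{k,k}(t)$. Since $\mathcal{L}^0$ is symmetric, $\exp(2t\mathcal{L}^0)=\exp(t\mathcal{L}^0)^T\exp(t\mathcal{L}^0)=r(t)^T r(t)$, the Cholesky factorization of a positive-definite matrix, so $[\exp(2t\mathcal{L}^0)]_k=\prod_{i=1}^k r_{i,i}(t)^2$; writing $\sigma_k(t):=[\exp(2t\mathcal{L}^0)]_k$ this gives $\mathcal{L}_{k,k}(t)=\tfrac12\frac{d}{dt}\ln\frac{\sigma_k(t)}{\sigma_{k-1}(t)}$. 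Using $\mathcal{L}^0=q_0^T\Lambda q_0$, $g=q_0 r_0$ (so the leftmost $k$ columns of $q_0$ span $A_k=\pi_k(g)$ up to the factor $[r_0^{-1}]_k$), and the Cauchy--Binet lemma,
\[
\sigma_k(t)=[r_0^{-1}]_k^{\,2}\sum_{I\in\mathcal{M}(A_k)}\Delta_I(A_k)^2\, e^{2t\lambda_I},\qquad \lambda_I:=\textstyle\sum_{i\in I}\lambda_i .
\]
Up to a positive constant this has the same exponential content as the square of $\tau_k(t)$ in \eqref{tau}: a positive combination of the exponentials $e^{2t\lambda_I}$, $I\in\mathcal{M}(A_k)$.

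Exactly as in Lemma \ref{asympt-tau} (using $\lambda_1<\dots<\lambda_n$), the dominant term of $\sigma_k(t)$ as $t\to+\infty$ (resp.\ $t\to-\infty$) is the one indexed by the lex-maximal element $w\cdot[k]$ (resp.\ the lex-minimal element $v\cdot[k]$) of $\mathcal{M}(A_k)$, so $\frac{d}{dt}\ln\sigma_k(t)\to 2\lambda_{w\cdot[k]}$ (resp.\ $\to 2\lambda_{v\cdot[k]}$). Since $(w\cdot[k])\setminus(w\cdot[k-1])=\{w(k)\}$, this yields $\mathcal{L}_{k,k}(t)\to\lambda_{w(k)}$ as $t\to\infty$ and $\mathcal{L}_{k,k}(t)\to\lambda_{v(k)}$ as $t\to-\infty$, matching $a_{k,k}(t)$ from Theorem \ref{asympt}. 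To upgrade this to convergence of the whole matrix I would invoke isospectrality: $\mathcal{L}(t)$ is symmetric with the fixed eigenvalues $\lambda_1,\dots,\lambda_n$, so $\tr(\mathcal{L}(t)^2)=\sum_i\mathcal{L}_{i,i}(t)^2+2\sum_{i<j}\mathcal{L}_{i,j}(t)^2=\sum_i\lambda_i^2$ is constant; since $\sum_i\mathcal{L}_{i,i}(t)^2\to\sum_i\lambda_{w(i)}^2=\sum_i\lambda_i^2$ as $t\to\infty$, we get $\sum_{i<j}\mathcal{L}_{i,j}(t)^2\to 0$, hence $\mathcal{L}(t)\to\diag(\lambda_{w(1)},\dots,\lambda_{w(n)})$, and likewise $\to\diag(\lambda_{v(1)},\dots,\lambda_{v(n)})$ as $t\to-\infty$. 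Combined with Theorem \ref{asympt} this identifies $z^+=w$, $z^-=v$ and finishes the proof.

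The only non-formal point is the identification inside the previous paragraph of the $\lambda_I$-maximal basis of the matroid $\mathcal{M}(A_k)$ with the lex-maximal basis $w\cdot[k]$; this is precisely the fact (valid because the $\lambda_i$ are distinct and increasing) already used in the proof of Lemma \ref{asympt-tau}, so it transfers without change. Everything else is a routine transcription of the f-KT arguments to the QR-factorization, with the bonus that the trace identity makes the vanishing of the off-diagonal entries immediate and avoids re-running the inductive estimate from the proof of Theorem \ref{asympt}. The mild sign ambiguity of the QR-factorization ($r_0$ and $r(t)$ are unique only up to $\diag(\pm1)$) is harmless since only $r_{k,k}(t)^2$ and $[r_0^{-1}]_k^2$ enter.
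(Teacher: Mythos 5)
Your proof is correct, and it takes a genuinely different route from the paper's main argument; in fact it is precisely the alternative the paper sketches in the remark that follows Theorem \ref{thm:symm}. The paper's proof appeals to Theorem \ref{asympt} together with the Bloch--Gekhtman conjugation $\mathcal{L}(t) = \beta(t)^{-1}L(t)\beta(t)$: since $L(t)\to L^{\infty}=\epsilon+\diag(\lambda_{z(1)},\ldots,\lambda_{z(n)})\in\mathfrak{b}^+$, any $B^+$-conjugate of $L^{\infty}$ lies in $\mathfrak{b}^+$, the only symmetric matrices in $\mathfrak{b}^+$ are diagonal, and the only diagonal $B^+$-conjugate of $L^{\infty}$ is $\diag(\lambda_{z(1)},\ldots,\lambda_{z(n)})$ (this step implicitly treats a limit point of $\mathcal{L}(t)$ as a $B^+$-conjugate of $L^{\infty}$, which the paper does not spell out). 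You instead re-derive the diagonal asymptotics of $\mathcal{L}(t)$ from scratch via the QR $\tau$-functions $\sigma_k(t)=[\exp(2t\mathcal{L}^0)]_k$, using Cauchy--Binet and Lemma \ref{lemma:AQ} (i.e.\ $\mathcal{M}(Q_k)=\mathcal{M}(A_k)$) to reduce to the same exponential-dominance argument as Lemma \ref{asympt-tau}; then you dispatch the off-diagonal entries in one line with the isospectral trace identity $\tr(\mathcal{L}(t)^2)=\sum_i\lambda_i^2$. That trace trick is available only because $\mathcal{L}$ is symmetric, and it buys you a clean bypass of both the inductive decay estimates from the proof of Theorem \ref{asympt} and the limit-point subtlety in the paper's $B^+$-conjugacy argument. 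The sign ambiguity in QR is correctly observed to be harmless since only squares enter. One minor imprecision in the introductory color: $L(t)$ and $\mathcal{L}(t)$ do not arise from literally the same orbit; the companion-embedded f-KT orbit is $E\exp(\Theta_{\Lambda}(\mathbf{t}))q_0B^+$, which differs from the diagonally embedded symmetric Toda orbit $\exp(\Theta_{\Lambda}(\mathbf{t}))q_0B^+$ by the fixed left translation $E$. This does not affect the actual computation, which is correct.
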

\begin{proof}
Proposition \ref{prop:regular}
implies that $L(t)$ is regular for all $t$.  
Now we use Theorem \ref{sToda}
and Proposition \ref{lem:initialsymm},
which imply that 
$\mathcal{L}(t) = \beta(t)^{-1} L(t) \beta(t)$ 
for $\beta(t) \in B^+$.  Note that the matrix 
$L^{\infty}:=\epsilon+\text{diag}(\lambda_{z(1)},\ldots,\lambda_{z(n)})$ is in $\mathfrak{b}^+$
and hence conjugating it by an element of $B^+$ results in an element of $\mathfrak{b}^+$.
The only symmetric matrices which lie in $\mathfrak{b}^+$ are diagonal matrices, so 
the corresponding limit point of $\mathcal{L}(t)$ is diagonal.
Finally, the only diagonal matrix which is conjugate by $B^+$ to 
$L^{\infty}$ is 
$\text{diag}(\lambda_{z(1)},\ldots,\lambda_{z(n)}).$
The second part of the theorem now follows from Theorem 
\ref{asympt}.
\end{proof}

\begin{remark}
Note that the equation $\dot{r}=(\diag(\mathcal{L})+2(\mathcal{L})_{>0})r$ from \eqref{eq:r}
gives the following equation for the diagonal elements of the matrix $r$:
\[
\frac{dr_{k,k}}{dt}=\alpha_{k,k}r_{k,k},
\]
where $\alpha_{k,k}$ is the $k$-th element of the diagonal part of $\mathcal{L}$.
That is, we have
\[
\alpha_{k,k}=\frac{d}{dt}\ln r_{k,k}.
\]
Also, from the QR-factorization $\exp(t\mathcal{L}^0)=qr$, one can write
\[
\exp(2t\mathcal{L}^0)=r^Tq^Tqr=r^Tr,
\]
where we have used $\exp(t\mathcal{L}^0)^T=\exp(t\mathcal{L}^0)=r^Tq^T=qr$
with $ q^Tq=I$.
Now we define the $\tau$-functions for the full symmetric Toda lattice as
the principal minors of $\exp(2t\mathcal{L}^0)$, i.e.
\[
\tau_k^{sym}(t)=[\exp(2t\mathcal{L}^0)]_k=[r^Tr]_k=\prod_{i=1}^{k}(r_{i,i})^2.
\]
This then gives the formula for the diagonal elements $\alpha_{k,k}$ of the symmetric Lax matrix $\mathcal{L}$, 
\[
\alpha_{k,k}=\frac{1}{2}\frac{d}{dt}(\ln r_{k,k}^2)=\frac{1}{2}\frac{d}{dt}\ln\frac{\tau_k^{sym}}{\tau_{k-1}^{sym}}.
\]
This formula can be used to give an alternative proof of Theorem \ref{thm:symm}.
\end{remark}

\begin{remark}
The first part of Theorem \ref{thm:symm}
recovers recent results of Chernyakov-Sharygin-Sorin, 
see \cite[Theorem 3.1 and Corollary 3.3]{CSS}.
\end{remark}

The following lemma will be helpful for proving Theorem 
\ref{thm:symmpolytope} below.
\begin{lemma}\label{lemma:AQ}
Let $g$ and $q_0$ be as in Proposition \ref{lem:initialsymm}.
Let $A_k$ and $Q_k$ be the $n\times k$ submatrices of $g$ and $q_0$ consisting of
the first $k$ column vectors of $g$ and $q_0$, respectively.
Then the matroids $\mathcal{M}(A_k)$ and $\mathcal{M}(Q_k)$ are the same.
\end{lemma}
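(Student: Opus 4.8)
The plan is to exploit the upper-triangularity of $r_0$: in the product $g = q_0 r_0$, the first $k$ columns of $g$ only involve the first $k$ columns of $q_0$, mixed among themselves by the leading principal block of $r_0$. Concretely, write $R_k$ for the $k\times k$ leading principal submatrix of $r_0$. First I would record the identity
\[
A_k = Q_k R_k,
\]
which holds because the $j$-th column of $g$ is $q_0$ applied to the $j$-th column of $r_0$, and, $r_0$ being upper triangular, that column is supported on the first $j \le k$ coordinates. Since $r_0 \in B^+ \subset \SL_n(\R)$ is invertible and upper triangular, every diagonal entry of $r_0$ is nonzero, so $R_k$ is an invertible upper-triangular $k\times k$ matrix; in particular $\det R_k \neq 0$.

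Next, for an arbitrary $I \in \binom{[n]}{k}$ I would apply the Binet--Cauchy lemma (equivalently, the multiplicativity of the determinant on the $k\times k$ minor in the row set $I$) to the factorization $A_k = Q_k R_k$, obtaining
\[
\Delta_I(A_k) = \Delta_I(Q_k R_k) = \Delta_I(Q_k)\,\det(R_k).
\]
Because $\det R_k \neq 0$, this yields $\Delta_I(A_k) \neq 0 \iff \Delta_I(Q_k) \neq 0$ for every $k$-subset $I$. By the definition of $\mathcal{M}(\cdot)$ as the set of nonvanishing Plücker coordinates, this is precisely the equality $\mathcal{M}(A_k) = \mathcal{M}(Q_k)$, which is what we want.

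I do not expect a genuine obstacle here; the only points deserving a sentence of care are the identity $A_k = Q_k R_k$ (just the block structure of the matrix product $q_0 r_0$ on the first $k$ columns, using upper-triangularity of $r_0$) and the observation that the non-uniqueness of the QR-factorization is harmless: replacing $q_0$ by $q_0 D$ and $r_0$ by $D^{-1} r_0$ with $D = \diag(\pm 1, \dots, \pm 1)$ only rescales columns of $Q_k$ and rows/diagonal of $R_k$ by signs, which changes no minor's vanishing pattern. Hence the statement is independent of the chosen QR-factorization.
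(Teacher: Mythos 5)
Your proof is correct and is essentially the same as the paper's: both exploit the upper-triangularity of $r_0$ to conclude that $\Delta_I(A_k)$ equals $\Delta_I(Q_k)$ times the nonzero scalar $r_0^{11}\cdots r_0^{kk}=\det R_k$ for every $I$, so the vanishing patterns coincide. The paper phrases this via the wedge-product identity $r_0\cdot e_1\wedge\cdots\wedge e_k=(r_0^{11}\cdots r_0^{kk})\,e_1\wedge\cdots\wedge e_k$ while you phrase it via $A_k=Q_kR_k$ and Binet--Cauchy, which is the same computation; your closing remark on the sign-ambiguity of the QR-factorization is a small extra caution not made explicit in the paper.
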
 

\begin{proof}
This can be shown by calculating the minors $\Delta_I$ with $I=\{i_1,\ldots,i_k\}$:
\begin{align*}
\Delta_I(A_k)&=\langle e_{i_1}\wedge\cdots\wedge e_{i_k},~g\cdot e_1\wedge\cdots\wedge e_k\rangle
=\langle e_{i_1}\wedge\cdots\wedge e_{i_k},~q_0\,r_0\cdot e_1\wedge\cdots\wedge e_k\rangle\\
&=(r_0^{11}\cdots r_0^{kk})\langle e_{i_1}\wedge\cdots\wedge e_{i_k},~q_0\cdot e_1\wedge\cdots\wedge e_k\rangle
=(r_0^{11}\cdots r_0^{kk})\Delta_I(Q_k),
\end{align*}
where the $r_0^{ii}$'s are the diagonal elements of the matrix $r_0$, 
which are all nonzero.
\end{proof}

\begin{theorem} \label{thm:symmpolytope}
Use the same hypotheses as in Theorem \ref{thm:symm}.
Then the closure of the 
image of the moment map for the full symmetric Toda hierarchy has 
the same Bruhat interval polytope $\mathsf{P}_{v,w}$ as does the full 
Kostant-Toda hierarchy.
\end{theorem}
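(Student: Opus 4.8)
The plan is to run the proof of Theorem~\ref{thm:polytope} almost verbatim, with the diagonal embedding $d_{\Lambda}$ (sending $q^T\Lambda q$ to $qB^+$) replacing the companion embedding $c_{\Lambda}$, and with Lemma~\ref{lemma:AQ} supplying the one new ingredient.  First I would take the moment map for the full symmetric Toda hierarchy to be $\mu\circ d_{\Lambda}$, where $\mu : G/B^+\to\mathfrak{h}_{\R}^*$ is the flag moment map.  Fix $g\in G_{\v_+,\w}^{>0}$ and let $g=q_0 r_0$ be its QR-factorization, so that $\mathcal{L}^0=q_0^T\Lambda q_0$.  The flow $\mathcal{L}(\mathbf{t})$ is defined for all $\mathbf{t}\in\R^{n-1}$ since $L(\mathbf{t})$ is, by Proposition~\ref{prop:regular}, and the commutative diagram~\eqref{commutative} shows that under $d_{\Lambda}$ this flow becomes precisely the orbit $\{\exp(\Theta_{\Lambda}(\mathbf{t}))\,q_0 B^+ : \mathbf{t}\in\R^{n-1}\}$ on the flag variety.  (In contrast to the f-KT case, the auxiliary Vandermonde basis of Lemma~\ref{lem:momentprojection} is not needed here, because the extra left factor $E$ present in~\eqref{companionE} is absent from~\eqref{commutative}.)

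Next I would project to each Grassmannian.  Since $\exp(\Theta_{\Lambda}(\mathbf{t}))$ is diagonal, $\pi_k(\exp(\Theta_{\Lambda}(\mathbf{t}))\,q_0)$ is obtained from $Q_k:=\pi_k(q_0)$ by the action of the positive torus element $\exp(\Theta_{\Lambda}(\mathbf{t}))$, and as $\mathbf{t}$ ranges over $\R^{n-1}$ these torus elements cover the whole positive torus, exactly as in the proof of Proposition~\ref{Qkg}.  Hence Corollary~\ref{cor:convexity} gives $\overline{\{\mu_k(\pi_k(\exp(\Theta_{\Lambda}(\mathbf{t}))\,q_0)) : \mathbf{t}\in\R^{n-1}\}}=\Gamma_{\mathcal{M}(Q_k)}$.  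Since $\mu=\sum_{k=1}^{n-1}\mu_k$ and the $Q_k$ are all projections $\pi_k(q_0)$ of the single element $q_0$, the \emph{flag property} used in the proof of Theorem~\ref{thm:polytope} holds with $Q_k$ in place of $A_k$, so that same argument shows the closure of the image of $\mu\circ d_{\Lambda}$ along the symmetric Toda flow is the Minkowski sum $\sum_{k=1}^{n-1}\Gamma_{\mathcal{M}(Q_k)}$.

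Finally, Lemma~\ref{lemma:AQ} gives $\mathcal{M}(Q_k)=\mathcal{M}(A_k)$ for each $k$, where $A_k=\pi_k(g)$; hence $\sum_{k=1}^{n-1}\Gamma_{\mathcal{M}(Q_k)}=\sum_{k=1}^{n-1}\Gamma_{\mathcal{M}(A_k)}$, which equals $\QQ_g$ by Corollary~\ref{cor:Minkowski} and therefore equals $\mathsf{P}_{v,w}$ by Theorem~\ref{thm:polytope}.  I do not expect a serious obstacle: the only delicate point is the bookkeeping in the first two steps, namely verifying that $d_{\Lambda}$ genuinely turns the symmetric Toda flow into a positive-torus orbit on $G/B^+$ in the standard basis, and that the Minkowski-sum and flag-property step of Theorem~\ref{thm:polytope} carries over unchanged.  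The one genuinely new fact, that the $k$-th Grassmannian projections of the f-KT and symmetric Toda flows carry the same matroid, is exactly Lemma~\ref{lemma:AQ}.
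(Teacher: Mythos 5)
Your proposal is correct and follows essentially the same route as the paper: use the diagonal embedding $d_{\Lambda}$ and the commutative diagram \eqref{commutative} to identify the symmetric Toda flow with the $e^{\Theta_{\Lambda}(\mathbf{t})}$-orbit of $q_0 B^+$, apply Lemma~\ref{lemma:AQ} to conclude $\mathcal{M}(Q_k)=\mathcal{M}(A_k)$, and reduce to Theorem~\ref{thm:polytope}. The paper's proof is slightly more compact (it directly observes that the moment-map coefficients $\tilde\alpha_I^k$ equal $\alpha_I^k$ because the scalar $r_0^{11}\cdots r_0^{kk}$ cancels in the normalized ratio), but the substance is the same.
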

\begin{proof} First note from \eqref{commutative} that the full symmetric Toda hierarchy gives the torus action $e^{\Theta_{\Lambda}(\mathbf{t})}$ on the flag variety. 
Then following the arguments on the moment map for the f-KT hierarchy,
one can see that the moment map for the full symmetric Toda hierarchy 
is given by
\begin{align*}\label{momentT-symm}
\tilde\varphi(\mathbf{t};g)&=\sum_{k=1}^{n-1}\tilde\varphi_k(\mathbf{t};g)\qquad\text{with}\quad
\tilde\varphi_k(\mathbf{t};g)=\sum_{I\in\mathcal{M}(Q_k)}\tilde\alpha_{I}^k(\mathbf{t};g)\,\mathsf{L}(I),\\ \nonumber
&\text{and} \qquad \tilde\alpha_I^k(\mathbf{t}; g)
=\frac{\left(\Delta_{I}(Q_ke^{\Theta_{\Lambda}(\mathbf{t})})\right)^2}
{\sum_{J\in\mathcal{M}(Q_k)}\left(\Delta_J(Q_k e^{\Theta_{\Lambda}(\mathbf{t})})\right)^2}.
\end{align*}
Note here that $\tilde\varphi(\mathbf{t};g)=\mu_k(Q_ke^{\Theta_{\Lambda}(\mathbf{t})})$.  Then  Lemma \ref{lemma:AQ} implies that 
$\tilde\alpha_I^k(\mathbf{t};g)=\alpha_I^k(\mathbf{t};g)$ for all $I\in\mathcal{M}(Q_k)=\mathcal{M}(A_k)$ and $k=1,\ldots,n-1$.  That is, the moment polytope 
for the full symmetric Toda hierarchy is exactly the same as that of the f-KT hierarchy.   The present theorem is then reduced to Theorem \ref{thm:polytope}.\end{proof}

\begin{remark}
It is natural to wonder what happens if instead of using solutions 
coming from points $gB^+$ of the tnn flag variety, we use solutions
coming from arbitrary points $gB^+$ of the real flag variety.
In this more general case, we no longer have the result
of Proposition \ref{prop:regular} which guarantees that the f-KT flows
are regular.  However, we may still 
have a unique QR-factorization of $g$ with positive diagonal entries in $r_0$ ($g=q_0r_0$),
and the flow of the full symmetric Toda lattice with the initial data
$\mathcal{L}^0=q_0^T\Lambda q_0$ \emph{is} regular.
This can be seen from the positivity of the $\tau$-functions defined by
\begin{align*}
\tau^{sym}_k(t)&=\left[\exp(2t\mathcal{L}^0)\right]_k=\left[ q_0^Te^{2t\Lambda}q_0\right]_k=\sum_{I\in\mathcal{M}(Q_k)}\Delta_I(Q_k)^2e^{2\theta_I(t)},
\end{align*}
where $Q_k$ is the submatrix consisting of the first $k$ columns of $q_0$, and
$\theta_I(t)=\sum_{j=1}^k\lambda_{i_j}t$ with $I=\{i_1,\ldots,i_k\}$. 
Since the f-KT flows are in general not regular but the full symmetric Toda flows
\emph{are} regular,
there is no canonical way to map flows of the f-KT lattice to the full symmetric Toda lattice.
\end{remark}

\appendix
\section{Bruhat interval polytopes}

In this section we study some basic combinatorial properties of the 
Bruhat interval polytopes that arose in Section \ref{sec:moment}.  A 
more extensive study will be taken up in a future work.
Recall 
that for $u \leq v \in \Sym_n$, the Bruhat interval polytope 
$\mathsf{P}_{u,v}$ is defined as
the convex hull of the permutation vectors $z$ such that 
$u \leq z \leq v$.
We show that each Bruhat interval polytope is a Minkowski sum
of some matroid polytopes, and is also a 
\emph{generalized permutohedron}, as defined by Postnikov
\cite{Postnikov2}.  Moreover, every edge of a Bruhat 
interval polytope corresponds to a cover relation in Bruhat order.

In order to make this section self-contained, we will 
recall the definitions of the various polytopes we need.  
To be more consistent with the combinatorial literature,
we will define our polytopes in $\R^n$ (rather than in the weight 
space); however, this does not affect any of our statements
about the polytopes.  We start by defining a \emph{matroid}
using the \emph{basis axioms}.

\begin{definition}
Let $\M$ be a nonempty
collection 
of $k$-element subsets of $[n]$ such that:
if $I$ and $J$ are distinct members of $\M$ and 
$i \in I \setminus J$, then there exists an element $j \in J \setminus I$
such that $I \setminus \{i\} \cup \{j\} \in \M$.
Then $\M$ is called the \emph{set of bases of a matroid} of \emph{rank $k$}
on the \emph{ground set} $[n]$; 
or simply a \emph{matroid}. 
\end{definition}

\begin{definition}
Given the set of bases $\M \subset {[n] \choose k}$ 
of a matroid, the \emph{matroid polytope} 
$\Gamma_{\M}$ of $\M$ is the convex hull of the indicator vectors of the bases of $\M$:
\[
\Gamma_{\M} := \convex\{e_I \mid I \in \M\} \subset \R^n,
\]
where $e_I := \sum_{i \in I} e_i$, and $\{e_1, \dotsc, e_n\}$ is the standard basis of $\R^n$.
\end{definition}

Note that ``a matroid polytope" refers to the polytope of a specific matroid in its specific position in $\R^n$.

Recall that any element $A \in Gr_{k,n}$ gives rise to a matroid
$\M(A)$ of rank $k$ on the ground set $[n]$: specifically, the bases 
of $\M(A)$ are precisely the $k$-element subsets $I$ such that 
$\Delta_I(A) \neq 0$.

\begin{definition}
The \emph{usual permutohedron} $\Perm_n$ in $\R^n$ is the convex
hull of the $n!$ points obtained by permuting the coordinates 
of the vector $(1,2,\dots,n)$.
\end{definition}

There is a beautiful generalization of permutohedra due to 
Postnikov \cite{Postnikov2}.  

\begin{definition} A 
\emph{generalized permutohedron} is a polytope which is 
obtained by moving the vertices
of the usual permutohedron in such a way that directions of 
edges are preserved, but some edges (and higher dimensional faces)
may degenerate.  
\end{definition}

The main topic of this section is Bruhat interval polytopes.

\begin{definition}
Let $v$ and $w \in \Sym_n$ such that $v \leq w$ in (strong) Bruhat 
order.  We identify each permutation $z\in \Sym_n$ with 
the corresponding vector $(z(1),\dots, z(n)) \in \R^n.$  
Then the \emph{Bruhat interval polytope}
$\mathsf{P}_{v,w}$ is defined as the convex hull of all 
vectors $(z(1),\dots,z(n))$ for $z$ such that $v \leq z \leq w$.
\end{definition}

See Figure \ref{fig:Mpoly} for some examples of Bruhat interval 
polytopes.

\begin{theorem}\label{prop:Bruhat-matroid}
Let $v, w \in \Sym_n$ such that $v \leq w$.  Then 
the Bruhat interval polytope $\mathsf{P}_{v,w}$ is the Minkowski sum of $n-1$ matroid
polytopes $\mathsf{P}_1,\dots,\mathsf{P}_{n-1}$.  
Specifically, if we choose any reduced expression
$\w$ for $w$, and the corresponding positive expression 
$\v_+$ for $v$ in $\w$, and choose any $g\in G_{\v_+,\w}^{>0}$
then $\mathsf{P}_k$ is the matroid polytope associated to the 
matroid  $\M(\pi_k(g))$.  See \eqref{eq:G} and \eqref{eq:pi} for the 
definitions of $G_{\v_+,\w}^{>0}$
and $\pi_k$.
\end{theorem}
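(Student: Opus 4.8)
The plan is to deduce this from the body of the paper, where essentially all of the work has already been done. Fix $g\in G_{\v_+,\w}^{>0}$ and put $A_k=\pi_k(g)$. Corollary~\ref{cor:Minkowski} gives $\QQ_g=\sum_{k=1}^{n-1}\Gamma_{\M(A_k)}$, and Theorem~\ref{thm:polytope} identifies $\QQ_g$ with the Bruhat interval polytope; together these are exactly Corollary~\ref{cor:BIPM}. After transporting this identity through the change of coordinate system relating the weight space $\mathfrak{h}_{\R}^*$ to $\R^n$ described at the beginning of this appendix, under which $\Gamma_{\M(A_k)}$ becomes the matroid polytope $\mathsf P_k:=\convex\{e_I : I\in\M(A_k)\}$ and the Bruhat interval polytope becomes $\mathsf P_{v,w}$, we obtain $\mathsf P_{v,w}=\mathsf P_1+\dots+\mathsf P_{n-1}$. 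I would also record that the right-hand side does not depend on the reduced word $\w$ or on the choice of $g$: the cell $\mathcal R_{v,w}^{>0}$, hence each projection $\pi_k(\mathcal R_{v,w}^{>0})$ (a single cell of $(Gr_{k,n})_{\ge0}$, hence a single matroid stratum), hence each matroid $\M(A_k)$, depends only on $(v,w)$.

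To keep this appendix self-contained I would, alternatively, give the proof directly, rerunning the argument of the proof of Theorem~\ref{thm:polytope} in $\R^n$. Set $\mathsf P:=\mathsf P_1+\dots+\mathsf P_{n-1}$; this is a polytope, and every vertex of a Minkowski sum of polytopes is a sum of vertices of the summands, so each vertex of $\mathsf P$ has the form $e_{I_1}+\dots+e_{I_{n-1}}$ with each $I_k$ a basis of $\M(A_k)$. The key step is that, for a vertex, the $I_k$ form a chain $I_1\subset I_2\subset\dots\subset I_{n-1}$, so that $I_k=z\cdot[k]$ for a single permutation $z$ and the vertex is the corresponding permutohedron vertex. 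Finally, Lemma~\ref{minors} says that $z\cdot[k]\in\M(A_k)$ for all $k$ precisely when $v\le z\le w$, and for such $z$ the chain $z\cdot[1]\subset\dots\subset z\cdot[n-1]$ maximizes a suitable linear functional on each $\mathsf P_k$, so $e_{z\cdot[1]}+\dots+e_{z\cdot[n-1]}$ really is a vertex of $\mathsf P$. Hence the vertices of $\mathsf P$ are exactly the permutohedron vertices indexed by the interval $[v,w]$, and since $\mathsf P$ and $\mathsf P_{v,w}$ are each the convex hull of their vertices, $\mathsf P=\mathsf P_{v,w}$.

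The main obstacle is the chain (``flag'') step, which I would prove exactly as in the proof of Theorem~\ref{thm:polytope}. The matroids $\M(A_1),\dots,\M(A_{n-1})$ are the matroids of the nested subspaces $\pi_1(g)\subset\dots\subset\pi_{n-1}(g)$, and this nesting imposes the compatibility: if $I\in\M(A_k)$ and $j<k$, there is a $j$-subset $J\subset I$ with $J\in\M(A_j)$ (the first $j$ columns of the invertible $k\times k$ submatrix of $g$ on rows $I$, columns $[k]$, have rank $j$, so some $j$ of those rows, forming $J\subset I$, have nonvanishing $j\times j$ minor on columns $[j]$). Any point of the relative interior of $\mathsf P$ is a positive convex combination $\sum_k\sum_{I\in\M(A_k)}\alpha_I^k\,e_I$, and the compatibility shows that such points ``remember a flag''; passing to a vertex, where for each $k$ exactly one coefficient $\alpha_I^k$ equals $1$, forces the surviving sets $I_1,\dots,I_{n-1}$ to be nested. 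This extremality/limit bookkeeping is the only delicate point, and it is word-for-word the argument already carried out for Theorem~\ref{thm:polytope}, so no new ingredient is required; the rest of the proof is a direct appeal to Lemma~\ref{minors}.
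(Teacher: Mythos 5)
Your first paragraph is exactly the paper's proof: Theorem \ref{prop:Bruhat-matroid} is proved by citing Corollary \ref{cor:BIPM}, which in turn combines Corollary \ref{cor:Minkowski} and Theorem \ref{thm:polytope}. The extra remarks you add (the coordinate change from $\mathfrak{h}_{\R}^*$ to $\R^n$, the independence of the choices of $\w$ and $g$, and the optional self-contained rerun of the argument of Theorem \ref{thm:polytope}) are correct but not part of the paper's argument, which is just the one-line reference.
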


\begin{proof}
This result is equivalent to Corollary \ref{cor:BIPM}.
\end{proof}

\begin{remark}
In fact $\mathsf{P}_{v,w}$ is the Minkowski sum of $n-1$ 
\emph{positroid polytopes}, which were recently studied  in \cite{ARW}.
\end{remark}

\begin{corollary}
Every Bruhat interval polytope is a generalized permutohedron.
\end{corollary}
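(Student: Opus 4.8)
The plan is to deduce the corollary from Theorem~\ref{prop:Bruhat-matroid} together with two standard facts about generalized permutohedra: (i) every matroid polytope is a generalized permutohedron, and (ii) a Minkowski sum of generalized permutohedra is again a generalized permutohedron. Granting these, the argument is immediate: by Theorem~\ref{prop:Bruhat-matroid} we may write $\mathsf{P}_{v,w} = \mathsf{P}_1 + \cdots + \mathsf{P}_{n-1}$ with each $\mathsf{P}_k = \Gamma_{\M(\pi_k(g))}$ a matroid polytope, hence a generalized permutohedron by (i), and then (ii) applied repeatedly shows that the sum $\mathsf{P}_{v,w}$ is a generalized permutohedron.

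For (i), I would invoke the Gelfand--Goresky--MacPherson--Serganova characterization of matroid polytopes: a $0/1$-polytope all of whose vertices lie on a hyperplane $\sum_i x_i = k$ is a matroid polytope if and only if each of its edges is parallel to $e_i - e_j$ for some $i \neq j$. Since the $e_i - e_j$ are precisely the edge directions of the usual permutohedron $\Perm_n$, every matroid polytope is obtained from $\Perm_n$ by moving its vertices in such a way that edge directions are preserved while some faces degenerate; this is exactly the definition of a generalized permutohedron recalled in the text. (Equivalently, in the language of normal fans: the normal fan of a generalized permutohedron is a coarsening of the braid fan, and the same is true of any matroid polytope.)

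For (ii), I would use that the normal fan of a Minkowski sum $P + Q$ is the common refinement of the normal fans of $P$ and $Q$. If both of these coarsen the braid fan $\mathcal{B}_n$, then so does their common refinement, because the common refinement of two coarsenings of a fixed complete fan is again a coarsening of that fan. Concretely, every edge of $P + Q$ is parallel to an edge of $P$ or to an edge of $Q$, so the edge-direction condition characterizing generalized permutohedra passes to $P + Q$. Iterating over the $n - 1$ summands in Theorem~\ref{prop:Bruhat-matroid} then completes the proof.

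The point I would be most careful about is reconciling the informal definition of ``generalized permutohedron'' recalled in the text (deform the permutohedron while preserving edge directions, allowing degenerations) with the edge-direction and normal-fan characterizations actually used in steps (i) and (ii). This equivalence is due to Postnikov, so rather than reprove it I would simply cite \cite{Postnikov2}; with that in hand, steps (i) and (ii) are routine and the corollary follows.
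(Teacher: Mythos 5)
Your argument is the same as the paper's: apply Theorem~\ref{prop:Bruhat-matroid} to express $\mathsf{P}_{v,w}$ as a Minkowski sum of matroid polytopes, then use that matroid polytopes are generalized permutohedra and that the class of generalized permutohedra is closed under Minkowski sums (the paper cites \cite{ABD} for both facts, which you instead sketch via the GGMS edge-direction criterion and normal fans). The proposal is correct and matches the paper's proof in substance.
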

\begin{proof}
This follows from the fact that matroid polytopes are generalized 
permutohedra (see \cite[Section 2]{ABD}), and the Minkowski sum of two 
generalized permutohedra is again a generalized permutohedron
(see \cite[Lemma 2.2]{ABD}).
\end{proof}

Our next result is about edges of Bruhat interval polytopes.
It generalizes the well-known fact that edges of $\Perm_n$
correspond precisely to the pairs of permutations $y$ and $z$ which are adjacent
in the \emph{weak Bruhat order}.  That is, we can write $z = y t$ where $t$
is an adjacent transposition $t = (i, i+1)$ for some $i$, and 
$\ell(z) = \ell(y)+1$.

Before stating our generalization of this result, we first recall the elegant characterization
of edges of matroid polytopes, due to Gelfand, Goresky, MacPherson,
and Serganova \cite{GGMS}.

\begin{theorem}[\cite{GGMS}]\label{r:GS} Let $\M$ be a collection of subsets of $[n]$ and let
$\Gamma_{\M}:=\convex\{e_I \mid I \in \M\} \subset \R^n$. Then $\M$ is the collection of bases of a matroid if and only if every edge of  $\Gamma_{\M}$ is a parallel translate of $e_i-e_j$ for some $i,j \in [n]$.
\end{theorem}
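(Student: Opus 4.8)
The plan is to prove the two implications separately, using Brualdi's symmetric exchange property for matroid bases to handle the ``only if'' direction, and the elementary polyhedral fact that the cone of feasible directions at a vertex of a polytope is generated by the edge directions emanating from it to handle the ``if'' direction. Throughout, write $P:=\Gamma_{\M}$, and recall $e_I:=\sum_{i\in I}e_i$.

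For the ``only if'' direction, suppose $\M$ is the set of bases of a matroid and let $[e_I,e_J]$ be an edge of $P$, so $I\ne J$ and, since $|I|=|J|=k$, the integer $|I\triangle J|$ is a positive even number. If $|I\triangle J|=2$ we are done, as then $e_J-e_I=e_j-e_i$ for the unique $i\in I\setminus J$, $j\in J\setminus I$. So suppose $|I\triangle J|\ge 4$. Pick $x\in I\setminus J$; by the symmetric exchange property there is $y\in J\setminus I$ with both $I':=I\setminus\{x\}\cup\{y\}$ and $J':=J\setminus\{y\}\cup\{x\}$ in $\M$. Checking coordinate by coordinate gives $e_I+e_J=e_{I'}+e_{J'}$, and $I,J,I',J'$ are pairwise distinct precisely because $|I\triangle J|\ge 4$. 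Since $[e_I,e_J]$ is a face of $P$, there is a functional $c$ with $\langle c,\cdot\rangle\le m$ on $P$ and equality exactly on $[e_I,e_J]$; then $\langle c,e_{I'}\rangle<m$ and $\langle c,e_{J'}\rangle<m$, yet $\langle c,e_{I'}\rangle+\langle c,e_{J'}\rangle=\langle c,e_I\rangle+\langle c,e_J\rangle=2m$, a contradiction. Hence every edge has $|I\triangle J|=2$.

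For the ``if'' direction, assume every edge of $P$ is a parallel translate of a coordinate difference, and let $I,J\in\M$ and $a\in I\setminus J$; we must produce $b\in J\setminus I$ with $I\setminus\{a\}\cup\{b\}\in\M$. Consider the vertex $e_I$. The feasible-direction cone of $P$ at $e_I$ (the cone generated by $P-e_I$) is generated by the edge directions at $e_I$; each such direction is $w-e_I$ for a neighboring vertex $w$, and since $w-e_I$ is a nonzero $\{0,\pm1\}$-vector summing to $0$ which is also a scalar multiple of some $e_i-e_j$, necessarily $w=I\setminus\{p\}\cup\{q\}$ for some $p\in I$, $q\notin I$ with $I\setminus\{p\}\cup\{q\}\in\M$. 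Because $e_J\in P$, we may write $e_J-e_I=\sum_{(p,q)} c_{(p,q)}\,(e_q-e_p)$ with all $c_{(p,q)}\ge 0$, the sum over pairs with $p\in I$, $q\notin I$, $I\setminus\{p\}\cup\{q\}\in\M$. Reading off the $a$-th coordinate (no term has $q=a$, since $a\in I$) yields $-1=-\sum_{q}c_{(a,q)}$, so $c_{(a,q)}>0$ for some $q\notin I$ with $I\setminus\{a\}\cup\{q\}\in\M$; reading off the $q$-th coordinate then gives $[q\in J]\ge c_{(a,q)}>0$, hence $q\in J\setminus I$. This is exactly the basis exchange axiom, so $\M$ is a matroid.

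Most of this is routine once the polyhedral inputs are available; the step I expect to be the main obstacle is the reverse direction, specifically the correct use of the fact that the tangent (feasible-direction) cone at a vertex of a polytope is generated by the incident edge rays. That lemma is precisely what converts the \emph{global} hypothesis about all edges of $\Gamma_{\M}$ into a \emph{local} statement at $e_I$ and then into a single exchange. A lesser point to state carefully, in the forward direction, is that an edge, being a face, is cut out by a supporting hyperplane meeting $\Gamma_{\M}$ in exactly that edge — this is what makes the midpoint coincidence $e_I+e_J=e_{I'}+e_{J'}$ contradictory.
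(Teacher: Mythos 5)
The paper states Theorem~\ref{r:GS} purely as a citation to \cite{GGMS} and does not reprove it, so there is no internal argument to compare against. Your proof is correct, and it is essentially the standard one: for ``only if'' the midpoint coincidence $e_I+e_J=e_{I'}+e_{J'}$ with $I',J'\notin\{I,J\}$ contradicts the supporting hyperplane cutting out the edge, and for ``if'' the tangent cone at $e_I$ being generated by edge rays lets you write $e_J-e_I$ as a nonnegative combination of directions $e_q-e_p$ with $I\setminus\{p\}\cup\{q\}\in\M$, after which reading the $a$-th and then the $q$-th coordinates extracts a valid exchange.

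One remark worth recording: the ``only if'' direction does not actually need Brualdi's symmetric exchange theorem, which is itself a nontrivial consequence of the basis axioms; the single exchange axiom as stated in the paper's definition of matroid already suffices. If $[e_I,e_J]$ is an edge with $|I\triangle J|\ge 4$ and $c$ attains its maximum $m$ on $\Gamma_{\M}$ exactly along it, then for each $x\in I\setminus J$ exchange produces $y\in J\setminus I$ with $B:=I\setminus\{x\}\cup\{y\}\in\M$; the condition $|I\triangle J|\ge 4$ forces $B\ne I$ and $B\ne J$, so $\langle c,e_B\rangle=m-c_x+c_y<m$, i.e.\ $c_y<c_x$. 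Hence $\min_{J\setminus I}c<\min_{I\setminus J}c$, and swapping the roles of $I$ and $J$ gives the opposite strict inequality, a contradiction. For completeness against the paper's definition of matroid you should also note that the edge hypothesis forces all members of $\M$ to have the same cardinality (every edge direction has zero coordinate sum and $\Gamma_{\M}$ is connected), and that $\M\ne\emptyset$ must be assumed.
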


\begin{theorem}
Let $\mathsf{P}_{u,v}$ be a Bruhat interval polytope such that 
$u,v \in \Sym_n$.  Then 
every edge of $\mathsf{P}_{u,v}$ connects two vertices
$y,z \in \Sym_n$ such that $y \lessdot z$ is a cover relation 
in the (strong) Bruhat
order, that is, $y < z$ and there is no $w$ such that 
$y < w < z$.
\end{theorem}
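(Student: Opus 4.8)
The plan is to leverage the Minkowski sum structure from Theorem~\ref{prop:Bruhat-matroid} together with the edge characterization of matroid polytopes (Theorem~\ref{r:GS}), and then upgrade the resulting coarse statement about edge directions to the sharp statement about cover relations in the strong Bruhat order. First I would recall that by Theorem~\ref{prop:Bruhat-matroid}, $\mathsf{P}_{u,v}$ is a Minkowski sum $\mathsf{P}_1 + \cdots + \mathsf{P}_{n-1}$ of matroid polytopes, where $\mathsf{P}_k = \Gamma_{\M(\pi_k(g))}$ for a chosen $g\in G_{\v_+,\w}^{>0}$. A standard fact about Minkowski sums is that every edge of a sum of polytopes is parallel to an edge of one of the summands (the edge is a face minimized by some linear functional, and a minimizing face of a Minkowski sum is the Minkowski sum of the minimizing faces of the summands; for this to be one-dimensional, all but one of those summand faces must be a vertex). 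Combining this with Theorem~\ref{r:GS}, every edge of $\mathsf{P}_{u,v}$ is a parallel translate of $e_i - e_j$ for some $i \neq j \in [n]$.

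Next I would identify the vertices of $\mathsf{P}_{u,v}$. By the proof of Theorem~\ref{thm:polytope} (or directly: $\mathsf{P}_{u,v}$ is the convex hull of the $\mathsf{L}_z$, equivalently the vectors $(z(1),\dots,z(n))$, for $z$ in the Bruhat interval $[u,v]$, and all these points are permutations of $(1,\dots,n)$, hence lie on a sphere and are all vertices of their convex hull), the vertex set of $\mathsf{P}_{u,v}$ is exactly $\{z \in \Sym_n : u \leq z \leq w\}$. So let $y, z$ be two permutations in $[u,v]$ that are joined by an edge of $\mathsf{P}_{u,v}$. By the previous paragraph, $z - y = c(e_i - e_j)$ for some scalar $c$ and indices $i \neq j$. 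Since $y$ and $z$ are both permutations, comparing coordinates forces $c = \pm 1$ and shows that $z$ is obtained from $y$ by swapping the two coordinate values $\{p, p+1\}$ for consecutive integers $p, p+1$ — wait, more carefully: $z$ and $y$ agree outside positions $i,j$, and $\{z(i),z(j)\} = \{y(i),y(j)\}$ with $|y(i)-y(j)| = |c|$. Since only two entries change and the multisets of values must coincide, we get $z = y \cdot (i\, j)$ as a transposition of positions, and the value displacement $|c|$ equals $|y(i) - y(j)|$. Combining $z-y = \pm(e_i - e_j)$ with $z = y\cdot(i\,j)$ gives $|y(i) - y(j)| = 1$, i.e. the transposed values are consecutive integers. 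So $y$ and $z$ differ by swapping two entries whose \emph{values} are consecutive.

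Finally I would show such a pair $y,z$ with $y < z$ is necessarily a cover relation in strong Bruhat order. This is a known fact: if $z = y\,t_{ab}$ where $t_{ab}$ is the transposition of the two \emph{values} $a$ and $b = a+1$ (acting on the left, so $z = t_{ab}\, y$ in one convention, or appropriately on positions), and $y<z$, then $\ell(z) = \ell(y)+1$; indeed when the swapped values are adjacent integers, exactly one inversion is created or destroyed, so $|\ell(z)-\ell(y)| = 1$, and by the standard characterization of the Bruhat order via the exchange condition, a length difference of one with $y \leq z$ forces $y \lessdot z$. I would either cite this (it follows from the subword/exchange characterization of Bruhat order, e.g. Björner–Brenti) or give the short direct argument. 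The main obstacle I anticipate is the bookkeeping in this last step: one must be careful about whether the transposition acts on positions or values, since an edge of the polytope gives a swap of values $\{p,p+1\}$ appearing in some two positions $i,j$, and translating this cleanly into the statement $\ell(z) = \ell(y) \pm 1$ (counting the change in the number of inversions) requires checking that swapping two occurrences of consecutive values changes the inversion count by exactly one regardless of where in the word they sit. Once that combinatorial lemma is in hand, the theorem follows.
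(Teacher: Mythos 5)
Your approach is the same as the paper's: decompose $\mathsf{P}_{u,v}$ as a Minkowski sum of matroid polytopes (Theorem~\ref{prop:Bruhat-matroid}), apply the GGMS edge characterization (Theorem~\ref{r:GS}) to conclude the edge has direction $e_i - e_j$, and then argue that the two endpoint permutations swap two \emph{consecutive} values, hence form a Bruhat cover.

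However, there is a genuine gap here, and it occurs in the paper's own proof as well. The parenthetical in your first paragraph (``for this to be one-dimensional, all but one of those summand faces must be a vertex'') is false: a $1$-dimensional face of a Minkowski sum can be the sum of \emph{several parallel} edges, one from each of several summands. When several of the matroid polytopes $\Gamma_{\M(\pi_k(g))}$ contribute parallel edges in direction $e_i - e_j$, the resulting edge of $\mathsf{P}_{u,v}$ satisfies $z - y = k(e_i - e_j)$ with $k \geq 1$, and $k$ need not equal $1$. Your own ``wait, more carefully'' computation correctly shows $|c| = |y(i) - y(j)|$, which does \emph{not} force $c = \pm 1$ --- but you then silently reinstate $c = \pm 1$ in the next sentence. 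Concretely, for $u = e$ and $v = s_1 s_2 = (2,3,1)$ in $\Sym_3$, the polytope $\mathsf{P}_{u,v}$ is the convex hull of $(1,2,3)$, $(2,1,3)$, $(1,3,2)$, $(2,3,1)$, and the segment from $(2,1,3)$ to $(2,3,1)$ is an edge with $z - y = 2(e_2 - e_3)$; the swapped values are $1$ and $3$, which are not consecutive. (Correspondingly, in the Minkowski sum $\Gamma_{\M_1}+\Gamma_{\M_2}$ both summands contribute parallel edges.) This pair still happens to be a Bruhat cover, so the theorem is true, but neither your argument nor the paper's establishes it in the case $k \geq 2$.

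A repair that avoids the issue: suppose $y$ and $z = y\cdot(i\,j)$ are joined by an edge, with $i < j$ and $y(i) < y(j)$. If $y \lessdot z$ fails, there exists $m$ with $i < m < j$ and $y(i) < y(m) < y(j)$, and then $x := y\cdot(i\,m)$ and $x' := y\cdot(m\,j)$ both lie strictly between $y$ and $z$ in Bruhat order, hence in $[u,v]$, hence are vertices of $\mathsf{P}_{u,v}$ not lying on the segment $[y,z]$. Choose a linear functional $\phi$ whose maximizing face on $\mathsf{P}_{u,v}$ is exactly $[y,z]$. Then $\phi(y)=\phi(z)$ forces $\phi_i = \phi_j$, while $\phi(x) < \phi(y)$ gives $(y(m)-y(i))(\phi_i - \phi_m) < 0$, i.e.\ $\phi_i < \phi_m$, and $\phi(x') < \phi(y)$ gives $\phi_m < \phi_j$. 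Combining yields $\phi_i < \phi_m < \phi_j = \phi_i$, a contradiction.
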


\begin{proof}
From Theorem \ref{prop:Bruhat-matroid}, we have that every Bruhat interval polytope
$\mathsf{P}_{u,v}$ (for $u \leq v \in \Sym_n)$ is the Minkowski sum of $n-1$ matroid
polytopes $\mathsf{P}_1,\dots,\mathsf{ P}_{n-1}$.  It follows that each edge $E$ of
$\mathsf{P}_{u,v}$ is the Minkowski sum of an edge of one of those matroid
polytopes $\mathsf{P}_i$ and $n-2$ vertices (one from each of the other matroid polytopes).
Therefore by Theorem \ref{r:GS}, the vertices of $E$ must have the form
$y = (y(1), y(2), \dots, y(n))$ and $z = (z(1), z(2), \dots, z(n))$ where
$(z(1), z(2),\dots,z(n)) = (y(1), y(2),\dots,y(n)) + e_i - e_j$
for some $i,j$.
Therefore $z(i) = y(i)+1$ and $z(j) = y(j)-1$.
Since $(y(1), y(2), \dots, y(n))$ and
$(z(1), z(2), \dots, z(n))$ are both permutations, it follows that
$z = y t$ where $t$ is the transposition $(i,j)$, and
$\ell(z) = \ell(y)+1$, where
$\ell(z)$ is the number of inversions of $z$.
Therefore $y \lessdot z$ in Bruhat order.
\end{proof}



\raggedright

\bibliographystyle{amsalpha}
\bibliography{bibliography}

\def\cprime{$'$} \def\cprime{$'$}
\providecommand{\bysame}{\leavevmode\hbox to3em{\hrulefill}\thinspace}
\providecommand{\MR}{\relax\ifhmode\unskip\space\fi MR }
\providecommand{\MRhref}[2]{%
  \href{http://www.ams.org/mathscinet-getitem?mr=#1}{#2}
}
\providecommand{\href}[2]{#2}
\begin{thebibliography}{DLNT86b}

\bibitem[ABD10]{ABD}
F.~Ardila, C.~Benedetti, and J.~Doker, \emph{Matroid polytopes and their
  volumes}, Discrete Comput. Geom. \textbf{43} (2010), no.~4, 841--854.
  \MR{2610473 (2012b:52026)}

\bibitem[ARW13]{ARW}
F.~Ardila, F.~Rincon, and L.~Williams, \emph{Positroids and non-crossing
  partitions}, preprint, {\tt arXiv:1308.2698}, 2013.

\bibitem[Ati82]{A:82}
M.~F. Atiyah, \emph{Convexity and commuting {H}amiltonians}, Bull. London Math.
  Soc. \textbf{14} (1982), no.~1, 1--15. \MR{642416 (83e:53037)}

\bibitem[AvM99]{AvM}
M.~Adler and P.~van Moerbeke, \emph{Vertex operator solutions to the discrete
  {KP}-hierarchy}, Comm. Math. Phys. \textbf{203} (1999), no.~1, 185--210.
  \MR{1695172 (2000d:37097)}

\bibitem[BG98]{BG}
A.~M. Bloch and M.~I. Gekhtman, \emph{Hamiltonian and gradient structures in
  the {T}oda flows}, J. Geom. Phys. \textbf{27} (1998), no.~3-4, 230--248.
  \MR{1645028 (99i:58070)}

\bibitem[BK03]{BK}
G.~Biondini and Y.~Kodama, \emph{On a family of solutions of the
  {K}adomtsev-{P}etviashvili equation which also satisfy the {T}oda lattice
  hierarchy}, J. Phys. A \textbf{36} (2003), no.~42, 10519--10536. \MR{2024910
  (2005j:37128)}

\bibitem[CK02]{CK}
L.~Casian and Y.~Kodama, \emph{Blow-ups of the {T}oda lattices and their
  intersections with the {B}ruhat cells}, The legacy of the inverse scattering
  transform in applied mathematics ({S}outh {H}adley, {MA}, 2001), Contemp.
  Math., vol. 301, Amer. Math. Soc., Providence, RI, 2002, pp.~283--310.
  \MR{1947372 (2004j:37107)}

\bibitem[CSS12]{CSS}
Yu.~B. Chernyakov, G.~I. Sharygin, and A.~S. Sorin, \emph{Bruhat order in full
  symmetric {T}oda system}, preprint, {\tt arXiv:1212.4803}, 2012.

\bibitem[Deo85]{Deodhar}
V.~V. Deodhar, \emph{On some geometric aspects of {B}ruhat orderings. {I}. {A}
  finer decomposition of {B}ruhat cells}, Invent. Math. \textbf{79} (1985),
  no.~3, 499--511. \MR{782232 (86f:20045)}

\bibitem[DLNT86a]{DNT}
P.~Deift, L.~C. Li, T.~Nanda, and C.~Tomei, \emph{The {T}oda flow on a generic
  orbit is integrable}, Comm. Pure Appl. Math. \textbf{39} (1986), no.~2,
  183--232. \MR{820068 (87c:58045)}

\bibitem[DLNT86b]{DLNT}
\bysame, \emph{The {T}oda flow on a generic orbit is integrable}, Comm. Pure
  Appl. Math. \textbf{39} (1986), no.~2, 183--232. \MR{820068 (87c:58045)}

\bibitem[EFS93]{EFS}
N.~M. Ercolani, H.~Flaschka, and S.~Singer, \emph{The geometry of the full
  {K}ostant-{T}oda lattice}, Integrable systems ({L}uminy, 1991), Progr. Math.,
  vol. 115, Birkh\"auser Boston, Boston, MA, 1993, pp.~181--225. \MR{1279823
  (95g:58103)}

\bibitem[Ehr34]{Ehresmann}
C.~Ehresmann, \emph{Sur la topologie de certains espaces homog\`enes}, Ann. of
  Math. (2) \textbf{35} (1934), no.~2, 396--443. \MR{1503170}

\bibitem[FH91]{FH}
H.~Flaschka and L.~Haine, \emph{Vari\'et\'es de drapeaux et r\'eseaux de
  {T}oda}, Math. Z. \textbf{208} (1991), no.~4, 545--556. \MR{1136474
  (93e:58080)}

\bibitem[GGMS87]{GGMS}
I.~M. Gel{\cprime}fand, R.~M. Goresky, R.~D. MacPherson, and V.~V. Serganova,
  \emph{Combinatorial geometries, convex polyhedra, and {S}chubert cells}, Adv.
  in Math. \textbf{63} (1987), no.~3, 301--316. \MR{877789 (88f:14045)}

\bibitem[GS82]{GuS}
V.~Guillemin and S.~Sternberg, \emph{Convexity properties of the moment
  mapping}, Invent. Math. \textbf{67} (1982), no.~3, 491--513. \MR{664117
  (83m:58037)}

\bibitem[GS87]{GS}
I.~M. Gel{\cprime}fand and V.~V. Serganova, \emph{Combinatorial geometries and
  the strata of a torus on homogeneous compact manifolds}, Uspekhi Mat. Nauk
  \textbf{42} (1987), no.~2(254), 107--134, 287. \MR{898623 (89g:32049)}

\bibitem[KM96]{KM}
Y.~Kodama and K.~T-R McLaughlin, \emph{Explicit integration of the full
  symmetric {T}oda hierarchy and the sorting property}, Lett. Math. Phys.
  \textbf{37} (1996), no.~1, 37--47. \MR{1392145 (97d:58098)}

\bibitem[KS08]{KS08}
Y.~Kodama and B.~A. Shipman, \emph{The finite non-periodic toda lattice: a
  geometric and topological viewpoint}, preprint, {\tt arXiv:0805.1389}, 2008.

\bibitem[KW13]{KW3}
Y.~Kodama and L.~K. Williams, \emph{The {D}eodhar decomposition of the
  {G}rassmannian and the regularity of {KP} solitons}, Adv. Math. \textbf{244}
  (2013), 979--1032.

\bibitem[KY96]{KY}
Y.~Kodama and J.~Ye, \emph{Iso-spectral deformations of general matrix and
  their reductions on {L}ie algebras}, Comm. Math. Phys. \textbf{178} (1996),
  no.~3, 765--788. \MR{1395214 (97d:58097)}

\bibitem[Lus94]{Lusztig3}
G.~Lusztig, \emph{Total positivity in reductive groups}, Lie theory and
  geometry, Progr. Math., vol. 123, Birkh\"auser Boston, Boston, MA, 1994,
  pp.~531--568. \MR{1327548 (96m:20071)}

\bibitem[Lus98]{Lusztig2}
\bysame, \emph{Total positivity in partial flag manifolds}, Represent. Theory
  \textbf{2} (1998), 70--78. \MR{1606402 (2000b:20060)}

\bibitem[MR04]{MR}
R.~J. Marsh and K.~C. Rietsch, \emph{Parametrizations of flag varieties},
  Represent. Theory \textbf{8} (2004), 212--242 (electronic). \MR{2058727
  (2005c:14061)}

\bibitem[Pos]{Postnikov}
A.~Postnikov, \emph{Total positivity, {G}rassmannians, and networks}, Preprint.
  Available at \url{http://www-math.mit.edu/~apost/papers/tpgrass.pdf}.

\bibitem[Pos09]{Postnikov2}
A.~Postnikov, \emph{Permutohedra, associahedra, and beyond}, Int. Math. Res.
  Not. IMRN (2009), no.~6, 1026--1106. \MR{2487491 (2010g:05399)}

\bibitem[Rie98]{Rietsch}
K.~C. Rietsch, \emph{Total positivity and real flag varieties}, ProQuest LLC,
  Ann Arbor, MI, 1998, Thesis (Ph.D.)--Massachusetts Institute of Technology.
  \MR{2716793}

\bibitem[Rie06]{RietschClosure}
\bysame, \emph{Closure relations for totally nonnegative cells in {$G/P$}},
  Math. Res. Lett. \textbf{13} (2006), no.~5-6, 775--786. \MR{2280774
  (2007j:14073)}

\bibitem[Shi02]{Shipman}
B.~A. Shipman, \emph{Nongeneric flows in the full {K}ostant-{T}oda lattice},
  Integrable systems, topology, and physics ({T}okyo, 2000), Contemp. Math.,
  vol. 309, Amer. Math. Soc., Providence, RI, 2002, pp.~219--249. \MR{1953364
  (2005b:37127)}

\bibitem[Sym82]{Symes}
W.~W. Symes, \emph{The {$QR$} algorithm and scattering for the finite
  nonperiodic {T}oda lattice}, Phys. D \textbf{4} (1981/82), no.~2, 275--280.
  \MR{653781 (83h:58053)}

\bibitem[Tod89]{Toda}
M.~Toda, \emph{Theory of nonlinear lattices}, second ed., Springer Series in
  Solid-State Sciences, vol.~20, Springer-Verlag, Berlin, 1989. \MR{971987
  (89h:58082)}

\end{thebibliography}
\label{sec:biblio}

\end{document}